\tikzset{
	mynode/.style={fill,circle,inner sep=1.5pt,outer sep=0pt}
}
\numberwithin{equation}{section}
\pgfplotsset{/pgf/number format/use comma,compat=newest}
\theoremstyle{plain}
\newtheorem{thm}{Theorem}[section]
\newtheorem{lem}[thm]{Lemma}
\newtheorem{prop}[thm]{Proposition}
\theoremstyle{definition}
\newtheorem{defn}[thm]{Definition}
\newtheorem{rem}[thm]{Remark}
\newcommand{\R}{\mathbb{R}}
\newcommand{\N}{\mathbb{N}}
\newcommand{\Sf}{\mathbb{S}}
\newcommand{\Int}{\textnormal{Int}}
\newcommand{\dist}{\textnormal{dist}}
\newcommand{\divr}{\textnormal{div}}
\newcommand{\curl}{\textnormal{curl}}
\newcommand{\diam}{\textnormal{diam}}
\newcommand{\lip}{\textnormal{Lip}}
\newcommand{\supp}{\textnormal{Supp} }
\newcommand{\cof}{\textnormal{cof}}
\newcommand{\Kinfty}{|K|_\infty}
\definecolor{blue_links}{RGB}{13,0,180} 
\newcommand{\Om}{\Omega}
\title[Geometric rigidity for incompatible fields in the multi-well case and an application to strain-gradient plasticity]{Geometric rigidity for incompatible fields in the multi-well case and an application to strain-gradient plasticity}
\author[S. Almi]{Stefano Almi}
\address[Stefano Almi]{Dipartimento di Matematica e Applicazioni ``R.~Caccioppoli'',
	Universit\`a di Napoli Federico II, via Cintia, 80126 Napoli, Italy.}
\email{stefano.almi@unina.it}
\author[D. Reggiani]{Dario Reggiani}
\address[Dario Reggiani]{Scuola Superiore Meridionale, Universit\'a di Napoli Federico II
	Largo San Marcellino 10, 80138 Napoli, Italy.}
\email{dario.reggiani@unina.it}
\author[F. Solombrino]{Francesco Solombrino}
\address[Francesco Solombrino]{Dipartimento di Matematica e Applicazioni ``R.~Caccioppoli'',
	Universit\`a di Napoli Federico II, via Cintia, 80126 Napoli, Italy.}
\email{francesco.solombrino@unina.it}
\begin{document}
	
	\maketitle
			\begin{abstract}
	We derive a quantitative rigidity estimate for a multiwell problem in nonlinear elasticity with dislocations. Precisely, we show that the $L^{1^{*}}$-distance of a possibly incompatible strain field $\beta$ from a single well is controlled in terms of the $L^{1^{*}}$-distance from a finite set of wells, of ${\rm curl}\beta$, and of ${\rm div}\beta$. As a consequence, we derive a strain-gradient plasticity model as $\Gamma$-limit of a nonlinear finite dislocation model, containing a singular perturbation term accounting for the divergence of the strain field. This can also be seen as a generalization of the result of \cite{ADMLP} to the case of incompatible vector fields. 
	\end{abstract}

\section{Introduction}

Quantitative geometric rigidity estimates in nonlinear elasticity attracted significant interest after the seminal paper \cite{Friesecke2002ATO}, stating that for a sufficiently smooth bounded domain $\Omega \subset \R^n$, the $L^{2}$-distance of the gradient of a map $u \in H^{1}(\Om; \R^{n})$ from a suitable constant rotation is always controlled, up to a multiplicative factor, by the $L^{2}$-distance of $\nabla u$ from the set of rotations~$SO(n)$. Such result has been later extended to general exponents $p \in (1,\infty)$, see \cite{Conti-SchwCPAM}, to settings of mixed growth \cite{CDM}, as well as to variable exponents $p(x)$ finite, bounded away from~$1$, and satisfying suitable $\log$-H\"older continuity conditions~\cite{Caponietal}.

Remarkable applications of the above quantitative estimates are the rigorous derivation of reduced model of nonlinear elasticity for plates, shells, and rods with different scaling regimes \cite{Bresciani, Bresciani2, Friesecke2002ATO, Friesecke2006, Lewicka, Mora4, Mora2003, Mora2004} and the passage from nonlinear to linearized elasticity, which has been provided in \cite{DalMasoNegriPercivale:02} for quadratic growth, and later extended to subquadratic growth \cite{AgDMDS,Caponietal, Mora-Riva} and to collapsing multiwell energies~\cite{Agostiniani-Blass-Koumatos, Jesenko-Schmidt2, Schmidt_2008}. Further examples of linearization of nonlinearly elastic energies include incompressible materials \cite{Jesenko-Schmidt:20, edo}, pure traction problems \cite{ Maddalena-Percivale-Tomarelli, Mainini-Percivale, Mora-Maor} the passage from atomistic-to-continuum models \cite{Braides-Solci-Vitali:07, KFZ:2022, Schmidt:2009}, plasticity \cite{Ulisse}, thermoviscoelasticity \cite{Rufat, MFMK}, and elastic thin films \cite{KFZ:2021, KrePio19}.

Rigidity estimates have then been studied also beyond elasticity. In \cite{Chambolle-Giacomini-Ponsiglione:2007, KFZ:2021, Friedrich2015AQG} rigidity results have been obtained in the realm of free-discontinuity problems, where deformation may be discontinuous, hence allowing to justify linearization also in the setting of fracture mechanics~\cite{Friedrich2015ADO} (see also~\cite{Almi-Davoli-Friedrich, higherordergriffith}). In \cite{ContiGarrRigidity, lauteri.luckhaus, MuScaZepIncomp, MuScaZep} the authors provided rigidity results for incompatible fields, i.e., for fields that are not curl-free, and therefore can not be interpreted as gradient of a deformation field. In this case, the original estimate~\cite{Friesecke2002ATO} is generalized in the sense that the $L^{1^{*}}$-distance of a matrix field $\beta$ from a suitable rotation is controlled by the $L^{1^{*}}$-distance of~$\beta$ from $SO(n)$ plus the total variation of $\curl \beta$. A paramount application of rigidity estimates for incompatible fields is in the derivation of gradient plasticity models from discrete dislocations models. First examples of linearization processes in such framework appeared in~\cite{MuScaZep, MuScaZepIncomp, ScaZep} 2D for pure edge dislocations. Similar to elasticity, rigidity estimates are crucial for compactness of sequences of incompatible fields~$\beta_{\varepsilon}$ with bounded energy and guarantee convergence of a suitably rescaled and rotated version of~$\beta_{\varepsilon}$. A generalization to the 3D case has been obtained in~\cite{Conti-Garroni-Marziani}, while a discrete-to-continuum analysis has been recently discussed in~\cite{Alicandro-DeLuca-Ponsiglione-Palombaro}.

A common feature to all the above results in elasticity and dislocations is the presence of a single-well elastic energy density, which generally implies that the elastic strain gets closer and closer to the identity or to a fixed rotation in the linearized limit. Multiwell energies are relevant from a physical standpoint, as they may appear in solid-solid phase transition models and in shape-memory alloys ones, where the elastic energy is minimized in several copies of $SO(n)$. Rigidity estimates in this scenario have been obtained, for gradient fields, under suitable incompatibility conditions among the wells~\cite{Chaudhuri, DeLellis}, meaning that the wells do not have mutual rank-one connections. Remarkably, rigidity results in presence of general wells have been recovered first in 2D in~\cite{Lorent} and later on in~\cite{Chermisi-Conti, Jerrard-Lorent} in higher dimension, always under an $L^1$-bounds on second derivatives, an approach which may be related to our results. A further extension dealing with compact connected submanifolds of $\R^{2\times 2}$ can be found in~\cite{Lamy-Lorent}. When dealing with $\Gamma$-convergence, a two-wells problem in solid-solid phase transition was considered in~\cite{Conti-SchwCPAM} in 2D, and later on extended in~\cite{davoli.friedrich, davoli2} to general dimension. In the characterization of linear elastic energies as limit of multi-well nonlinear ones, we remark that the incompatibility assumption of the wells was first removed in~\cite{ADMLP}. There, as it also happened in~\cite{davoli2,  davoli.friedrich} some perturbations terms have to be considered as we discuss below.

The point of view of \cite{ADMLP} is of particular interest for our analysis. The presence of possibly not incompatible wells prevents from the direct use of the rigidity estimates~\cite{Chaudhuri, DeLellis}. Hence, the derivation of a linearly elastic model is obtained by adding a singular perturbation depending on the second derivatives of the deformation field. A generalization of such result to non-conservative vector fields is the one we discuss in this paper in the planar setting, where we show that a model of strain-gradient plasticity can be obtained as $\Gamma$-limit of functionals of the form 
 \begin{equation}
 \label{e:intro1}
 \frac{1}{\varepsilon^{2} |\log\varepsilon|^{2}} \left[\int_{\Omega_{\varepsilon}}W(\beta) \, d x + \eta_{\varepsilon}^{2} \int_{\Omega_{\varepsilon}} |{\rm div}\beta|^{2}\, dx, \right]
 \end{equation}
 accounting for a finite (but diverging) number of dislocation in~$\Om \subseteq \R^{2}$. In~\eqref{e:intro1}, the elastic density $W$ is assumed to attained its minimum in the set $K = SO(n) U_{1} \cup \ldots \cup SO(n) U_{\ell}$ for suitable invertible matrices $U_{i} \in \mathbb{R}^{2\times 2}$, the factor $\varepsilon^{2}|\log \varepsilon|^{2}$ accounts for the energy scaling of $N_{\varepsilon} \sim |\log \varepsilon|$ edge dislocations~\cite{GaLeoPonDislocations, Ginster, MuScaZepIncomp}, $\Omega_{\varepsilon} := \Omega \setminus \bigcup_{i} B_{\varepsilon} (x_{i})$ is the set outside the core-region, where the energy of dislocations is concentrated, and $\eta_{\varepsilon}>0$ is a suitably scaled parameter which tends to $0$ as $\varepsilon \to 0$. As it is customary in the framework of discrete dislocations, to such material defects is associated the measure $\mu= \sum_{i} \varepsilon \xi_{i} \delta_{x_{i}}$, where $\xi_{i}$ denotes the Burgers' vector of the dislocation sitting in~$x_{i}$. Correspondingly, the strain field~$\beta \in L^{2}(\Om; \R^{2\times 2})$ is assumed to be curl-free in~$\Om_{\varepsilon}$ and to satisfy a suitable circulation condition on~$\partial B_{\varepsilon} (x_{i})$, which expresses its compatibility with the system of dislocations~$\mu$. We refer to Section~\ref{sez 4} for the details of the model. 
 
 On the one hand, our asymptotic analysis of~\eqref{e:intro1} can be seen as a generalization of the result of \cite{ADMLP} for $\beta=\nabla u$ (notice that the additional factor $| \log \varepsilon|^{2}$ is not present in the elastic setting). On the other hand, it also extends the theory of \cite{Ginster2, MuScaZep, MuScaZepIncomp, ScaZep}, where the singular perturbation does not appear but the elastic energy has a single well in $SO(n)$.

A fundamental ingredient for compactness issues related to~\eqref{e:intro1} is a multiwell rigidity estimate for incompatible fields in every dimension $n \geq 2$, that we establish in Theorem~\ref{thm: rig est for incompat}. Precisely, we show that there exists $C = C(\Om, K) >0$ such that for every $\beta \in L^{1^*}(\Omega;\R^{n \times n})$ there exists $F \in K$ such that 
\begin{equation}
\label{e:intro2}
		\Vert \beta - F \Vert_{L^{1^*}(\Omega;\R^{n \times n})} \leq C \left( \Vert \dist(\beta,K) \Vert_{L^{1^*}(\Omega)}+|\divr \beta|(\Omega)+|\curl \beta|(\Omega) \right).
\end{equation}
We recall that in the single-well case the additional divergence term does not appear and~\eqref{e:intro2} was proven in \cite{ContiGarrRigidity, MuScaZep}. Our proof follows from the fact that given a cube $Q$ and a field $\beta \in L^{1^*}(Q,\R^{n \times n})$ with $\curl \beta \in \mathcal{M}_b(Q,\R^{n \times n \times n})$ and $\divr \beta \in \mathcal{M}_b(Q,\R^n)$, we can find a decomposition of $\beta$ of the form
\begin{equation*}
	\beta=
	\underbrace{\parbox{50pt}{\centering Y}}_{\parbox{50pt}{\scriptsize\centering $\curl Y=\curl \beta$ \\ $\divr Y=0$}}
	+
	\underbrace{\parbox{50pt}{\centering $\nabla v$}}_{\parbox{50pt}{\scriptsize\centering $\curl \nabla v=0$ \\ $\divr \nabla v=\divr \beta$}}
	+
	\underbrace{\parbox{50pt}{\centering $\nabla w$}}_{\parbox{50pt}{\scriptsize \centering Harmonic}}.
\end{equation*}
The key point is a quantitative rigidity estimate for harmonic functions. The qualitative version of such an estimate is a simple multiwell generalization of a result by Reshetnyak, namely, if a sequence $(\nabla w_j)_j$ with $\nabla w_j$ harmonic functions weakly* converges to a function $\nabla w$ and $\dist(\nabla w_j,K) \to 0$ in measure, then, $\nabla w=F$ for some $F \in K$. Indeed, $\nabla w$ must be harmonic, so that the div-curl Lemma entails strong convergence of $\nabla w_j \to \nabla w$ in $L^2$. Then, $\nabla w \in K$ a.e. which reduces to a single well inclusion, $\nabla w \in K_i$, by continuity and disjointedness of the wells. Hence, the qualitative result follows by Liouville's Theorem. The quantitative estimate for harmonic functions makes use of Caccioppoli inequality, isoperimetric inequality, coarea formula and some arguments from \cite{Chaudhuri} to be first derived in~$L^{1*}$ and in~$L^{1^*,\infty}$. In particular, it is crucial to have this estimate also in the weak $L^{1^*}$ which is the natural space for elliptic regularity estimates with measure data. Indeed, to derive the result for general fields we must combine the estimates for harmonic gradient fields with a regularity result by Bourgain-Brezis-Van Shaftingen (recalled in Theorem \ref{BBT}). The derivation of \eqref{e:intro2} is contained in Theorem \ref{thm: rig est for incompat}.

Exploiting the inequality~\eqref{e:intro2} and by suitable construction, we derive a linearized model of strain plasticity in dimension 2 as limit of~\eqref{e:intro1}. The limit energy writes as
\begin{equation}
\label{e:intro3}
\frac{1}{2} \int_\Omega \mathbb{C}_U \beta : \beta \, dx + \int_\Omega \varphi\left( RU,\frac{d\mu}{d|\mu|} \right) \, d|\mu| 
\end{equation}
and expresses the competition between the usual elastic stored energy due to the strain $\beta \in L^{2}(\Om; \R^{2\times 2})$ and a self-energy due to the dislocations distribution~$\mu \in \mathcal{M} (\Om; \R^{2})$, both obtained as limit of suitably rescaled and rotated versions of $\beta_{\varepsilon}$ and $\mu_{\varepsilon}$ with bounded energy~\eqref{e:intro1} (see Proposition~\ref{prop: compactness} and Definition~\ref{def: convergence} for full details). Furthermore, the matrix $RU$ belongs to $K$, with $R \in SO(n)$ and $U$ in the finite set $\{U_{1}, \ldots, U_{\ell}\}$. Finally, $\mathbb{C}_{U}: = \partial^2 W/\partial F^2(U)$. We refer to Theorem~\ref{thm: gamma limite} for a complete statement of the $\Gamma$-convergence result: observe that the matrix $U$ appearing in the limit is completely characterized by the convergence of rescaled and rotated versions of a sequence $(\beta_\varepsilon)_\varepsilon$ with bounded energy in the sense of Definition \ref{def: convergence}.

A crucial point in our proof strategy is to show that, up to subsequences, a sequence $(\beta_\varepsilon)_\varepsilon$ for which~\eqref{e:intro1} is bounded must be asymptotically close in a suitable sense to one single potential well of $W$. Precisely, we are lead to show that, for suitable rotations $R_\varepsilon$, 
$$
\sup_{\varepsilon >0} \frac{\Vert R_\varepsilon^T \beta_\varepsilon-U \Vert_{L^2}}{\varepsilon |\log \varepsilon|} < +\infty.
$$
This is done in two steps: first, we apply the rigidity estimate \eqref{e:intro2} to find a matrix $V \in K$ such that 
\begin{equation}\label{e:intro4}
\frac{\Vert R_\varepsilon^T \beta_\varepsilon-V\Vert_{L^2}}{\varepsilon |\log \varepsilon|} \leq C/\eta_\varepsilon. 
\end{equation}
In the second step, we decompose $\beta_\varepsilon=\nabla w_\varepsilon + \hat{\beta}_\varepsilon$ so that $\curl \hat{\beta}_\varepsilon= \curl \beta_\varepsilon$ and $\divr \hat{\beta}_\varepsilon=0$. By elliptic estimates we have $\Vert \hat{\beta}_\varepsilon \Vert_{L^{2, \infty}} \leq |\curl \beta_\varepsilon|=O(\varepsilon|\log \varepsilon|)$. As for $w_\varepsilon$, first notice that by the previous bound and \eqref{e:intro4}, we have \footnote{For the reader acquainted with the topic, we stress that we did not investigate the issue of extending the result of \cite{CDM} (recalled in Theorem \ref{thm: CDM rigidity}) in the weak $L^p$ setting to our multiwell case. This would have shortened a bit some proofs in the application we present, but does not follow from a local to global construction. We circumvented the issue by using our extension to incompatible fields, Theorem \ref{thm: rig est for incompat}, instead.} 
\begin{equation*}
	\Vert  \nabla w_\varepsilon-R_\varepsilon^T V \Vert_{L^{2,\infty}} \leq \frac{C \varepsilon |\log \varepsilon|}{\eta_\varepsilon}.
\end{equation*}
With this, since it holds $\Delta w_\varepsilon=\divr \beta_\varepsilon$, a local estimate on $\nabla^2 w_\varepsilon$ can be inferred:
$$
\Vert \nabla^2 w_\varepsilon \Vert_{L^{2 ,\infty}} \leq \frac{C \varepsilon|\log \varepsilon|}{\eta_\varepsilon}.
$$
If $\varepsilon|\log \varepsilon|/\eta_\varepsilon$ is bounded, the above smoothness estimate can be used to show that, up to rotations, there exists $U \in \{U_1,\dots,U_\ell\}$ such that 
\begin{equation*}
	\sup_{\varepsilon>0} \frac{\Vert \nabla w_\varepsilon-U \Vert_{L^{2,\infty}}}{\varepsilon|\log \varepsilon|} < +\infty.
\end{equation*}
To improve this estimate to an $L^2$ one, notice that now we are in a position to use the Brezis-Bourgain-Van Shaftingen inequality (see Theorem \ref{thm: n=2 BB}) in a similar spirit to the approach in \cite[Theorem 3.3]{MuScaZep}.

This is the core to our compactness argument, Proposition \ref{prop: compactness}, and essentially of the $\Gamma-\liminf$ inequality, Proposition \ref{prop: liminf}. We remark that we have to require something more compared to \cite{ADMLP} on the penalization factor $\eta_\varepsilon$, in order to absorb constants due to our localization arguments and to cope with possible infinite number of dislocations. Additional care is also needed in the $\Gamma-\limsup$ construction, indeed, there we essentially follow the footsteps of \cite[Theorem 3]{MuScaZepIncomp}, which is exactly what we need in the special case $\mathbb{C}_U=I$. Otherwise, the recovery sequence has to be conveniently modified in order to let the perturbation term
$
\frac{\eta_\varepsilon^2}{\varepsilon^2 |\log \varepsilon|^2} \int_{\Omega_\varepsilon} |\divr \beta_\varepsilon|^2 \, dx
$
vanish.

\subsection*{Outlook} While the asymptotic analysis of the nonlinear energy~\eqref{e:intro1} has been tackled in the idealized setting, where dislocation lines are assumed to be straight and parallel, essentially reducing to a planar problem, we remark that the multi-well rigidity estimate in Theorem~\ref{thm: rig est for incompat} is stated in general dimension $n \geq 1$, where the natural exponent $p=2$ is replaced by $p=1^{*}$ (see also \cite{ContiGarrRigidity} for the single-well case). In this respect, the analysis of a complete 3-dimensional line-tension model for dislocations under {\em dilute assumptions} in the spirit of~\cite{Conti-Garroni-Marziani, Garroni-Marziani-Scala} will be the subject of future investigation.

\section{Notations and Preliminaries}

\subsection{Notations}

The space of $m \times n$ matrices with real entries is denoted by $\R^{m \times n}$ Given two matrices $A_1$ and $A_2 \in \R^{m \times n}$ their product is denoted by $A_1 \colon A_2$ and the induced norm by $|A|$. With $A^T$ we denote the transpose of $A$. In the case $m=n$, the subspace of symmetric matrices is denoted by $\R^{n \times n}_{\textnormal{sym}}$ while the subspace of skew-symmetric matrices by $\R^{n \times n}_{\textnormal{skew}}$. The identity matrix is denoted by $I \in \R^{n \times n}$.

For a measurable set $E \subset \R^n$ we use $\mathcal{L}^n(E)$ to denote the $n$-dimensional Lebesgue measure of $E$. If $E$ is a set of finite perimeter, we use $P(E)$ to indicate its perimeter (see \cite{Ambrosio2000FunctionsOB}). By $\diam(E)$ we indicate the diameter of the set $E$. The function $\chi_E \colon \R^n \to [0,1]$ is the characteristic function associated to $E$. An open and connected set $\Omega \subset \R^n$ is called domain. Given $\Omega$ Lipschitz domain, we call "isoperimetric constant of $\Omega$" the constant $C_\Omega>0$ such that the relative isoperimetric inequality in $\Omega$ holds for every set $E$ of finite perimeter:
$$
\min \left\{ \mathcal{L}^n(E),\mathcal{L}^n(\Omega \setminus E) \right\} \leq C_\Omega P(E,\Omega).
$$
The ball of radius $r>0$ and centered in $x \in \R^n$ is indicated as $B_r(x)$ or $B(x,r)$ according to the context. If $x=0$ we simply write $B_r$.
By $\mathcal{M}(\Omega;\R^m)$ we indicate the Radon measures on $\Omega$ and $\R^m$-valued. 
We adopt the standard notation for Lebesgue spaces on measurable subsets $E \subseteq \R^n$
and Sobolev spaces on open subsets $\Omega \subseteq \R^n$. According to the context, we use $\Vert \cdot \Vert_{L^p(E)}$ to denote the norm in $L^p(E;\R^m)$ for every $1 \leq p \leq \infty$. A similar convention is also used to denote the norms in Sobolev spaces.

Given $\Omega \subset \R^n$ a measurable set and $p \in [1,\infty)$, we denote by $L^{p,\infty}(\Omega;\R^m)$ the space of weak-$L^p$ functions, that is, $f \in L^{p,\infty}(\Omega;\R^m)$ if and only if $f$ is measurable and there exists a constant $C>0$ such that
$$
\mathcal{L}^n(\{ x \in \Omega \colon |f(x)|>\lambda  \}) \leq \frac{C^p}{\lambda^p}, \qquad \mbox{for every $ \lambda>0$}.
$$
The semi-norm on $L^{p,\infty}$ is defined as
$$
\Vert f \Vert_{L^{p,\infty}(\Omega;\R^m)}:= \inf \{C>0 \colon \lambda^p \mathcal{L}(\{ |f|>\lambda \}) \leq C^p, \ \mbox{for every $\lambda>0$}\}.
$$
It is a semi-norm since the Minkowski Inequality holds in a weaker form. Finally, notice that if $f \in L^p(\Omega;\R^m)$ then, $f \in L^{p,\infty}(\Omega;\R^m)$ and $\Vert f \Vert_{L^{p,\infty}(\Omega;\R^m)} \leq \Vert f \Vert_{L^p(\Omega;\R^m)}$; but $L^{p,\infty}(\Omega;\R^m) \subsetneq L^p(\Omega;\R^m)$.

The partial derivatives with respect to the variable $x_i$ are denoted by $\partial_i$. Given an open subset $\Omega \subset \R^n$ and a function $u \colon \Omega \to \R^m$, we denote its Jacobian matrix by $\nabla u$, whose components are $(\nabla u)_{i,j}=\partial_j u_i$ for $i=1,\dots,m$ and $j=1,\dots,n$. We set $\nabla^2 u:=\nabla(\nabla u)$ and we use $\Delta u$ to denote the Laplacian of $u$, which is defined as $(\Delta u)_i:=\sum_{j=1}^n \delta_{jj} u_i$ for $i=1,\dots,m$. For a function $u \colon \Omega \to \R^n$ we use $Eu$ to denote the symmetric part of the gradient, i.e., $Eu:=\frac{1}{2}(\nabla u+\nabla u^T)$.

Given a matrix field $F \in L^1( \Omega; \R^{n \times n})$ by $\divr F$ we mean its divergence with respect to lines, namely $(\divr F)_i:=\sum_{j=1}^n \partial_j F_{i,j}$ for $i=1,\dots,n$. Finally, with $\curl F$ we denote the tensor valued distribution whose rows are the curl (in a distributional sense) of the rows of $F$; in the sense that $\curl F_{i,j,k}:=\partial_k F_{i,j}-\partial_{i,j} F_k$ for $i,j,k=1,\dots,n$.

We use the convention that constants may change from line to line. We will frequently emphasize
the explicit dependence of the constants on the parameters for the sake of clarity.

Let $n \geq 2$. Let $\ell \in \N$, from now on 
\begin{equation}\label{0}
	K:=\bigcup_{i=1}^\ell K_i,
\end{equation}
where $K_i:=SO(n)U_i$ and $U_i \in \R^{n \times n}$ are invertible matrices for $i=1,\dots,\ell$ such that $U_i U^{-1}_j \notin SO(n)$ for each $i \neq j$. We introduce the notations $\Kinfty:=\max_{F \in K} |F|$, $M:=\max\{ \diam(K),\Kinfty \}$. Finally, for $P \subset \R^{n \times n}$, we denote $d_P(\cdot):=\dist(\cdot,P)$.

\subsection{Preliminaries}

We recall two rigidity estimates for gradient strain fields that we are going to use in the sequel. The first one is proved in \cite[Theorem 3.1]{FJMCPAM} (see also \cite{Conti-SchwCPAM}).

\begin{thm}\label{thm: FJM rigidity}
	Let $\Omega$ be a bounded Lipschitz domain in $\R^n$, $n \geq 2$ and $p \in (1,\infty)$. There exists a constant $C(\Omega,p)>0$ with the following property: for every $u \in W^{1,p}(\Omega;\R^n)$ there exists an associated rotation $R \in SO(n)$ such that
	\begin{equation*}
		\Vert \nabla u -R \Vert_{L^p(\Omega;\R^{n \times n})} \leq C(\Omega,p) \Vert \dist(\nabla u,SO(n)) \Vert_{L^p(\Omega)}.
	\end{equation*}
\end{thm}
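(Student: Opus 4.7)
The plan is to follow the by now classical approach of Friesecke--James--Müller, adapted to general $p \in (1,\infty)$ in the spirit of \cite{Conti-SchwCPAM}. The proof proceeds in three stages: a local rigidity estimate on a fixed scale assuming small deformations, an extension to arbitrary gradients via a truncation argument, and a covering/gluing step to select one global rotation.

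For the local stage, I would first prove the estimate on a ball $B_r \subset \Omega$ under the additional hypothesis that $\|\dist(\nabla u, SO(n))\|_{L^\infty(B_r)}$ is smaller than a universal threshold. Near $SO(n)$ the distance function satisfies the quadratic expansion $\dist(F,SO(n))^2 \sim |F^T F - I|^2$, so that writing $\nabla u = R(I+\eta)$ for some $R \in SO(n)$ and $\eta$ pointwise small, the distance controls $|\mathrm{sym}\,\eta|$ up to a quadratic remainder in $|\eta|$. Applying the standard $L^p$ Korn inequality to the auxiliary map $x \mapsto R^T u(x) - x$ produces a skew matrix $W$ with $\|R^T \nabla u - (I+W)\|_{L^p} \leq C\|\mathrm{sym}\,\eta\|_{L^p}$; choosing the rotation $R\, e^W$ (close to $R$ because $W$ is small) and absorbing the quadratic remainder using the smallness threshold yields the local estimate.

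For the general case, where the small-deformation hypothesis cannot be assumed pointwise, I would combine a Whitney-type covering of $\Omega$ with a Lipschitz truncation: for each $\lambda > 0$, there exists $v_\lambda \in W^{1,\infty}$ with $\|\nabla v_\lambda\|_\infty \lesssim \lambda$ that agrees with $u$ outside a set of measure $\lesssim \lambda^{-p}\|\nabla u\|_{L^p(\{|\nabla u|>\lambda\})}^p$. Choosing $\lambda$ at the smallness threshold allows applying the local step to $v_\lambda$ on balls of the covering, with the symmetric difference contributing an error bounded by $\|\dist(\nabla u, SO(n))\|_{L^p}$. The local rotations $R_i$ produced on overlapping balls must then be compared: on an overlap $B_i \cap B_j$, Poincaré's inequality applied to the displacement $u - R_i x$ against $u - R_j x$ bounds $|R_i - R_j|$ in terms of the $L^p$-distance on $B_i \cup B_j$, and a chaining argument along the nerve of the covering (Lipschitz domains are connected by finitely many balls of comparable size) selects a single $R \in SO(n)$ valid on all of $\Omega$, at the cost of a constant depending only on $\Omega$ and $p$.

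The main obstacle is quantitative bookkeeping: the truncation scale, the smallness threshold for the linearization, and the constants in Korn and Poincaré must conspire so that all error terms can be simultaneously absorbed into the right-hand side, producing a constant independent of $u$. The restriction $p \in (1,\infty)$ enters at two points: Korn's inequality in $L^p$ fails at the endpoints, and the Lipschitz truncation / associated maximal function estimates degenerate at $p=1$ and $p=\infty$. Both obstructions are genuine, and any approach avoiding explicit Lipschitz truncation (e.g.\ a compactness/blow-up argument as in the original Friesecke--James--Müller proof for $p=2$) would have to reintroduce an equivalent control at this stage.
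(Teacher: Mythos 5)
The paper gives no proof of this statement: it is imported verbatim from the literature (\cite{FJMCPAM} for $p=2$, \cite{Conti-SchwCPAM} for general $p\in(1,\infty)$), so the only question is whether your sketch would stand on its own, and as written it would not, because of the local step. Writing $\nabla u = R(I+\eta)$ with a \emph{single fixed} $R\in SO(n)$ and $\eta$ pointwise small is not a consequence of $\Vert \dist(\nabla u,SO(n))\Vert_{L^\infty}$ being below a threshold: smallness of the distance only says that $\nabla u(x)$ is close to some rotation $R(x)$, which may drift along the connected manifold $SO(n)$ as $x$ varies, and excluding this drift is precisely the content of the theorem. A linearization-plus-Korn argument therefore assumes what it has to prove. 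The actual proofs circumvent this with a different device: by the Piola identity $\divr(\cof \nabla u)=0$ one has, row-wise, $\Delta u=\divr(\nabla u-\cof \nabla u)$, and $|F-\cof F|\le C(M)\,\dist(F,SO(n))$ on $\{|F|\le M\}$ since $F=\cof F$ exactly on $SO(n)$; hence $u$ is close to a harmonic map, whose gradient is then shown to be nearly constant by interior estimates. Korn's inequality plays no role at this stage (rigidity implies Korn, not conversely).

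The truncation step is also not viable as formulated: Lipschitz truncation at level $\lambda$ yields $\Vert\nabla v_\lambda\Vert_{L^\infty}\lesssim\lambda$ and a good bound on the exceptional set, but on the retained set $\nabla v_\lambda=\nabla u$ may still lie at distance of order $\lambda$ from $SO(n)$, so it cannot deliver your hypothesis that $\dist(\nabla v_\lambda,SO(n))$ is pointwise below a small universal threshold. This is exactly why in \cite{FJMCPAM, Conti-SchwCPAM} the local lemma is proved under the mere bound $\Vert\nabla v\Vert_{L^\infty}\le M$ (with constant depending on $M$), a hypothesis truncation can arrange --- but proving that version again forces the harmonic-approximation argument rather than a direct linearization around one rotation. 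Your globalization step (comparing rotations on overlapping balls of a Whitney covering, chaining, and a weighted Poincar\'e inequality) is sound and is in fact the same mechanism the present paper uses in Theorem \ref{thm: rigidity harmonic}; the gap is confined to, but fatal for, the local estimate.
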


The second one is a generalization to weak $L^p$ spaces proven in \cite[Corollary 4.1]{CDM}.

\begin{thm}[$L^{p,\infty}$-rigidity]\label{thm: CDM rigidity}
	Let $\Omega$ be a bounded Lipschitz domain in $\R^n$, $n \geq 2$ and $p \in (1,\infty)$. There exists a constant $C(\Omega,p)>0$ with the following property: for every $u \in W^{1,1}(\Omega;\R^n)$ with $\nabla u \in L^{p,\infty}(\Omega;\R^{n \times n})$, there exists an associated rotation $R \in SO(n)$ such that
	\begin{equation*}
		\Vert \nabla u -R \Vert_{L^{p,\infty}(\Omega;\R^{n \times n})} \leq C(\Omega,p) \Vert \dist(\nabla u,SO(n)) \Vert_{L^{p,\infty}(\Omega)}.
	\end{equation*}
\end{thm}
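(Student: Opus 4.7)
The plan is to upgrade the strong $L^{p}$ rigidity of Theorem~\ref{thm: FJM rigidity} to the weak-$L^{p}$ setting via a Lipschitz truncation based on the Hardy--Littlewood maximal function. Set $f:=\dist(\nabla u,SO(n))$ and $\kappa:=\Vert f\Vert_{L^{p,\infty}(\Omega)}$. By a scaling argument it suffices to treat the case $\kappa=1$. Since $|\nabla u|\leq f+\sqrt{n}$ pointwise, also $|\nabla u|\in L^{p,\infty}(\Omega)$, with a norm controlled by $\kappa$ and $|\Omega|$.

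For each $\lambda>0$ I perform the classical Lipschitz truncation: on the set $\{M(|\nabla u|)\leq\lambda\}$, $u$ coincides a.e.\ with a $C\lambda$-Lipschitz map, which I extend to $u_{\lambda}\in W^{1,\infty}(\Omega;\R^{n})$ with $\Vert\nabla u_{\lambda}\Vert_{\infty}\leq C\lambda$ and $\{u\neq u_{\lambda}\}\subseteq\{M(|\nabla u|)>\lambda\}$. Since $M$ is bounded on $L^{p,\infty}$ for $p>1$, this bad set has measure at most $C\lambda^{-p}$. Applying Theorem~\ref{thm: FJM rigidity} to $u_{\lambda}$ at some fixed exponent $q>p$ produces a rotation $R_{\lambda}\in SO(n)$ with
\[
\Vert\nabla u_{\lambda}-R_{\lambda}\Vert_{L^{q}(\Omega)}\leq C\,\Vert\dist(\nabla u_{\lambda},SO(n))\Vert_{L^{q}(\Omega)}.
\]
On the good set the distance equals $f$; on the bad set it is bounded by $C\lambda$. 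A layer-cake estimate
\[
\int_{\{f\leq\lambda\}}f^{q}\,dx=\int_{0}^{\lambda}q t^{q-1}\mathcal{L}^{n}(\{f>t\})\,dt\leq C(p,q)\,\lambda^{q-p},
\]
together with the bad-set contribution, yields $\Vert\nabla u_{\lambda}-R_{\lambda}\Vert_{L^{q}(\Omega)}\leq C\lambda^{1-p/q}$.

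I then freeze the rotation by setting $R:=R_{\lambda_{0}}$ for a threshold $\lambda_{0}$ of order one. For a generic level $\lambda>0$ I split
\[
\{|\nabla u-R|>\lambda\}\subseteq\{M(|\nabla u|)>\lambda\}\cup\{|\nabla u_{\lambda}-R|>\lambda\}.
\]
The first set contributes $C\lambda^{-p}$. For the second, compactness of $SO(n)$ gives $|R-R_{\lambda}|\leq 2\sqrt{n}$, so for $\lambda$ beyond a dimensional constant $\lambda_{1}(n)$ the inclusion $\{|\nabla u_{\lambda}-R|>\lambda\}\subseteq\{|\nabla u_{\lambda}-R_{\lambda}|>\lambda/2\}$ holds, and Chebyshev applied to the $L^{q}$ bound gives measure at most $C\lambda^{-q}\cdot\lambda^{q-p}=C\lambda^{-p}$. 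For $\lambda\leq\lambda_{1}$ the trivial bound $\lambda^{p}|\Omega|\leq C(\Omega,n)$ suffices under the normalization $\kappa=1$. Combining the two contributions and undoing the scaling produces the desired weak-$L^{p}$ estimate.

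The main obstacle I expect is the bookkeeping in the middle step: the truncation level and the strong-$L^{q}$ exponent must be tuned so that the $\lambda^{q-p}$ gain of the layer-cake exactly cancels the Chebyshev loss, which forces $q>p$ strictly; a careless choice leads either to a diverging or a suboptimal constant. A secondary subtlety is that the rotation $R_{\lambda}$ produced by FJM depends on the truncation level, and it is the compactness of $SO(n)$ that allows me to freeze a single $R$. This same difficulty will re-appear, in a more substantial form, in the multi-well setting of Theorem~\ref{thm: rig est for incompat}, where the target is no longer a single compact orbit and a genuinely new argument is required.
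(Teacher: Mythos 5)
The decisive problem is your opening normalization. Because $\dist(\cdot,SO(n))$ is not positively homogeneous, there is no scaling that reduces to $\kappa:=\Vert\dist(\nabla u,SO(n))\Vert_{L^{p,\infty}(\Omega)}=1$: replacing $u$ by $su$ changes the constraint set $SO(n)$, and dilating the variable changes $\Omega$ and hence the admissible constant $C(\Omega,p)$. If you keep $\kappa$ explicit, your scheme only yields $\lambda^p\mathcal{L}^n(\{|\nabla u-R|>\lambda\})\le C\kappa^p$ for $\lambda\gtrsim\sqrt{n}$ (and even there the bound $\mathcal{L}^n(\{M(|\nabla u|)>\lambda\})\le C\lambda^{-p}$ secretly carries the term $\sqrt{n}\,|\Omega|^{1/p}$, not $\kappa$; one must pass to $M(\dist(\nabla u,SO(n)))$). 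In the range $\kappa\lesssim\lambda\lesssim\sqrt{n}$ your ``trivial bound'' gives $\lambda^p|\Omega|$, which is not controlled by $C\kappa^p$ when $\kappa$ is small. So what the argument actually proves is an estimate with an additive constant, $\Vert\nabla u-R\Vert_{L^{p,\infty}}\le C(\kappa+1)$, which misses the whole content of the theorem — and the regime the paper needs, where the distance is of order $\varepsilon|\log\varepsilon|\to0$.

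This gap cannot be closed within the same truncation scheme. For $\lambda\ll\sqrt{n}$ you can no longer truncate at gradient level $\lambda$, since $|\nabla u|$ is of size $\sqrt{n}$ a.e.; truncating at level $\sqrt{n}+\lambda$ gives a bad set of measure $\lesssim\kappa^p\lambda^{-p}$, but then the layer-cake gain is only $\kappa^p$ (no factor $\lambda^{q-p}$), and Chebyshev from the $L^q$ rigidity bound at height $\lambda$ loses $\lambda^{-q}$, producing $\kappa^p\lambda^{-p-q}$; choosing $q=p$ instead fails because $L^{p,\infty}\not\subset L^p$ and one picks up a $\log(1/\kappa)$ loss. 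Removing the additive term, i.e. getting closeness to a single rotation at thresholds $\lambda$ between $\kappa$ and $1$, is exactly the hard part, and it is why the paper does not prove this statement but quotes \cite[Corollary 4.1]{CDM} (see also Proposition~\ref{App: CDM}): there the result is obtained from a Korn inequality and a rigidity estimate with mixed growth, with two exponents $p_1<p_2$ handling separately the region where the distance is small and where it is large, and the weak-$L^p$ (Lorentz) statement then follows by interpolation. Your large-$\lambda$ analysis and the device of freezing the rotation via compactness of $SO(n)$ are correct, but they only cover the easy regime.
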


\section{Multiwell Rigidity Estimates for Gradient Fields}

\subsection{Multiwell Rigidity Estimates for Harmonic Functions}\label{sez 2}

Our first aim is to prove a rigidity result for harmonic functions on a cube. To this aim we introduce a preliminary Lemma which will prove to be useful for several purposes in what follows.

\begin{lem}\label{lem: beginning lemma}

	Let $K_1$ and $K_2$ be two disjoint compact sets in $\R^{n \times n}$ and set $\rho \leq \frac{1}{8} \dist(K_1,K_2)$. Let $\Omega$ be a Lipschitz set and $w \in W^{2,1}(\Omega;\R^n)$. Then, there exists $C>0$ depending only on the isoperimetric constant of $\Omega$ such that, at least for one index $i \in \{1,2\}$, the following inequality holds
	\begin{equation}\label{isopcoarea}
		\mathcal{L}^n(\{d_{K_i}(\nabla w) \leq \rho\})^{\frac{n-1}{n}} \leq \frac{C}{\rho} \int_{\{ \rho<d_{K_i}(\nabla w)<2\rho \}} |\nabla^2 w(x)| \, dx.
	\end{equation}
\end{lem}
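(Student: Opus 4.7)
For $i=1,2$, define the superlevel sets $A_i := \{d_{K_i}(\nabla w) \leq \rho\}$ and $B_i := \{d_{K_i}(\nabla w) < 2\rho\}$. The first step is a disjointness observation: if $x \in B_1 \cap B_2$, pick $F_1 \in K_1$, $F_2 \in K_2$ with $|\nabla w(x) - F_j| < 2\rho$, so $|F_1 - F_2| < 4\rho \leq \tfrac12 \dist(K_1,K_2)$, contradicting $|F_1 - F_2| \geq \dist(K_1,K_2)$. Hence $B_1$ and $B_2$ are disjoint subsets of $\Omega$, and therefore at least one of them --- say $B_i$ --- satisfies $\mathcal{L}^n(B_i) \leq \tfrac12 \mathcal{L}^n(\Omega)$. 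We fix this index for the rest of the argument.

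Next I would slice with the coarea formula. Since $d_{K_i}\colon \R^{n\times n}\to\R$ is $1$-Lipschitz and $\nabla w \in W^{1,1}(\Omega;\R^{n\times n})$, the composition $f(x) := d_{K_i}(\nabla w(x))$ lies in $W^{1,1}(\Omega)$ and satisfies $|\nabla f(x)| \leq |\nabla^2 w(x)|$ for a.e.\ $x \in \Omega$, by the standard chain rule for Lipschitz compositions with Sobolev maps. For each $t \in [\rho, 2\rho]$, set $E_t := \{f < t\}$, so that $A_i \subseteq E_t \subseteq B_i$ and, in particular,
\begin{equation*}
\mathcal{L}^n(E_t) \leq \mathcal{L}^n(B_i) \leq \tfrac12 \mathcal{L}^n(\Omega) \leq \mathcal{L}^n(\Omega \setminus E_t).
\end{equation*}
The relative isoperimetric inequality on $\Omega$ then yields, for a.e.\ such $t$ (where $E_t$ has finite perimeter in $\Omega$),
\begin{equation*}
\mathcal{L}^n(A_i)^{\frac{n-1}{n}} \leq \mathcal{L}^n(E_t)^{\frac{n-1}{n}} \leq C_\Omega \, P(E_t,\Omega).
\end{equation*}

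The final step is to integrate the above estimate in $t$ over $[\rho,2\rho]$ and invoke the coarea formula for $f$:
\begin{equation*}
\rho \, \mathcal{L}^n(A_i)^{\frac{n-1}{n}} \leq C_\Omega \int_\rho^{2\rho} P(E_t,\Omega)\, dt \leq C_\Omega \int_{\{\rho < f < 2\rho\}} |\nabla f|\, dx \leq C_\Omega \int_{\{\rho < d_{K_i}(\nabla w) < 2\rho\}} |\nabla^2 w|\, dx,
\end{equation*}
which after dividing by $\rho$ is exactly \eqref{isopcoarea} with $C = C_\Omega$.

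\textbf{Where I expect friction.} The conceptual heart of the argument --- pick the well with smaller measure and slice between $\rho$ and $2\rho$ --- is clean; the only mildly delicate points are justifying that $f = d_{K_i}(\nabla w) \in W^{1,1}(\Omega)$ with $|\nabla f| \leq |\nabla^2 w|$ a.e.\ (this is the usual chain rule for Lipschitz $\circ$ Sobolev, valid because $d_{K_i}$ is Lipschitz and differentiable $\mathcal{L}^{n\times n}$-a.e.), and invoking the coarea formula at the level of perimeter rather than $\mathcal{H}^{n-1}$-measure of level sets, which is standard once one notes that $P(E_t,\Omega) \leq \mathcal{H}^{n-1}(\{f=t\}\cap\Omega)$ for a.e.\ $t$. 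Neither step should pose a genuine obstacle.
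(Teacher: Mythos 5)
Your proposal is correct and follows essentially the same route as the paper: observe that the sets $\{d_{K_i}(\nabla w)\leq 2\rho\}$, $i=1,2$, are disjoint by the choice of $\rho$ (so one has measure at most $\mathcal{L}^n(\Omega)/2$), then apply the relative isoperimetric inequality to the sublevel sets at levels $t\in(\rho,2\rho)$, average in $t$, and conclude via the coarea formula together with the $1$-Lipschitz chain-rule bound $|\nabla(d_{K_i}(\nabla w))|\leq|\nabla^2 w|$. The only difference is cosmetic (you apply the isoperimetric inequality before integrating in $t$, the paper after), so no further comment is needed.
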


\begin{proof}
	By definition of $\rho$ we have that
	\begin{equation}\label{isop cond}
		\min_{i=1,2} \mathcal{L}^n (\{ d_{K_i}(\nabla w) \leq 2\rho \}) \leq \frac{\mathcal{L}^n(\Omega)}{2}.
	\end{equation}
	Assume without loss of generality that $i=2$ is the minimum in \eqref{isop cond}. By isoperimetric inequality, coarea formula and using the fact that $d_{K_2}$ is 1-Lipschitz, we have that there exists $C>0$ depending only on the isoperimetric constant of $\Omega$ such that
	\begin{align*}
		\mathcal{L}^n(\{d_{K_2}(\nabla w) \leq \rho\})^{\frac{n-1}{n}} & \leq \frac{1}{\rho} \int_{\rho}^{2\rho} \mathcal{L}^n(\{d_{K_2}(\nabla w) \leq t\})^{\frac{n-1}{n}} \, dt \\
		& \leq \frac{C}{\rho} \int_{\rho}^{2\rho} P(\{ d_{K_2}(\nabla w) \leq t \},\Omega) \, dt \\
		& \leq \frac{C}{\rho} \int_{\{ \rho<d_{K_2}(\nabla w)<2\rho \}} |\nabla (d_{K_2}(\nabla w))| \, dx \\
		& \leq \frac{C}{\rho} \int_{\{ \rho<d_{K_2}(\nabla w)<2\rho \}} |\nabla^2 w(x)| \, dx.
	\end{align*}
	Hence, \eqref{isopcoarea} is proven.
\end{proof}

\begin{prop}\label{prop: harmonic rigidity cube}
	Consider $K$ as in \eqref{0}. Let $Q$ be the unit cube and $Q' \Subset Q$ be a smaller cube concentric to $Q$ with side length bigger or equal than $1/2$. Set $\delta:=\frac{1}{4}\dist(Q',\partial Q)$. Then, for every $\frac{n}{n-1} \leq p <\infty$ there exists a constant $C(n,\delta,K,p)>0$ with the property that for every $w \in W^{1,p}(Q;\R^n)$ harmonic on $Q$ there exists an associated $F \in K$ such that
	\begin{equation}\label{rigidity of harmo on cube}
		\Vert \nabla w -F \Vert_{L^p(Q';\R^{n \times n})} \leq C(n,\delta,K,p) \, \Vert \dist(\nabla w ,K) \Vert_{L^p(Q)}.
	\end{equation}
\end{prop}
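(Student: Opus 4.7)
The plan is to combine interior regularity for harmonic functions with an iterative use of Lemma~\ref{lem: beginning lemma} to concentrate $\nabla w$ near one single well $K_{i^*}=SO(n)U_{i^*}$, and then to invoke the single-well rigidity Theorem~\ref{thm: FJM rigidity}. Set $\varepsilon:=\|\dist(\nabla w,K)\|_{L^p(Q)}$. Whenever $\varepsilon\geq \varepsilon_0$ for a threshold $\varepsilon_0=\varepsilon_0(n,p,\delta,K)$ fixed later, the bound $\|\nabla w\|_{L^p(Q)}\leq \varepsilon + |K|_\infty|Q|^{1/p}$ and a triangle inequality with $F:=U_1\in K$ already yield $\|\nabla w-F\|_{L^p(Q')}\leq C\varepsilon$. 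Hence from now on I assume $\varepsilon\leq \varepsilon_0$.

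Fix an intermediate cube $Q' \Subset Q'' \Subset Q$ at distances $\geq\delta/2$ from the respective boundaries. Since each component of $\nabla w$ is itself harmonic, the standard interior estimate for harmonic functions gives
\begin{equation*}
\|\nabla^2 w\|_{L^\infty(Q'')} \leq C(n,p,\delta)\,\|\nabla w\|_{L^p(Q)} \leq C(n,p,\delta,K).
\end{equation*}
Set $\rho_0:=\tfrac{1}{16}\min_{i\neq j}\dist(K_i,K_j)>0$ and define the pairwise disjoint level sets
$A_i:=\{x\in Q'' : d_{K_i}(\nabla w(x))\leq \rho_0\}$, so that $|Q''\setminus\bigcup_i A_i|\leq \varepsilon^p/\rho_0^p$. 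Let $i^*$ be an index maximizing $|A_{i^*}|$; by pigeonhole $|A_{i^*}|\geq |Q''|/(2\ell)$ provided $\varepsilon_0$ is small enough.

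For every $j\neq i^*$ apply Lemma~\ref{lem: beginning lemma} to the disjoint compact pair $(K_{i^*},K_j)$ on $Q''$ with $\rho=\rho_0$. The separation $16\rho_0\leq \dist(K_{i^*},K_j)$ forces the transition regions $\{\rho_0<d_{K_\cdot}(\nabla w)<2\rho_0\}$ to lie inside $\{d_K(\nabla w)>\rho_0\}$, and thus to have measure $\leq \varepsilon^p/\rho_0^p$. Combining with the $L^\infty$ bound above, the right-hand side of the lemma is at most $C\varepsilon^p/\rho_0^{p+1}$ regardless of which well-index the lemma selects. If the lemma controlled $A_{i^*}$ we would obtain $|A_{i^*}|\leq C\varepsilon^{pn/(n-1)}$, contradicting the lower bound $|A_{i^*}|\geq |Q''|/(2\ell)$ as soon as $\varepsilon_0$ is sufficiently small. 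Therefore the lemma must control $A_j$, yielding
\begin{equation*}
|A_j| \leq C\varepsilon^{pn/(n-1)} \leq C\varepsilon^p,
\end{equation*}
where the last inequality uses $p\geq n/(n-1)$ and $\varepsilon\leq 1$. Decomposing $Q''=A_{i^*}\cup\bigcup_{j\neq i^*}A_j\cup (Q''\setminus\bigcup_i A_i)$ and using $d_{K_{i^*}}(\nabla w)\leq d_K(\nabla w)+2|K|_\infty$ on the last two pieces, I conclude
\begin{equation*}
\|\dist(\nabla w, K_{i^*})\|_{L^p(Q'')} \leq C\varepsilon.
\end{equation*}

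To close the argument I invoke Theorem~\ref{thm: FJM rigidity} after the linear change of variables $v(y):=w(U_{i^*}^{-1}y)$, which converts $d_{SO(n)U_{i^*}}(\nabla w)$ into $d_{SO(n)}(\nabla v)$ up to the multiplicative factor $\|U_{i^*}^{\pm 1}\|$. This produces $R\in SO(n)$ with $F:=RU_{i^*}\in K$ satisfying
\begin{equation*}
\|\nabla w - F\|_{L^p(Q')} \leq \|\nabla w - F\|_{L^p(Q'')} \leq C\,\|\dist(\nabla w, K_{i^*})\|_{L^p(Q'')} \leq C\varepsilon,
\end{equation*}
which is the desired inequality \eqref{rigidity of harmo on cube}. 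I expect the delicate step to be the argument by contradiction used to identify the dominant well: this is where both the preliminary smallness of $\varepsilon$ and the restriction $p\geq n/(n-1)$ play an essential role, the latter ensuring $\varepsilon^{pn/(n-1)}\leq \varepsilon^p$ so that the isoperimetric output of the lemma is compatible with the scaling of the conclusion.
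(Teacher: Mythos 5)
Your argument is correct, but the central quantitative step is genuinely different from (and simpler than) the one in the paper. Both proofs share the same skeleton: Lemma \ref{lem: beginning lemma} plus a Chebychev bound on the transition layer $\{\rho<d_{K_j}(\nabla w)<2\rho\}\subset\{d_K(\nabla w)>\rho\}$ to show that the sublevel sets of the ``minority'' wells have measure $\lesssim \varepsilon^p$, followed by the reduction to a single well and Theorem \ref{thm: FJM rigidity} via the change of variables $w(U_{i^*}^{-1}\,\cdot)$. Where you diverge is in how the measure bound is obtained: you exploit harmonicity once and for all through a global interior bound $\Vert\nabla^2 w\Vert_{L^\infty(Q'')}\leq C(n,p,\delta,K)$ on an intermediate cube (legitimate since $\Vert\nabla w\Vert_{L^p(Q)}$ is bounded once $\varepsilon\leq\varepsilon_0$), so that Lemma \ref{lem: beginning lemma} immediately yields $|A_j|\leq C\varepsilon^{pn/(n-1)}\leq C\varepsilon^p$, the superlinear exponent being harmless because $\varepsilon\leq 1$; and you identify the dominant well by pigeonhole, ruling out by contradiction that the lemma's inequality can hold for the majority index. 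The paper instead keeps the estimate \emph{linear} in the measure of the transition layer, which forces the more elaborate second stage: the choice of a radius $r\in[\delta/2,\delta]$ with small relative density of the bad set, the exceptional set $A$ of points with large local averages of $|\nabla w|$ (controlled via Young's convolution inequality), the mean-value bound on $\Vert\nabla^2 w\Vert_{L^\infty(B(x,r))}$ off $A$, and a Besicovitch covering. Your shortcut buys a noticeably shorter proof of this proposition and handles all $\ell$ wells at once; the paper's localized, linear-in-the-layer argument is the template that is then reused essentially verbatim for the weak-norm version (Proposition \ref{prop: weak harmonic rigidity}) and the multiplicative estimates, which is presumably why the authors set it up that way.

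Two minor points. First, your closing remark misattributes the role of the hypothesis $p\geq n/(n-1)$: the inequality $\varepsilon^{pn/(n-1)}\leq\varepsilon^p$ only uses $n/(n-1)\geq 1$ and $\varepsilon\leq 1$, so your argument in fact never needs $p\geq n/(n-1)$; the restriction in the statement is not what makes your contradiction step work. Second, on the piece $A_{i^*}$ of the decomposition you should say explicitly that $d_{K_{i^*}}(\nabla w)=d_K(\nabla w)$ there (which holds because the other wells are at distance at least $15\rho_0$ from $\nabla w$ on that set); the bound $d_{K_{i^*}}\leq d_K+2\max_{F\in K}|F|$ you quote for the other two pieces would not suffice on $A_{i^*}$, whose measure is not small. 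Neither point affects the validity of the proof.
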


\begin{proof}
	We first consider the case $K=K_1 \cup K_2$ where $K_1$ and $K_2$ are disjoint compact sets of $\R^{n \times n}$. We set $\rho:=\frac{1}{8} \dist(K_1,K_2)$ and assume without loss of generality that 
	$$
	\mathcal{L}^n (\{ d_{K_2}(\nabla w) \leq 2\rho \} \cap Q') \leq \frac{\mathcal{L}^n(Q')}{2}.
	$$
	We claim that there exists $C=C(n,\delta,K,p)>0$ such that
	\begin{equation}\label{1}
		\int_{Q'} d^p_{K_1}(\nabla w(x)) \, dx \leq C \int_Q d^p_K(\nabla w(x)) \, dx.
	\end{equation}
	Set $\varepsilon:= \Vert \dist(\nabla w ,K) \Vert_{L^p(Q)}$. From now on, $C$ will denote a positive constant depending only on $n$, $\delta$, $K$ and $p$. 
	
	First of all, notice that on the set $\{ d_{K_2}(\nabla w) \leq 3M \}$ we have $d_{K_1}(\nabla w) \leq 4M$ by definition of $M$. Moreover, if $|\nabla w(x)| \geq 2M$, $d_{K_1}(\nabla w(x)) \leq 2 d_K(\nabla w(x))$. By definition of $M$ it clearly holds that either $|\nabla w(x)| \geq 2M$ or $d_{K_2}(\nabla w(x)) \leq 3M$. Therefore, we estimate
	\begin{align*}
		\int_{Q'} d^p_{K_1} (\nabla w) \, dx & \leq \int_{\{ |\nabla w| \geq 2M \} \cap Q'} d^p_{K_1} (\nabla w) \, dx + \int_{\{ d_{K_2}(\nabla w) \leq 3M \} \cap Q'} d^p_{K_1} (\nabla w) \, dx \\
		& \leq 2^p \int_{Q'} d^p_{K} (\nabla w) \, dx+(4M)^p \mathcal{L}^n( \{ d_{K_2}(\nabla w) \leq 3M \} \cap Q') \\
		& \leq 2^p \int_{Q'} d^p_{K} (\nabla w) \, dx + \frac{(4M)^p}{\rho^d} \int_{Q'} d^p_{K} (\nabla w) \, dx+ (4M)^p \mathcal{L}^n(\{ d_{K_2}(\nabla w) \leq \rho \} \cap Q').
	\end{align*}
	Hence, it is enough to prove a bound of the type $\mathcal{L}^n(\{ d_{K_2}(\nabla w) \leq \rho \} \cap Q') \leq C\varepsilon^p$.
	
	Since $w$ is harmonic on $Q$, we have that there exists $C(n,\delta,p)>0$ such that
	\begin{equation}\label{cacciop est}
		\int_{Q'} |\nabla^2 w|^p \, dx \leq C(n,\delta,p) \int_Q |\nabla w|^p \, dx \leq C(n,\delta,p) \int_Q (\Kinfty^p+d^p_K(\nabla w)) \, dx.
	\end{equation}
	Thus, using Lemma \ref{lem: beginning lemma} and \eqref{cacciop est}, we estimate
	\begin{align*}
		\mathcal{L}^n(\{ d_{K_2}(\nabla w) \leq \rho \} \cap Q') & \leq C \left( \int_{\{ \rho<d_{K_2}(\nabla w)<2\rho \} \cap Q'} |\nabla^2 w| \, dx \right)^{\frac{n}{n-1}} \\
		& \leq C \left( \mathcal{L}^n\left(\{ \rho<d_{K_2}(\nabla w)<2\rho \} \cap Q'\right)^{\frac{p-1}{p}} \left( \int_{Q'} |\nabla^2 w|^p \right)^{\frac{1}{p}} \, dx \right)^{\frac{n}{n-1}} \\
		& \leq C \left( \mathcal{L}^n\left(\{ \rho<d_{K_2}(\nabla w)<2\rho \} \cap Q'\right)^{\frac{p-1}{p}} \left( \int_Q |\nabla w|^p \right)^{\frac{1}{p}} \, dx \right)^{\frac{n}{n-1}} \\
		& \leq \vphantom{\int} C\mathcal{L}^n\left(\{ \rho<d_{K_2}(\nabla w)<2\rho \} \cap Q'\right)^{\frac{n(p-1)}{p(n-1)}} + C\varepsilon^{\frac{n}{n-1}}.
	\end{align*}
	When $\varepsilon \ll 1$, we can use Chebychev inequality to further estimate 
	$$
	C\mathcal{L}^n\left(\{ \rho<d_{K_2}(\nabla w)<2\rho \} \cap Q'\right)^{\frac{n(p-1)}{p(n-1)}} + C\varepsilon^{\frac{n}{n-1}} \leq C\varepsilon^{\frac{n(p-1)}{(n-1)}} +C \varepsilon^{\frac{n}{n-1}}.
	$$
	Hence, there exists $\varepsilon_0$ depending on $n$, $\delta$, $K$ and $p$ such that, 
	\begin{equation*}
		\mathcal{L}^n(\{ d_{K_2}(\nabla w) \leq \rho \} \cap Q') \leq
		\begin{cases*}
			 C\varepsilon^p & if $\varepsilon \geq \varepsilon_0$, \\
			 \displaystyle \frac{\omega_n \delta^n}{4^{n+1}} & if $\varepsilon \leq \varepsilon_0$.
		\end{cases*}
	\end{equation*}
	Therefore, if $\varepsilon \geq \varepsilon_0$ we conclude.
	Otherwise, by definition of $\delta$, we have that we can find $\frac{\delta}{2}  \leq r \leq \delta$ such that for every $x \in \{ d_{K_2}(\nabla w) \leq \rho \} \cap Q'$ it holds $B(x,2r) \subset Q$ and
	\begin{equation}\label{2}
	\frac{\mathcal{L}^n(\{ d_{K_2}(\nabla w) \leq \rho \} \cap Q' \cap B(x,r))}{\omega_n r^n} \leq \frac{1}{2^{n+1}}.
	\end{equation}
	
	Let
	\begin{equation}\label{A}
	A:= \left\{ x \in Q' \colon \fint_{B(x,2r)} | \nabla w(y)| \, dy>4M \right\}.
	\end{equation}
	We now show that 
	\begin{equation}\label{2.5}
	\mathcal{L}^n(A) \leq C \varepsilon^p.
	\end{equation}
	Indeed, observe that for every $y \in Q$ by definition of $M$ the following inequality holds 
	$
	|\nabla w(y)| \leq (|\nabla w(y)|-2M)^+ + 2M.
	$
	Hence, for every $x \in A$ we have
	$$
	\fint_{B(x,2r)} (|\nabla w(y)|-2M)^+ \, dy \geq 2M 
	$$
	and, using Jensen inequality
	$$
	\fint_{B(x,2r)} \left((|\nabla w(y)|-2M)^+\right)^p \, dy \geq 2^p M^p.
	$$
	Since for every $y \in Q$ we also have $\left((|\nabla w(y)|-2M)^+\right)^p \leq 2^p \ \dist^p(\nabla w(y),K)$, we infer
	$$
	A \subseteq \left\{ x \in Q' \colon \fint_{B(x,2r)} d^p_K(\nabla w(y)) \, dy>  M^p \right\}.
	$$
	Setting $g:=\frac{1}{\omega_n (2r)^n}\chi_{B(0,2r)}$, by Young's convolution inequality we have
	\begin{equation}\label{young}
	\bigg\Vert \fint_{B(\cdot,2r)} d^p_K(\nabla w(y)) \, dy \bigg\Vert_{L^1(Q')} \leq \Vert d_K^p(\nabla w) \Vert_{L^1(Q)} \Vert g \Vert_{L^1(\R^n)}=\varepsilon^p.
	\end{equation}
	Finally, using Chebychev inequality, we obtain \eqref{2.5}.
	
	Let now $x \in \left(\{ d_{K_2}(\nabla w) \leq \rho/2 \} \cap Q'\right) \setminus A $. Using the fact that $w$ is harmonic on $Q$ we have
	\begin{equation}\label{3}
		\Vert \nabla^2 w \Vert_{L^\infty(B(x,r))} \leq C(n) \frac{1}{r^n} \Vert \nabla w \Vert_{L^1(B(x,2r))}=C(n) \fint_{B(x,2r)} |\nabla w| \, dy.
	\end{equation} 
	Observe that $\mathcal{L}^n(Q' \cap B(x,r)) \geq \omega_n r^n /2^n$ since $x \in Q'$ and the side length of $Q'$ is bigger than $2r$. Hence, by virtue of \eqref{2} we can apply Lemma \ref{lem: beginning lemma} on $Q' \cap B(x,r)$ with $2\rho$ replaced by $\rho$, obtaining 
	\begin{align*}
		\mathcal{L}^n\left(\{ d_{K_2}(\nabla w) \leq \rho/2 \} \cap Q' \cap B(x,r)\right) & \leq \left(\frac{C_x}{\rho}  \int_{\{ \rho/2<d_{K_2}(\nabla w)<\rho \} \cap Q' \cap B(x,r)} |\nabla^2 w(x)| \, dx \right)^{\frac{n}{n-1}}\\
		& \leq  \left(\frac{C_x}{\rho} \mathcal{L}^n(\{ \rho/2<d_{K_2}(\nabla w)<\rho \} \cap Q' \cap B(x,r)) \Vert \nabla^2 w \Vert_{L^\infty(B(x,r))} \right)^{\frac{n}{n-1}},
	\end{align*}
	where $C_x>0$ is the isoperimetric constant of $Q' \cap B(x,r)$. Notice that since $x \in Q'$ and the side length of $Q'$ is bigger than $2r$, the constant $C_x$ is bounded uniformly for every $x \in Q'$. In turn, \eqref{3}, the fact that $x \notin A$ and $r \leq \delta \leq 1$ imply
	$$
	\mathcal{L}^n\left(\{ d_{K_2}(\nabla w) \leq \rho/2 \} \cap Q' \cap B(x,r)\right) \leq C  \mathcal{L}^n(\{ \rho/2<d_{K_2}(\nabla w)<\rho \} \cap Q' \cap B(x,r)).
	$$
	Using Besicovitch theorem, we can find a sub-covering of the set $(\{ d_{K_2}(\nabla w) \leq \rho/2 \} \cap Q') \setminus A$ from the covering $\{  B(x,r) \}_{x \in (\{ d_{K_2}(\nabla w) \leq \rho/2 \} \cap Q') \setminus A}$ with overlapping at most $\xi$. Therefore,
	\begin{align}
		\begin{split}\label{xx}
		\mathcal{L}^n\left((\{ d_{K_2}(\nabla w) \leq \rho/2 \} \cap Q') \setminus A \right) & \leq \xi C  \mathcal{L}^n(\{ \rho/2<d_{K_2}(\nabla w)<\rho \} \cap Q') \\
		& \leq \frac{C}{\rho^p} \int_{Q'} d^p_K(\nabla w) \, dx \leq  C \varepsilon^p.
		\end{split}
	\end{align}
	Combining with \eqref{2.5}, this proves the claim \eqref{1}.
	
	Now we consider $K$ as in the statement. Set $\rho:=\frac{1}{8} \min_{i,j} \dist(K_i,K_j)$. We have two possibilities: there exists a single well (without loss of generality) $K_1$ such that 
	$
	\mathcal{L}^n(\{ d_{K \setminus K_1}(\nabla w) \leq 2 \rho \} \cap Q') \leq \mathcal{L}^n(Q')/2;
	$
	or, for every $i=1,\dots,\ell$ we have $\mathcal{L}^n(\{ d_{K_i}(\nabla w) \leq 2 \rho \} \cap Q') \leq \mathcal{L}^n(Q')/2$. In the first case, we proceed exactly as above, with the two disjoints sets $K_1$ and $K \setminus K_1$. In the second case we can argue that the following estimate holds
	$$
	\int_{Q'} d_{K_1}^p(\nabla w) \, dx \leq C \int_{Q'} d^p_K(\nabla w) \, dx + C \sum_{i=2}^\ell \mathcal{L}^n(\{ d_{K_i}(\nabla w) \leq \rho/2 \} \cap Q').
	$$
	In turn, proceeding exactly as above, for every $i=2,\dots,\ell$ we infer 
	$$ 
	\mathcal{L}^n(\{ d_{K_i}(\nabla w) \leq \rho/2 \} \cap Q') \leq C \int_{Q} d^p_K(\nabla w) \, dx.
	$$
	Hence, we have proven \eqref{1} for the multiwell $K$ of the statement. Finally, since $K_1=SO(n)U_1$ where $U_1 \in \R^{n \times n}$ is an invertible matrix, the thesis follows by Theorem \ref{thm: FJM rigidity}.
\end{proof}

\begin{rem}\label{rem: constant scaling invariant}
	The constants $C>0$ appearing in Propositions \ref{prop: harmonic rigidity cube} and \ref{prop: weak harmonic rigidity} are invariant under uniform scaling and translation of the cubes $Q$ and $Q'$. The same holds also for the constant in Proposition \ref{prop: weak harmonic rigidity}.
\end{rem}

In order to get an estimate on the whole domain, we need to consider a Whitney covering for a Lipschitz domain.

\begin{prop}\label{prop: cubic decomp}
	Let $\Omega \subset \R^n$ be a bounded Lipschitz domain. There exists a constant $N$ depending only on the dimension $n$ and a collection $\mathcal{F}:=\{ \hat{Q}_1,\hat{Q}_2,\dots \}$ of closed cubes, whose sides are parallel to the axes and having disjoint interior so that
	\begin{enumerate} [label=\textnormal{(}\roman*\textnormal{)},ref=\roman*]
		\item \label{p1} $\Omega= \cup_k \hat{Q}_k$,
		\item \label{p2} $\textnormal{diam}\hat{Q}_k \leq \dist(\hat{Q}_k,\partial \Omega) \leq 4\textnormal{diam}\hat{Q}_k$,
		\item \label{p3} each point $x \in \Omega$ is contained in at most $N$ enlarged concentric cubes $Q_k$, where, given $x_k$ the centre of $\hat{Q}_k$, $Q_k:=x_k+2(\hat{Q}_k-x_k)$.
	\end{enumerate}
\end{prop}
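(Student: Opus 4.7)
This is the classical Whitney decomposition of an open set (cf.\ Stein, \emph{Singular Integrals}, Ch.~VI), and I would prove it by the standard dyadic selection adapted to verify the $Q_k$-overlap bound. First, for each $k \in \mathbb{Z}$ let $\mathcal{G}_k$ denote the tiling of $\R^n$ by closed axis-aligned cubes of side $2^{-k}$, and define the candidate family
\[
\mathcal{S} := \bigcup_{k \in \mathbb{Z}} \bigl\{ Q \in \mathcal{G}_k : \diam Q \leq \dist(Q, \partial\Omega) \leq 4\diam Q \bigr\}.
\]
To see that $\mathcal{S}$ already covers $\Omega$: given $x \in \Omega$ with $d(x) := \dist(x, \partial\Omega) > 0$, choose $k \in \mathbb{Z}$ so that $2\sqrt{n}\, 2^{-k} \leq d(x) < 4\sqrt{n}\, 2^{-k}$; the unique $Q \in \mathcal{G}_k$ with $x \in Q$ then satisfies $\dist(Q,\partial\Omega) \geq d(x) - \diam Q \geq \diam Q$ and $\dist(Q,\partial\Omega) \leq d(x) < 4 \diam Q$, so $Q \in \mathcal{S}$.

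Next, I would take $\mathcal{F}$ to be the subfamily of $\mathcal{S}$ consisting of cubes that are \emph{maximal} with respect to inclusion. Since any two dyadic cubes in $\bigcup_k \mathcal{G}_k$ are either interior-disjoint or nested, the elements of $\mathcal{F}$ have pairwise disjoint interiors. Moreover $\diam \Omega < \infty$, so the chain of dyadic ancestors of any $Q \in \mathcal{S}$ inside $\mathcal{S}$ is finite; each $Q \in \mathcal{S}$ is therefore contained in a member of $\mathcal{F}$, yielding property (\ref{p1}). Property (\ref{p2}) is built into $\mathcal{S}$.

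The main obstacle is property (\ref{p3}), the bounded overlap of the \emph{doubled} cubes $Q_k = x_k + 2(\hat{Q}_k - x_k)$. The key step is a two-sided comparison: I would show that for every $y \in Q_k$,
\[
\tfrac{1}{2} \diam \hat{Q}_k \;\leq\; \dist(y,\partial\Omega) \;\leq\; 6 \diam \hat{Q}_k.
\]
Both inequalities follow from the triangle inequality after passing through a nearest point $z$ of $\hat{Q}_k$ to $y$; the upper bound uses $\dist(\hat{Q}_k,\partial\Omega) \leq 4\diam\hat{Q}_k$ with $|y-z| \leq \diam Q_k = 2\diam\hat{Q}_k$, while the lower bound uses $\dist(\hat{Q}_k,\partial\Omega) \geq \diam\hat{Q}_k$ together with the elementary computation $|y - z| \leq \tfrac{1}{2} \diam \hat{Q}_k$ for $y \in Q_k$ and $z$ its metric projection on $\hat{Q}_k$.

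Applying this bound to any $y \in Q_j \cap Q_k$ in both directions forces $\diam \hat{Q}_j / \diam \hat{Q}_k \leq 12$ and, symmetrically, its reciprocal, so only finitely many dyadic scales are mutually compatible. Finally, fixing $x \in \Omega$, all centers $x_k$ of cubes with $x \in Q_k$ satisfy $|x - x_k| \leq \diam \hat{Q}_k$ and, by the previous step, their side lengths are comparable to a single scale $s_0$; hence the corresponding $\hat{Q}_k$, being pairwise interior-disjoint, all sit inside a fixed ball of radius $C(n) s_0$, and a volume-counting argument bounds their number by a dimensional constant $N = N(n)$, completing (\ref{p3}).
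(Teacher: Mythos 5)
Your construction is the classical Whitney decomposition (which the paper invokes without proof), and both the dyadic selection and your argument for the bounded overlap of the doubled cubes are sound: the two-sided comparison $\tfrac12\diam\hat Q_k\le\dist(y,\partial\Omega)\le 6\diam\hat Q_k$ for $y\in Q_k$ is computed correctly, it does force comparability of the scales of any two cubes whose doubles meet, and the volume-counting step then gives a dimensional bound $N(n)$.

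There is, however, one slip in the set-up: you define $\mathcal{S}$ as \emph{all} dyadic cubes with $\diam Q\le\dist(Q,\partial\Omega)\le 4\diam Q$, without requiring any relation to $\Omega$. Such a cube is disjoint from $\partial\Omega$ and connected, so it lies either in $\Omega$ or in the exterior $\R^n\setminus\overline{\Omega}$; exterior cubes satisfying the distance condition certainly exist, and their maximal elements would be included in your $\mathcal{F}$, so the union $\bigcup_k\hat Q_k$ would strictly contain $\Omega$ and property (i) (an equality) would fail as written. The fix is immediate: restrict $\mathcal{S}$ to cubes meeting $\Omega$ (equivalently, contained in $\Omega$, by the connectedness dichotomy just mentioned); every cube produced by your covering argument meets $\Omega$, hence so does any dyadic ancestor of it in $\mathcal{S}$, so the maximality argument, the disjointness of interiors, and properties (ii)--(iii) go through verbatim, and now every cube of $\mathcal{F}$ is contained in $\Omega$, giving the equality in (i).
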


We are now in a position to prove the rigidity result for harmonic functions on Lipschitz domains. 

\begin{thm}\label{thm: rigidity harmonic}
	Let $\Omega \subset \R^n$ be a bounded Lipschitz domain and consider $K$ as in \eqref{0}. For every $\frac{n}{n-1} \leq p <\infty$, there exists a constant $C(\Omega,K,p)>0$ with the property that for every function $w \in W^{1,p}(\Omega;\R^n)$ which is harmonic on $\Omega$, there exists an associated $F \in K$, such that
	\begin{equation}\label{harmonic rigidity}
		\Vert \nabla w -F \Vert_{L^p(\Omega;\R^{n \times n})} \leq C(\Omega,K,p) \, \Vert \dist(\nabla w ,K) \Vert_{L^p(\Omega)}.
	\end{equation}
\end{thm}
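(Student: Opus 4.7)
The plan is to reduce the global estimate on $\Omega$ to the local estimate on cubes (Proposition~\ref{prop: harmonic rigidity cube}) via a Whitney decomposition (Proposition~\ref{prop: cubic decomp}), and then bootstrap to a single well by a chaining argument before invoking Theorem~\ref{thm: FJM rigidity}.

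First, set $\varepsilon := \Vert \dist(\nabla w,K) \Vert_{L^p(\Omega)}$. I would first dispose of the ``large~$\varepsilon$'' case: since $|\nabla w| \leq \dist(\nabla w,K) + \Kinfty$ pointwise, we have $\Vert \nabla w \Vert_{L^p(\Omega)} \leq \varepsilon + \Kinfty |\Omega|^{1/p}$, so for any $F \in K$ the triangle inequality gives $\Vert \nabla w - F \Vert_{L^p} \leq \varepsilon + 2\Kinfty|\Omega|^{1/p}$. If $\varepsilon$ exceeds a threshold $\varepsilon_0 = \varepsilon_0(\Omega,K,p)$ to be fixed later, this is already bounded by $C \varepsilon$. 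So I assume $\varepsilon < \varepsilon_0$.

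Apply Proposition~\ref{prop: cubic decomp} to write $\Omega = \bigcup_k \hat Q_k$ with enlarged cubes $Q_k$ of finite overlap $N$, and note that property (\ref{p2}) guarantees $Q_k \subset \Omega$. Since $w$ is harmonic on each $Q_k$ and $\hat Q_k$ is the concentric cube of half side length, Proposition~\ref{prop: harmonic rigidity cube} (using Remark~\ref{rem: constant scaling invariant} to rescale) yields $F_k \in K$ with
$$
\Vert \nabla w - F_k \Vert^p_{L^p(\hat Q_k)} \leq C^p \Vert \dist(\nabla w, K) \Vert^p_{L^p(Q_k)}.
$$
Summing over $k$ and using the finite-overlap property~(\ref{p3}), we obtain $\sum_k \Vert \nabla w - F_k \Vert^p_{L^p(\hat Q_k)} \leq C^p N \varepsilon^p$.

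The key step is then to show that, for $\varepsilon_0$ small enough, there exists a single well $K_{i_0}$ such that $F_k \in K_{i_0}$ for every $k$. For two neighbouring Whitney cubes $\hat Q_k, \hat Q_j$ of comparable side length, I would find an enclosing cube $\tilde Q \subset \Omega$ of comparable size containing both, and apply Proposition~\ref{prop: harmonic rigidity cube} to $\tilde Q$ to obtain a single $\tilde F \in K$ valid on a smaller concentric cube $\tilde Q' \supset \hat Q_k \cup \hat Q_j$. Comparing $F_k$ with $\tilde F$ on $\hat Q_k$,
$$
|F_k - \tilde F|^p |\hat Q_k| \leq 2^p \Vert \nabla w - F_k \Vert^p_{L^p(\hat Q_k)} + 2^p \Vert \nabla w - \tilde F \Vert^p_{L^p(\hat Q_k)} \leq C^p \bigl(\Vert \dist(\nabla w,K) \Vert^p_{L^p(Q_k)} + \Vert \dist(\nabla w,K) \Vert^p_{L^p(\tilde Q)}\bigr),
$$
and since $|\tilde Q| \sim |\hat Q_k|$ this is a bound on a volume-normalized average. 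Starting from a cube of maximal side and chaining through adjacent cubes (using the connectedness of $\Omega$ and the Whitney property that adjacent cubes have comparable sizes), for $\varepsilon < \varepsilon_0$ small enough this difference stays below the well-gap $\tfrac{1}{8}\min_{i\neq j}\dist(K_i,K_j)$, forcing $F_k$ and $\tilde F$ to lie in the same well and, by propagation, forcing all $F_k$ to lie in the same $K_{i_0}$.

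Finally, once all $F_k \in K_{i_0} = SO(n)U_{i_0}$, we have
$$
\Vert \dist(\nabla w, K_{i_0}) \Vert^p_{L^p(\Omega)} \leq \sum_k \Vert \nabla w - F_k \Vert^p_{L^p(\hat Q_k)} \leq C^p N \varepsilon^p.
$$
Since $\nabla w$ is curl-free, so is $\nabla w \, U_{i_0}^{-1}$; hence there exists $\tilde w \in W^{1,p}(\Omega;\R^n)$ with $\nabla \tilde w = \nabla w \, U_{i_0}^{-1}$, and $\dist(\nabla \tilde w, SO(n)) \leq |U_{i_0}^{-1}| \dist(\nabla w, K_{i_0})$. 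Applying Theorem~\ref{thm: FJM rigidity} to $\tilde w$ yields $R \in SO(n)$ with $\Vert \nabla \tilde w - R \Vert_{L^p(\Omega)} \leq C(\Omega,p)\Vert \dist(\nabla \tilde w, SO(n))\Vert_{L^p(\Omega)}$, and multiplying back by $U_{i_0}$ produces an $F := RU_{i_0} \in K_{i_0} \subset K$ with $\Vert \nabla w - F \Vert_{L^p(\Omega)} \leq C \varepsilon$, as required.

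The main obstacle is the chaining argument in the third step: Whitney cubes near $\partial \Omega$ have vanishing side length, so the estimate $|F_k - \tilde F| \leq C \varepsilon / |\hat Q_k|^{1/p}$ does not by itself confine $F_k$ to a single well. What saves the day is working with averaged quantities (thanks to $|\tilde Q| \sim |\hat Q_k|$), starting the chain from interior cubes of controlled size and propagating outward while absorbing the Lipschitz dependence of the number of chain steps into the final constant $C(\Omega,K,p)$.
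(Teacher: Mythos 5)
Your reduction to Whitney cubes and the local estimate is fine, but the well-selection step (forcing all the $F_k$ into a single well $K_{i_0}$ by chaining) has a genuine gap, and it is exactly the one you flag at the end without actually resolving. The comparison estimate you write gives $|F_k-\tilde F|\le C\,\varepsilon\,|\hat Q_k|^{-1/p}$, and for Whitney cubes the side length tends to zero as the cubes approach $\partial\Omega$; hence no single threshold $\varepsilon_0(\Omega,K,p)$ can keep this quantity below the well gap for \emph{all} $k$. The defect is not the number of chain steps (which one could indeed absorb into the constant) but the per-step bound itself, which degenerates with the cube size, so ``starting from interior cubes and propagating outward'' does not help: for any fixed small $\varepsilon$ there may be boundary cubes on which the averaged distance $\varepsilon^p/|\hat Q_k|$ is of order one, and on such cubes $F_k$ may sit in the wrong well, which breaks the inequality $\Vert\dist(\nabla w,K_{i_0})\Vert_{L^p(\Omega)}^p\le\sum_k\Vert\nabla w-F_k\Vert_{L^p(\hat Q_k)}^p$. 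Repairing this requires an additional measure-theoretic argument to control the total volume of the ``bad'' boundary cubes, which you do not supply. The paper avoids the issue altogether: it never selects a well per cube. It uses the local rigidity of Proposition~\ref{prop: harmonic rigidity cube} together with the interior Caccioppoli estimate \eqref{9} only to obtain the weighted Hessian bound \eqref{10}, $\int_\Omega|\nabla^2w|^p\dist^p(x,\partial\Omega)\,dx\le C\int_\Omega\dist^p(\nabla w,K)\,dx$, then applies the weighted Poincar\'e inequality \eqref{11} to $\nabla w$ to produce a \emph{single} constant matrix $A\in\R^{n\times n}$ valid on all of $\Omega$, and finally projects $A$ onto $K$ using \eqref{12}. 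This sidesteps both the chaining and the boundary-cube degeneracy.

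A secondary, fixable error: in your last step you claim that since $\nabla w$ is curl-free, so is $\nabla w\,U_{i_0}^{-1}$. This is false in general (e.g.\ $n=2$, $w_1=x_1x_2$, $U^{-1}=\mathrm{diag}(1,2)$ gives a row with nonzero curl); right multiplication by a constant matrix does not preserve gradients on the same domain. The correct reduction to Theorem~\ref{thm: FJM rigidity} is the standard change of variables, e.g.\ $\tilde w(y):=w(U_{i_0}^{-1}y)$ on the Lipschitz domain $U_{i_0}\Omega$, so that $\nabla\tilde w=(\nabla w)U_{i_0}^{-1}$ composed with the change of variables, together with $\dist(\nabla w\,U_{i_0}^{-1},SO(n))\le|U_{i_0}^{-1}|\,\dist(\nabla w,SO(n)U_{i_0})$; this is what the citation of Theorem~\ref{thm: FJM rigidity} for the well $SO(n)U_{i_0}$ implicitly relies on.
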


\begin{proof}
	Fix one of the cubes in the decomposition of $\Omega$ given by Proposition \ref{prop: cubic decomp}, $\hat{Q}:=\Int(\hat{Q}_k)=\overline{x}+\left( -\frac{r}{2},\frac{r}{2}\right)$, denote $Q:=\overline{x}+2 (Q-\overline{x})$ the enlarged cube. Using \eqref{p2} of Proposition \ref{prop: cubic decomp}, we have that the enlarged cube $Q$ is still contained in $\Omega$. Now we use the local estimate of Proposition \ref{prop: harmonic rigidity cube} with $Q'=\hat{Q}$ together with Remark \ref{rem: constant scaling invariant}, to obtain the existence of $F_Q \in K$ such that
	\begin{equation}\label{8}
		\int_{\hat{Q}} |\nabla w -F_Q|^p \, dx \leq C(K,n,p) \int_{Q} \dist^p(\nabla w,K) \, dx.
	\end{equation}
	Since $w$ is harmonic on $Q_k$, there exists $C(n,p)>0$ such that
	\begin{equation} \label{9}
		r^p \int_{\hat{Q}} |\nabla^2 w|^p \, dx \leq 2^p C(n,p) \min_{A \in \R^{n \times n}} \int_{Q} |\nabla w-A|^p \, dx.
	\end{equation}
	In turn, \eqref{8}, \eqref{9} and assertion \eqref{p2} of Proposition \ref{prop: cubic decomp} entail
	\begin{align*}
		\int_{\hat{Q}_k} |\nabla^2 w|^p \dist^p(x,\partial \Omega) \, dx \leq C(K,n,p) \int_{Q_k} \dist^p(\nabla w,K) \, dx.
	\end{align*}
	Summing over $k$ and using assertion \eqref{p3} of Proposition \ref{prop: cubic decomp}, gives
	\begin{equation}\label{10}
		\int_\Omega |\nabla^2 w|^p \dist^p(x,\partial \Omega) \, dx \leq C(K,n,p) \int_\Omega \dist^p(\nabla w,K) \, dx.
	\end{equation}
	Finally, we apply a weighted Poincaré inequality of the type
	\begin{equation}\label{11}
		\min_{A \in \R^{n \times n}} \int_\Omega |f(x)-A|^p \, dx \leq C(\Omega) \int_\Omega |\nabla f|^p \dist^p(x,\partial \Omega) \, dx,
	\end{equation}
	valid for $f \in W^{1,p}_{\textnormal{loc}}(\Omega,\R^{n \times n})$, which can be deduced from \cite{FJMCPAM} for the case $p=2$ or from \cite{BoasStraube} for $p \in [1,\infty)$.
	
	Applying inequality \eqref{11} to \eqref{10} we get that there exists $F \in \R^{n \times n}$ such that
	\begin{equation}\label{12}
		\int_\Omega |\nabla w-F|^p \, dx \leq C(\Omega,K,p) \int_\Omega \dist^p(\nabla w,K) \, dx.
	\end{equation} 
	If $F \in K$ we are done. Otherwise, Let $F' \in K$ such that $|F-F'|=\dist(F,K)$. Notice that by \eqref{12} follows that 
	$$
	\mathcal{L}^n(\Omega) |F-F'|^p \leq C(\Omega,k,p) \int_\Omega \dist^p(\nabla w,K) \, dx.
	$$
	Therefore, we conclude.
\end{proof}

We also need a local rigidity result for harmonic functions in weak Lebesgue spaces.
To this aim, we first recall a Caccioppoli-type estimate in $L^{p,\infty}$.

\begin{lem}\label{weak Caccio est}
	Let $\Omega \subset \R^n$ be a Lipschitz set and let $\varphi \in L^{p,\infty}(\Omega)$ be an harmonic function on $\Omega$ with $p \in (1,\infty)$. Let $\Omega' \Subset \Omega$ be an open set and define $\delta:=\dist(Q',\partial Q)$. Then, there exists $C=C(\Omega,p)>0$ such that
	$$
	\Vert \nabla \varphi \Vert_{L^{p,\infty}(\Omega')} \leq \frac{C}{\delta} \Vert \varphi \Vert_{L^{p,\infty}(\Omega)}.
	$$
\end{lem}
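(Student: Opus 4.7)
The plan is to reduce the estimate to the boundedness of the Hardy--Littlewood maximal operator on $L^{p,\infty}(\R^n)$ via a pointwise bound of harmonic gradients by suitable ball averages. First, I would exploit the mean value property for harmonic functions. Since $\varphi$ is harmonic in $\Omega$, so is each partial derivative $\partial_j\varphi$. Fix $x \in \Omega'$; then $B(x,\delta/2) \Subset \Omega$, and applying the mean value representation on any ball $B(x,r)$ with $r \in [\delta/4,\delta/2]$ together with the divergence theorem gives the standard bound
\[
|\nabla \varphi(x)| \leq \frac{C(n)}{r^n} \int_{\partial B(x,r)} |\varphi(y)|\, dS(y).
\]
Multiplying by $r^{n-1}$ and integrating over $r \in [\delta/4,\delta/2]$ via the coarea formula yields the averaged pointwise estimate
\[
|\nabla \varphi(x)| \leq \frac{C(n)}{\delta}\, \fint_{B(x,\delta/2)} |\varphi(y)|\, dy.
\]

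Next, I would extend $\varphi$ by zero outside $\Omega$ to obtain $\tilde{\varphi} \in L^{p,\infty}(\R^n)$ with $\Vert \tilde{\varphi}\Vert_{L^{p,\infty}(\R^n)} = \Vert \varphi \Vert_{L^{p,\infty}(\Omega)}$. Since the ball $B(x,\delta/2)$ is contained in $\Omega$ for $x \in \Omega'$, the average above is dominated by $M\tilde{\varphi}(x)$, the (uncentered or centered) Hardy--Littlewood maximal function of $\tilde\varphi$. Thus
\[
|\nabla \varphi(x)| \leq \frac{C(n)}{\delta}\, M\tilde\varphi(x) \qquad \text{for every } x \in \Omega'.
\]
Since $p \in (1,\infty)$, the weak-type interpolation (or direct) version of the Hardy--Littlewood maximal theorem gives $M \colon L^{p,\infty}(\R^n) \to L^{p,\infty}(\R^n)$ boundedly with norm depending only on $n$ and $p$. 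Applying this and the fact that $\Vert \cdot \Vert_{L^{p,\infty}(\Omega')} \leq \Vert \cdot \Vert_{L^{p,\infty}(\R^n)}$ yields the desired conclusion.

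I do not expect any serious obstacle here: the argument is essentially classical. The only mildly delicate point is that the target space is weak-$L^p$, so one cannot quote the trivial $L^p$ Caccioppoli inequality; however, this is precisely overcome by passing through the pointwise maximal function bound, since the maximal operator is well-behaved on $L^{p,\infty}$ exactly for $p>1$ (which is why the hypothesis $p \in (1,\infty)$ is used). The constant in the final estimate depends only on $n$ and $p$, and can therefore be absorbed into the $C=C(\Omega,p)$ of the statement.
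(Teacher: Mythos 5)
Your proposal is correct: the pointwise bound $|\nabla\varphi(x)|\le \frac{C(n)}{\delta}\fint_{B(x,\delta/2)}|\varphi|\,dy$ for $x\in\Omega'$ follows exactly as you say from the mean value property of $\partial_j\varphi$ plus the divergence theorem and averaging in $r$, and the passage to weak norms via the Hardy--Littlewood maximal operator is legitimate, since $M$ is bounded on $L^{p,\infty}(\R^n)$ precisely for $p>1$ and extension by zero preserves the $L^{p,\infty}$ quasi-norm. The paper only sketches this lemma (``mean value property plus a standard Caccioppoli-type estimate''), so your argument supplies the missing details; the first ingredient coincides, while your second step replaces the Caccioppoli-type estimate by the maximal function. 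A marginally more economical variant, and the one most consistent with the paper's own toolkit, is to observe that the averaged bound reads $|\nabla\varphi|\le \frac{C(n)}{\delta}\,|\tilde\varphi|\ast g$ with $g$ a normalized characteristic function of a ball, and then invoke the weak-type Young inequality $\Vert f\ast g\Vert_{L^{p,\infty}}\le C\Vert f\Vert_{L^{p,\infty}}\Vert g\Vert_{L^1}$ (the same \cite[Theorem 1.2.13]{Grafakos} the authors use in \eqref{7}); both routes need $p>1$ and give a constant $C(n,p)/\delta$, which is even slightly stronger than the stated $C(\Omega,p)/\delta$. One cosmetic remark: the $\delta$ in the statement should of course be read as $\dist(\Omega',\partial\Omega)$, as you implicitly did.
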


\begin{proof}
	The proof is straightforward and follows by using the mean value property for harmonic functions and a standard Caccioppoli-type estimate.
\end{proof}

\begin{prop}\label{prop: weak harmonic rigidity}
	Consider $K$ as in \eqref{0}. Let $Q$ be the unit cube and $Q' \Subset Q$ be a smaller cube concentric to $Q$ with side length bigger or equal than $1/2$. Set $\delta:=\frac{1}{4}\dist(Q',\partial Q)$. We denote $q:=\frac{n}{n-1}$. Then, there exists a constant $C(n,\delta,K)>0$ with the property that for every $w \in W^{1,1}(Q;\R^n)$ with $\nabla w \in L^{q,\infty}(Q;\R^{n \times n})$ and harmonic on $Q$, there exists an associated $F \in K$, such that
	\begin{equation}\label{weak rigidity of harmo on cube}
		\Vert \nabla w -F \Vert_{L^{q,\infty}(Q';\R^{n \times n})} \leq C(n,\delta,K) \, \Vert \dist(\nabla w ,K) \Vert_{L^{q,\infty}(Q)}.
	\end{equation}
\end{prop}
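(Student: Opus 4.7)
The plan is to adapt the proof of Proposition~\ref{prop: harmonic rigidity cube} to the weak-$L^{q}$ setting, replacing each $L^{p}$ ingredient by its Lorentz-space counterpart. First I reduce to the two-well case $K=K_{1}\cup K_{2}$, and by a measure-theoretic dichotomy as in Proposition~\ref{prop: harmonic rigidity cube} I assume without loss of generality that $\mathcal{L}^{n}(\{d_{K_{2}}(\nabla w)\leq 2\rho\}\cap Q')\leq \mathcal{L}^{n}(Q')/2$ with $\rho=\tfrac{1}{8}\dist(K_{1},K_{2})$; the multi-well conclusion then follows by the same partition argument used in Proposition~\ref{prop: harmonic rigidity cube}. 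Setting $\varepsilon:=\Vert \dist(\nabla w,K)\Vert_{L^{q,\infty}(Q)}$, the inequality $d_{K_{1}}+d_{K_{2}}\geq 8\rho$ together with the crude bound $d_{K_{1}}(\nabla w)\leq |\nabla w|+M$ yields $d_{K_{1}}(\nabla w)\leq C\,d_{K}(\nabla w) + C \chi_{\{d_{K_{2}}(\nabla w)\leq \rho\}}$ on $Q'$, and the quasi-subadditivity of the $L^{q,\infty}$ seminorm reduces the proposition to the key measure estimate
\begin{equation*}
\mathcal{L}^{n}(\{d_{K_{2}}(\nabla w)\leq \rho\}\cap Q')\leq C\varepsilon^{q}.
\end{equation*}
Once this is proven, $\Vert d_{K_{1}}(\nabla w)\Vert_{L^{q,\infty}(Q')}\leq C\varepsilon$ follows, and Theorem~\ref{thm: CDM rigidity} applied to the single well $K_{1}=SO(n)U_{1}$ produces the rotation $R\in SO(n)$ and $F=RU_{1}\in K$ satisfying~\eqref{weak rigidity of harmo on cube}.

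For the key estimate I would first derive a preliminary bound mirroring the first step of Proposition~\ref{prop: harmonic rigidity cube}. Combining Lemma~\ref{lem: beginning lemma} with H\"older's inequality in Lorentz spaces gives
\begin{equation*}
\mathcal{L}^{n}(\{d_{K_{2}}(\nabla w)\leq \rho\}\cap Q')^{1/q} \leq \frac{C}{\rho}\,\Vert \nabla^{2}w\Vert_{L^{q,\infty}(Q')}\,\mathcal{L}^{n}(E)^{1/q'},
\end{equation*}
where $E:=\{\rho<d_{K_{2}}(\nabla w)<2\rho\}\cap Q'$. The pointwise inequality $|\nabla w|\leq d_{K}(\nabla w)+M$ and Lemma~\ref{weak Caccio est} applied componentwise to the harmonic field $\nabla w$ give $\Vert \nabla^{2}w\Vert_{L^{q,\infty}(Q')}\leq C(1+\varepsilon)$, while the inclusion $E\subseteq \{d_{K}(\nabla w)\geq \rho\}$ (again forced by $d_{K_{1}}+d_{K_{2}}\geq 8\rho$) together with Chebychev in weak-$L^{q}$ yields $\mathcal{L}^{n}(E)\leq C\varepsilon^{q}$. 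For $n=2$ this preliminary bound directly closes the key estimate; for $n\geq 3$ the exponents do not balance and one only gets $\mathcal{L}^{n}(\{d_{K_{2}}(\nabla w)\leq \rho\}\cap Q')\leq C\varepsilon^{q/(n-1)}$, which however is smaller than $\omega_{n}\delta^{n}/4^{n+1}$ as soon as $\varepsilon<\varepsilon_{0}$ for a threshold depending only on $n$, $\delta$, $K$; this is exactly the small-density condition used to start the iteration below, and for $\varepsilon\geq\varepsilon_{0}$ the key estimate holds trivially since $\mathcal{L}^{n}(Q')\leq 1\leq C\varepsilon^{q}$.

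The Besicovitch iteration then proceeds as in the proof of Proposition~\ref{prop: harmonic rigidity cube}: for each $x\in \{d_{K_{2}}(\nabla w)\leq \rho\}\cap Q'$ one picks $r\in[\delta/2,\delta]$ with $B(x,2r)\subset Q$ and small density of $\{d_{K_{2}}(\nabla w)\leq\rho\}$ in $B(x,r)$, and introduces the exceptional set $A:=\{x\in Q':\fint_{B(x,2r)}|\nabla w|\,dy>4M\}$. The pointwise control $(|\nabla w|-2M)^{+}\leq C\,d_{K}(\nabla w)$ together with the weak-type $(q,q)$ boundedness of the Hardy--Littlewood maximal operator (available since $q>1$) gives $\mathcal{L}^{n}(A)\leq C\varepsilon^{q}$; this replaces the Young convolution bound used in the strong $L^{p}$ case. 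For $x\notin A$ the interior estimate for harmonic functions yields $\Vert \nabla^{2}w\Vert_{L^{\infty}(B(x,r))}\leq C$, and Lemma~\ref{lem: beginning lemma} applied on $B(x,r)\cap Q'$ (whose isoperimetric constant is uniformly bounded, as in the strong proof) combined with the universal bound $\mathcal{L}^{n}(B(x,r))\leq \omega_{n}$ to absorb the $(q-1)$-power returns
\begin{equation*}
\mathcal{L}^{n}(\{d_{K_{2}}\leq \rho/2\}\cap B(x,r)\cap Q')\leq C\,\mathcal{L}^{n}(\{\rho/2<d_{K_{2}}<\rho\}\cap B(x,r)\cap Q').
\end{equation*}
Summing via a Besicovitch sub-covering and using the geometric inclusion $\{\rho/2<d_{K_{2}}<\rho\}\subseteq \{d_{K}\geq \rho/2\}$ with weak Chebychev produces $\mathcal{L}^{n}((\{d_{K_{2}}\leq\rho/2\}\cap Q')\setminus A)\leq C\varepsilon^{q}$, which combined with the bound on $\mathcal{L}^{n}(A)$ closes the key estimate.

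The main obstacle is precisely the Besicovitch step in the regime $n\geq 3$: the direct weak-H\"older estimate never yields the sharp exponent $\varepsilon^{q}$, so one must exploit the interior $L^{\infty}$ bound on $\nabla^{2}w$ on balls where $\nabla w$ is controlled, and substitute the Young convolution inequality used in the single-well case by the weak-type $(q,q)$ maximal function bound. Beyond this, the entire argument is a systematic transcription of the proof of Proposition~\ref{prop: harmonic rigidity cube} with each strong tool replaced by its Lorentz analogue, the only genuine care being to verify that the absorption of geometric constants survives the passage to the weak-$L^{q}$ setting.
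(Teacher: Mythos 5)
Your proposal is correct and follows essentially the same route as the paper's proof: the same reduction to the measure bound on $\{d_{K_2}(\nabla w)\leq\rho\}\cap Q'$, the same preliminary estimate via Lemma \ref{lem: beginning lemma}, the weak Caccioppoli bound and the $\varepsilon_0$-dichotomy, and the same Besicovitch iteration with the exceptional set $A$, concluded through Theorem \ref{thm: CDM rigidity}. The only deviations are cosmetic and harmless: you control $\mathcal{L}^n(A)$ by the weak-$(q,q)$ bound for the maximal function instead of the weak-type Young convolution inequality the paper cites, you phrase the two-well reduction as a pointwise inequality plus quasi-subadditivity rather than the paper's case analysis in $\lambda$, and you note that for $n=2$ the preliminary estimate already closes the argument.
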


\begin{proof}
	We first consider $K=K_1 \cup K_2$ where $K_1$ and $K_2$ are two disjoint compact sets of $\R^{n \times n}$ and let $\rho:=\frac{1}{8}\dist(K_1,K_2)$. Assume that
	\begin{equation}\label{4}
		\mathcal{L}^n(\{ d_{K_2}(\nabla w) \leq 2\rho \} \cap Q') \leq \frac{\mathcal{L}^n(Q')}{2}.
	\end{equation}
	We claim that there exists $C=C(n,\delta,K)>0$ such that
	\begin{equation}\label{5}
		\lambda^q \mathcal{L}^n(\{ d_{K_1}(\nabla w) >\lambda \} \cap Q') \leq C \Vert d_K(\nabla w) \Vert_{L^{q,\infty}(Q)}^q, \qquad \mbox{for every $\lambda >0$}.
	\end{equation}
	Let us set $\varepsilon:=\Vert \dist(\nabla w,K) \Vert_{L^{q,\infty}(Q)}$.
	
	Observe that on the set $\{ d_{K_2}(\nabla w) \leq 2M \}$ we have $d_{K_1}(\nabla w) \leq 3M$ by definition of $M$. Moreover, if $d_{K_1}(\nabla w) > 2M$ then, $d_K(\nabla w) > M$ and $d_{K_1}(\nabla w) \leq 2d_K(\nabla w)$. Thanks to these considerations, for every $\lambda \geq 2M$ we have
	$$
	\lambda \mathcal{L}^n(\{ d_{K_1}(\nabla w)>\lambda \} \cap Q')^{1/q} \leq \lambda \mathcal{L}^n(\{ d_{K}(\nabla w)>\lambda/2 \})^{1/q} \leq 2\varepsilon.
	$$
	On the other hand, for every $\rho \leq \lambda \leq 2M$ we have
	\begin{align*}
		\{ d_{K_1}(\nabla w)> \lambda \} & \subseteq \{ d_K(\nabla w) > \lambda \} \cup \{ d_{K_2}(\nabla w) \leq \lambda \} \\
		& \subseteq \{ d_K(\nabla w) > \lambda \} \cup \{\rho< d_{K_2}(\nabla w) \leq \lambda \} \cup \{ d_{K_2}(\nabla w) \leq \rho \} \\
		& \subseteq \{ d_K(\nabla w) > \rho \} \cup \{ d_{K_2} (\nabla w) \leq \rho \},
	\end{align*}
	hence,
	\begin{align*}
		\lambda^q \mathcal{L}^n(\{ d_{K_1}(\nabla w) > \lambda \} \cap Q') & \leq (2M)^q \left( \mathcal{L}^n(\{ d_K(\nabla w) > \rho \} \cap Q') +(2M)^q \mathcal{L}^n(\{d_{K_2}(\nabla w) \leq \rho \} \cap Q') \right) \\
		& \leq \left(\frac{2M}{\rho}\right)^q \varepsilon^q+(2M)^q\mathcal{L}^n(\{d_{K_2}(\nabla w) \leq \rho \} \cap Q').
	\end{align*}
	Finally, if $0<\lambda < \rho$, observe that
	\begin{align*}
		\lambda^q \mathcal{L}^n(\{ d_{K_1}(\nabla w) > \lambda \} \cap Q') & \leq \lambda^q \left( \mathcal{L}^n(\{ d_K(\nabla w) > \lambda \} \cap Q') +\lambda^q \mathcal{L}^n(\{d_{K_2}(\nabla w) \leq \rho \} \cap Q') \right) \\
		& \leq \varepsilon^q+\rho^q\mathcal{L}^n(\{d_{K_2}(\nabla w) \leq \rho \} \cap Q').
	\end{align*} 
	Thus, as before it is enough to show a bound of the type $\mathcal{L}^n(\{d_{K_2}(\nabla w) \leq \rho \}) \leq C \varepsilon^q$.
	
	Since $w$ is harmonic on $Q$, by Lemma \ref{weak Caccio est} there exists $C(n,\delta)>0$ such that
	\begin{equation}\label{wcacciop est}
		\Vert \nabla^2 w \Vert_{L^{q,\infty}(Q')} \leq C(n,\delta) \Vert \nabla w \Vert_{L^{q,\infty}(Q)} \leq C(n,\delta)(\Kinfty+\varepsilon).
	\end{equation}
	Therefore, using Lemma \ref{lem: beginning lemma}, H\"{o}lder inequality and \eqref{wcacciop est}, we estimate
	\begin{align*}
		\mathcal{L}^n(\{d_{K_2}(\nabla w) \leq \rho\} \cap Q')^{1/q} & \leq C \int_{\{ \rho<d_{K_2}(\nabla w)<2\rho \} \cap Q'} |\nabla^2 w| \, dx \\
		& \leq C \mathcal{L}^n(\{ \rho<d_{K_2}(\nabla w) <2\rho \})^{1/n} \Vert \nabla^2 w \Vert_{L^{q,\infty}(Q')} \\
		& \leq C \mathcal{L}^n(\{ \rho<d_{K_2}(\nabla w) <2\rho \})^{1/n} (\Kinfty+\varepsilon).
	\end{align*}
	As the last term in the chain of inequalities can be estimated with $\varepsilon^{\frac{1}{n-1}}+\varepsilon$ when $\varepsilon \ll 1$, we infer the existence of $\varepsilon_0>0$ depending only on $n$, $\delta$ and $K$ such that
	\begin{equation*}
		\mathcal{L}^n(\{ d_{K_2}(\nabla w) \leq \rho \} \cap Q') \leq
		\begin{cases*}
			C\varepsilon^q & if $\varepsilon \geq \varepsilon_0$, \\
			\displaystyle \frac{\omega_n \delta^n}{4^{n+1}} & if $\varepsilon \leq \varepsilon_0$.
		\end{cases*}
	\end{equation*}
	Therefore, if $\varepsilon \geq \varepsilon_0$ we conclude. In the remaining case we can argue exactly as in the proof of Proposition \ref{prop: harmonic rigidity cube}, replacing \eqref{young} with
%
%
	\begin{equation}\label{7}
		\bigg\Vert \fint_{B(\cdot,2r)} d_K(\nabla w) \, dy \bigg\Vert_{L^{q,\infty}(Q')} \leq \Vert d_K(\nabla w) \Vert_{L^{q,\infty}(Q)} \Vert g \Vert_{L^1(\R^n)} = \varepsilon,
	\end{equation}
	which holds by \cite[Theorem 1.2.13]{Grafakos}. 
	Hence, we get by definition of $L^{q,\infty}$ norm that $M^q \mathcal{L}^n(A) \leq \varepsilon^q$, where $A$ is defined as in \eqref{A}.
	The estimate
	\begin{align*}
		\mathcal{L}^n\left((\{ d_{K_2}(\nabla w) \leq \rho/2 \} \cap Q') \setminus A \right) \leq  C \varepsilon^q,
	\end{align*}
	can be deduced arguing as in the proof of \eqref{xx}.
	Hence, the claim \eqref{5} is proven.
	
	Now we consider $K$ as in the statement. Arguing exactly as in the final step of Proposition \ref{prop: harmonic rigidity cube}, we can show that \eqref{5} still holds. Since $K_1=SO(n)U$ where $U \in \R^{n \times n}$ is an invertible matrix, the thesis follows by the weak rigidity estimate Theorem \ref{thm: CDM rigidity}. 
\end{proof}

\subsection[Multiwell Rigidity and Gradient Fields]{Multiwell Rigidity and Gradient Fields}

As we are going to prove the rigidity result for the critical exponent $\frac{n}{n-1}$ we need a few more additional results. 

\begin{lem}\label{lem: from weak to strong 1}
	Let $p \in [1,\infty)$ and $g: \R \to \R$ be a function such that $|g(t)| \leq \gamma t^\alpha$ for every $t \in \R$, for some $\gamma>0$ and $\alpha>1$. Let $E \subset \R^n$ be a measurable open set, given $u \in L^{p,\infty}(E) \cap L^\infty(E)$ we have that $g \circ u \in L^p(E)$ and
	$$
	\Vert g \circ u \Vert_{L^p(E)} \leq C \Vert u \Vert_{L^{p,\infty}(E)},
	$$
	where $C=C(\alpha,\gamma,p,\Vert u \Vert_{L^\infty})$.
\end{lem}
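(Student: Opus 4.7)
The plan is to use the distribution-function (layer-cake) representation of the $L^p$ norm, combined with the fact that $u \in L^\infty$ forces the distribution function of $g \circ u$ to vanish above a finite threshold. Set $L := \Vert u \Vert_{L^\infty(E)}$. Then one writes
$$
\Vert g \circ u \Vert_{L^p(E)}^p = p \int_0^\infty \lambda^{p-1} \mathcal{L}^n(\{|g\circ u|>\lambda\})\, d\lambda,
$$
and the growth bound $|g(t)| \leq \gamma |t|^\alpha$ gives the inclusion $\{|g\circ u|>\lambda\} \subseteq \{|u|>(\lambda/\gamma)^{1/\alpha}\}$. Applying the defining property of $L^{p,\infty}(E)$ to the right-hand side yields
$$
\mathcal{L}^n(\{|g\circ u|>\lambda\}) \leq \Big(\frac{\gamma}{\lambda}\Big)^{p/\alpha} \Vert u \Vert_{L^{p,\infty}(E)}^p.
$$

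Next I would exploit the $L^\infty$ bound: since $|g\circ u| \leq \gamma L^\alpha$ a.e. on $E$, the distribution function is identically zero for $\lambda > \gamma L^\alpha$, so the range of integration collapses to $(0, \gamma L^\alpha]$. Plugging the estimate above into the layer-cake formula gives
$$
\Vert g \circ u \Vert_{L^p(E)}^p \leq p\, \gamma^{p/\alpha}\, \Vert u \Vert_{L^{p,\infty}(E)}^p \int_0^{\gamma L^\alpha} \lambda^{\,p - 1 - p/\alpha}\, d\lambda.
$$
The assumption $\alpha > 1$ ensures $p - 1 - p/\alpha > -1$, so the integral converges at $0$; the truncation at $\gamma L^\alpha$ handles the upper end. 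Evaluating and taking $p$-th roots yields the desired bound with
$$
C = C(\alpha,\gamma,p,L) = \gamma\, L^{\alpha - 1}\Big(\frac{p}{p - p/\alpha}\Big)^{1/p}.
$$

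There is no real obstacle here: the argument is a two-line distribution-function computation. The only point requiring care is keeping straight the two distinct roles played by the hypotheses — the exponent $\alpha > 1$ controls integrability near $\lambda = 0$, while $\Vert u \Vert_{L^\infty}$ is what makes the integral finite at infinity (the pure $L^{p,\infty}$ decay $\lambda^{-p/\alpha}$ with $\alpha > 1$ is too slow to be integrable against $\lambda^{p-1}$ on all of $(0,\infty)$).
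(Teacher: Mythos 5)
Your proof is correct and is essentially the argument the paper has in mind: the paper omits the proof and cites \cite{MuScaZep}, where the same layer-cake computation is carried out (there applied to $\int_E |u|^{\alpha p}\,dx$ after the pointwise bound $|g\circ u|^p \leq \gamma^p |u|^{\alpha p}$, rather than to the distribution function of $g\circ u$ directly), with the weak-$L^p$ decay used on $(0,\Vert u\Vert_{L^\infty}]$ and the condition $\alpha>1$ guaranteeing integrability near $0$. Your identification of the separate roles of $\alpha>1$ and of $\Vert u\Vert_{L^\infty}$ is exactly right, so no changes are needed.
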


\begin{proof}
	The proof is identical to the one of \cite[Proposition 3.2]{MuScaZep}  and we omit it.
\end{proof}

\begin{lem}\label{lem: divdiv of sksym is zero}
	Let $\Omega \subset \R^n$ be an open set with Lipschitz boundary. Let $F \in L^{1^*} (\Omega, \R^{n \times n})$ be a matrix field such that $F(x)$ is skew symmetric for $\mathcal{L}^n$-a.e. $x \in \Omega$. Then, $\divr(\divr(F))=0$ as an element of $W^{-2,1^*}(\Omega)$.
\end{lem}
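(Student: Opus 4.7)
The plan is to interpret $\divr(\divr(F))$ as the distribution acting on test functions $\varphi \in W_0^{2,n}(\Omega)$, show that the corresponding bilinear form is well defined and bounded, and then exploit the algebraic fact that a skew-symmetric matrix paired with a symmetric matrix is zero.

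First I would recall that $W^{-2,1^*}(\Omega)$ is the topological dual of $W_0^{2,(1^*)'}(\Omega) = W_0^{2,n}(\Omega)$, since the Sobolev conjugate $1^* = n/(n-1)$ has dual exponent $n$. By the classical distributional definition, for every $\varphi \in C^\infty_c(\Omega)$ we have
\begin{equation*}
\langle \divr(\divr F), \varphi \rangle = \sum_{i,j=1}^n \int_\Omega F_{ij}(x)\, \partial_i \partial_j \varphi(x)\, dx = \int_\Omega F : \nabla^2 \varphi\, dx.
\end{equation*}
By H\"older's inequality with exponents $1^*$ and $n$, this pairing satisfies
\begin{equation*}
\left| \int_\Omega F : \nabla^2 \varphi\, dx \right| \leq \Vert F \Vert_{L^{1^*}(\Omega;\R^{n\times n})} \Vert \nabla^2 \varphi \Vert_{L^n(\Omega;\R^{n\times n})} \leq \Vert F \Vert_{L^{1^*}(\Omega;\R^{n\times n})} \Vert \varphi \Vert_{W^{2,n}(\Omega)},
\end{equation*}
so the linear functional extends by density to a bounded element of $W^{-2,1^*}(\Omega)$, and it suffices to check that it vanishes on the dense subset $C^\infty_c(\Omega)$.

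The key algebraic observation is that for any $\varphi \in C^\infty_c(\Omega)$, the Hessian $\nabla^2 \varphi$ is a symmetric matrix at every point, while $F(x)$ is skew-symmetric for a.e.\ $x \in \Omega$. Hence the pointwise inner product $F(x) : \nabla^2 \varphi(x)$ equals zero for a.e.\ $x$, since
\begin{equation*}
F : \nabla^2\varphi = \frac{1}{2}\sum_{i,j}\bigl(F_{ij}+F_{ji}\bigr)\partial_i\partial_j\varphi = 0.
\end{equation*}
Integrating gives $\langle \divr(\divr F), \varphi \rangle = 0$ for every $\varphi \in C^\infty_c(\Omega)$, which by the density argument above extends to all $\varphi \in W_0^{2,n}(\Omega)$, concluding the proof.

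There is really no hard step here: the only point that deserves care is ensuring that the Lipschitz regularity of $\partial \Omega$ and the integrability class $L^{1^*}$ are compatible with the chosen duality, which is exactly what the Sobolev embedding $W_0^{2,n}(\Omega) \hookrightarrow$ (functions with Hessian in $L^n$) provides.
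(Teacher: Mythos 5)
Your proof is correct. The core cancellation is the same one the paper uses — the contraction of a skew-symmetric matrix with a symmetric one vanishes — but you implement it on the dual side: you pair $F$ directly with $\nabla^2\varphi$ for $\varphi \in C^\infty_c(\Omega)$ and use the symmetry of the Hessian of the \emph{test function}, whereas the paper first assumes $F$ smooth, computes $\divr(\divr F)=\sum_{i,j}\partial_i\partial_j F_{ij}=0$ classically from the skew-symmetry of $F$ together with the equality of mixed partials of $F$, and then says "conclude by approximation". Your route is slightly tidier in that it never needs the (unstated, though routine) mollification step and the attendant check that smoothing preserves skew-symmetry and converges in $L^{1^*}$; it also makes explicit why $W^{-2,1^*}(\Omega)$ is the right target space, via the duality with $W^{2,n}_0(\Omega)$ and Hölder with exponents $1^*$ and $n$. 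The only mild caveat is that the identification $\langle \divr(\divr F),\varphi\rangle=\int_\Omega F:\nabla^2\varphi\,dx$ should be taken as the distributional definition of the iterated divergence (two integrations by parts, with the signs cancelling), which you do implicitly and which matches the paper's row-wise convention for $\divr$ of a matrix field.
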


\begin{proof}
	We can assume first that $F$ is smooth on $\Omega$ and then conclude by approximation. Observe that
	$$
	\divr(\divr(F))=\sum_{i=1}^n \sum_{j=1}^n \partial_{i} \partial_{j} F_{ij}=0.
	$$ 
	Indeed, since $F$ is skew symmetric on $\Omega$, we have $\partial_{ii} F_{ii}=0$ and $\partial_i \partial_j F_{ij}=-\partial_j \partial_i F_{ji}$ for every $i,j = 1,\dots,n$.
\end{proof}

The next fundamental Theorem is contained in \cite[Theorem 3.1]{BrezisVanSc} (see also \cite{BourgainBrezis}).

\begin{thm}[Bourgain, Brezis, Van Schaftingen]\label{BBT}
	Let $Q \subset \R^n$ be the unit cube and let $f \in L^1(Q,\R^n)$. If
	$$
	[f]:=\sup \left\{ \int f \cdot \nabla \zeta \, dx \bigg| \ \zeta \in C^2(Q), \ \zeta=0 \ \mbox{and} \ \nabla \zeta=0 \ \mbox{on} \ \partial Q \ \mbox{and} \ \Vert \nabla^2 \zeta \Vert_{L^n(Q)} \leq 1 \right\} < +\infty,
	$$
	then the system
	\begin{equation*}
		\begin{cases*}
			-\Delta u = f & on $Q$, \\
			u=0 & on $\partial Q$,
		\end{cases*}
	\end{equation*}
	has a unique solution $u \in W^{1,1^*}(Q,\R^n)$ satisfying 
	$$
	\Vert u \Vert_{W^{1,1^*}(Q,\R^n)} \leq C( \Vert f \Vert_{L^1(Q,\R^n)}+[f]).
	$$
	where $C>0$ depends solely on the dimension $n$.
\end{thm}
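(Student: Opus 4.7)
This is a theorem of Bourgain--Brezis and Van Schaftingen whose complete proof is intricate; below I sketch the strategy. The hypothesis $[f] < \infty$ is strictly weaker than $f \in W^{-1,1^*}(Q; \R^n)$, which would immediately yield a $W^{1,1^*}$-solution via Calder\'on--Zygmund theory for the Dirichlet Laplacian. The crucial point is that the admissible test fields in the definition of $[f]$ are gradients of $W^{2,n}$ scalars vanishing to first order at $\partial Q$, and this gradient structure, together with the ellipticity of $-\Delta$, is enough to close the estimate at the critical Sobolev scale.

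I would proceed by approximation and duality. First, mollify $f$ to $f_\varepsilon \in C^\infty$, solve the smooth Poisson problem to obtain $u_\varepsilon \in C^\infty(\overline{Q}; \R^n)$, and aim for a uniform bound
\[
\Vert u_\varepsilon \Vert_{W^{1,1^*}(Q;\R^n)} \leq C \bigl( \Vert f \Vert_{L^1(Q;\R^n)} + [f] \bigr),
\]
after which weak-$*$ compactness in $W^{1,1^*}$ produces a candidate limit, and uniqueness follows from standard elliptic theory applied to the difference of two solutions. By the dual characterization of $L^{1^*}$, obtaining such a bound reduces to controlling $\int_Q \nabla u_\varepsilon : \Phi \, dx$ uniformly over unit matrix-valued $\Phi \in L^n(Q;\R^{n\times n})$. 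The idea is to decompose $\Phi$ in a Hodge-type fashion as $\nabla^2 \zeta$ plus a compensated remainder: for the first piece, integration by parts and the PDE $-\Delta u_\varepsilon = f_\varepsilon$ produce exactly the pairing $\int f_\varepsilon \cdot \nabla \zeta$ controlled by $[f]$; for the compensated remainder, a duality with $L^1$-vector fields in the spirit of \cite{BourgainBrezis} (compensated compactness and Hardy space estimates) yields control by $\Vert f \Vert_{L^1}$.

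The main obstacle, and the actual content of the Bourgain--Brezis--Van Schaftingen theory, is the construction of the above decomposition respecting the strong boundary conditions $\zeta = 0$ and $\nabla \zeta = 0$ on $\partial Q$. The standard Helmholtz projection is blind to such conditions, so the lifting requires a delicate local analysis near $\partial Q$ (dyadic boundary decomposition, reflection across faces of the cube, Littlewood--Paley cutoffs) to produce, from a generic $L^n$-matrix field, a $W^{2,n}$-scalar with the prescribed first-order vanishing modulo a $\mathrm{BMO}$-type error. The critical exponent $n$ enters here through the endpoint embedding $W^{2,n} \hookrightarrow \mathrm{BMO}$, and handling the borderline nature of this embedding is precisely what makes the result nontrivial and what my plan would not attempt to reproduce in detail.
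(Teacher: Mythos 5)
This statement is not proved in the paper at all: it is quoted verbatim as an external result, with the proof delegated to \cite[Theorem 3.1]{BrezisVanSc} (see also \cite{BourgainBrezis}). So the relevant comparison is between your sketch and that citation, and here your proposal, taken as a proof, has a genuine gap that you yourself flag: the entire content of the theorem is concentrated in the step you defer. Concretely, after reducing by duality to bounding $\int_Q \nabla u_\varepsilon : \Phi\,dx$ uniformly over $\Phi \in L^n(Q;\R^{n\times n})$ with $\Vert \Phi\Vert_{L^n}\le 1$, you assert a decomposition of $\Phi$ into $\nabla^2\zeta$, with $\zeta=0$ and $\nabla\zeta=0$ on $\partial Q$ and $\Vert\nabla^2\zeta\Vert_{L^n}$ controlled, plus a ``compensated remainder'' dual to $L^1$; neither the construction of $\zeta$ with these first-order vanishing conditions nor the estimate for the remainder is carried out, and both are exactly the Bourgain--Brezis--Van Schaftingen machinery (critical-exponent duality estimates of the type $|\int Y\cdot\nabla\zeta\,dx|\le C\Vert Y\Vert_{L^1}\Vert\nabla^2\zeta\Vert_{L^n}$ together with a careful boundary analysis). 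Without that, the duality bound is not closed and the argument does not yield the stated estimate. The surrounding scaffolding (mollification, weak compactness in the reflexive space $W^{1,1^*}$, uniqueness via the harmonic difference) is fine but routine.

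Since the paper treats this as a black-box input, the appropriate resolution is the same: cite \cite[Theorem 3.1]{BrezisVanSc} rather than attempt a self-contained proof. If you do want to keep an expository sketch, you should either present it explicitly as a pointer to the cited argument, or be accurate about its structure: the published proof runs through duality and boundary estimates for an auxiliary Dirichlet problem combined with the $L^1$--$\dot W^{1,n}$ duality inequalities, not through a reflection-based Hodge-type lifting of an arbitrary $L^n$ field, and presenting the latter as the route risks claiming a construction that is not actually available in the form you describe.
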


In order to prove the rigidity in the critical exponent, we first need to generalize an argument contained in the proof of \cite[Theorem 3.2]{MuScaZep} to arbitrary dimension in order to apply Theorem \ref{BBT}. Finally, we employ an argument used in \cite{ContiGarrRigidity} to obtain an estimate up to the boundary.

\begin{thm}\label{thm: multiwell rigidity crit}
	Let $\Omega \subset \R^n$ be a bounded Lipschitz domain and consider $K$ as in \eqref{0}. There exists a constant $C(\Omega,K)>0$ with the property that for every $u \in W^{1,1^*}(\Omega;\R^n)$ with $\Delta u \in \mathcal{M}_b(\Omega;\R^2)$, there exists an associated $F \in K$ such that
	\begin{equation}\label{crit grad rigidity est}
		\Vert \nabla u-F \Vert_{L^{1^*}(\Omega;\R^{n \times n})} \leq C(\Omega,K)\left(\Vert \dist(\nabla u,K) \Vert_{L^{1^*}(\Omega)}+|\Delta u|(\Omega)\right).
	\end{equation}
\end{thm}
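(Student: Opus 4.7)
The plan is to split $u$ on a cube into a harmonic part $w$ plus a correction $v$ that carries the measure $\Delta u$, and then to combine the multiwell harmonic rigidity of Theorem~\ref{thm: rigidity harmonic} applied to $w$ with a Bourgain--Brezis--Van Schaftingen bound on $\nabla v$ coming from Theorem~\ref{BBT}. Once the cube estimate is in hand, a Whitney-type argument parallel to the one in the proof of Theorem~\ref{thm: rigidity harmonic}, following \cite{ContiGarrRigidity}, would extend the result to a general bounded Lipschitz domain.

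On a cube $Q$, I would produce $v \in W^{1,1^*}_0(Q;\R^n)$ solving $-\Delta v = -\Delta u$ by invoking Theorem~\ref{BBT}, after checking that the bracket $[\Delta u]$ is finite. Interpreting $\Delta u$ as a finite vector measure, for every admissible $\zeta \in C^2(Q)$ (that is, $\zeta = \nabla\zeta = 0$ on $\partial Q$ and $\|\nabla^2\zeta\|_{L^n(Q)} \le 1$) a distributional integration by parts gives
\[
\int_Q \nabla\zeta \cdot d(\Delta u) \;=\; -\int_Q \nabla^2\zeta : \nabla u\,dx \;=\; -\int_Q \nabla^2\zeta : (\nabla u - F)\,dx
\]
for any constant $F \in \R^{n\times n}$, since the boundary conditions on $\zeta$ annihilate the $F$-integral. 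Hölder's inequality then yields $[\Delta u] \le \inf_{F \in \R^{n\times n}} \|\nabla u - F\|_{L^{1^*}(Q)}$, and Theorem~\ref{BBT} delivers $\|\nabla v\|_{L^{1^*}(Q)} \le C(|\Delta u|(Q) + [\Delta u])$.

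Setting $w := u - v$, we have $\Delta w = 0$ on $Q$, so Theorem~\ref{thm: rigidity harmonic} produces $F \in K$ with $\|\nabla w - F\|_{L^{1^*}(Q)} \le C\|\dist(\nabla w, K)\|_{L^{1^*}(Q)}$. Chaining with the two triangle inequalities
\[
\|\dist(\nabla w,K)\|_{L^{1^*}} \le \|\dist(\nabla u,K)\|_{L^{1^*}} + \|\nabla v\|_{L^{1^*}},
\qquad
\|\nabla u - F\|_{L^{1^*}} \le \|\nabla v\|_{L^{1^*}} + \|\nabla w - F\|_{L^{1^*}},
\]
and plugging in the BBT estimate for $\nabla v$, we land on an inequality for $\|\nabla u - F\|_{L^{1^*}(Q)}$ in terms of the right-hand side of \eqref{crit grad rigidity est} plus a circular occurrence of $\|\nabla u - F\|_{L^{1^*}(Q)}$ itself, introduced by the bracket term.

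The main obstacle is precisely this absorption step: the naive bracket bound reintroduces the very quantity we want to estimate. I expect to handle it by applying the construction not to $u$ directly but to $u - F_0 x$, where $F_0 \in K$ is a well-chosen constant (for instance, the matrix produced by a preliminary application of the weak harmonic rigidity of Proposition~\ref{prop: weak harmonic rigidity}, which would bound $\|\nabla u - F_0\|_{L^{1^*,\infty}}$ by the right-hand side of \eqref{crit grad rigidity est}). This is the arbitrary-dimension generalization of the strategy in \cite[Theorem 3.2]{MuScaZep} alluded to in the preamble, and together with Lemma~\ref{lem: from weak to strong 1} it should allow us to promote the weak control on $\nabla u - F_0$ to a strong $L^{1^*}$ control, closing the cube estimate. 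Once this is established, the passage to the Lipschitz domain $\Omega$ follows the same scheme as in the proof of Theorem~\ref{thm: rigidity harmonic}: cover $\Omega$ with the Whitney family of Proposition~\ref{prop: cubic decomp}, apply the cube estimate on each enlarged concentric cube $Q_k$, sum the contributions via the finite-overlap property \eqref{p3}, and glue by means of the weighted Poincaré inequality \eqref{11}.
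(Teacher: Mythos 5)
Your skeleton (split off a correction solving the Poisson problem with datum $\Delta u$ via Theorem~\ref{BBT}, apply harmonic multiwell rigidity to the remainder, then glue with the Whitney cover and the weighted Poincar\'e inequality) is the same as the paper's, and you correctly diagnose the obstruction: the bracket bound $[\Delta u]\le \inf_{F}\Vert\nabla u-F\Vert_{L^{1^*}(Q)}$ reintroduces the quantity to be estimated, and since the constants are large it cannot be absorbed. The problem is that your proposed repair does not close this gap. Replacing $u$ by $u-F_0x$ changes nothing in the bracket (it is already invariant under subtracting constants from $\nabla u$), so everything hinges on upgrading the \emph{weak} bound $\Vert\nabla u-F_0\Vert_{L^{1^*,\infty}(Q')}\le C\varepsilon$ from Proposition~\ref{prop: weak harmonic rigidity} to a \emph{strong} $L^{1^*}$ bound. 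Lemma~\ref{lem: from weak to strong 1} cannot do this: it applies only to compositions $g\circ u$ with $|g(t)|\le\gamma t^{\alpha}$, $\alpha>1$, and requires an $L^\infty$ bound on the argument; the linear quantity $\nabla u-F_0$ satisfies neither hypothesis, and of course $L^{1^*,\infty}\not\subset L^{1^*}$ in general. So at the decisive absorption step your argument stalls.

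What is missing is the structural mechanism the paper uses to make the bracket controllable without circularity. After the weak local estimate, one writes the pointwise nearest rotation $\mathcal{R}(x)$ to $\nabla u$ in $SO(n)U$ in canonical form \eqref{18}, obtains weak bounds on the angles $\theta_i$ from \eqref{19}, and applies Lemma~\ref{lem: from weak to strong 1} only to the \emph{superlinear} quantities $\cos\theta_i-1$ and $\sin\theta_i-\theta_i$ (these are bounded and quadratic near $0$, so the lemma genuinely applies). This yields a strong $L^{1^*}$ bound on $h:=\nabla u-B^TR_{\textnormal{inf}}BU$, where $B^TR_{\textnormal{inf}}BU$ is a constant matrix plus a field that becomes skew-symmetric after multiplication by $U^T$. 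By Lemma~\ref{lem: divdiv of sksym is zero}, $\divr(\divr(U^T\nabla u))=\divr(\divr(U^Th))$ in $W^{-2,1^*}$, so the bracket of $f=U^T\Delta u$ (not of $\Delta u$ itself: the $U^T$-twist is needed for the skew-symmetry) is bounded by $C\Vert h\Vert_{L^{1^*}(Q')}\le C\varepsilon$, with $\varepsilon$ the right-hand side of \eqref{crit grad rigidity est}. Only then does Theorem~\ref{BBT} give a strong $L^{1^*}$ gradient bound for the correction $\tilde v$, and $u-U^{-T}\tilde v$ is harmonic, so Theorem~\ref{thm: rigidity harmonic} finishes the cube estimate; your Whitney/weighted-Poincar\'e gluing step is then fine as outlined. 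Without this angle/skew-symmetry argument (the analogue of \cite[Theorem 3.2]{MuScaZep} that you allude to but do not carry out), the proof as proposed does not go through.
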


\begin{proof}
	Let us set $q:=1^*$ for brevity. By approximation we can assume $\Delta u \in L^1(\Omega;\R^2)$. We start with the weak local estimate on cubes.
	
	\noindent \textbf{Step 1:} Let $Q$ be a cube and $u \in W^{1,q}(Q;\R^n)$ with $\Delta u \in L^1(Q;\R^n)$. Let $v \in W^{1,1}(\Omega;\R^n)$ be the solution of 
	\begin{equation}\label{15-1}
		\begin{cases*}
			\Delta v = \Delta u & on $Q$, \\
			v=0 & on $\partial Q$.
		\end{cases*}
	\end{equation} 
	By regularity theory for elliptic equations with measure data (see \cite{BoccardoMeasureData} for $n \geq 3$, \cite{DoHunMul-MeasData} for $n=2$, \cite{MingioneMeasureData} and references therein), we have that $\nabla v \in L^{q,\infty}(Q;\R^{n \times n})$ and there exists a constant $C(n)>0$ such that 
	\begin{equation}\label{15}
		\Vert 	\nabla v \Vert_{L^{q,\infty}(Q)} \leq C(n) \Vert \Delta u \Vert_{L^{1}(Q)}.
	\end{equation}
	Hence, if we set $w:=u-v$, we have that $w$ is harmonic on $Q$ and, using triangular inequality and \eqref{15}, there exists $C(n)>0$ such that
	\begin{equation}\label{16}
		\Vert \dist(\nabla w,K) \Vert_{L^{q,\infty}(Q)} \leq C(n)(\Vert \dist(\nabla u,K) \Vert_{L^{q,\infty}(Q)} + \Vert \Delta u \Vert_{L^{1}(Q)}).
	\end{equation}
	Let now $Q' \Subset Q$ be a concentric cube to $Q$ and $\delta:=\dist(Q',\partial Q)$. Since $w$ is harmonic on $Q$, by Proposition \ref{prop: weak harmonic rigidity} we infer that there exist $C(K,n,\delta)>0$, $U \in \{ U_1,\dots U_\ell \}$ and $R \in SO(n)$ such that
	\begin{equation}\label{17bis}
		\Vert \nabla w -RU \Vert_{L^{q,\infty}(Q')} \leq C(K,n,\delta) \Vert \dist(\nabla w,K) \Vert_{L^{q,\infty}(Q)}.
	\end{equation}
	Therefore, 
	\begin{equation}\label{17}
		\Vert \nabla u -RU \Vert_{L^{q,\infty}(Q')} \leq C(K,n,\delta) \Vert \dist(\nabla u,K) \Vert_{L^{q,\infty}(Q)}+C(K,n,\delta) \Vert \Delta u \Vert_{L^{1}(Q)}. \footnote{We point out that this estimate is only local and cannot be used to straightforwardly derive a global estimate in $L^{q,\infty}$ as no weighted Poincaré inequality is available for weak $L^p$ spaces. By comparison, in the single well case recalled in Theorem~\ref{thm: CDM rigidity}, the authors of \cite{CDM} must take a different path going through a suitable Korn inequality and a rigidity estimate with mixed growth on the whole domain. The extension of this approach to our setting is not investigated in the present paper.}
	\end{equation}
	 Up to substituting $u$ with $R^T u$, we can assume $R=I$ in \eqref{17}.
	 Let $\mathcal{R}:Q \to SO(n)$ be the measurable matrix field with the property that
	 $|\nabla u(x)-\mathcal{R}(x)U|=\dist(\nabla u(x),SO(n)U)$ for $\mathcal{L}^n$-a.e. $x \in Q$. Using the canonical representation for orthogonal matrices and assuming $n$ even, we can find measurable functions $B \colon Q \to O(n)$ and $\theta:=(\theta_1,\dots,\theta_{n/2}) \colon Q \to [-\pi,\pi)^{n/2}$ such that for $\mathcal{L}^n$-a.e. $x \in Q$
	 \begin{equation}\label{18}
	 	\mathcal{R}(x)=B^T(x) 
	 	\left(
	 	\begin{array}{c c c c c}
	 		\cos(\theta_1(x)) & -\sin(\theta_1(x)) & & & \\
	 		\sin(\theta_1(x)) & \cos(\theta_1(x)) & & & \\
	 		 & & \ddots & & \\
	 		 & & & \cos(\theta_{n/2}(x)) & -\sin(\theta_{n/2}(x)) \\
	 		 & & & \sin(\theta_{n/2}(x)) & \cos(\theta_{n/2}(x))
	 	\end{array}
 		\right)
	 	B(x).
	 \end{equation}
 	If instead the dimension $n$ is odd, the matrix above has also a $1$ in position $(n,n)$. However, this is not relevant in the proof, hence we will consider only the case $n$ even.
 	By \eqref{17bis} and \eqref{17} we have
 	\begin{align}
 		\begin{split}\label{19}
 		\Vert \mathcal{R}U-U \Vert_{L^{q,\infty}(Q')} & \leq 2\Vert \mathcal{R}U-\nabla u\Vert_{L^{q,\infty}(Q')}+2\Vert \nabla u - U \Vert_{L^{q,\infty}(Q')} \\
 		& \leq C(K,n,\delta) \left(\Vert \dist(\nabla u,SO(n)U) \Vert_{L^{q,\infty}(Q')}+\Vert \dist(\nabla u,K) \Vert_{L^{q,\infty}(Q)} +\Vert \Delta u \Vert_{L^1(Q)}\right) \\
 		& \leq C(K,n,\delta)\left(\Vert \nabla u - U \Vert_{L^{q,\infty}(Q')}+\Vert \dist(\nabla u,K) \Vert_{L^{q,\infty}(Q)}+\Vert \Delta u \Vert_{L^{1}(Q)}\right) \\
 		& \leq C(K,n,\delta) \left(\Vert \dist(\nabla u,K) \Vert_{L^{q,\infty}(Q)}+\Vert \Delta u \Vert_{L^{1}(Q)}\right).
 		\end{split}
 	\end{align}
 	Since $U$ is invertible and using the expression in \eqref{18}, one can see that there exists a constant $c>0$ depending on $U$ such that for $\mathcal{L}^n$-a.e. $x \in Q'$ and every $i=1,\dots,n/2$
 	$$
 	|\mathcal{R}U-U| \geq c |\mathcal{R}-I| \geq c \frac{|\theta_i|}{2}.
 	$$
	Using \eqref{19}, this gives for every $i=1,\dots,n/2$,
	\begin{equation}\label{20}
	\Vert \theta_i \Vert_{L^{q,\infty}(Q')} \leq C(K,n,\delta) (\Vert \dist(\nabla u,K) \Vert_{L^{q,\infty}(Q)}+\Vert \Delta u \Vert_{L^{1}(Q)}).
	\end{equation}
	Let us define $g(t):=\cos t-1$ and $h(t):=\sin t-t$. By Lemma \ref{lem: from weak to strong 1}, we have for every $i=1,\dots,n/2$
	\begin{equation}\label{21}
	\Vert g(\theta_i) \Vert_{L^{q}(Q')}+\Vert h(\theta_i) \Vert_{L^{q}(Q')} \leq C \Vert \theta_i \Vert_{L^{q,\infty}(Q')}.
	\end{equation}
	
	Let $F \colon Q \to K$ be the measurable matrix field such that $|F(x)-\nabla u(x)|=\dist(\nabla u,K)$ for $\mathcal{L}^n$-a.e. $x \in Q$ and set $\rho:=\frac{1}{4} \min_{i \neq j} \dist(K_i,K_j)$. By definition of $F$, $\mathcal{R}$ and $\rho$ we have $F=\mathcal{R}U$ on the set $\{ \dist(\nabla u,SO(n)U) \leq \rho \}$.
	Using \eqref{17}, we infer
	$$
	\Vert \dist(\nabla u, SO(n)U) \Vert_{L^{q,\infty}(Q')} \leq C(K,n,\delta)(\Vert \dist(\nabla u,K) \Vert_{L^{q,\infty}(Q)}+\Vert \Delta u \Vert_{L^{1}(Q)}),
	$$
	which in turn, by definition of norm on weak Lebesgue spaces, implies 
	\begin{equation}\label{22}
		\mathcal{L}^n(\{ \dist(\nabla u,SO(n)U) > \rho \} \cap Q') \leq C(K,n,\delta) (\Vert \dist(\nabla u,K) \Vert_{L^{q,\infty}(Q)}+\Vert \Delta u \Vert_{L^{1}(Q)})^q.
	\end{equation}
	We now consider the infinitesimal rotation $R_{\textnormal{inf}} \colon Q \to \R^{n \times n}$ as 
	\begin{equation*}
		R_{\textnormal{inf}}(x):=\left(
		\begin{array}{c c c c c}
			1 & -\theta_1(x) & & & \\
			\theta_1(x) & 1 & & & \\
			 & & \ddots & & \\
			 & & & 1 & -\theta_{n/2}(x) \\
			 & & & \theta_{n/2}(x) & 1 
		\end{array}
		\right).
	\end{equation*}
	 Using \eqref{20}--\eqref{22} we estimate
	 \begin{align*}
	 	\Vert F-B^T R_{\textnormal{inf}} B U \Vert_{L^{q}(Q')} & \leq C(K)\mathcal{L}^n(\{ \dist(\nabla u,SO(n)U) > \rho \} \cap Q')^{\frac{1}{q}}+\Vert \mathcal{R}U-B^T R_{\textnormal{inf}} B U \Vert_{L^{q}(Q')} \\
	 	& \leq C(K,n,\delta) \left( \Vert \dist(\nabla u,K) \Vert_{L^{q,\infty}(Q)}+\Vert \Delta u \Vert_{L^{1}(Q)}+\sum_{i=1}^{n/2} \Vert \theta_i \Vert_{L^{q,\infty}(Q')} \right) \\
	 	& \leq C(K,n,\delta)(\Vert \dist(\nabla u,K) \Vert_{L^{q,\infty}(Q)}+\Vert \Delta u \Vert_{L^{1}(Q)}).
	 \end{align*}
 	 Hence, by definition of $F$ we have
 	 \begin{equation}\label{23}
 	 	\Vert \nabla u-B^T R_{\textnormal{inf}} B U \Vert_{L^{q}(Q')} \leq C(K,n,\delta)(\Vert \dist(\nabla u,K) \Vert_{L^{q}(Q)}+\Vert \Delta u \Vert_{L^{1}(Q)}).
 	 \end{equation}
  	From now on, we set for brevity $\varepsilon:= \Vert \dist(\nabla u,K) \Vert_{L^{q}(Q)}+\Vert \Delta u \Vert_{L^{1}(Q)}$.
  	Define the matrix field $h:=\nabla u-B^T R_{\textnormal{inf}} B U$. By \eqref{23} we have $h \in L^q(Q',\R^{n\times n})$ and $\Vert h \Vert_{L^{q}(Q')} \leq C(K,n,\delta) \varepsilon$.
  	Since $R_{\textnormal{inf}}$ is the sum of the identity matrix and a skew symmetric matrix, we infer that $U^T B^T R_{\textnormal{inf}} B U$ is the sum of a constant matrix and a skew symmetric matrix on $Q'$. Therefore, using Lemma \ref{lem: divdiv of sksym is zero} we estimate
  	\begin{equation}\label{24}
  		\Vert \divr(\divr(U^T \nabla u)) \Vert_{W^{-2,q}(Q')} = \Vert \divr(\divr(U^T h)) \Vert_{W^{-2,q}(Q')} \leq C(K,n,\delta) \varepsilon.
  	\end{equation}
  	\textbf{Step 2: }Notice that $\divr(\divr(U^T \nabla u))=\divr(U^T \Delta u)=\divr(\Delta (U^T u))$. We consider the following system
  	\begin{equation*}
  		\begin{cases*}
  			\Delta \tilde{v} = \Delta( U^T u) = U^T \Delta u & on $Q'$, \\
  			\tilde{v}=0 & on $\partial Q'$.
  		\end{cases*}
  	\end{equation*}
  	We have 
  	$
  	\Vert U^T \Delta u \Vert_{L^1(Q')} \leq C(K) \Vert \Delta u \Vert_{L^1(Q)}.
  	$
  	We now apply Theorem \ref{BBT} on $Q'$ with $f=U^T \Delta u$. 
  	By \eqref{24} it holds that $\Vert \divr(U^T \Delta u) \Vert_{W^{-2,q}(Q')} \leq C(K,n,\delta) \varepsilon$. Thus, observing that in Theorem \ref{BBT} $[f]=\Vert \divr f \Vert_{W^{-2,q}(Q')}$, we conclude
  	\begin{equation}\label{25}
  		\Vert \nabla \tilde{v} \Vert_{L^q(Q')} \leq C(K,n,\delta) \varepsilon.
  	\end{equation}
  	Let $\tilde{w}:=u-U^{-T} \tilde{v} \in W^{1,q}(Q';\R^n)$. By definition we get $\Delta \tilde{w}=0$ on $Q'$. Hence, we can apply Theorem \ref{thm: rigidity harmonic} to $\tilde{w}$ on $Q'$ and find $V \in K$ such that
  	\begin{equation}\label{26}
  		\Vert \nabla \tilde{w} -V \Vert_{L^{q}(Q')} \leq C(K,n) \Vert \dist(\nabla \tilde{w},K) \Vert_{L^{q}(Q')}.
  	\end{equation}
  	Combining \eqref{25} and \eqref{26} and using the triangle inequality, we estimate
  	\begin{align*}
  		\Vert \nabla u - V \Vert_{L^{q}(Q')} & \leq \Vert \nabla \tilde{w} - V \Vert_{L^{q}(Q')} + \Vert \nabla U^{-T} \tilde{v} \Vert_{L^{q}(Q')} \\
  		&  \leq C(K,n,\delta) (\Vert \dist(\nabla \tilde{w},K) \Vert_{L^{q}(Q')}+ \varepsilon) \\
  		& \leq C(K,n,\delta) \varepsilon.
  	\end{align*}
  	Hence, for the unit cube $Q$ and $Q'\Subset Q$ a smaller cube concentric to $Q$ with $\delta:=\dist(Q',\partial Q)$, we have proven that there exists a constant $C=C(K,n,\delta)>0$ such that for every $u \in W^{1,q}(Q;\R^n)$ there exists an associated matrix $F \in K$ with the property that
  	\begin{equation}\label{27}
  	\Vert \nabla u - F \Vert_{L^q(Q';\R^{n \times n})} \leq C(K,n,\delta)( \Vert \dist(\nabla u,K) \Vert_{L^q(Q)}+\Vert \Delta u \Vert_{L^1(Q)}).
  	\end{equation}
  	Moreover, the constant $C(K,n,\delta)$ is invariant after translation and rescaling of the cube.
  	
  	\noindent \textbf{Step 3: }Finally, we are ready to prove the estimate up to the boundary. We use again the Whitney covering of $\Omega$ with cubes, i.e., Proposition \ref{prop: cubic decomp}. Let $\{ \hat{Q}_j \}_j$ the countable family of cubes covering $\Omega$, for each $j \in \N$ we have $\hat{Q}_j:=x_j+(r_j/2,r_j/2)^n$ and $Q_j:=x_j+(r_j,r_j)^n$. Moreover, by \eqref{p3} of Proposition \ref{prop: cubic decomp}, it holds
  	\begin{equation}\label{28}
  		\chi_{\Omega} \leq \sum_j \chi_{\hat{Q}_j} \leq \sum_j \chi_{Q_j} \leq N \chi_{\Omega}.
  	\end{equation}
  	Consider $\{\varphi_j\}_j$ with $\varphi_j \in C^\infty_c(\hat{Q}_j)$ to be a partition of unity subordinated to the cubes $\hat{Q}_j$ and such that $|\nabla \varphi_j| \leq c/r_j$, where $c>0$ is a dimensional constant. We use the estimate \eqref{27} with $Q'=\hat{Q}_j$ and $Q=Q_j$ for each $j \in \N$. Therefore, there exists a constant $C(K,n)>0$ such that for every $j \in \N$ we can find matrices $F_j \in K$ with the property that
  	\begin{equation}\label{29}
  		\Vert \nabla u - F_j \Vert_{L^q(\hat{Q}_j)} \leq C(K,n)( \Vert \dist(\nabla u,K) \Vert_{L^q(Q_j)}+\Vert \Delta u \Vert_{L^1(Q_j)}).
  	\end{equation}
  	Define the matrix field $F \colon \Omega \to \R^{n \times n}$ as $F:= \sum_j \varphi_j F_j$, by triangular inequality we have that there exists $C=C(N)>0$ such that
  	\begin{equation}\label{30}
  		\Vert \nabla u - F \Vert_{L^q(\Omega)}^q \leq C \sum_j \Vert \nabla u -F_j \Vert_{L^q(\hat{Q}_j)}^q.
  	\end{equation}
  	Since $\sum_j \varphi_j=1$ on $\Omega$, we deduce that
  	\begin{equation*}
  		\nabla F = \sum_j \nabla \varphi_j F_j = \sum_j \nabla \varphi_j(F_j - \nabla u).
  	\end{equation*}
	 Hence, by \eqref{p2} of Proposition \ref{prop: cubic decomp}, \eqref{28} and \eqref{29}
	 \begin{align}
	 	\begin{split}\label{31}
	 		\int_\Omega \dist^q(x,\partial \Omega) |\nabla F|^q \, dx & \leq C \sum_j \int_{\hat{Q}_j} r_j^q |\nabla \varphi_j|^q |\nabla u-F_j|^q \, dx \\
	 		& \leq C \sum_j \int_{\hat{Q}_j} |\nabla u-F_j|^q \, dx \\
	 		& \leq C(K,n) \sum_j \left( \Vert \dist(\nabla u,K) \Vert_{L^q(Q_j)}^q+\Vert \Delta u \Vert_{L^1(Q_j)}^q \right) \\
	 		& \leq C(K,n) \left( \Vert \dist(\nabla u,K) \Vert_{L^q(\Omega)}^q+\Vert \Delta u \Vert_{L^1(\Omega)}^q \right).
	 	\end{split}
	 \end{align}
 	Using the weighted Poincaré inequality in $L^q$, we infer that there exists a matrix $A \in \R^{n \times n}$ such that
 	\begin{equation}\label{32}
 		\Vert F-A \Vert_{L^q(\Omega)}^q \leq C(\Omega) \int_\Omega \dist^q(x,\partial \Omega) |\nabla F|^q \, dx.
 	\end{equation}
 	Combining \eqref{29}--\eqref{32} we estimate
 	\begin{equation}\label{33}
 		\Vert \nabla u - A \Vert_{L^q(\Omega)}^q \leq C(\Omega,K,n) \left( \Vert \dist(\nabla u,K) \Vert_{L^q(\Omega)}^q+\Vert \Delta u \Vert_{L^1(\Omega)}^q \right).
 	\end{equation}
 	Finally, let $F_* \in K$ be the matrix such that $|A-F_*|=\dist(A,K)$, using \eqref{33} we estimate
 	\begin{equation}\label{34}
 		\mathcal{L}^n(\Omega)^{\frac{1}{q}} |F_*-A|\leq C(\Omega,K,n) \left( \Vert \dist(\nabla u,K) \Vert_{L^q(\Omega)}+\Vert \Delta u \Vert_{L^1(\Omega)} \right).
 	\end{equation}
 	Thus, combining \eqref{33} and \eqref{34} we infer \eqref{crit grad rigidity est} and the proof is concluded.
 	
\end{proof}


\subsection{Auxiliary Estimates for $W^{2,p}$ Functions} 
	 
	We point out that if $u \in W^{1,p}(\Omega,\R^n)$ is a function with $L^p$ (or $L^{1^*,\infty}$) bounded Hessian on $\Omega$, then a similar argument as in Section \ref{sez 2} entails a global rigidity result in a multiplicative form that we are going to use in the application in Section \ref{sez 4}. We therefore give statements and proofs.
	 
	We start proving the multiplicative rigidity result for "strong" $L^p$ norm.
	
	\begin{prop}\label{prop: rigidity multiplic form}
		Let $\Omega \subset \R^n$ be a bounded Lipschitz domain and consider $K$ as in \eqref{0}. There exists a constant $C>0$ depending only on $K$, $p$, $n$ and on the isoperimetric constant of $\Omega$ with the property that for every $u \in W^{2,p}(\Omega;\R^n)$ there is an associated matrix $U \in \{ U_1,\dots,U_\ell \}$ such that 
		\begin{equation}\label{65}
			\int_\Omega \dist^p(\nabla u,SO(n)U) \, dx \leq C \int_\Omega \dist^p(\nabla u,K) \, dx+C\left[ \left( \int_\Omega \dist^p(\nabla u,K) \,dx \right)^{\frac{p-1}{p}} \left( \int_\Omega |\nabla^2 u|^p \, dx \right)^{\frac{1}{p}} \right]^{\frac{n}{n-1}}.
		\end{equation}
	\end{prop}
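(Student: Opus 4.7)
The plan is to mimic the opening argument of Proposition \ref{prop: harmonic rigidity cube}, but applied globally on $\Omega$, exploiting the fact that $\nabla^2 u \in L^p(\Omega)$ is directly controlled. This removes the need for the Caccioppoli estimate (which was used there to pass from $\nabla^2 w$ to $\nabla w$) and also eliminates the Whitney-decomposition step used in Theorem \ref{thm: rigidity harmonic} to globalize a cube-wise rigidity estimate, since we already have a global bound on the Hessian. Observe that the inequality \eqref{65} does not even require a single rotation $R \in SO(n)$ on the left-hand side, only the full well $SO(n)U$, so that the final reduction via Theorem \ref{thm: FJM rigidity} is not needed here.

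I would first treat the case $K = K_1 \cup K_2$, with $K_1,K_2$ disjoint compact subsets of $\mathbb{R}^{n\times n}$ and $\rho := \frac{1}{8}\dist(K_1,K_2)$. Arguing as in the beginning of the proof of Proposition \ref{prop: harmonic rigidity cube}, one of the two wells — say $K_2$ — satisfies $\mathcal{L}^n(\{d_{K_2}(\nabla u)\leq 2\rho\}) \leq \mathcal{L}^n(\Omega)/2$, and the bound
\[
\int_\Omega d_{K_1}^p(\nabla u)\,dx \leq 2^p\!\int_\Omega d_{K}^p(\nabla u)\,dx + C(M,\rho)\!\int_\Omega d_K^p(\nabla u)\,dx + C(M)\,\mathcal{L}^n(\{d_{K_2}(\nabla u)\leq \rho\})
\]
holds. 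The measure term is then controlled by applying Lemma \ref{lem: beginning lemma} \emph{directly on $\Omega$}, whose isoperimetric constant enters here, obtaining
\[
\mathcal{L}^n(\{d_{K_2}(\nabla u)\leq \rho\})^{(n-1)/n} \leq \frac{C}{\rho}\int_{\{\rho<d_{K_2}(\nabla u)<2\rho\}} |\nabla^2 u|\,dx.
\]
Applying Hölder's inequality to the right-hand side and estimating $\mathcal{L}^n(\{\rho<d_{K_2}(\nabla u)<2\rho\}) \leq \rho^{-p}\int_\Omega d_K^p(\nabla u)\,dx$ via Chebyshev, one gets
\[
\mathcal{L}^n(\{d_{K_2}(\nabla u)\leq \rho\}) \leq C\left[\left(\int_\Omega d_K^p(\nabla u)\,dx\right)^{(p-1)/p}\left(\int_\Omega |\nabla^2 u|^p\,dx\right)^{1/p}\right]^{n/(n-1)}.
\]
Chaining the two displays gives \eqref{65} with $U = U_i$ for the index $i$ such that $K_1 = SO(n)U_i$.

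For general $K$ as in \eqref{0}, I would reproduce the dichotomy employed at the end of the proof of Proposition \ref{prop: harmonic rigidity cube}: either there is a single well $K_i$ for which $\mathcal{L}^n(\{d_{K\setminus K_i}(\nabla u)\leq 2\rho\})\leq \mathcal{L}^n(\Omega)/2$, reducing matters to the two-well case with the pair $\{K_i, K\setminus K_i\}$; or otherwise every well $K_j$ satisfies the opposite inequality, and one bounds separately each measure term $\mathcal{L}^n(\{d_{K_j}(\nabla u)\leq \rho/2\})$ by the same scheme. I do not anticipate a genuine obstacle here: the only delicate point is the bookkeeping in the multi-well dichotomy, which is identical to the one already performed in Proposition \ref{prop: harmonic rigidity cube}, and the constants depend solely on $K$, $p$, $n$, and the isoperimetric constant of $\Omega$, as required.
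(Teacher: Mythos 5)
Your proposal is correct and follows essentially the same route as the paper's own proof: the same well-selection dichotomy, Lemma \ref{lem: beginning lemma} applied directly on $\Omega$ (so that the isoperimetric constant of $\Omega$ enters), and H\"older plus Chebyshev to generate the multiplicative term, with no Caccioppoli, Whitney, or Theorem \ref{thm: FJM rigidity} step needed. The only differences are cosmetic (labelling of the wells and $\rho/2$ versus $\rho$ in the measure terms).
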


	\begin{proof}
		Let $\rho:=\frac{1}{8} \min_{i \neq j} \dist(K_i,K_j)>0$. Notice that by definition of $\rho$ we have two possibilities: 
		\begin{itemize}
			\item[(a)] there exists a single well (without loss of generality) $K_1$ such that 
			$$
			\mathcal{L}^n(\{ d_{K \setminus K_1}(\nabla u) \leq 2 \rho \} ) \leq \frac{\mathcal{L}^n(\Omega)}{2};
			$$
			\item[(b)] for every $i=1,\dots,\ell$ we have $\mathcal{L}^n(\{ d_{K_i}(\nabla u) \leq 2 \rho \}) \leq \mathcal{L}^n(\Omega)/2$.
		\end{itemize}
		Assume that (a) holds. Arguing as in Proposition \ref{prop: harmonic rigidity cube}, there exists a constant $C=C(K,p,n)>0$ such that
		$$
		\int_\Omega d^p_{K_1}(\nabla u) \, dx \leq C \int_\Omega d^p_K(\nabla u) \, dx + C \mathcal{L}^n(\{ d_{K \setminus K_1}(\nabla u) \leq \rho \}).
		$$
		In turn, by Lemma \ref{lem: beginning lemma} we have that there exists $C>0$ depending on $K$, $p$, $n$ and the isoperimetric constant of $\Omega$ such that
		\begin{align*}
		\mathcal{L}^n(\{ d_{K \setminus K_1}(\nabla u) \leq \rho \}) & \leq C \left( \int_{\{ \rho<d_{K \setminus K_1}(\nabla u)<2\rho \}} |\nabla^2 u| \, dx \right)^{\frac{n}{n-1}} \\
		& \leq C \left[\mathcal{L}^n( \{ \rho<d_{K \setminus K_1}(\nabla u)<2\rho \} )^{\frac{p-1}{p}}\left( \int_\Omega |\nabla^2 u|^p \, dx  \right)^{\frac{1}{p}} \right]^{\frac{n}{n-1}} \\
		& \leq C \left[ \left( \int_\Omega d_K^p(\nabla u) \, dx \right)^{\frac{p-1}{p}} \left( \int_\Omega |\nabla^2 u|^p \, dx \right)^{\frac{1}{p}} \right]^{\frac{n}{n-1}}.
		\end{align*}
		Hence we conclude \eqref{65}. 
		
		If (b) holds instead, we observe that
		$$
		\int_\Omega d^p_{K_1}(\nabla u) \, dx \leq C \int_\Omega d^p_K(\nabla u) \, dx + C \sum_{i=2}^\ell \mathcal{L}^n(\{ d_{K_i}(\nabla u) \leq \rho \}).
		$$
		Moreover, by Lemma \ref{lem: beginning lemma} for every $i=2,\dots,\ell$ we have 
		\begin{align*}
			\mathcal{L}^n(\{ d_{K_i}(\nabla u) \leq \rho \}) & \leq  C \left( \int_{\{ \rho<d_{K_i}(\nabla u)<2\rho \}} |\nabla^2 u| \, dx \right)^{\frac{n}{n-1}} \\
			& \leq C \left[ \left( \int_\Omega d_K^p(\nabla u) \, dx \right)^{\frac{p-1}{p}} \left( \int_\Omega |\nabla^2 u|^p \, dx \right)^{\frac{1}{p}} \right]^{\frac{n}{n-1}},
		\end{align*}
		therefore, we conclude again \eqref{65}.
	\end{proof}

		For the weak $L^p$ norm we have the following result in the case $p=1^*$.
		
		\begin{prop}\label{prop: weak rigidity multipl form}
			Let $\Omega \subset \R^n$ be a bounded Lipschitz domain and consider $K$ as in \eqref{0}. There exists a constant $C>0$ depending only on $K$, $n$ and on the isoperimetric constant of $\Omega$ with the property that for every $u \in W^{2,1}(\Omega;\R^n)$ with $\nabla^2 u \in L^{1^*,\infty}(\Omega;\R^{n \times n \times n})$ there is an associated matrix $U \in \{ U_1,\dots,U_\ell \}$ such that 
			\begin{equation}\label{66}
				\Vert \dist(\nabla u,SO(n)U) \Vert_{L^{1^*,\infty}(\Omega)} \leq C \Vert \dist(\nabla u,K) \Vert_{L^{1^*,\infty}(\Omega)}+C\Vert \dist(\nabla u,K) \Vert_{L^{1^*,\infty}(\Omega)}^{\frac{1}{n-1}}\Vert \nabla^2 u \Vert_{L^{1^*,\infty}(\Omega)}.
			\end{equation}
		\end{prop}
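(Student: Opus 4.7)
The plan is to mirror the proof of Proposition \ref{prop: rigidity multiplic form} using weak Lebesgue norms, together with the exponent bookkeeping developed in Proposition \ref{prop: weak harmonic rigidity}. Set $q:=1^*=n/(n-1)$, $\rho:=\tfrac{1}{8}\min_{i\neq j}\dist(K_i,K_j)$, and abbreviate $\varepsilon_1:=\Vert\dist(\nabla u,K)\Vert_{L^{q,\infty}(\Omega)}$, $\varepsilon_2:=\Vert\nabla^2 u\Vert_{L^{q,\infty}(\Omega)}$. As in the proof of Proposition \ref{prop: rigidity multiplic form}, I would split into two cases: either (a) a single well (say $K_1$) dominates, i.e. $\mathcal{L}^n(\{d_{K\setminus K_1}(\nabla u)\leq 2\rho\})\leq \mathcal{L}^n(\Omega)/2$, or (b) $\mathcal{L}^n(\{d_{K_i}(\nabla u)\leq 2\rho\})\leq\mathcal{L}^n(\Omega)/2$ for every $i$. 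Choosing $U=U_1$ in case (a) (and analogously in (b)), the conclusion \eqref{66} will follow from a bound on $\Vert d_{K_1}(\nabla u)\Vert_{L^{q,\infty}(\Omega)}$ together with Theorem \ref{thm: CDM rigidity}, applied after the substitution $v(y):=u(U_1^{-1}y)$ on $U_1\Omega$ that converts $SO(n)U_1$ into $SO(n)$ at the price of a constant depending on $U_1$.

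The core of the argument is a uniform bound on $\lambda^q\mathcal{L}^n(\{d_{K_1}(\nabla u)>\lambda\})$. Splitting into the ranges $\lambda\geq 2M$, $\rho\leq\lambda\leq 2M$, and $0<\lambda<\rho$ as in Proposition \ref{prop: weak harmonic rigidity}, the first is immediate from $d_{K_1}\leq 2d_K$ on $\{|\nabla u|\geq 2M\}$; in the remaining two ranges I would use the inclusion
\[
\{d_{K_1}(\nabla u)>\lambda\}\subseteq\{d_K(\nabla u)>\lambda\}\cup\{d_{K\setminus K_1}(\nabla u)\leq\max(\lambda,\rho)\},
\]
so that the only remaining task is to control $\mathcal{L}^n(\{d_{K\setminus K_1}(\nabla u)\leq\rho\})$, multiplied by a constant (depending on $K$) coming from $\lambda^q\leq(2M)^q$.

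The key step is this measure bound. By Lemma \ref{lem: beginning lemma} applied with $K_2:=K\setminus K_1$ — the case (a) assumption guarantees that the minimum is realised on $K_2$ — one has
\[
\mathcal{L}^n(\{d_{K\setminus K_1}(\nabla u)\leq\rho\})^{(n-1)/n}\leq\frac{C}{\rho}\int_{\{\rho<d_{K\setminus K_1}(\nabla u)<2\rho\}}|\nabla^2 u|\,dx.
\]
I would estimate the right-hand side by H\"older's inequality in Lorentz spaces, $\int_E|\nabla^2 u|\,dx\leq C\,\mathcal{L}^n(E)^{1/n}\,\varepsilon_2$, which is a consequence of the $L^{n,1}$–$L^{q,\infty}$ duality together with the identity $\Vert\chi_E\Vert_{L^{n,1}}=n\mathcal{L}^n(E)^{1/n}$. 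The disjointness of the wells then yields $\{\rho<d_{K\setminus K_1}(\nabla u)<2\rho\}\subseteq\{d_K(\nabla u)>\rho\}$: indeed, on this set $\nabla u$ lies within $2\rho$ of some $K_j$, $j\neq 1$, so $d_{K_1}(\nabla u)\geq 6\rho$ and hence $d_K$ coincides with $d_{K\setminus K_1}>\rho$. Its measure is therefore at most $\rho^{-q}\varepsilon_1^q$, and plugging back and raising to the power $n/(n-1)$ produces
\[
\mathcal{L}^n(\{d_{K\setminus K_1}(\nabla u)\leq\rho\})\leq C\,\varepsilon_1^{q/(n-1)}\varepsilon_2^q=C\,(\varepsilon_1^{1/(n-1)}\varepsilon_2)^q,
\]
which is precisely the scaling required by \eqref{66}.

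Case (b) is dealt with analogously by summing the above bound over $i\geq 2$, exactly as in the final part of the proof of Proposition \ref{prop: rigidity multiplic form}. The main delicate point — and the only substantive difference with respect to the strong $L^p$ proof — is the use of the Lorentz-space H\"older inequality in place of the ordinary one, together with the careful tracking of exponents needed to land on precisely the factor $\varepsilon_1^{1/(n-1)}\varepsilon_2$. Here the choice $p=1^*$ is essential: it is exactly the exponent for which the isoperimetric gain $(n-1)/n$ and the H\"older factor $1/n$ balance each other.
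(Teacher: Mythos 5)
Your proposal is correct and follows essentially the same route as the paper's proof: the same case split (a)/(b), Lemma \ref{lem: beginning lemma} applied with $K_2=K\setminus K_1$, the $L^{n,1}$--$L^{1^*,\infty}$ H\"older/duality bound $\int_E|\nabla^2u|\,dx\leq C\mathcal{L}^n(E)^{1/n}\Vert\nabla^2u\Vert_{L^{1^*,\infty}}$, the inclusion of the transition layer $\{\rho<d_{K\setminus K_1}(\nabla u)<2\rho\}$ in $\{d_K(\nabla u)>\rho\}$, and the same exponent arithmetic, all combined with the level-set bookkeeping of Proposition \ref{prop: weak harmonic rigidity}. The only deviation is the final appeal to Theorem \ref{thm: CDM rigidity}, which is superfluous (and best omitted, since its constant depends on more than the isoperimetric constant of $\Omega$): estimate \eqref{66} is exactly the bound on $\Vert d_{K_1}(\nabla u)\Vert_{L^{1^*,\infty}(\Omega)}$ that you have already established, and the paper stops there.
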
 
	
		\begin{proof}
			Let us set $q:=1^*$ for brevity. Let $\rho:=\frac{1}{8} \min_{i \neq j} \dist(K_i,K_j)>0$. As in Proposition \ref{prop: rigidity multiplic form} by definition of $\rho$ we have two possible cases:
			\begin{itemize}
				\item[(a)] there exists a single well (without loss of generality) $K_1$ such that 
				$$
				\mathcal{L}^n(\{ d_{K \setminus K_1}(\nabla u) \leq 2 \rho \} ) \leq \frac{\mathcal{L}^n(\Omega)}{2};
				$$
				\item[(b)] for every $i=1,\dots,\ell$ we have $\mathcal{L}^n(\{ d_{K_i}(\nabla u) \leq 2 \rho \}) \leq \mathcal{L}^n(\Omega)/2$. 
			\end{itemize}
			We are going to prove only the first case, the second one being analogous. Arguing as in Proposition \ref{prop: weak harmonic rigidity}, there exists a constant $C=C(K,n)>0$ such that for every $\lambda>0$
			$$
			\lambda^q \mathcal{L}^n(\{ d_{K_1}(\nabla u) > \lambda \}) \leq C \Vert d_K(\nabla u) \Vert_{L^{q,\infty}(\Omega)}^q+C \mathcal{L}^n( \{ d_{K \setminus K_1}(\nabla u) \leq \rho \} ).
			$$
			Using Lemma \ref{lem: beginning lemma} we infer the existence of $C>0$ depending on $K$, $n$ and the isoperimetric constant of $\Omega$ such that
			\begin{align*}
			\mathcal{L}^n( \{ d_{K \setminus K_1}(\nabla u) \leq \rho \} )^{\frac{1}{q}} & \leq C \mathcal{L}^n( \{ \rho < d_{K \setminus K_1}(\nabla u) <2 \rho \} )^{\frac{1}{n}} \Vert \nabla^2 u \Vert_{L^{q,\infty}(\Omega)} \\
			& \leq C\Vert d_K(\nabla u) \Vert_{L^{q,\infty}(\Omega)}^{\frac{1}{n-1}}\Vert \nabla^2 u \Vert_{L^{q,\infty}(\Omega)}.
			\end{align*}
			Thus, for every $\lambda>0$ we have 
			$$
			\lambda \mathcal{L}^n(\{ d_{K_1}(\nabla u) > \lambda \})^{\frac{1}{q}} \leq C  \Vert d_K(\nabla u) \Vert_{L^{q,\infty}(\Omega)}+C\Vert d_K(\nabla u) \Vert_{L^{q,\infty}(\Omega)}^{\frac{1}{n-1}}\Vert \nabla^2 u \Vert_{L^{q,\infty}(\Omega)}.
			$$
			By definition of $L^{q,\infty}$ norm, \eqref{66} follows.
		\end{proof}

\section{Multiwell Rigidity Estimates for Incompatible Fields}

For now we have dealt only with gradient fields, i.e., fields with zero curl. If we want to prove an analogous of the estimate in Theorem \ref{thm: multiwell rigidity crit} for incompatible fields, we need to take into account also the curl. The proof of the multiwell rigidity estimate follows closely the one for incompatible fields in the one well case (for $n=2$ in \cite{MuScaZep} and for $n \geq 3$ in \cite{ContiGarrRigidity}). In particular, it is hinged on the fact that given a cube $Q$ and a field $\beta \in L^{1^*}(Q,\R^{n \times n})$ with $\curl \beta \in \mathcal{M}_b(Q,\R^{n \times n \times n})$ and $\divr \beta \in \mathcal{M}_b(Q,\R^n)$, we can find a decomposition of $\beta$ of the type
\begin{equation*}
	\beta=
	\underbrace{\parbox{50pt}{\centering Y}}_{\parbox{50pt}{\scriptsize\centering $\curl Y=\curl \beta$ \\ $\divr Y=0$}}
	+
	\underbrace{\parbox{50pt}{\centering $\nabla v$}}_{\parbox{50pt}{\scriptsize\centering $\curl \nabla v=0$ \\ $\divr \nabla v=\divr \beta$}}
	+
	\underbrace{\parbox{50pt}{\centering $\nabla w$}}_{\parbox{50pt}{\scriptsize \centering Harmonic}}.
\end{equation*}

We first need two preliminary results. The first one follows immediately from Theorem 3.1 and Lemma 3.4 of \cite{BrezisVanSc} (see also \cite{BourgainBrezis}) and it is used in the proof of the two dimensional case.

\begin{thm}\label{thm: n=2 BB}
	Let $\Omega \subset \R^2$ be a bounded Lipschitz domain. Let $f \in L^1(\Omega,\R^2)$ be a vector field such that $\divr f \in H^{-2}(\Omega)$, then $f \in H^{-1}(\Omega,\R^2)$ and the following estimate holds
	$$
	\Vert f \Vert_{H^{-1}(\Omega,\R^2)} \leq C(\Omega) \left( \Vert \divr f \Vert_{H^{-2}(\Omega)} +\Vert f \Vert_{L^1(\Omega,\R^2)} \right).
	$$
\end{thm}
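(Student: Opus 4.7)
The proof consists in reading Theorem \ref{BBT} in dimension $n=2$, where the critical exponent $1^* = 2$ brings the whole setting back to the familiar Hilbert--Sobolev scale. First, I would reinterpret the seminorm $[f]$ appearing there: integration by parts yields
$$
\int f \cdot \nabla \zeta \, dx = -\langle \divr f, \zeta\rangle,
$$
and the admissible test functions $\zeta$ (with $\zeta = \nabla\zeta = 0$ on $\partial Q$ and $\|\nabla^2\zeta\|_{L^2(Q)} \le 1$) form a dense subset of the unit ball of $H^2_0(Q)$ with respect to the seminorm $\|\nabla^2 \zeta\|_{L^2(Q)}$, which is equivalent to the full $H^2$ norm on $H^2_0(Q)$ by Poincaré. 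Hence $[f]$ coincides with $\|\divr f\|_{H^{-2}(Q)}$ up to a multiplicative constant, and the hypothesis $\divr f \in H^{-2}$ is exactly what is needed to trigger Theorem \ref{BBT} in this dimension.

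Next, I would invoke Theorem \ref{BBT} with $n=2$ to find a solution $u \in W^{1, 1^*}(Q, \R^2) = H^1(Q, \R^2)$ of $-\Delta u = f$ on the unit cube, satisfying
$$
\|u\|_{H^1(Q)} \leq C \bigl(\|f\|_{L^1(Q)} + \|\divr f\|_{H^{-2}(Q)}\bigr).
$$
Since $f = -\divr(\nabla u)$ with $\nabla u \in L^2(Q, \R^{2\times 2})$, the duality pairing gives
$$
|\langle f, \zeta\rangle| = \Bigl|\int_Q \nabla u \cdot \nabla \zeta \, dx\Bigr| \leq \|\nabla u\|_{L^2(Q)} \|\nabla \zeta\|_{L^2(Q)}
$$
for every $\zeta \in H^1_0(Q, \R^2)$, which yields $f \in H^{-1}(Q, \R^2)$ with the desired inequality on the unit cube.

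Finally, the main obstacle is passing from the cube to a general bounded Lipschitz domain $\Omega$: a naive zero extension of $f$ across $\partial \Omega$ would in general produce a singular surface contribution to $\divr \tilde f$ that is not controlled by $\|\divr f\|_{H^{-2}(\Omega)}$. The correct route, carried out in \cite[Lemma 3.4]{BrezisVanSc}, is to localize the problem via a finite partition of unity, flatten $\partial \Omega$ using its Lipschitz charts, and apply the cube version in each local piece. The pointwise commutator between $\divr$ and multiplication by a cutoff contributes only terms of order $|f|$, which are absorbed in the $L^1$ norm of $f$ in the final estimate. Summing over the local charts produces the global inequality on $\Omega$ and completes the proof.
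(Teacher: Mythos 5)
Your cube argument is fine and matches how the paper itself uses Theorem \ref{BBT} (including the identification $[f]\simeq\Vert \divr f\Vert_{H^{-2}(Q)}$, which for $n=2$ the paper also exploits). The problem is the last paragraph, which is where the whole content of the statement for a general bounded Lipschitz $\Omega$ lies, and your sketch there does not go through. First, flattening the boundary with Lipschitz charts is only a bi-Lipschitz change of variables, and composition with a bi-Lipschitz map is not bounded on $H^2$; hence the hypothesis $\divr f\in H^{-2}(\Omega)$ (dual of $H^2_0$) does not transport to the flattened local picture, so "apply the cube version in each local piece" is not available for merely Lipschitz boundaries. Second, even with smooth charts the localized field $\theta_i f$ lives on a half-cube-like region whose boundary contains a piece of (the image of) $\partial\Omega$; to invoke Theorem \ref{BBT} you would have to extend it by zero across that piece, which is exactly the uncontrolled surface-divergence problem you flagged at the start. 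The commutator term $\nabla\theta_i\cdot f$ is indeed harmless (it is $L^1\subset H^{-2}$ in 2D), but it is not where the difficulty sits. As written, your "proof" of the boundary passage is in effect a citation of the boundary analysis in \cite{BrezisVanSc}, which is precisely what the paper does in the body (the theorem is quoted from Theorem 3.1 and Lemma 3.4 there, not proved), so nothing new is established and the description you give of how that passage works is not a correct substitute for it.

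If you want a self-contained argument on a Lipschitz (simply connected) domain, the paper's Appendix (Lemma \ref{App: lem for BBV}) shows the efficient route, which avoids localization and flattening altogether: argue by duality on $\Omega$ itself. Given $\varphi\in H^1_0(\Omega;\R^2)$, note $\int_\Omega\curl\varphi\,dx=0$ and use the Bourgain--Brezis solvability of $\divr Y=\curl\varphi$ with $Y\in L^\infty\cap H^1_0$ (Theorem \ref{thm: app BB}); setting $\psi:=JY$ one gets the decomposition $\varphi=\psi+\nabla\eta$ with $\psi\in L^\infty\cap H^1_0$, $\eta\in H^2_0(\Omega)$, and
\begin{equation*}
\int_\Omega f\cdot\varphi\,dx\le \Vert f\Vert_{L^1(\Omega)}\Vert\psi\Vert_{L^\infty(\Omega)}+\Vert\divr f\Vert_{H^{-2}(\Omega)}\Vert\eta\Vert_{H^2(\Omega)}\le C(\Omega)\left(\Vert f\Vert_{L^1(\Omega)}+\Vert\divr f\Vert_{H^{-2}(\Omega)}\right)\Vert\varphi\Vert_{H^1(\Omega)},
\end{equation*}
which is the claimed estimate. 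Replacing your third step by this duality decomposition (or by an honest reproduction of the boundary estimates of \cite{BrezisVanSc}) would close the gap; the cube discussion in your first two steps then becomes unnecessary.
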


The second one is a technical lemma contained in \cite[Lemma 2.1]{ContiGarrRigidity}.

\begin{lem}\label{lem: CGBB lemma n geq 3}
	Let $n \geq 3$ and consider $Q \subset \R^n$ the unit cube with normal $\nu$ to its boundary. Given a vector field $Y \in L^1(Q,\R^n)$ such that $\divr Y=0$ distributionally on $Q$ and $Y \cdot \nu =0$ on $\partial Q$. Then, 
	$$
	\Vert Y \Vert_{L^{1^*}(Q)} \leq C(n) \Vert \curl Y \Vert_{L^1(Q)}.
	$$
\end{lem}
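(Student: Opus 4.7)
My plan is to argue by duality, combining a Helmholtz decomposition of the test vector field with a critical Bourgain--Brezis--Van Schaftingen potential estimate for divergence-free fields in $L^n$.

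Since $L^{1^*}(Q;\mathbb{R}^n) = (L^n(Q;\mathbb{R}^n))^*$, it is enough to estimate $|\int_Q Y\cdot\varphi\,dx|$ by $C(n)\,\|\curl Y\|_{L^1(Q)}\|\varphi\|_{L^n(Q)}$ for every $\varphi\in C^\infty_c(Q;\mathbb{R}^n)$. Given such a $\varphi$, I would perform a Helmholtz-type decomposition $\varphi = \nabla\psi + \varphi_0$ where $\psi\in W^{1,n}(Q)$ solves the Neumann problem $\Delta\psi = \divr\varphi$ in $Q$ with $\partial_\nu\psi = 0$ on $\partial Q$, so that $\varphi_0 := \varphi - \nabla\psi$ is divergence-free in $Q$ with vanishing normal trace on $\partial Q$, together with the quantitative bound $\|\nabla\psi\|_{L^n(Q)} + \|\varphi_0\|_{L^n(Q)} \leq C(n)\|\varphi\|_{L^n(Q)}$. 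An integration by parts using $\divr Y = 0$ in $Q$ and $Y\cdot\nu = 0$ on $\partial Q$ eliminates the gradient component, reducing the claim to establishing $|\int_Q Y\cdot\varphi_0\,dx|\leq C(n)\|\curl Y\|_{L^1(Q)}\|\varphi_0\|_{L^n(Q)}$ for every $\varphi_0\in L^n(Q;\mathbb{R}^n)$ that is divergence-free and has zero normal trace.

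The core step is then to invoke the critical Bourgain--Brezis--Van Schaftingen estimate (of the same family as Theorem~\ref{BBT}, available thanks to $n\geq 3$): for any such $\varphi_0$ there exists an antisymmetric matrix field $A\in L^\infty(Q;\mathbb{R}^{n\times n})$ vanishing on $\partial Q$ with $(\varphi_0)_i = \sum_{j=1}^{n}\partial_j A_{ij}$ and $\|A\|_{L^\infty(Q)} \leq C(n)\|\varphi_0\|_{L^n(Q)}$. Once $A$ is at hand, I would integrate by parts and exploit the antisymmetry of $A$ through the identity $\sum_{i,j}\partial_j Y_i\,A_{ij} = -\tfrac12\sum_{i,j}(\partial_i Y_j - \partial_j Y_i)A_{ij}$, obtaining
\begin{equation*}
	\int_Q Y\cdot\varphi_0\,dx \;=\; -\sum_{i,j}\int_Q (\partial_j Y_i)\,A_{ij}\,dx \;=\; \frac{1}{2}\int_Q \curl Y : A\,dx,
\end{equation*}
where the boundary terms produced by the integration by parts vanish because $A|_{\partial Q} = 0$. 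H\"older's inequality then gives $|\int_Q Y\cdot\varphi_0\,dx|\leq \tfrac12\|\curl Y\|_{L^1(Q)}\|A\|_{L^\infty(Q)}$, which is the required bound.

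The main obstacle is precisely the $L^\infty$ potential step. This is a delicate endpoint regularity result, not accessible via classical Calder\'on--Zygmund theory (the underlying Riesz-type operator fails to be bounded at the endpoint), and it requires the full strength of the Bourgain--Brezis--Van Schaftingen machinery. The restriction $n\geq 3$ in the statement reflects the fact that this form of the $L^\infty$ potential is unavailable in two dimensions, where a separate route through the $H^{-2}$-type estimate in Theorem~\ref{thm: n=2 BB} is used instead. A comparatively routine technical point is the validity of the $L^n$ Helmholtz decomposition on the Lipschitz cube $Q$, which follows from standard elliptic theory.
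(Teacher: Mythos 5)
The paper itself does not prove this lemma: it is quoted verbatim from \cite[Lemma 2.1]{ContiGarrRigidity}, so your attempt can only be measured against the argument in that source and against correctness. Your duality scheme is the natural one, but it pivots on a statement that you neither prove nor can cite in the form you use it: that every $\varphi_0\in L^n(Q;\R^n)$ with $\divr \varphi_0=0$ and $\varphi_0\cdot\nu=0$ admits an antisymmetric potential $A$ with $\divr A=\varphi_0$, $\Vert A\Vert_{L^\infty(Q)}\leq C(n)\Vert\varphi_0\Vert_{L^n(Q)}$ \emph{and} $A=0$ on $\partial Q$. The Bourgain--Brezis critical potential theorem is proved on $\R^n$ and on the torus; the boundary-value versions available at the critical exponent concern the scalar divergence equation ($\divr Y=f$ with $Y\in W^{1,n}_0\cap L^\infty$, cf.\ Theorem~\ref{thm: app BB} for $L^2$), not the Hodge-type system with an antisymmetric unknown whose full trace vanishes. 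Your computation genuinely needs $A=0$ on $\partial Q$: otherwise the boundary term $\int_{\partial Q} Y_i A_{ij}\nu_j\,d\mathcal{H}^{n-1}$ survives, and it involves tangential components of $Y$ over which you have no control (only the normal trace of $Y$ is prescribed, and $Y$ is merely $L^1$). Producing such an $A$ is itself a delicate endpoint problem (correcting the whole-space potential by a $W^{1,n}$ field does not restore an $L^\infty$ bound), so this is a missing key ingredient, not a routine technicality. Two smaller soft spots: the $L^n$ bound for the Neumann/Helmholtz step is not a consequence of Lipschitz regularity alone (for $p=n\geq4$ it can fail on Lipschitz domains; on the cube it holds by convexity or by reflection), and for $Y\in L^1$ the two integrals obtained after splitting $\varphi=\nabla\psi+\varphi_0$ need not converge absolutely, so the estimate must first be established for regularized fields.

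All of these issues are bypassed simultaneously by the route the cited lemma actually takes, which exploits the geometry of the cube: use the hypotheses $\divr Y=0$ and $Y\cdot\nu=0$ to extend $Y$ across the faces of $Q$ (tangential components evenly, normal component oddly), obtaining a $2$-periodic divergence-free field on the torus with no surface contributions to $\divr$ or $\curl$ and with the $L^1$ norm of the curl increased only by a dimensional factor; after mollification one applies the Bourgain--Brezis estimate on the torus \cite{BourgainBrezis}, either directly in the primal form $\Vert Z\Vert_{L^{1^*}}\leq C(n)\Vert\curl Z\Vert_{L^1}$ for mean-zero divergence-free fields ($n\geq3$), or through your duality computation, which on the torus has no boundary terms and therefore needs no boundary-value potential theorem. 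If you want to keep your argument as written, you must either supply a proof (or a precise reference) for the $L^\infty$ antisymmetric potential with vanishing boundary trace, or replace that step by the reflection-to-the-torus reduction.
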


We are now ready to prove the multiwell rigidity estimate for incompatible fields.

\begin{thm}\label{thm: rig est for incompat}
	Let $\Omega \subset \R^n$ bounded, open and connected set with Lipschitz boundary and consider $K$ as in \eqref{0}. There exists a constant $C(\Omega,K)>0$ with the property that for every field $\beta \in L^{1^*}(\Omega;\R^{n \times n})$ with $\curl \beta \in \mathcal{M}_b(\Omega,\R^{n \times n \times n})$ and $\divr \beta \in \mathcal{M}_b(\Omega,\R^n)$, we can find an associated rotation $F \in K$ such that
	\begin{equation}\label{crit rig est for incompat}
		\Vert \beta-F \Vert_{L^{1^*}(\Omega;\R^{n \times n})} \leq C(\Omega,K)\left( \Vert \dist(\beta,K) \Vert_{L^{1^*}(\Omega)}+|\divr \beta|(\Omega)+|\curl \beta|(\Omega) \right).
	\end{equation}
\end{thm}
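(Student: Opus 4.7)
The plan is to reduce the incompatible case to the gradient case already established in Theorem \ref{thm: multiwell rigidity crit} by absorbing the curl of $\beta$ into a divergence-free auxiliary field. Working first on a cube $Q \Subset \Omega$, I would construct $Y \in L^{1^*}(Q; \R^{n\times n})$ satisfying, row by row, $\curl Y = \curl \beta$, $\divr Y = 0$ on $Q$, together with the zero normal-trace condition $Y\nu = 0$ on $\partial Q$ and the quantitative estimate
$$\|Y\|_{L^{1^*}(Q)} \leq C(n)\, |\curl \beta|(Q).$$
For $n \geq 3$ such a $Y$ is obtained via Lemma \ref{lem: CGBB lemma n geq 3} applied to each row (after regularizing the measure $\curl \beta$ by convolution and passing to the limit), while in the two-dimensional setting one invokes Theorem \ref{thm: n=2 BB} in the spirit of \cite{MuScaZep}.

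Once $Y$ is in hand, the corrected field $\alpha := \beta - Y$ is curl-free on $Q$, hence $\alpha = \nabla u$ for some $u \in W^{1,1^*}(Q; \R^n)$, and $\Delta u = \divr \alpha = \divr \beta$ as a bounded measure on $Q$. Theorem \ref{thm: multiwell rigidity crit} then produces a matrix $F \in K$ with
$$\|\nabla u - F\|_{L^{1^*}(Q)} \leq C\left(\|\dist(\nabla u, K)\|_{L^{1^*}(Q)} + |\divr \beta|(Q)\right).$$
Since $\dist(\cdot, K)$ is $1$-Lipschitz, one has pointwise a.e.\ the inequality $\dist(\nabla u, K) = \dist(\beta - Y, K) \leq \dist(\beta, K) + |Y|$, and combining the two bounds with the control on $\|Y\|_{L^{1^*}(Q)}$ yields the local estimate
$$\|\beta - F\|_{L^{1^*}(Q)} \leq C\left(\|\dist(\beta, K)\|_{L^{1^*}(Q)} + |\divr \beta|(Q) + |\curl \beta|(Q)\right).$$

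To globalize this on the Lipschitz domain $\Omega$, I would follow verbatim Step 3 of the proof of Theorem \ref{thm: multiwell rigidity crit}: apply the Whitney-type decomposition of Proposition \ref{prop: cubic decomp} and use the local estimate on each cube $\hat{Q}_j$ with its concentric enlargement $Q_j$ playing the role of the ambient cube; glue the associated matrices $F_j \in K$ by a subordinate partition of unity into a matrix field $F \colon \Omega \to \R^{n \times n}$; control $\dist(x,\partial\Omega)^{1^*}\,|\nabla F|^{1^*}$ in $L^1(\Omega)$ via the local bounds and the bounded overlap property; and apply the weighted Poincaré inequality to replace $F$ by a single constant matrix $A$. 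A final projection of $A$ onto its closest point $F_* \in K$ produces the required estimate \eqref{crit rig est for incompat}.

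The main obstacle is the construction of $Y$ under the merely bounded-measure assumption on $\curl \beta$: at this critical Sobolev endpoint the naive inversion of $\curl$ on $L^1$ data does not yield an $L^{1^*}$ field, so the Bourgain--Brezis--Van Schaftingen machinery (Theorem \ref{BBT}, Theorem \ref{thm: n=2 BB}) or its consequence Lemma \ref{lem: CGBB lemma n geq 3} is indispensable, and it forces a separate treatment of $n = 2$ and $n \geq 3$. All other steps reduce either to results already proven in this paper (most prominently Theorem \ref{thm: multiwell rigidity crit}, which internally already handles the measure-data Laplacian via Theorem \ref{BBT}) or to the standard Whitney/weighted-Poincaré gluing machinery.
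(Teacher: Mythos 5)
Your reduction works for $n \geq 3$ and there it coincides with the paper's argument: the corrector $Y$ is obtained by solving the Neumann problem $\Delta u = \divr\beta$, $\partial_\nu u = \beta\nu$ and setting $Y = \beta - \nabla u$, Lemma \ref{lem: CGBB lemma n geq 3} gives $\Vert Y\Vert_{L^{1^*}(Q)} \leq C|\curl\beta|(Q)$, and Theorem \ref{thm: multiwell rigidity crit} plus the Whitney/weighted-Poincar\'e gluing of Step 3 finish the proof. The gap is in the case $n=2$, where $1^*=2$. There the intermediate estimate you rely on, namely the existence of $Y$ with $\curl Y = \curl\beta$, $\divr Y = 0$ and $\Vert Y\Vert_{L^{2}(Q)} \leq C|\curl\beta|(Q)$, is false: writing $Y=\nabla z\,J$ with $\Delta z=-\curl\beta$, measure-data elliptic theory only gives $\nabla z \in L^{2,\infty}$, and the strong bound $\Vert\nabla z\Vert_{L^2}\lesssim\Vert\Delta z\Vert_{L^1}$ fails at this critical exponent (by duality it would require $H^1\hookrightarrow L^\infty$ in 2D). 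Theorem \ref{thm: n=2 BB} does not repair this directly, because it applies to an $L^1$ field $f$ whose \emph{divergence} is controlled in $H^{-2}$, and for $f=J\curl\beta$ no such control is available a priori from the hypotheses.

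Consequently your pointwise bound $\dist(\nabla u,K)\leq\dist(\beta,K)+|Y|$ only controls the distance in $L^{2,\infty}$, which cannot be fed into Theorem \ref{thm: multiwell rigidity crit} (whose right-hand side is the strong $L^{1^*}$ norm). This is precisely why the paper's 2D proof is much longer: it first obtains a rigidity estimate in the \emph{weak} norm (via Proposition \ref{prop: weak harmonic rigidity} applied to the harmonic part after the $L^{2,\infty}$ correction), then linearizes around the selected well, passing from weak to strong norms for $\cos\theta-1$ and $\sin\theta-\theta$ through Lemma \ref{lem: from weak to strong 1}, and exploits that $R_{\textnormal{inf}}(\theta)U-U=\cof(U)(R_{\textnormal{inf}}(\theta)-I)$ is skew-symmetric up to $\cof(U)$, so that $\divr\bigl(J\curl(\cof(U)^{-1}\beta)\bigr)$ is bounded in $H^{-2}$ by the full right-hand side; only then does Theorem \ref{thm: n=2 BB} yield an $H^{-1}$ bound on $\curl(\cof(U)^{-1}\beta)$, producing a corrector $\tilde Y=\cof(U)\nabla\tilde v\,J$ in strong $L^2$, after which Theorem \ref{thm: multiwell rigidity crit} applies. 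You flag the construction of $Y$ as ``the main obstacle'' and invoke the right machinery ``in the spirit of \cite{MuScaZep},'' but the way you propose to use it — extracting a strong-$L^{1^*}$ corrector directly from $|\curl\beta|(Q)$ — is not what these results provide, and the weak-norm rigidity and linearization steps that make them applicable are missing from your argument.
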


\begin{proof}
	Set as before $q:=1^*$. First, we are going to prove the estimate (locally) on the unit cube $Q$. The proof differs greatly in the cases $n=2$ and $n \geq 3$, hence, we split it. Once we have a local estimate on cubes, we employ the argument in \cite{ContiGarrRigidity}, used also in the proof of Theorem \ref{thm: multiwell rigidity crit}, to derive the estimate up to the boundary of $\Omega$. Moreover, without loss of generality, we can assume $\beta$ to be smooth.
	
	\noindent \textit{Estimate on $Q$ for $n \geq 3$.} 
	
	\noindent Let $\nu$ be the normal to $\partial Q$. Define as in \cite[Theorem 1.1]{ContiGarrRigidity} $u \in W^{1,1}(Q,\R^n)$ as the solution of the Neumann problem
	\begin{equation*}
		\begin{cases*}
			\Delta u= \divr \beta & in $Q$ \\
			\partial_\nu u = \beta \nu & on $\partial Q$,
		\end{cases*}
	\end{equation*}
	and set $Y:=\beta - \nabla u$. Since by definition of $u$ we have $\divr Y=0$, $Y\nu =0$ and $\curl Y = \curl \beta$, by Lemma \ref{lem: CGBB lemma n geq 3} applied to every row of $Y$ we infer that 
	\begin{equation}\label{35}
		\Vert Y \Vert_{L^q(Q)} \leq C(n) \Vert \curl \beta \Vert_{L^1(Q)}.
	\end{equation}
	Using Theorem \ref{thm: multiwell rigidity crit} we find $F \in K$ such that 
	$$
	\Vert \nabla u -F \Vert_{L^q(Q)} \leq C(K,n) \left( \Vert \dist(\nabla u,K) \Vert_{L^q(Q)}+\Vert \Delta u \Vert_{L^1(Q)} \right).
	$$
	Hence, since $\nabla u = \beta -Y$ and $\Delta u =\divr \beta$, using \eqref{35} we estimate
	\begin{align*}
		\Vert \beta -F \Vert_{L^q(Q)} & \leq \Vert Y \Vert_{L^q(Q)} +\Vert \nabla u-F \Vert_{L^{q}(Q)} \\
		& \leq C(K,n) \left( \Vert \dist( \beta,K) \Vert_{L^q(Q)}+\Vert \Delta u \Vert_{L^1(Q)} + 2\Vert Y \Vert_{L^q(Q)} \right) \\
		& \leq C(K,n) \left( \Vert \dist( \beta,K) \Vert_{L^q(Q)}+\Vert \divr \beta \Vert_{L^1(Q)} + \Vert \curl \beta \Vert_{L^1(Q)} \right).
	\end{align*}

	\noindent \textit{Estimate on $Q$ for $n =2$.}
	
	\noindent We have that $\curl \beta \in L^1(Q,\R^2)$. Observe that for $i=1,2$  
	\begin{equation*}
		(\curl \beta)_i= \curl (\beta^T e_1)=-\divr(J(\beta^T e_i)), \qquad \mbox{where } 
		J:=
		\left(
		\begin{array}{c c}
			0 & -1 \\
			1 & 0
		\end{array}
		\right).
	\end{equation*}
	Hence, $\curl \beta \in H^{-1}(Q,\R^n)$ and there exists a unique solution $z \in H^1_0(Q,\R^2)$ to the Dirichlet problem:
	\begin{equation}\label{36}
		\begin{cases*}
			\Delta z = - \curl \beta & on $Q$, \\
			z = 0 & on $\partial Q$.
		\end{cases*}
	\end{equation}
	Using regularity theory for measure data (see \cite{DoHunMul-MeasData}) we have
	\begin{equation}\label{37}
		\Vert \nabla z \Vert_{L^{2,\infty}(Q)} \leq C \Vert \curl \beta \Vert_{L^{1}(Q)}.
	\end{equation}
	Set $Y:=\nabla z J$, by \eqref{36} we have that $\curl(\beta-Y)=0$. Thus, since $Q$ is convex, we have that there exists $u \in H^1(Q,\R^2)$ such that $\beta-Y=\nabla u$ $\mathcal{L}^n$-a.e. in $Q$. Moreover, by definition observe that $\divr Y=-\curl \nabla z=0$, therefore, $\divr \beta = \Delta u$. Hence we estimate
	\begin{align}\label{38}
		\begin{split}
			\dist(\nabla u,K) \leq \dist(\beta,K)+|\nabla z|.
		\end{split}
	\end{align}
	Let $Q' \Subset Q$ be a smaller cube concentric to $Q$ and set $\delta:=\dist(Q',\partial Q)$, \eqref{38} and Proposition \ref{prop: weak harmonic rigidity}, arguing as in \eqref{15-1}--\eqref{17}, give the existence of a rotation $R \in SO(2)$ and $U \in \{ U_1,\dots,U_\ell \}$ such that
	\begin{align}\label{39}
		\begin{split}
		\Vert \nabla u - RU \Vert_{L^{2,\infty}(Q')} 
		& \leq C(K,\delta) \left( \Vert \dist(\nabla u,K) \Vert_{L^{2,\infty}(Q)}+\Vert \Delta u \Vert_{L^{1}(Q)} \right) \\
		& \leq C(K,\delta) \left( \Vert \dist(\beta,K) \Vert_{L^{2,\infty}(Q)}+\Vert \divr \beta \Vert_{L^{1}(Q)}+\Vert \curl \beta \Vert_{L^{1}(Q)} \right).
		\end{split}
	\end{align}
	Hence,
	\begin{equation}\label{40}
		\Vert \beta - RU \Vert_{L^{2,\infty}(Q')} \leq C(K,\delta) \left( \Vert \dist(\beta,K) \Vert_{L^{2,\infty}(Q)}+\Vert \divr \beta \Vert_{L^{1}(Q)}+\Vert \curl \beta \Vert_{L^{1}(Q)} \right).
	\end{equation}
	Up to substituting $\beta$ with $R^T \beta$, we can assume $R=I$ in \eqref{39} and \eqref{40}. Let $\theta \colon Q \to [-\pi,\pi)$ be the measurable function such that the associated two dimensional rotation
	\begin{equation*}
		R(\theta):=
		\left(
		\begin{array}{c c}
			\cos(\theta) & -\sin(\theta) \\
			\sin(\theta) & \cos(\theta)
		\end{array}
		\right)
	\end{equation*} 
	satisfies $|\beta - R(\theta(x))U|=\dist(\beta,SO(2)U)$ for $\mathcal{L}^2$-a.e. $x \in Q$. 
	We estimate
	\begin{align}\label{41}
		\begin{split}
			\Vert R(\theta)U-U \Vert_{L^{2,\infty}(Q')} & \leq 2 \Vert R(\theta)U-\beta \Vert_{L^{2,\infty}(Q')}+2\Vert \beta-U \Vert_{L^{2,\infty}(Q')} \\
			& = 2\Vert \dist(\beta,SO(2)U) \Vert_{L^{2,\infty}(Q')} +2\Vert \beta-U \Vert_{L^{2,\infty}(Q')} \\
			&  \leq 4\Vert \beta-U \Vert_{L^{2,\infty}(Q')}.
		\end{split}
	\end{align}
	Since the matrix $U$ is invertible, we can find a constant $c>0$ depending only on $U$ such that for $\mathcal{L}^2$-a.e. $x \in Q'$ we have
	$$
	|R(\theta)U-U| \geq c|R(\theta)-I|  \geq c \frac{|\theta|}{2}.
	$$
	By \eqref{41} this gives that
	\begin{equation}\label{42}
		\Vert \theta \Vert_{L^{2,\infty}(Q')} \leq C(K) \Vert \beta-U \Vert_{L^{2,\infty}(Q')}.
	\end{equation}
	If we define $g_1(t):=\cos t-1$ and $g_2:=\sin t-t$, by Lemma \ref{lem: from weak to strong 1} we deduce that
	\begin{equation}\label{43}
		\Vert g_1(\theta) \Vert_{L^2(Q')}+\Vert g_2(\theta) \Vert_{L^2(Q')} \leq C\Vert \theta \Vert_{L^{2,\infty}(Q')}.
	\end{equation}
	Let now $V \colon Q \to K$ be the measurable matrix field such that $|V(x)-\beta(x)|=\dist(\beta(x),K)$ for $\mathcal{L}^2$-a.e. $x \in Q$. By definition we have that $V=R(\theta)U$ on the set $\{ \dist(\beta,SO(2)U) \leq \rho \}$. By definition of $L^{2,\infty}$ norm we infer that
	\begin{equation}\label{44}
		\mathcal{L}^2(\{ \dist(\beta,SO(2)U) > \rho \} \cap Q') \leq C(K) \Vert \dist(\beta,SO(2)U) \Vert_{L^{2,\infty}(Q')}^2 \leq C(K)\Vert \beta-U \Vert_{L^{2,\infty}(Q')}^2.
	\end{equation}
	Consider now the infinitesimal rotation $R_{\textnormal{inf}}(\theta)$ defined as
	\begin{equation*}
		R_{\textnormal{inf}}(\theta):=
		\left(
		\begin{array}{c c}
			1 & -\theta \\
			\theta & 1
		\end{array}
		\right).
	\end{equation*}
	By virtue of \eqref{42}--\eqref{44} we estimate
	\begin{align*}
		\Vert V-R_{\textnormal{inf}}(\theta)U \Vert_{L^2(Q')} & \leq C(K)\mathcal{L}^2(\{ \dist(\beta,SO(2)U) > \rho \} \cap Q')^{\frac{1}{2}}+\Vert R(\theta)U-R_{\textnormal{inf}}(\theta)U \Vert_{L^2(Q')} \\
		& \leq C(K) \left( \Vert \beta-U \Vert_{L^{2,\infty}(Q')} +\Vert \theta \Vert_{L^{2,\infty}(Q')} \right) \\
		& \leq C(K) \Vert \beta-U \Vert_{L^{2,\infty}(Q')}.
	\end{align*}
	Therefore, by definition of $V$ we infer
	\begin{equation}\label{45}
		\Vert \beta - R_{\textnormal{inf}}(\theta)U \Vert_{L^2(Q')} \leq  \Vert \dist(\beta,K) \Vert_{L^{2}(Q)}+C(K) \Vert \beta-U \Vert_{L^{2,\infty}(Q')}.
	\end{equation}

	From now on, we set for brevity $\varepsilon:=\Vert \dist(\beta,K) \Vert_{L^{2}(Q)}+\Vert \divr \beta \Vert_{L^{1}(Q)}+\Vert \curl \beta \Vert_{L^{1}(Q)}$. Define the matrix field $h:=\beta-R_{\textnormal{inf}}(\theta)U$, thanks to \eqref{40} and \eqref{45} we have $\Vert h \Vert_{L^2(Q')} \leq C(K,\delta) \varepsilon$. Since $R_{\textnormal{inf}}(\theta)$ is the sum of the identity and a skew symmetric matrix, we can write
	$$
	R_{\textnormal{inf}}(\theta)U=U+\cof(U)(R_{\textnormal{inf}}(\theta)-I).
	$$
	Hence, $\cof(U)^{-1}\beta=\cof(U)^{-1}U+R_{\textnormal{inf}}(\theta)-I+\cof(U)^{-1}h$. Taking the curl gives
	$$
	\curl \left(\cof(U)^{-1}\beta \right) = -\nabla \theta+\curl \left( \cof(U)^{-1}h \right) \qquad \mbox{on $Q'$}.
	$$
	If we first multiply on the left the expression above with $J$ and then take the divergence we get
	$$
	\divr \left( J \left( \curl \left(\cof(U)^{-1}\beta \right) \right) \right)=\divr \left( J \left( \curl \left(\cof(U)^{-1} h \right) \right) \right) \qquad \mbox{on $Q'$},
	$$
	which in turn implies
	\begin{equation}\label{46}
		\Vert \divr \left( J \left( \curl \left(\cof(U)^{-1}\beta \right) \right) \right) \Vert_{H^{-2}(Q')} \leq C \Vert h \Vert_{L^2(Q')} \leq C(K,\delta) \varepsilon. 
	\end{equation}
	Hence, combining Theorem \ref{thm: n=2 BB} with $f=J\left( \curl \left(\cof(U)^{-1}\beta \right) \right)$ and \eqref{46} gives
	\begin{equation}\label{47}
		\Vert J \left( \curl \left(\cof(U)^{-1}\beta \right) \right)  \Vert_{H^{-1}(Q')} \leq C(K,\delta) \varepsilon.
	\end{equation}

	We now define a new Dirichlet problem on the smaller cube $Q'$. Let $\tilde{v} \in H^1_0(Q',\R^2)$ be the unique solution to
	\begin{equation*}
		\begin{cases*}
			\Delta \tilde{v} = -\curl \left( \cof(U)^{-1} \beta \right) & on $Q'$, \\
			\tilde{v}=0 & on $\partial Q'$.
		\end{cases*}
	\end{equation*}
	As $\curl \left( \cof(U)^{-1} \beta \right) \in H^{-1}(\Omega;\R^2)$ with the estimate in \eqref{47}, we have that $\Vert \nabla \tilde{v} \Vert_{L^2(Q')} \leq C(K,\delta) \varepsilon$.
	Next, a simple computation shows that $\curl \left( \cof(U)^{-1} \beta \right) = \cof(U)^{-1} \curl \beta$, hence on $Q'$ we have
	\begin{equation*}
	\divr \left( \cof(U) \nabla \tilde{v} \right)= \cof(U) \divr(\nabla \tilde{v})=\cof(U) \Delta \tilde{v}=-\cof(U)\curl \left( \cof(U)^{-1} \beta \right) = -\curl \beta.
	\end{equation*}
	Therefore, if we define the matrix field $\tilde{Y}:=\cof(U) \nabla \tilde{v} J$, we have that $\curl(\beta-\tilde{Y})=0$ on $Q'$. Since $Q'$ is simply connected, we can find $\tilde{u} \in H^1(Q';\R^2)$ such that $\beta-\tilde{Y}=\nabla \tilde{u}$ $\mathcal{L}^2$-a.e. on $Q'$. Moreover, it holds that
	\begin{equation}\label{48}
		\divr \tilde{Y} = \divr \left( \cof(U) \nabla \tilde{v} J \right)=\divr \left( \nabla (\cof(U) \tilde{v}) J \right)=0.
	\end{equation}
	Thus, by \eqref{48}
	$$
	\Vert \dist(\nabla \tilde{u},K) \Vert_{L^2(Q')}+ \Vert \Delta \tilde{u} \Vert_{L^1(Q')} \leq C(K) \Vert \nabla \tilde{v} \Vert_{L^2(Q')}+ \Vert \dist(\beta,K) \Vert_{L^2(Q')}+\Vert \divr \beta \Vert_{L^1(Q')} \leq C(K,\delta) \varepsilon.
	$$
	By Theorem \ref{thm: multiwell rigidity crit} applied to $\nabla \tilde{u}$ on $Q'$ there exists $F \in K$ such that
	$$
	\Vert \nabla \tilde{u}-F \Vert_{L^2(Q')} \leq C(K,\delta) \varepsilon,
	$$
	which in turn implies that
	$$
	\Vert \beta-F \Vert_{L^2(Q')} \leq C(K,\delta) \varepsilon.
	$$
	
	\noindent \textit{Estimate up to the boundary.}
	
	\noindent In the previous steps, we have proven in general for $n \geq 2$ that for every unit cube $Q$ and $Q'\Subset Q$ a smaller cube concentric to $Q$ with $\delta:=\dist(Q',\partial Q)$, there exists a constant $C=C(K,n,\delta)>0$ such that for every $\beta \in L^q(Q;\R^{n \times n})$ there exists an associated matrix $F \in K$ with the property that
	\begin{equation*}
		\Vert \beta - F \Vert_{L^q(Q';\R^2{n \times n})} \leq C(K,n,\delta)( \Vert \dist(\beta,K) \Vert_{L^q(Q)}+\Vert \divr \beta \Vert_{L^1(Q)}+\Vert \curl \beta \Vert_{L^1(Q)}).
	\end{equation*}
	Moreover, the constant $C(K,n,\delta)$ is invariant after translation and rescaling of the cube. Notice that in Step 3 in the proof of Theorem \ref{thm: multiwell rigidity crit} we have not used at all the fact that $\nabla u$ is a gradient field, hence, we can conclude reasoning exactly in the same way, up to replacing $\nabla u$ with $\beta$ and $\Vert \Delta u \Vert_{L^1}$ with $\Vert \divr \beta \Vert_{L^1}+\Vert \curl \beta \Vert_{L^1}$.

\end{proof}

\section{$\Gamma$-limit of an infinite dislocations model}\label{sez 4}

As an application of the previous result, we perform the $\Gamma$-convergence limit for an infinite dislocations model in the presence of a multi-well elastic energy. We start by introducing some notations.

Let $\Omega \subset \R^2$ be a simply connected and bounded Lipschitz domain representing a horizontal section of an infinite cylindrical crystal. Let $S:=\{ \hat{b}_1,\hat{b}_2 \}$ be a set of admissible normalized Burgers vectors for the crystal: $\hat{b}_1,\hat{b}_2 \in \R^2$, are linearly independent and are dependent on the crystalline structure. We also set $\mathbb{S}:=\textnormal{Span}_{\mathbb{Z}}S$, the span of $S$ with integer coefficients, i.e. the set of Burgers vectors for multiple dislocations. Every dislocation is characterized by a point in the plane $x_i \in \Omega$ and a vector $\xi_i \in \mathbb{S}$.

For a given crystal, we denote the interatomic distance with $\varepsilon>0$. We also assume that the distance between two dislocations is bounded from below in terms of an intermediate scale $\rho_\varepsilon \gg \varepsilon$ such that $\rho_\varepsilon \to 0$ as $\varepsilon \to 0$. This implies that dislocations are well separated (with respect to the atomic spacing $\varepsilon$), that is, there is a scale separation between the scale of the atomic lattice $\varepsilon$ and the scale of the dislocations distributions $\rho_\varepsilon$. Analogously to \cite{GaLeoPonDislocations, MuScaZep}, we assume the following:
\begin{enumerate}[label=($\rho$.\arabic*),ref=$\rho$.\arabic*]
	\item \label{rho1} $\lim_{\varepsilon\to 0} \rho_\varepsilon/\varepsilon^s=+\infty$ for every $s \in (0,1)$; 
	\item \label{rho2} $\lim_{\varepsilon \to 0} |\log \varepsilon| \rho_\varepsilon^2 =0$.
\end{enumerate} 

Under this assumptions on $\rho_\varepsilon$, we will show, as in the one well case (see \cite{MuScaZep}), that in the limit the energy can
be decomposed into two contributions: a self energy concentrated in the hard-core regions $B_{\rho_\varepsilon}(x_i)$ and an interaction energy stored outside the union of the hard-core regions.

We define the class $X_\varepsilon$ of the admissible dislocation densities as
\begin{equation}\label{X epsilon}
	X_\varepsilon:= \left\{  \mu \in \mathcal{M}(\Omega;\R^2) \colon \mu= \sum_{i=1}^M \varepsilon \xi_i \delta_{x_1}, \ M \in \N, \ B_{2\rho_\varepsilon}(x_i) \subset \Omega, \ |x_j - x_k| \geq 2 \rho_\varepsilon \ \mbox{for every $j \neq k$}, \ \xi_i \in \mathbb{S} \right\}.
\end{equation}
For a given measure $\mu \in X_\varepsilon$ and $r>0$ we also define 
\begin{equation}\label{set minus balls}
	\Omega_r(\mu):= \Omega \setminus \bigcup_{x_i \in \supp(\mu)} \overline{B_r(x_i)}.
\end{equation}
The class of admissible strains associated with $\mu \in X_\varepsilon$ is given by those $\beta \in L^2(\Omega_\varepsilon(\mu);\R^{2 \times 2})$ satisfying
$$
\curl \beta =0 \ \mbox{in} \ \Omega_\varepsilon(\mu) \qquad \mbox{and} \qquad \int_{\partial B_\varepsilon(x_i)} \beta t\, ds=\varepsilon \xi_i, \ \mbox{for $i=1,\dots,M,$}
$$
where $\curl \beta=0$ is intended in a distributional sense. The vector $t$ above denotes the oriented tangent vector to $\partial B_\varepsilon(x_i)$ and the integrand $\beta t$ is intended in the sense of traces (which is well defined thanks to \cite[Theorem 2, pag. 204]{DautrayLions90}).

As customary, it is useful to extend $\beta$ to the whole $\Omega$ setting $\beta \equiv I$ in the balls $B_\varepsilon(x_i)$. Hence, the class of admissible strains associated with a measure $\mu \in X_\varepsilon$ for the perturbed energy is given by
\begin{align}\label{Admissible strains}
	\begin{split}
		\mathcal{AS}_\varepsilon(\mu):= \left\{ \vphantom{\int} \right. \vphantom{\int} \beta \in L^2(\Omega;\R^{2 \times 2}) \colon & \beta \equiv I \ \mbox{in} \ \cup_{i=1}^M B_\varepsilon(x_i), \  \curl \beta=0 \ \mbox{on} \ \Omega_\varepsilon(\mu), \\
		&\left. \divr \beta \in L^2(\Omega_\varepsilon(\mu);\R^2),  \ \int_{\partial B_\varepsilon(x_i)}  \beta t \, d\mathcal{H}^1 = \varepsilon \xi, \ \mbox{for} \ i=1,\dots,M \right\}.
	\end{split}
\end{align} 

We recall that $K=\cup_{i=1}^\ell K_i$ with $K_i=SO(2)U_i$ where $U_i \in \R^{2 \times 2}$ are invertible matrices for every $i=1,\dots,\ell$. Moreover, we can assume that $U_1=I$. The elastic energy $W \colon \R^{2 \times 2} \to [0,+\infty)$ satisfies the following properties:
\begin{enumerate}[label=($W$\arabic*),ref=$W$\arabic*]
	\item \label{W1} $W \in C^0(\R^{2 \times 2})$ and $W \in C^2$ in a neighborhood of $K$;
	\item \label{W2} $W(U)=0$ for every $U \in \{U_1 \dots,U_\ell\}$;
	\item \label{W3} $W$ is frame indifferent, that is $W(RF)=W(F)$ for every $F \in \R^{2 \times 2}$ and $R \in SO(2)$;
	\item  \label{W4} there exists two constant $0<C_1 \leq C_2 <+\infty$ such that for every $F \in \R^{2 \times 2}$ it holds
	$$
	C_1 \ \dist^2(F,K) \leq W(F)\leq C_2 \ \dist^2(F,K).
	$$
\end{enumerate}

Under the assumptions \eqref{W1}--\eqref{W4} we define for $\mu \in X_\varepsilon$ and $\beta \in \mathcal{AS}_\varepsilon(\mu)$ the singularly perturbed energy  
$$
\int_{\Omega_\varepsilon(\mu)} W(\beta) \, dx + \eta_\varepsilon^2 \int_{\Omega_\varepsilon(\mu)} |\divr \beta|^2 \, dx,
$$
where $\eta_\varepsilon>0$ is a suitable parameter such that $\eta_\varepsilon \to 0$ as $\varepsilon \to 0$. We notice that the energy is finite for $\beta \in \mathcal{AS}_\varepsilon(\mu)$ thanks to \eqref{W4}. The higher order penalization term is crucial for compactness (see Proposition \ref{prop: compactness}).

Using \eqref{W2} we can rewrite the perturbed energy associated to an (extended) admissible strain $\beta \in \mathcal{AS}_\varepsilon(\mu)$ as
$$
E_\varepsilon(\mu,\beta):= \int_\Omega W(\beta) \, dx + \eta_\varepsilon^2 \int_{\Omega_\varepsilon(\mu)} |\divr \beta|^2 \, dx.
$$
As in \cite{MuScaZep} the relevant scaling for the energy is $\varepsilon^2 |\log \varepsilon|^2$, thus, we consider the scaled nonlinear perturbed dislocation energy given by
\begin{equation}\label{scaled energy}
	\mathcal{E}_\varepsilon(\mu,\beta) : = 
	\begin{cases*}
		\displaystyle \frac{1}{\varepsilon^2 |\log \varepsilon|^2} E_\varepsilon(\mu,\beta) & if $\mu \in X_\varepsilon$, $\beta \in \mathcal{AS}_\varepsilon(\mu)$, \\
		+\infty & otherwise in $\mathcal{M}(\Omega;\R^{2}) \times L^2(\Omega;\R^{2 \times 2})$.
	\end{cases*}
\end{equation}
Moreover, for every $\varepsilon>0$ and every $\Omega' \subseteq \Omega$ measurable, let us also denote 
$$
\mathcal{E}_\varepsilon(\mu,\beta;\Omega'):= \frac{1}{\varepsilon^2|\log \varepsilon|^2} \left( \int_{\Omega' \cap \Omega_\varepsilon(\mu)} W(\beta) \, dx+\eta_\varepsilon \int_{\Omega' \cap \Omega_\varepsilon(\mu)} |\divr \beta|^2 \, dx \right).
$$

Fix $\gamma \in (0,1)$, we take the penalization $\eta_\varepsilon$ such that
\begin{equation}\label{eta range}
	\frac{\varepsilon|\log \varepsilon|}{\rho_\varepsilon} \leq \eta_\varepsilon \leq \varepsilon^\gamma.
\end{equation}
Under this assumption on $\eta_\varepsilon$, we will show that, as in the one well case, sequences with bounded energy are compact in some topology.

\subsection{Compactness for Sequences with Bounded Energy}

In order to prove the compactness result, we need some technical lemmas.
First, we deal with the shrinking of Lipschitz domains. For a complete treatment of the topic see \cite[Sections 2.1 and 2.2]{ErnGuermond} and \cite{HofMitTay}. As we will use this technique repeatedly in the proof of compactness, we have to make sure that the constants involved remain bounded. This will be subject of investigation also in Appendix A.

\begin{lem}\label{lem: shrinking lip}
	Let $\Omega \subset \R^2$ be a bounded Lipschitz domain and simply connected. For every $\delta >0$ small enough, there exists a bounded Lipschitz domain $\Omega^\delta$ such that:
	\begin{itemize}
		\item[\textnormal{(i)}] the set $\Omega^\delta$ is simply connected and there exists $c>0$ not depending on $\delta$ such that 
		$$\{ x \in \Omega \colon \dist(x,\partial \Omega) > \delta \} \subset \Omega^\delta \Subset \Omega^\delta+B_{c\delta} \subset \Omega;$$
		\item[\textnormal{(ii)}] given a set $E \subset \Omega^\delta$ of finite perimeter, there exists $C(\Omega)>0$ such that 
		$$\min \left\{ \mathcal{L}^2(E) ,\mathcal{L}^2(\Omega \setminus E)\right\} \leq C(\Omega) P\left(E,\Omega^\delta\right)^2;$$
		\item[\textnormal{(iii)}] given $p \in (1,\infty)$, for every $f \in W^{1,p}_{\textnormal{loc}}(\Omega^\delta;\R^{2 \times 2})$ there exists a constant $C'(\Omega,p)>0$ such that
		$$
		\min_{F \in \R^{2 \times 2}} \int_{\Omega^\delta} |f-F|^p \, dx \leq C'(\Omega,p) \int_{\Omega^\delta} |\nabla f|^p \dist^p\left(x,\partial \Omega^\delta\right) \, dx.
		$$
	\end{itemize}
\end{lem}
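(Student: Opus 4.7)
The plan is to realize $\Omega^\delta$ as the image of $\Omega$ under the time-$\delta$ flow of a Lipschitz vector field on $\overline{\Omega}$ that points strictly inward on $\partial\Omega$. Once this is done, all three conclusions will follow from the fact that such a flow is a bi-Lipschitz homeomorphism with constants uniform in $\delta$, so that $\Omega^\delta$ is a Lipschitz, simply connected perturbation of $\Omega$ on which the quantitative inequalities can be pulled back from $\Omega$ itself.

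I would first construct the vector field. Since $\Omega$ is Lipschitz, one can cover $\partial\Omega$ by finitely many balls $U_1,\dots,U_N$ in each of which, after a rigid motion, $\Omega\cap U_i$ is the epigraph of a Lipschitz function of common constant $L$. On each $U_i$ let $X_i$ be the unit vector pointing into the epigraph along the height direction; then, by Rademacher's theorem, $X_i\cdot\nu\le -1/\sqrt{1+L^2}$ at $\mathcal{H}^1$-a.e.\ point of $\partial\Omega\cap U_i$, where $\nu$ is the outer normal. Adding an interior set $U_0\Subset\Omega$ with $X_0\equiv 0$ and gluing through a subordinate partition of unity produces a Lipschitz field $X\colon\overline{\Omega}\to\R^2$ satisfying $X\cdot\nu\le -c_0<0$ at $\mathcal{H}^1$-a.e.\ point of $\partial\Omega$. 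Let $\Phi_t$ denote the flow of $X$: for $t\in[0,t_0]$ with $t_0$ depending only on $\Omega$, $\Phi_t$ is well defined and maps $\overline{\Omega}$ into itself, and a Gronwall argument gives that both $\Phi_t$ and $\Phi_t^{-1}$ have Lipschitz constants bounded independently of $t$. I then set $\Omega^\delta:=\Phi_\delta(\Omega)$.

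Property (i) is then read off from $\Phi_\delta$: since $\Phi_\delta$ is a bi-Lipschitz homeomorphism, simple connectivity and the Lipschitz character of $\partial\Omega^\delta=\Phi_\delta(\partial\Omega)$ are inherited from $\Omega$, with Lipschitz constants controlled by $L$ and by those of $\Phi_\delta$. The lower bound $X\cdot\nu\le -c_0$ yields $\dist(\Phi_\delta(\partial\Omega),\partial\Omega)\ge c_0\delta/2$ for $\delta$ small, hence $\Omega^\delta+B_{c\delta}\subset\Omega$ with $c=c_0/2$. For the other inclusion, $|\Phi_\delta(x)-x|\le\|X\|_\infty\delta$ together with a barrier argument in each graph chart gives $\{\dist(\cdot,\partial\Omega)>\delta\}\subset\Omega^\delta$.

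Finally, (ii) and (iii) are obtained by transferring the corresponding inequalities on $\Omega$ through the change of variables $y=\Phi_\delta^{-1}(x)$. Since $\Phi_\delta$ and its inverse have bi-Lipschitz constants independent of $\delta$, their Jacobians are uniformly bounded above and below and, crucially, $\dist(x,\partial\Omega^\delta)\asymp\dist(\Phi_\delta^{-1}(x),\partial\Omega)$ with comparison constants uniform in $\delta$. The 2D relative isoperimetric inequality on $\Omega^\delta$ then reduces to the one on $\Omega$, and the weighted Poincar\'e inequality of type (iii) to the Boas-Straube inequality on $\Omega$ already invoked in Section~\ref{sez 2}, both with constants depending only on $\Omega$ (and $p$ for the latter). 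The main technical obstacle, and the reason the full proof is postponed to Appendix~A, is precisely the uniform in $\delta$ control on the bi-Lipschitz constants of $\Phi_\delta$ and on the distance comparison $\dist(x,\partial\Omega^\delta)\asymp\dist(\Phi_\delta^{-1}(x),\partial\Omega)$; these are standard Gronwall-type estimates but do need to be checked with care.
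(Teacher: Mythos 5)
Your overall strategy coincides with the paper's: both produce a map $\Phi_\delta$ from $\Omega$ onto $\Omega^\delta$ that is uniformly bi-Lipschitz and $O(\delta)$-close to the identity, and then deduce (ii) and (iii) by pulling back the relative isoperimetric inequality and the weighted Poincar\'e (Boas--Straube) inequality on $\Omega$ through the change of variables, using uniform Jacobian bounds and the comparability $\dist(x,\partial\Omega^\delta)\asymp\dist\left(\Phi_\delta^{-1}(x),\partial\Omega\right)$; this transfer part of your argument is exactly the paper's. The only real difference is the source of $\Phi_\delta$: the paper simply invokes \cite[Lemma 2.1]{ErnGuermond}, which provides a smooth diffeomorphism with $|D^k\Phi_\delta-D^k\textnormal{Id}|\leq C\delta$ for every $k$, whereas you construct $\Phi_\delta$ by hand as the time-$\delta$ flow of a glued transversal vector field (which is essentially how such results are proved, cf.\ \cite{HofMitTay}; note your field is even smooth, so the flow would also deliver the higher-order closeness to the identity that the paper uses in Appendix A).

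The soft spot is precisely the quantitative inward displacement. The deduction ``$X\cdot\nu\leq-c_0$ $\mathcal{H}^1$-a.e.\ on $\partial\Omega$ yields $\dist(\Phi_\delta(\partial\Omega),\partial\Omega)\geq c_0\delta/2$ by Gronwall'' is not a proof for a merely Lipschitz boundary: the normal exists only almost everywhere, and an a.e.\ inequality against $\nu$ does not control trajectories near corner-type points. Nor does the naive chartwise bound $X\cdot e_2^{(i)}-L\,|X\cdot e_1^{(i)}|\geq c$ hold in general: verticals of overlapping charts may differ by an angle up to $\pi-2\arctan(1/L)$, so for $L$ large a convex combination of chart verticals can have negative ``vertical gain'' in a fixed chart. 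The correct argument is chart-based and more delicate (each active vertical carries a full interior cone into $\Omega$, one gains height in the chart carrying the largest partition-of-unity weight, and a covering argument controls the distance to all nearby boundary pieces), or one simply quotes \cite[Lemma 2.1]{ErnGuermond}/\cite{HofMitTay} as the paper does. Likewise, the inclusion $\{\dist(\cdot,\partial\Omega)>\delta\}\subset\Omega^\delta$ is most cleanly obtained by a degree/homotopy argument (the homotopy $\Phi_t$ moves points by at most $t\|X\|_{L^\infty}\leq\delta$, so the degree of $\Phi_\delta$ at such points equals that of the identity) rather than a chartwise barrier. With the construction of $\Phi_\delta$ either repaired along these lines or replaced by the citation, your proof of (i)--(iii) is correct and matches the paper's.
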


\begin{proof}
	Using \cite[Lemma 2.1]{ErnGuermond} we get the existence of a diffeomorphism $\Phi_\delta \in C^\infty(\R^2,\R^2)$ such that $\Phi_\delta(\Omega) +B_{c \delta} \subset \Omega$ for some $c>0$ not depending on $\delta$, and 
	\begin{equation}\label{l0}
	\sup_{x \in \Omega} \left(|D^k \Phi_\delta-D^k \textnormal{Id}|+|D^k \Phi_\delta^{-1}-D^k \textnormal{Id}| \right) \leq C \delta,
	\end{equation}
	for every $k \in \N$ and for some $C>0$, where $D^k$ is the Fréchet derivative of order $k$. Set $\Omega^\delta:=\Phi_\delta(\Omega)$. Property (i) is immediate. To prove (ii) observe that given $E \subset \Omega^\delta$ with finite perimeter and such that $\mathcal{L}^2(E) \leq \mathcal{L}^2(\Omega^\delta)/2$, thanks to \eqref{l0}, for $\delta$ small we have
	$$
	P\left(\Phi_\delta^{-1}(E),\Omega \right) \leq \max_{x \in \Omega^\delta} |D \Phi_\delta^{-1}(x)| P(E,\Omega^\delta) \leq 3 P(E,\Omega^\delta).
	$$
	By area formula and \eqref{l0}, we estimate
	\begin{equation}\label{l3}
	\frac{\mathcal{L}^2(E)}{2} \leq \int_E |\det D\Phi^{-1}_\delta| \, dx= \mathcal{L}^2\left(\Phi_\delta^{-1}(E)\right)=\int_E |\det D\Phi^{-1}_\delta| \, dx \leq 2\mathcal{L}^2(E).
	\end{equation}
	From \eqref{l3} we infer that if $\mathcal{L}^2(E) \leq \mathcal{L}^2(\Omega^\delta)/8$, then $\mathcal{L}^2(\Phi_\delta^{-1}(E)) \leq \mathcal{L}^2(\Omega)/2$. Hence, using the relative isoperimetric inequality on $\Omega$, for every $\delta>0$ small enough we have
	$$
	\frac{\mathcal{L}^2(E)}{2}^2 \leq 2	\mathcal{L}^2\left(\Phi_\delta^{-1}(E)\right) \leq 2C(\Omega) P\left(\Phi_\delta^{-1}(E),\Omega \right)^2 \leq 18 C(\Omega) P\left(E,\Omega^\delta\right).
	$$
	If instead $\min \left\{ \mathcal{L}^2(E) ,\mathcal{L}^2(\Omega^\delta \setminus E)\right\} > \mathcal{L}^2(\Omega^\delta)/8$, we observe that $\mathcal{L}^2(E) \leq 8 \min \left\{ \mathcal{L}^2(E) ,\mathcal{L}^2(\Omega^\delta \setminus E)\right\}$ and conclude reasoning as above.
	
	For (iii) we begin observing that $\dist \left(x,\partial \Omega \right) \leq \max_{x \in \Omega^\delta}|D \Phi_\delta^{-1}(x)| \dist\left(\Phi_\delta(x),\partial \Omega^\delta\right) \leq 3\dist\left(\Phi_\delta(x),\partial \Omega^\delta\right)$ for every $x \in \Omega$. Let $f \in W^{1,p}_{\textnormal{loc}}(\Omega^\delta;\R^{2 \times 2})$. Since $\Omega$ is a Lipschitz domain, by the weighted Poincaré inequality there exists a constant $C'(\Omega,p)>0$ such that
	$$
	\min_{F \in \R^{2 \times 2}} \int_{\Omega} |f \circ \Phi_\delta-F|^p \, dx \leq C'(\Omega,p) \int_{\Omega} |\nabla (f \circ \Phi_\delta)|^p \dist^p\left(x,\partial \Omega\right) \, dx.
	$$
	A change of coordinates gives
	$$
	\min_{F \in \R^{2 \times 2}} \int_{\Omega^\delta} |f -F|^p |\det D\Phi_\delta^{-1}(y)| \, dy \leq 3^p C'(\Omega) \int_{\Omega^\delta} |\nabla f|^p |D\Phi_\delta \circ \Phi_\delta^{-1}(y)|^p  \ \dist^p \left(y,\partial \Omega^\delta\right) |\det D\Phi_\delta^{-1}(y)| \, dy.
	$$
	Recalling \eqref{l0}, we conclude (iii) for every $\delta$ small enough. 
\end{proof}

We also need an elliptic regularity result for strong and weak $L^2$ norms.

\begin{lem}\label{lem: elliptic regularity}
	Let $\Omega \subset \R^2$ be a bounded Lipschitz domain. For $\delta>0$ set $\Omega_\delta:= \{ x \in \Omega \colon \dist(x,\partial \Omega)>\delta \}$. There exists a constant $C>0$ such that for every $u \in H^1(\Omega;\R^2)$ with $\Delta u \in L^2(\Omega;\R^2)$ the following estimates hold
	\begin{align*}
		& \Vert \nabla^2 u \Vert_{L^2(\Omega_\delta)} \leq C \, \frac{1+\diam (\Omega)}{\delta} \left(\Vert \nabla u \Vert_{L^2(\Omega)}+ \Vert \Delta u \Vert_{L^2(\Omega)} \right), \\
		& \Vert \nabla^2 u \Vert_{L^{2,\infty}(\Omega_\delta)} \leq C \, \frac{1+\diam (\Omega)}{\delta} \left(\Vert \nabla u \Vert_{L^{2,\infty}(\Omega)}+ \Vert \Delta u \Vert_{L^2(\Omega)} \right).
	\end{align*}
\end{lem}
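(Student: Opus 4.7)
The plan is a standard Caccioppoli-type argument. I would pick a cutoff $\eta\in C^\infty_c(\Omega)$ with $\eta\equiv 1$ on $\Omega_\delta$, $\mathrm{supp}(\eta)\subset\Omega$, $\|\nabla\eta\|_\infty\le C/\delta$ and then consider $\int |\nabla^2 u|^2\eta^2\,dx$. Integrating by parts twice (first in $j$, then in $i$) in the identity $\int(\partial_i\partial_j u_k)(\partial_i\partial_j u_k)\eta^2$ produces a main term $\int|\Delta u|^2\eta^2$, plus remainders of the form $\int \Delta u\cdot(\nabla u\,\nabla\eta)\,\eta$ and $\int \eta\,\nabla^2 u:(\nabla u\otimes\nabla\eta)$. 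Absorbing the $\nabla^2 u$ factor in the second remainder by Young's inequality yields
\[
\int_{\Omega_\delta}|\nabla^2 u|^2\,dx \le C\|\Delta u\|^2_{L^2(\Omega)}+\frac{C}{\delta^2}\|\nabla u\|^2_{L^2(\Omega)},
\]
which after taking square roots and estimating $1\le(1+\diam\Omega)/\delta$ (in the regime $\delta\le\diam\Omega$, which is the only nontrivial one) gives the claim.

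\textbf{Second estimate (weak $L^{2,\infty}$).} The difficulty is that weak $L^p$ norms do not sum well over a covering, so a direct local-to-global Caccioppoli as above breaks down. I would instead use a global splitting $u=v+w$, where, choosing a ball $B_R\supset\Omega$ with $R\le 2\diam\Omega$ and extending $\Delta u$ by zero, $v\in H^1_0(B_R;\R^2)$ solves $\Delta v=\chi_\Omega\Delta u$ and $w:=u-v$ is harmonic on $\Omega$. By Calder\'on--Zygmund theory on the ball $\|\nabla^2 v\|_{L^2(B_R)}\le C\|\Delta u\|_{L^2(\Omega)}$, and by Poincar\'e $\|\nabla v\|_{L^2(B_R)}\le C\diam(\Omega)\|\Delta u\|_{L^2(\Omega)}$. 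Since $\nabla w$ is harmonic on $\Omega$, Lemma~\ref{weak Caccio est} (applied componentwise to $\nabla w$ on $\Omega_\delta\Subset\Omega$) gives
\[
\|\nabla^2 w\|_{L^{2,\infty}(\Omega_\delta)}\le \frac{C}{\delta}\|\nabla w\|_{L^{2,\infty}(\Omega)}\le \frac{C}{\delta}\bigl(\|\nabla u\|_{L^{2,\infty}(\Omega)}+\|\nabla v\|_{L^2(\Omega)}\bigr).
\]
Adding back $\nabla^2 v$ via the triangle inequality for the weak $L^2$ semi-norm and using $\|\cdot\|_{L^{2,\infty}}\le\|\cdot\|_{L^2}$ yields the desired bound with constant $C(1+\diam\Omega)/\delta$.

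\textbf{Main obstacle.} The only real subtlety is the weak-norm estimate: one cannot simply cover $\Omega_\delta$ by balls $B_{\delta/2}(x_k)\subset B_\delta(x_k)\subset\Omega$ with bounded overlap and sum local Caccioppoli inequalities, because $\|\cdot\|_{L^{2,\infty}}$ is only a quasi-norm and lacks additivity over the covering. The decomposition $u=v+w$ circumvents this by isolating the non-harmonic part $v$, whose $\nabla^2 v$ is controlled in the \emph{strong} $L^2$ norm (which embeds in $L^{2,\infty}$), and passing to the harmonic remainder $w$, where the global weak Caccioppoli estimate of Lemma~\ref{weak Caccio est} applies directly.
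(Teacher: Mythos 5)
Your proposal is correct and follows essentially the same route as the paper: the strong $L^2$ bound is the standard Caccioppoli estimate (which the paper simply cites as classical), and for the weak $L^{2,\infty}$ bound you use exactly the paper's decomposition $u=v+w$ with $v$ solving $\Delta v=\chi_\Omega\Delta u$ on an enclosing ball of radius comparable to $\diam(\Omega)$ and $w$ harmonic, controlled via Lemma \ref{weak Caccio est} and the triangle inequality. Your remark on why a covering argument fails for the weak norm matches the motivation implicit in the paper's proof.
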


\begin{proof}
	We shall prove only the second inequality, as the first one is well known from elliptic regularity theory. Let $B_\Omega \subset \R^2$ be a ball such that $\Omega \subset B_\Omega$ and $\diam B_\Omega \leq 2 \diam (\Omega)$. Consider $v \in H^1_0(B_\Omega;\R^2)$ the unique solution to the system 
	\begin{equation*}
		\begin{cases*}
			\Delta v=\chi_\Omega \, \Delta u & on $B_\Omega$, \\
			v =0 & on $\partial B_\Omega$,
		\end{cases*}
	\end{equation*}
	and set $w=u-w$ on $\Omega$.
	Since $B_\Omega$ is a smooth domain, there exists an universal constant $C>0$ such that $\Vert \nabla^2 v \Vert_{L^2(B_\Omega)} \leq C \Vert \Delta u \Vert_{L^2(\Omega)}$ and $\Vert \nabla v \Vert_{L^2(B_\Omega)} \leq C \, \diam (\Omega) \Vert \Delta u \Vert_{L^2(\Omega)}$. As $w$ is harmonic on $\Omega$, we can use Caccioppoli inequality for $w$ with the $L^{2,\infty}$ norm (see Lemma \ref{weak Caccio est}), getting 
	$$
	\Vert \nabla^2 w \Vert_{L^{2,\infty}(\Omega_\delta)} \leq \frac{C}{\delta} \Vert \nabla w \Vert_{L^{2,\infty}(\Omega)}.
	$$
	Hence,
	\begin{align*}
		\Vert \nabla^2 u \Vert_{L^{2,\infty}(\Omega_\delta)} & \leq \Vert \nabla^2 w \Vert_{L^{2,\infty}(\Omega_\delta)}+ \Vert \nabla^2 v  \Vert_{L^{2,\infty}(\Omega_\delta)} \\ 
		& \leq \frac{C}{\delta} \Vert \nabla w \Vert_{L^{2,\infty}(\Omega)}+ C\Vert \Delta u \Vert_{L^2(\Omega)} \\
		& \leq \frac{C}{\delta} \Vert \nabla u \Vert_{L^{2,\infty}(\Omega)}+\frac{C}{\delta} \Vert \nabla v \Vert_{L^2(\Omega)}+ C\Vert \Delta u \Vert_{L^2(\Omega)} \\
		& \leq C \, \frac{1+ \diam (\Omega)}{\delta} \left(\Vert \nabla u \Vert_{L^{2,\infty}(\Omega)}+ \Vert \Delta u \Vert_{L^2(\Omega)} \right),
	\end{align*}
	which is the desired inequality.
\end{proof}

Finally, we need the following rigidity lemma involving a domain with a "hole and a cut" in the spirit of \cite[Proposition 3.3]{ScaZep}.

\begin{lem}\label{lem: rigidity non equi spaced annulus}
	There exists $C=C(K)>0$ with the following property: for every $0<4r_1<r_2<1$ and every $u \in H^1(B_{r_2} \setminus B_{r_1};\R^2)$ with $\Delta u \in L^2(B_{r_2} \setminus B_{r_1};\R^2)$ there exists an associated matrix $F \in K$ such that
	\begin{equation}\label{annulus rigidity}
		\Vert \nabla u - F \Vert_{L^2(B_{r_2} \setminus B_{r_1};\R^{2 \times 2})} \leq C \Vert \dist(\nabla u,K) \Vert_{L^2(B_{r_2} \setminus B_{r_1})}+r_2 C \Vert \Delta u \Vert_{L^2(B_{r_2} \setminus B_{r_1};\R^{2})}.
	\end{equation}
	The same result holds if instead of the full annulus we have the annulus with a cut, e.g. $(B_{r_2} \setminus B_{r_1}) \setminus L$, where $L:=\{ (z,0) \in \R^2 \colon r_1<z<r_2 \}$.
\end{lem}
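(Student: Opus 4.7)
The plan is to reduce to the unit scale by rescaling, apply the global multiwell rigidity estimate of Theorem~\ref{thm: multiwell rigidity crit}, and verify that the domain-dependent constant produced there can be taken uniformly over the relevant family of annuli.

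\textbf{Step 1 (Rescaling).} I would set $\tilde u(y) := u(r_2 y)/r_2$ on $\tilde\Omega_s := B_1 \setminus B_s$, with $s := r_1/r_2 \in (0, 1/4]$. A direct change of variables gives
$\|\nabla\tilde u - F\|_{L^2(\tilde\Omega_s)} = r_2^{-1}\|\nabla u - F\|_{L^2(B_{r_2}\setminus B_{r_1})}$,
$\|\dist(\nabla\tilde u,K)\|_{L^2(\tilde\Omega_s)} = r_2^{-1}\|\dist(\nabla u,K)\|_{L^2}$,
and $\|\Delta\tilde u\|_{L^2(\tilde\Omega_s)} = \|\Delta u\|_{L^2}$. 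Hence the claim is equivalent to producing $F \in K$ with
$\|\nabla\tilde u-F\|_{L^2(\tilde\Omega_s)}\le C(K)\bigl(\|\dist(\nabla\tilde u,K)\|_{L^2(\tilde\Omega_s)} + \|\Delta\tilde u\|_{L^2(\tilde\Omega_s)}\bigr)$ for some $C(K)$ independent of $s$. The slit version is handled identically after rescaling; since the slit $L$ has two-dimensional measure zero, the full-annulus statement follows from the slit-annulus statement (the $L^2$ data are unchanged, and $\tilde u \in H^1(\tilde\Omega_s)$ restricts automatically to $H^1(\tilde\Omega_s \setminus L)$).

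\textbf{Step 2 (Applying Theorem~\ref{thm: multiwell rigidity crit}).} The slit annulus $\tilde\Omega_s \setminus L$ is a bounded, simply connected, Lipschitz domain. Theorem~\ref{thm: multiwell rigidity crit} with $n=2$ (so $1^* = 2$) then produces $F \in K$ with
$$
\|\nabla\tilde u-F\|_{L^2(\tilde\Omega_s\setminus L)} \le C(\tilde\Omega_s\setminus L, K)\bigl(\|\dist(\nabla\tilde u,K)\|_{L^2} + \|\Delta\tilde u\|_{L^1}\bigr),
$$
and Cauchy--Schwarz together with $|\tilde\Omega_s\setminus L| \le \pi$ converts the $L^1$-norm of $\Delta\tilde u$ into its $L^2$-norm at a cost of a dimensional constant. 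The whole remaining task is to bound $C(\tilde\Omega_s\setminus L, K)$ uniformly in $s\in(0,1/4]$.

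\textbf{Step 3 (Uniformity of the constant).} Inspecting Step~3 of the proof of Theorem~\ref{thm: multiwell rigidity crit}, the domain-dependence of the constant enters only through the weighted Poincar\'e inequality~\eqref{11}: the local cube estimates (Propositions~\ref{prop: harmonic rigidity cube} and~\ref{prop: weak harmonic rigidity}) have $(K,n)$-dependent constants invariant under translation and dilation of the cube (Remark~\ref{rem: constant scaling invariant}), while the overlap constant of the Whitney decomposition (Proposition~\ref{prop: cubic decomp}) is purely dimensional. To close the argument I would establish a Boas--Straube-type weighted Poincar\'e inequality on $\tilde\Omega_s\setminus L$ with constant uniform in $s$: the assumption $4r_1 < r_2$ guarantees that each (slit) annulus in the family has uniformly bounded Lipschitz character (the inner and outer circles are at distance $\geq 3/4$ apart after rescaling, and the slit introduces only one additional corner of a fixed opening). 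Each such domain can then be decomposed into two half slit-annuli, both star-shaped with respect to an interior point at uniformly positive distance from the boundary, and the resulting local weighted Poincar\'e inequalities can be glued along the fixed ring $B_1 \setminus B_{1/2}$, where the weight $\dist(\cdot,\partial\tilde\Omega_s)$ is bounded away from zero uniformly in $s$.

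\textbf{Main obstacle.} The single technical point that requires real work is precisely this uniform-in-$s$ weighted Poincar\'e inequality on the degenerating family of slit annuli as $s \to 0$; all the other ingredients are scaling- and $K$-universal. Once the uniform Poincar\'e bound is in place, the desired estimate with a constant $C = C(K)$ depending only on $K$ follows immediately from Steps~1 and~2, and the slit and non-slit versions of the lemma are obtained at once.
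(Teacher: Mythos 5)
Your reduction in Steps 1--2 is sound as far as it goes: after rescaling, the only obstruction is that the constant of Theorem \ref{thm: multiwell rigidity crit} on the (slit) annulus $B_1\setminus B_s$, $s=r_1/r_2\in(0,1/4]$, a priori blows up as $s\to0$, and inspecting Step 3 of that proof this dependence indeed sits essentially in the weighted Poincar\'e inequality \eqref{11} (plus a harmless lower bound on $\mathcal{L}^2$ of the domain). But the proposal then defers precisely this point, and the sketch you give for it does not hold up. First, the rescaled domains do \emph{not} have uniformly bounded Lipschitz character: the inner boundary circle has radius $s\to0$, and at the junction of the cut with the inner circle the boundary can only be written as a Lipschitz graph over neighbourhoods of size $O(s)$. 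Second, the two half slit-annuli are \emph{not} star-shaped with respect to any point at uniformly positive distance from the boundary: any such centre $z$ lies at distance $\geq c$ from the tiny inner disc $B_s$, and the shadow cast by $B_s$ from $z$ contains points of the half-annulus just outside $B_s$ near the diameter, so segments from $z$ to those points cross $B_s$. Third, the weight $\dist(\cdot,\partial(\,(B_1\setminus B_s)\setminus L))$ is not bounded below on the proposed gluing ring $B_1\setminus B_{1/2}$, because the cut $L$ runs through that ring (and the ring touches the outer boundary). A uniform weighted Poincar\'e inequality on this degenerating family may well be true (e.g.\ via uniform John-domain constants), but that is exactly the nontrivial content of the lemma, and your argument neither proves it nor reduces it to a quoted result; as written the proof is incomplete at its only essential step.

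The paper avoids the issue altogether and never invokes a rigidity estimate on a domain of unbounded eccentricity. It first applies Theorem \ref{thm: multiwell rigidity crit} on the fixed-ratio annulus $B_{4r_1}\setminus B_{r_1}$ (whose constant is scale-invariant), obtaining a well $V\in K$; it then fills the hole by a Stein extension of $v(x)=u(r_1x)-r_1Vx$ from $B_3\setminus B_2$ to $B_3$, with control of both $\nabla\tilde v$ and $\nabla^2\tilde v$ (the latter via interior elliptic regularity), so that the modified map $\tilde u$ is defined on the full ball $B_{r_2}$ with $\dist(\nabla\tilde u,K)$ and $\Delta\tilde u$ controlled by the data on the annulus; applying Theorem \ref{thm: multiwell rigidity crit} on $B_{r_2}$ gives $F\in K$, and $|F-V|$ is estimated by comparing the two bounds on the overlap region of measure $\sim r_1^2$. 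The cut case is then obtained by a covering-with-overlap argument in the spirit of \cite[Proposition 3.3]{ScaZep}. If you want to salvage your route, you must either prove the uniform weighted Poincar\'e inequality for the slit annuli directly (uniform John constants plus a Boas--Straube/Hurri-Syrj\"anen type result), or adopt a hole-filling construction of the paper's kind; the star-shaped decomposition and the gluing ring as described will not work.
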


\begin{proof}
	Using Theorem \ref{thm: multiwell rigidity crit} applied to $u$ restricted in $B_{4r_1} \setminus B_{r_1}$ we get the existence of $C=C(K)>0$ (by scaling invariance) and $V \in K$ such that
	\begin{equation}\label{l1}
		\int_{B_{4r_1} \setminus B_{r_1}} |\nabla u - V|^2 \, dx \leq C \int_{B_{4r_1} \setminus B_{r_1}} \dist^2(\nabla u,K) \, dx + C \left( \int_{B_{4r_1} \setminus B_{r_1}} |\Delta u| \, dx \right)^2.
	\end{equation}
	We now want to find a modification of $u$ on the whole $B_{r_2}$. To this aim, define $v \in H^1(B_4 \setminus B_1;\R^2)$ as $v(x):=u(r_1 x)-r_1Vx$. Notice that by interior elliptic regularity, $v \in H^2(B_3 \setminus B_2;\R^2)$ and there exists $C>0$ depending only on the set $B_3 \setminus B_2$ such that
	\begin{equation}\label{l1.5}
	\int_{B_3 \setminus B_2} |\nabla^2 v|^2 \, dx \leq C \int_{B_4 \setminus B_1} |\nabla v|^2 \, dx + C \int_{B_4 \setminus B_1} |\Delta v|^2 \, dx.
	\end{equation}
	Using Stein extension Theorem we find $C>0$ depending only on the set $B_3 \setminus B_2$ and a function $\tilde{v} \in H^2(B_3;\R^2)$ such that $\tilde{v}=v$ on $B_3 \setminus B_2$,
	\begin{align}\label{n1}
	\begin{split}
	\int_{B_3} |\nabla \tilde{v}|^2 \, dx & \leq C \int_{B_3 \setminus B_2} |\nabla v|^2 \, dx = C r_1^2 \int_{B_3 \setminus B_2} |\nabla u(r_1 x)-V|^2 \, dx=\int_{B_{3r_1} \setminus B_{2r_1}} |\nabla u - V|^2 \, dx \\
	& \leq C \int_{B_{4r_1} \setminus B_{r_1}} \dist^2(\nabla u,K) \, dx + C \left( \int_{B_{4r_1} \setminus B_{r_1}} |\Delta u| \, dx \right)^2
	\end{split}
	\end{align}
	where in the last inequality we used \eqref{l1}. By \eqref{l1.5},
	\begin{align}\label{n2}
	\begin{split}
	\int_{B_3} |\nabla^2 \tilde{v}|^2 \, dx & \leq C r_1^2 \int_{B_4 \setminus B_1} |\nabla u(r_1 x)-V|^2 \, dx + C r_1^4 \int_{B_4 \setminus B_1} |\Delta u(r_1 x)|^2 \, dx \\
	& \leq  C  \int_{B_{4 r_1} \setminus B_{r_1}} |\nabla u-V|^2 \, dx+C r_1^2 \int_{B_{4 r_1} \setminus B_{r_1}} |\Delta u|^2 \, dx \\
	& \leq C \int_{B_{4r_1} \setminus B_{r_1}} \dist^2(\nabla u,K) \, dx + C r_1^2 \int_{B_{4 r_1} \setminus B_{r_1}} |\Delta u|^2 \, dx.
	\end{split}
	\end{align}
	Hence, if we define the function 
	\begin{equation*}
		\tilde{u}(x):=
		\begin{cases*}
			\tilde{v}(x/r_1)+Vx & on $B_{2 r_1}$, \\
			u(x) & on $B_{r_2} \setminus B_{2 r_1}$,
		\end{cases*}
	\end{equation*}
	we estimate 
	\begin{align}\label{l1.6}
		\begin{split}
		\int_{B_{2 r_1}} \dist^2(\nabla \tilde{u},K) \, dx & \leq \int_{B_{2 r_1}} \frac{1}{r_1^2} \left|\nabla \tilde{v}\left(\frac{x}{r_1}\right)\right|^2 \, dx = \int_{B_{2}} \left|\nabla \tilde{v}\left(y\right)\right|^2 \, dy \\
		& \leq  C \int_{B_{4r_1} \setminus B_{r_1}} \dist^2(\nabla u,K) \, dx + C \left( \int_{B_{4r_1} \setminus B_{r_1}} |\Delta u| \, dx \right)^2,
		\end{split}
	\end{align}
	and 
	\begin{align}\label{l1.7}
		\begin{split}
		\left( \int_{B_{2 r_1}} |\Delta \tilde{u}| \, dx\right)^2 & \leq C r_1^2 \int_{B_{2 r_1}} |\Delta \tilde{u}|^2 \, dx \leq \frac{C}{r_1^2} \int_{B_{2 r_1}} \left| \nabla^2 \tilde{v} \left( \frac{x}{r_1} \right) \right|^2 \, dx = C \int_{B_{2}} \left|  \nabla^2 \tilde{v} \left( y \right) \right|^2 \, dy \\
		& \leq C \int_{B_{4r_1} \setminus B_{r_1}} \dist^2(\nabla u,K) \, dx+ Cr_1^2 \int_{B_{4 r_1} \setminus B_{r_1}} |\Delta u|^2 \, dx.
		\end{split}
	\end{align}
	We now apply Theorem \ref{thm: multiwell rigidity crit} to $\tilde{u}$ on $B_{r_2}$ finding a constant $C>0$ and a matrix $F \in K$ such that
	\begin{align*}
			\int_{B_{r_2}} |\nabla \tilde{u}-F|^2 \, dx & \leq C \int_{B_{r_2}} \dist^2(\nabla \tilde{u},K) \, dx + C \left( \int_{B_{r_2}} |\Delta \tilde{u}| \, dx \right)^2 \\
			& \leq C \int_{B_{2r_1}} \dist^2(\nabla \tilde{u},K) \, dx + C \int_{B_{r_2} \setminus B_{2r_1}} \dist^2(\nabla u,K) \, dx+C \left( \int_{B_{r_2}} |\Delta \tilde{u}| \, dx \right)^2 \\
			& \leq C \int_{B_{r_2} \setminus B_{r_1}} \dist^2(\nabla u,K)^2 \, dx+ C \left( \int_{B_{2r_1}} |\Delta \tilde{u}| \, dx+\int_{B_{r_2} \setminus B_{r_1}} |\Delta u| \, dx \right)^2 \\
			& \leq C \int_{B_{r_2} \setminus B_{r_1}} \dist^2(\nabla u,K)^2 \, dx+r_2^2 C \int_{B_{r_2} \setminus B_{r_1}} |\Delta u|^2 \, dx.
	\end{align*}
	where we have also used \eqref{l1.6}, \eqref{l1.7} and H\"older inequality.
	Thus, by definition of $\tilde{u}$ we have that 
	\begin{equation}\label{l2}
		\int_{B_{r_2}\setminus B_{2r_1}} |\nabla u-F|^2 \, dx \leq C \int_{B_{r_2} \setminus B_{r_1}} \dist^2(\nabla u,K) \, dx+r_2^2 C \int_{B_{r_2} \setminus B_{r_1}} |\Delta u|^2 \, dx.
	\end{equation}
	On one hand, using \eqref{l1}, \eqref{l2} and triangular inequality we infer 
	$$
	r_1^2 |F-V|^2 \leq  C \int_{B_{r_2} \setminus B_{r_1}} \dist^2(\nabla u,K) \, dx+r_2^2 C \int_{B_{r_2} \setminus B_{r_1}} |\Delta u|^2 \, dx;
	$$
	on the other hand, using again a triangular estimate we have
	$$
	\int_{B_{r_2} \setminus B_{r_1}} |\nabla u-F|^2 \, dx \leq \int_{B_{r_2} \setminus B_{2r_1}} |\nabla u-F|^2 \, dx+ 2\int_{B_{2r_1} \setminus B_{r_1}} |\nabla u-V|^2 \, dx+ C r_1^2 |F-V|^2.
	$$
	Therefore, \eqref{annulus rigidity} is proven. For the last part of the statement we can argue using a covering with overlapping of the domain in the spirit of \cite[Proposition 3.3]{ScaZep}.
\end{proof}

We now are ready to prove compactness for sequences $(\mu_\varepsilon,\beta_\varepsilon)$ with bounded energy $\mathcal{E}_\varepsilon$.

\begin{prop}[Compactness]\label{prop: compactness}
	Let $\varepsilon_j \to 0$ and let $(\mu_j,\beta_j) \in \mathcal{M}(\Omega;\R^2) \times L^2(\Omega;\R^2)$ be a sequence such that $\sup_j \mathcal{E}_{\varepsilon_j} (\mu_j,\beta_j) < +\infty$. Then, there exists $U \in \{ U_1 \dots U_\ell \}$, a sequence of constant rotations $(R_j)_j \in SO(2)$, $R \in SO(2)$, a measure $\mu \in \mathcal{M}(\Omega;\R^2) \cap H^{-1}(\Omega;\R^2)$ and a field $\beta \in L^2(\Omega;\R^{2 \times 2})$ such that, up to subsequences, $R:=\lim_{j \to +\infty} R_j$,
	\begin{equation}\label{49}
		\frac{\mu_j}{\varepsilon_j|\log \varepsilon_j|} \xrightharpoonup{*} \mu  \qquad \mbox{ in $\mathcal{M}(\Omega;\R^2)$}, 
	\end{equation}
	\begin{equation}\label{50}
		\chi_{\Omega_j}\frac{R^T_j \beta_j - U}{\varepsilon_j |\log \varepsilon_j|} \rightharpoonup \beta \qquad \mbox{ in $L^2(\Omega;\R^{2 \times 2})$};
	\end{equation}
	where $\Omega_j:=\{ x \in \Omega \colon \dist(x,\partial \Omega)>\rho_{\varepsilon_j}/2 \}$
	and $\curl \beta = R^T \mu$.
\end{prop}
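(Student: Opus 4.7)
\medskip

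\noindent\textbf{Proof plan.} The strategy follows the outline sketched after equation~\eqref{e:intro4} in the introduction. Since $\beta_j \in \mathcal{AS}_{\varepsilon_j}(\mu_j)$, the distributional curl of $\beta_j$ is the measure $\sum_i \varepsilon_j \xi_i \delta_{x_i} = \mu_j$ (up to sign conventions coming from the circulation condition), so $|\curl \beta_j|(\Omega) = |\mu_j|(\Omega)$. A standard energy bound on self-energies of dislocations, as in the single-well case of \cite{MuScaZep}, together with the $L^2$-bound $\int_\Omega W(\beta_j)\,dx \leq C\varepsilon_j^2|\log\varepsilon_j|^2$ and the lower bound \eqref{W4}, should give $|\mu_j|(\Omega) \leq C\varepsilon_j|\log\varepsilon_j|$ and $\eta_{\varepsilon_j}^2 \|\divr \beta_j\|_{L^2(\Omega_{\varepsilon_j}(\mu_j))}^2 \leq C\varepsilon_j^2|\log\varepsilon_j|^2$. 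The first step is then to apply Theorem~\ref{thm: rig est for incompat} on a slightly shrunk Lipschitz domain $\Omega^\delta$ (provided by Lemma~\ref{lem: shrinking lip}) to the field $\beta_j$, producing rotations $R_j \in SO(2)$ and matrices $U_{i(j)} \in \{U_1,\ldots,U_\ell\}$ such that, setting $V_j := R_j U_{i(j)}$,
\begin{equation*}
\|\beta_j - V_j\|_{L^2(\Omega^\delta)} \leq C\bigl(\|\mathrm{dist}(\beta_j,K)\|_{L^2(\Omega)} + \varepsilon_j|\log\varepsilon_j| + \|\divr \beta_j\|_{L^1}\bigr) \leq C\,\varepsilon_j|\log\varepsilon_j|/\eta_{\varepsilon_j}.
\end{equation*}

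\noindent The second step is the Helmholtz-type decomposition $\beta_j = \nabla w_j + \hat{\beta}_j$ on a suitable subdomain, where $\hat{\beta}_j$ is divergence-free with $\curl \hat{\beta}_j = \curl \beta_j$ and $w_j \in H^1$. Classical elliptic regularity with measure data (the same results that support Proposition~\ref{prop: weak harmonic rigidity}) yields $\|\hat\beta_j\|_{L^{2,\infty}} \leq C|\mu_j|(\Omega) \leq C\varepsilon_j|\log\varepsilon_j|$, and combining with the previous $L^2$ bound gives $\|\nabla w_j - R_j U_{i(j)}\|_{L^{2,\infty}} \leq C\varepsilon_j|\log\varepsilon_j|/\eta_{\varepsilon_j}$. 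Since $\Delta w_j = \divr \beta_j \in L^2$, the interior weak-$L^{2,\infty}$ Caccioppoli estimate of Lemma~\ref{lem: elliptic regularity} then yields $\|\nabla^2 w_j\|_{L^{2,\infty}(\Omega^{2\delta})} \leq C\varepsilon_j|\log\varepsilon_j|/\eta_{\varepsilon_j}$, which by assumption~\eqref{eta range} on $\eta_{\varepsilon_j}$ is uniformly bounded by $C\varepsilon_j|\log\varepsilon_j|\cdot (\rho_{\varepsilon_j})^{-1}$ or better.

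\noindent The third step uses the multiplicative rigidity of Proposition~\ref{prop: weak rigidity multipl form} applied to $R_j^T w_j$: combining the bounds on $\nabla w_j - R_j U_{i(j)}$ in $L^{2,\infty}$ and on $\nabla^2 w_j$ in $L^{2,\infty}$, one selects (after passing to a subsequence so that $i(j)$ and $R_j$ stabilise to some $U$ and $R$) a single well $U \in \{U_1,\ldots,U_\ell\}$ for which $\|\mathrm{dist}(\nabla w_j, SO(2)U)\|_{L^{2,\infty}} \leq C\varepsilon_j|\log\varepsilon_j|$. The key improvement from the weak to the strong $L^2$ norm then comes from the two-dimensional Bourgain--Brezis--Van Schaftingen inequality (Theorem~\ref{thm: n=2 BB}), applied in the same spirit as in \cite[Theorem 3.3]{MuScaZep}: after linearising the rotation constraint (writing $\mathcal{R}U = U + \mathrm{cof}(U)(R_{\mathrm{inf}}(\theta)-I) + \text{higher order}$), the quantity $\divr(J\curl(\mathrm{cof}(U)^{-1}\beta_j))$ is controlled in $H^{-2}$ by the higher-order nonlinear remainder, which yields the desired $L^2$ bound $\|R_j^T\beta_j - U\|_{L^2(\Omega^\delta)} \leq C\varepsilon_j|\log\varepsilon_j|$.

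\noindent Finally, the weak $L^2$ compactness of $\chi_{\Omega_j}(R_j^T\beta_j - U)/(\varepsilon_j|\log\varepsilon_j|)$ follows, producing the limit $\beta$. For the measures, the bound $|\mu_j|(\Omega) \leq C\varepsilon_j|\log\varepsilon_j|$ gives weak-$*$ compactness of $\mu_j/(\varepsilon_j|\log\varepsilon_j|)$ to some $\mu$; the $H^{-1}$ bound is obtained from the $H^{-1}$-control of $\curl \beta_j$ inherited from the $L^2$ bound on $\beta_j$ via a duality/Hodge argument. The relation $\curl \beta = R^T \mu$ follows by passing to the limit in the distributional identity $\curl(R_j^T\beta_j - U) = R_j^T \curl \beta_j = R_j^T \mu_j$ (noting $\curl U = 0$ and that the core-region contributions vanish thanks to hypothesis~\eqref{rho2}). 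The principal difficulty I expect to be the orchestration of Steps~1--2 on the \emph{shrunk} domain $\Omega^\delta$ in a way compatible with the growing number of hard-core balls $B_{\varepsilon_j}(x_i)$ and with the circulation condition: one must track how the constants in the localised rigidity estimates interact with the penalisation $\eta_{\varepsilon_j}$, using the annular rigidity of Lemma~\ref{lem: rigidity non equi spaced annulus} to replace $\beta_j$ near each dislocation by a harmonic modification, so as to legitimately apply the global rigidity of Theorem~\ref{thm: rig est for incompat} on the perforated domain.
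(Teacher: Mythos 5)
Your Steps 2--3 follow the same route as the paper's treatment of the strain: shrunk domains from Lemma \ref{lem: shrinking lip} with uniform constants, the decomposition $\beta_j=\nabla w_j+\hat\beta_j$ with measure-data elliptic estimates, Lemma \ref{lem: elliptic regularity} for $\nabla^2 w_j$, the multiplicative rigidity statements, the Bourgain--Brezis--Van Schaftingen upgrade from $L^{2,\infty}$ to $L^2$, and Lemma \ref{lem: rigidity non equi spaced annulus} near the cores. The genuine gap is upstream, in what you dismiss as routine. First, the identity $|\curl\beta_j|(\Omega)=|\mu_j|(\Omega)$ is false for the extended field: since $\beta_j\equiv I$ in the core balls, $\curl\beta_j$ is a measure concentrated on the circles $\partial B_{\varepsilon_j}(x_{i,j})$ whose net circulation equals $\varepsilon_j\xi_{i,j}$ but whose total variation is not controlled by $\varepsilon_j|\xi_{i,j}|$; this is precisely why the paper constructs, at scale $\eta_{\varepsilon_j}$, a modified field $\tilde\beta_j$ (extending $\beta_j-\Gamma_{i,j}$ across $B_{2\eta_{\varepsilon_j}}(x_{i,j})$, with matching traces so that $\divr\tilde\beta_j$ remains in $L^2$ with quantitative bounds) for which $|\curl\tilde\beta_j|(\Omega)=|\mu_j|(\Omega)$, before Theorem \ref{thm: rig est for incompat} can legitimately be applied with the right curl input.

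Second, and more seriously, the mass bound $|\mu_j|(\Omega)\le C\varepsilon_j|\log\varepsilon_j|$ (equivalently $M_j\le C|\log\varepsilon_j|$) is not ``a standard energy bound as in the single-well case'': the paper devotes its entire Step 1 to it. The single-well dyadic argument compares the circulation lower bound $c\,\varepsilon_j^2|\log\varepsilon_j|\,|\xi_{i,j}|^2$ on annuli around $x_{i,j}$ with a rigidity estimate on those annuli; in the multiwell setting the available rigidity (Theorem \ref{thm: multiwell rigidity crit} or Theorem \ref{thm: rig est for incompat}) carries the additive term $\Vert \divr \beta_j\Vert_{L^1}^2$, and summing this over the $M_j$ balls only produces a quantity of order $\rho_{\varepsilon_j}^2\,\eta_{\varepsilon_j}^{-2}\varepsilon_j^2|\log\varepsilon_j|^2\le C\rho_{\varepsilon_j}^4$, which under \eqref{rho1}--\eqref{rho2} may be far larger than $\varepsilon_j^2|\log\varepsilon_j|^2$ (take, e.g., $\rho_\varepsilon=|\log\varepsilon|^{-1}$), so the naive additive estimate yields nothing. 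The paper circumvents this by running the full two-step scheme \emph{locally} on each $B_{\rho_{\varepsilon_j}}(x_{i,j})$ --- modification at scale $\eta_{\varepsilon_j}$, weak multiplicative rigidity (Proposition \ref{prop: weak rigidity multipl form}) with Theorem \ref{thm: CDM rigidity}, the linearisation plus Theorem \ref{thm: n=2 BB}, then Proposition \ref{prop: rigidity multiplic form} with Theorem \ref{thm: FJM rigidity} --- so that the divergence enters the local $L^2$ estimate only through $\eta_{\varepsilon_j}^2\int|\divr\beta_j|^2$, which is summable against the energy; only then does comparison with the circulation lower bound on annuli between radii $\varepsilon_j^{s_2}$ and $\varepsilon_j^{s_1}$ give $\sum_i|\xi_{i,j}|^2\le C|\log\varepsilon_j|$ and hence the mass bound. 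Without this, your application of Theorem \ref{thm: rig est for incompat} with input $|\curl|\lesssim\varepsilon_j|\log\varepsilon_j|$, the weak-$*$ compactness of $\mu_j/(\varepsilon_j|\log\varepsilon_j|)$, and the negligibility of the core balls in \eqref{50} all lack their foundation, so the local argument for the mass bound must be supplied explicitly.
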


\begin{proof}
	Let $(\mu_j,\beta_j) \in \mathcal{M}(\Omega;\R^2) \times L^2(\Omega;\R^{2 \times 2 })$ be a sequence such that $\mathcal{E}_{\varepsilon_j}(\mu_j,\beta_j) \leq C$ for some positive constant $C>0$ independent of $j$, where
	$$
	\mu_j=\sum_{i=1}^{M_j} \varepsilon_j \xi_{i,j} \delta_{x_{i,j}},
	$$
	with $\xi_{i,j} \in \mathbb{S}$, $x_{i,j} \in \Omega$ such that $B_{\rho_{\varepsilon_j}}(x_{i,j}) \subset \Omega$ and $|x_{i,j}-x_{k,j}|\geq 2 \rho_{\varepsilon_j}$ for every $i \neq k$.
	Using the growth conditions \eqref{W4} on $W$ we have 
	\begin{equation}\label{51}
		\int_{\Omega} \dist^2(\beta_j,K) \, dx + \eta^2_{\varepsilon_j} \int_{\Omega_{\varepsilon_j}(\mu_j)} |\divr \beta_j|^2 \, dx \leq C \varepsilon_j^2|\log \varepsilon_j|^2.
	\end{equation}
	We split the proof into three main steps.
	
	\noindent \textbf{Step 1.} \textit{Weak convergence of the scaled dislocation measures.} 
	We begin by showing that the sequence $\mu_j/(\varepsilon_j|\log \varepsilon_j|)$ is bounded in mass. In particular, our aim is to show that there exists $C>0$ such that for every $j \geq 1$ large 
	\begin{equation}\label{m1}
		\frac{1}{\varepsilon_j |\log \varepsilon_j|} |\mu_j|(\Omega) \leq C.
	\end{equation}
	We further subdivide the proof of \eqref{m1} into two sub-steps.
	
	\noindent \textbf{Step 1.1.} \textit{Modification of $\beta_j$ inside $B_{\rho_{\varepsilon_j}}(x_{i,j})$.}
	Fix $x_{i,j} \in \Omega$. We start by modifying $\beta_j$ inside $B_{\rho_{\varepsilon_j}}(x_{i,j})$ in order to apply a rigidity estimate. Set $C_{i,j}:=B_{4\eta_{\varepsilon_j}}(x_{i,j}) \setminus B_{\eta_{\varepsilon_j}}(x_{i,j})$ and consider the functions $\Gamma_{i,j} \colon C_{i,j} \to \R^{2 \times 2}$ defined as
	$$
	\Gamma_{i,j}(x):=\frac{\varepsilon_j}{2\pi} \xi_{i,j} \otimes J \frac{x-x_{i,j}}{|x-x_{i,j}|^2}
	$$
	where $J$ is as before the anticlockwise rotation of $\pi/2$. Notice that
	$$
	\int_{C_{i,j}} |\Gamma_{i,j}|^2 \, dx \leq C \varepsilon_j^2 |\xi_{i,j}|^2.
	$$
	Using Theorem \ref{thm: rig est for incompat} applied to $\beta_j$ restricted to $C_{i,j}$ and recalling that $\curl \beta_j =0$ in $C_{i,j}$, we have that by scaling invariance there exists $C=C(K)>0$ and $F \in K$ such that
		$$
		\int_{C_{i,j}} |\beta_j - F|^2 \, dx \leq C \int_{C_{i,j}} \dist^2(\beta_j,K) \, dx + C \left( \int_{C_{i,j}} |\divr \beta_j| \, dx \right)^2.
		$$
		In turn, since $\beta_j \in \mathcal{AS}_{\varepsilon_j}(\mu_j)$, we estimate
		\begin{align*}
			\int_{C_{i,j}} |\beta_j - F|^2 \, dx & \geq \int_{\eta_{\varepsilon_j}}^{4\eta_{\varepsilon_j}} \frac{1}{2\pi r} \left|  \int_{\partial B_r} (\beta_j-F) \cdot t \, ds  \right|^2 \, dr=\int_{\eta_{\varepsilon_j}}^{4\eta_{\varepsilon_j}} \frac{1}{2\pi r} \left| \int_{\partial B_r} \beta_j \cdot t \, ds \right|^2 \, dr \\
			& =\int_{\eta_{\varepsilon_j}}^{4\eta_{\varepsilon_j}} \frac{\varepsilon^2}{2\pi r} |\xi_{i,j}|^2 \, dr = |\xi_{i,j}|^2\frac{ \log 4}{2\pi} \varepsilon^2.
		\end{align*}
	Hence, we infer that
	\begin{equation}\label{m2}
		\int_{C_{i,j}} |\Gamma_{i,j}|^2 \, dx \leq C \int_{C_{i,j}} \dist^2(\beta_j,K) \, dx+ C \left(  \int_{C_{i,j}} |\divr \beta_j| \, dx \right)^2.
	\end{equation}
	By construction we have $\curl(\beta_j -\Gamma_{i,j})=0$ in $C_{i,j}$, $\int_{\partial B_{\eta_{\varepsilon_j}}(x_{i,j})} (\beta_j-\Gamma_{i,j}) \cdot t \, ds=0$ and $\divr \Gamma_{i,j}=0$. Hence, we can find $u_{i,j} \in H^1(C_{i,j};\R^2)$ such that $\nabla u_{i,j}=\beta_j-\Gamma_{i,j}$ and $\Delta u_{i,j}=\divr \beta_j$ on $C_{i,j}$. Using Theorem \ref{thm: multiwell rigidity crit} applied to $\nabla u_{i,j}$ on $C_{i,j}$ and \eqref{m2}, we find a matrix $F_{i,j} \in K$ such that
	\begin{align*}
		\int_{C_{i,j}} |\nabla u_{i,j}-F_{i,j}|^2 \, dx & \leq C \int_{C_{i,j}} \dist^2(\nabla u_{i,j},K) \, dx + C \left(  \int_{C_{i,j}} |\divr \beta_j| \, dx \right)^2 \\
		& \leq C \int_{C_{i,j}} \dist^2(\beta_j,K) \, dx+ C \left(  \int_{C_{i,j}} |\divr \beta_j| \, dx \right)^2.
	\end{align*}
	Using the fact that $\Delta u_{i,j}=\divr \beta_j \in L^2(C_{i,j};\R^2)$ and arguing as in the proof of Lemma \ref{lem: rigidity non equi spaced annulus} (see \eqref{n1} and \eqref{n2}), we find $v_{i,j} \in H^2(B_{3\eta_{\varepsilon_j}}(x_{i,j});\R^2)$ such that $\nabla v_{i,j}=\nabla u_{i,j}-F_{i,j}$ on $B_{3 \eta_{\varepsilon_j}}(x_{i,j}) \setminus B_{2 \eta_{\varepsilon_j}}(x_{i,j})$,
	\begin{equation}\label{m3}
		\int_{B_{3 \eta_{\varepsilon_j}}(x_{i,j})} |\nabla v_{i,j}|^2 \, dx \leq C \int_{C_{i,j}} |\nabla u_{i,j}-F_{i,j}|^2 \, dx \leq C \int_{C_{i,j}} \dist^2(\beta_j,K) \, dx+ C \left(  \int_{C_{i,j}} |\divr \beta_j| \, dx \right)^2
	\end{equation}
	and
	\begin{align}\label{m4}
		\begin{split}
			\int_{B_{3 \eta_{\varepsilon_j}}(x_{i,j})} |\nabla^2 v_{i,j}|^2 \, dx & \leq C \int_{C_{i,j}} |\nabla^2 u_{i,j}|^2 \, dx+C \int_{C_{i,j}} |\nabla u_{i,j}-F_{i,j}|^2 \, dx \\
			& \leq  \frac{C}{\eta_{\varepsilon_j}^2} \int_{C_{i,j}} |\nabla u_{i,j}-F_{i,j}|^2 \, dx + C \int_{C_{i,j}} |\Delta u_{i,j}|^2 \, dx \\
			& \leq \frac{C}{\eta_{\varepsilon_j}^2} \int_{C_{i,j}} \dist^2(\beta_j,K) \, dx +  C \int_{C_{i,j}} |\divr \beta_j|^2 \, dx.
		\end{split}
	\end{align}
	Now we are ready to define the modification $\tilde{\beta}_j \colon B_{\rho_{\varepsilon_j}}(x_{i,j}) \to \R^{2 \times 2}$ as
	\begin{equation*}
		\tilde{\beta}_j:=
		\begin{cases*}
			\beta_j & in $B_{\rho_{\varepsilon_j}}(x_{i,j}) \setminus B_{2\eta_{\varepsilon_j}}(x_{i,j})$,\\
			\nabla v_{i,j}+F_{i,j} & in $B_{2\eta_{\varepsilon_j}}(x_{i,j})$.
		\end{cases*}
	\end{equation*}
	Notice that by construction, H\"older inequality and \eqref{m3} we have
	\begin{align}\label{m5}
		\begin{split}
			\int_{B_{\rho_{\varepsilon_j}}(x_{i,j})} \dist^2(\tilde{\beta}_j,K) \, dx & \leq \int_{B_{\rho_{\varepsilon_j}}(x_{i,j})} \dist^2(\beta_j,K) \, dx+\int_{B_{2 \eta_{\varepsilon_j}}(x_{i,j})} |\nabla v_{i,j}|^2 \, dx \\
			& \leq C \int_{B_{\rho_{\varepsilon_j}}(x_{i,j})} \dist^2(\beta_j,K) \, dx + C \eta_{\varepsilon_j}^2 \int_{C_{i,j}} |\divr \beta_j|^2 \, dx.
		\end{split}
	\end{align}
	For the divergence of $\tilde{\beta}_j$ on $B_{\rho_{\varepsilon_j}}(x_{i,j})$, let us remark that since we obtained $v_{i,j}$ using an extension from $H^2(B_{3 \eta_{\varepsilon_j}}(x_{i,j}) \setminus B_{2 \eta_{\varepsilon_j}}(x_{i,j}))$ to $H^2(B_{3 \eta_{\varepsilon_j}}(x_{i,j}))$, we have that the trace of $\beta_j-\Gamma_{i,j}$ coincides with the trace of $\nabla v_{i,j}+F_{i,j}$ as elements of $H^{1/2}(\partial B_{2\eta_{\varepsilon_j}}(x_{i,j});\R^{2 \times 2})$. Hence, for every $\varphi \in C^1_c (B_{\rho_{\varepsilon_j}}(x_{i,j});\R^2)$, we have
	$$
	\int_{\partial B_{2\eta_{\varepsilon_j}}(x_{i,j})} \varphi \cdot [(\nabla v_{i,j}+F_{i,j}) \nu] \, d\mathcal{H}^1 =\int_{\partial B_{2\eta_{\varepsilon_j}}(x_{i,j})} \varphi \cdot [(\beta_j-\Gamma_{i,j}) \nu] \, d\mathcal{H}^1 =\int_{\partial B_{2\eta_{\varepsilon_j}}(x_{i,j})} \varphi \cdot (\beta_j  \nu) \, d\mathcal{H}^1,
	$$
	where $\nu$ is the normal vector to $\partial B_{2\eta_{\varepsilon_j}}(x_{i,j})$.
	This implies that $\divr \tilde{\beta}_j$ is absolutely continuous on $B_{\rho_{\varepsilon_j}}(x_{i,j})$ with respect to the Lebesgue measure. Therefore, using \eqref{m4} we estimate
	\begin{align}\label{m6}
		\begin{split}
			\int_{B_{\rho_{\varepsilon_j}}(x_{i,j})} |\divr \tilde{\beta}_j|^2 & \leq \int_{B_{\rho_{\varepsilon_j}}(x_{i,j})} |\divr \beta_j |^2 \, dx +\sum_{i=1}^{M_j} \int_{B_{2 \eta_{\varepsilon_j}}(x_{i,j})} |\nabla^2 v_{i,j}|^2 \, dx \\
			& \leq \frac{C}{\eta_{\varepsilon_j}^2} \int_{B_{\rho_{\varepsilon_j}}(x_{i,j})} \dist^2(\beta_j,K) \, dx +  C \int_{B_{\rho_{\varepsilon_j}}(x_{i,j})} |\divr \beta_j|^2 \, dx.
		\end{split}
	\end{align}
	Finally, by construction we have $|\curl \tilde{\beta}_j|(B_{\rho_{\varepsilon_j}}(x_{i,j}))=\varepsilon_j |\xi_{i,j}|$. 
	
	\noindent \textbf{Step 1.2.} \textit{Rigidity of $\tilde{\beta}_j$ on $B_{\rho_{\varepsilon_j}}(x_{i,j})$.} To shorten the notation from now on we assume that $x_{i,j}=0$ and we drop the dependence on $i$. Moreover, we assume also that $\rho_{\varepsilon_j} \leq 1$.
	Applying Theorem \ref{thm: rig est for incompat} to $\tilde{\beta}_j$ on $B_{\rho_{\varepsilon_j}}$, we find $F_j \in K$ such that
	\begin{equation}\label{m6.5}
		\Vert \tilde{\beta}_j-F_j \Vert_{L^2(B_{\rho_{\varepsilon_j}})} \leq C\left( \Vert \dist(\tilde{\beta}_j,K) \Vert_{L^2(B_{\rho_{\varepsilon_j}})} +\Vert \divr \tilde{\beta}_j \Vert_{L^1(B_{\rho_{\varepsilon_j}})} +\varepsilon_j |\xi_{i,j}| \right) \leq C \frac{1}{\eta_{\varepsilon_j}} \varepsilon_j |\log \varepsilon_j|.
	\end{equation}
	
	Define now the function $z_j \in H^1_0(\Omega;\R^2)$ as the unique solution to the Dirichlet problem
	\begin{equation*}
		\begin{cases*}
			\Delta z_j = -\curl \tilde{\beta}_j & on $B_{\rho_{\varepsilon_j}}$, \\
			z_j=0 & on $\partial B_{\rho_{\varepsilon_j}}$.
		\end{cases*}
	\end{equation*}
	By elliptic regularity with measure data we infer $\Vert \nabla z_j \Vert_{L^{2,\infty}(B_{\rho_{\varepsilon_j}})} \leq C \varepsilon_j |\xi_{i,j}|$. Define $w_j \in H^1(B_{\rho_{\varepsilon_j}};\R^2)$ as $\nabla w_j:=\tilde{\beta}_j-\nabla z_j J$. We have that $\divr \nabla w_j=\divr \tilde{\beta}_j$ on $B_{\rho_{\varepsilon_j}}$ and, by \eqref{m5}, 
	\begin{equation}\label{m7}
		\Vert \dist(\nabla w_j,K) \Vert_{L^{2,\infty}(B_{\rho_{\varepsilon_j}})}^2 \leq C\int_{B_{\rho_{\varepsilon_j}}} \dist^2(\beta_j,K) \, dx + C \eta_{\varepsilon_j}^2 \int_{C_{i,j}} |\divr \beta_j|^2 \, dx+C\varepsilon_j^2|\xi_{i,j}|^2.
	\end{equation}
	Using Lemma \ref{lem: elliptic regularity}, \eqref{m6}, \eqref{m6.5} and recalling \eqref{eta range}, we estimate
	\begin{align}\label{m8}
		\begin{split}
			\Vert \nabla^2 w_j \Vert_{L^{2,\infty}(B_{\rho_{\varepsilon_j/2}})} \, dx & \leq \frac{C}{\rho_{\varepsilon_j}} \Vert \nabla w_j-F_j \Vert_{L^{2,\infty}(B_{\rho_{\varepsilon_j}})}+\frac{C}{\rho_{\varepsilon_j}}\Vert \divr \tilde{\beta}_j \Vert_{L^2(B_{\rho_{\varepsilon_j}})} \\
			& \leq \frac{C}{\rho_{\varepsilon_j}} \Vert \tilde{\beta}_j-F_j \Vert_{L^2(B_{\rho_{\varepsilon_j}})}+\frac{C}{\rho_{\varepsilon_j}}\varepsilon_j |\xi_{i,j}|+\frac{C}{\rho_{\varepsilon_j}}\Vert \divr \tilde{\beta}_j \Vert_{L^2(B_{\rho_{\varepsilon_j}})} \\
			& \leq C \frac{1}{\rho_{\varepsilon_j} \eta_{\varepsilon_j}} \varepsilon_j|\log \varepsilon_j| \leq C.
		\end{split}
	\end{align}
	Combining \eqref{m7} and \eqref{m8} with Proposition \ref{prop: weak rigidity multipl form} and  Theorem \ref{thm: CDM rigidity}, we find a rotation $S_j \in SO(2)$ and $\tilde{U} \in \{ U_1,\dots,U_\ell \}$ such that the following bound holds
	\begin{align}\label{m9}
		\begin{split} 
			\Vert \nabla w_j - S_j \tilde{U} \Vert_{L^{2,\infty}(B_{\rho_{\varepsilon_j/2}})}^2 
			\leq C \int_{B_{\rho_{\varepsilon_j}}} \dist^2(\beta_j,K) \, dx + C \eta_{\varepsilon_j}^2 \int_{C_{i,j}} |\divr \beta_j|^2 \, dx+C\varepsilon_j^2|\xi_{i,j}|^2.
		\end{split}
	\end{align}
	
	Arguing as in the proof of Theorem \ref{thm: rig est for incompat}, in particular from \eqref{40} to \eqref{47}, we infer that
	\begin{align*}
		\Vert J(\curl (\cof(\tilde{U})^{-1}\tilde{\beta}_j)) \Vert_{H^{-1}(B_{\rho_{\varepsilon_j/2}})}^2 & \leq C \left( \Vert \dist(\tilde{\beta}_j,K) \Vert_{L^2(B_{\rho_{\varepsilon_j}})}^2 +\Vert \tilde{\beta}_j - S_j \tilde{U} \Vert_{L^{2,\infty}(B_{\rho_{\varepsilon_j/2}})}^2+\varepsilon_j^2|\xi_{i,j}|^2 \right) \\
		& \leq C \int_{B_{\rho_{\varepsilon_j}}} \dist^2(\beta_j,K) \, dx + C \eta_{\varepsilon_j}^2 \int_{C_{i,j}} |\divr \beta_j|^2 \, dx+C\varepsilon_j^2|\xi_{i,j}|^2.
	\end{align*}
	Hence, if we define another Dirichlet problem in $B_{\rho_{\varepsilon_j/2}}$ as
	\begin{equation*}
		\begin{cases*}
			\Delta \tilde{z}_j = -\curl \left( \cof(\tilde{U})^{-1} \tilde{\beta}_j \right) & on $B_{\rho_{\varepsilon_j/2}}$, \\
			\tilde{z}_j=0 & on $\partial B_{\rho_{\varepsilon_j/2}}$;
		\end{cases*}
	\end{equation*}
	we have that there exists a unique solution $\tilde{z}_j \in H^1_0(B_{\rho_{\varepsilon_j/2}};\R^2)$ such that
	\begin{equation}\label{m10}
		\int_{B_{\rho_{\varepsilon_j/2}}} |\nabla \tilde{z}_j|^2 \, dx \leq C \int_{B_{\rho_{\varepsilon_j}}} \dist^2(\beta_j,K) \, dx + C \eta_{\varepsilon_j}^2 \int_{C_{i,j}} |\divr \beta_j|^2 \, dx+C\varepsilon_j^2|\xi_{i,j}|^2.
	\end{equation}
	Define $\tilde{w}_j \in H^1(B_{\rho_{\varepsilon_j/2}};\R^2)$ as $\nabla \tilde{w}_j := \tilde{\beta}_j-\cof(\tilde{U}) \nabla\tilde{z}_j J$. Therefore, similarly as before, using Lemma \ref{lem: elliptic regularity}, \eqref{m6}, \eqref{m6.5} and \eqref{m10} we estimate
	\begin{align}\label{m11}
		\begin{split}
			\Vert \nabla^2 \tilde{w}_j \Vert_{L^2(B_{\rho_{\varepsilon_j/4}})} & \leq \frac{C}{\rho_{\varepsilon_j}} \Vert \tilde{\beta}_j-F_j \Vert_{L^2(B_{\rho_{\varepsilon_j}})}+\frac{C}{\rho_{\varepsilon_j}} \Vert \nabla  \tilde{z}_j \Vert_{L^2(B_{\rho_{\varepsilon_j/2}})}+\frac{C}{\rho_{\varepsilon_j}}\Vert \divr \tilde{\beta}_j \Vert_{L^2(B_{\rho_{\varepsilon_j}})} \\
			& \leq C \frac{1}{\rho_{\varepsilon_j} \eta_{\varepsilon_j}} \varepsilon_j|\log \varepsilon_j| \leq C.
		\end{split}
	\end{align}
	Finally, using Proposition \ref{prop: rigidity multiplic form}, \eqref{m10}, \eqref{m11} and Theorem \ref{thm: FJM rigidity}, we infer the existence of $R_j \in SO(2)$ and $U \in \{ U_1,\dots,U_\ell \}$ such that the following bound holds
	\begin{align*}
		\Vert \nabla \tilde{w}_j-R_j U \Vert_{L^2(B_{\rho_{\varepsilon_j/4}})}^2  & \leq C \int_{B_{\rho_{\varepsilon_j/4}}} \dist^2(\nabla \tilde{w}_j,K) \, dx \\
		& \leq C \int_{B_{\rho_{\varepsilon_j}}} \dist^2(\beta_j,K) \, dx + C \eta_{\varepsilon_j}^2 \int_{C_{i,j}} |\divr \beta_j|^2 \, dx+C\varepsilon_j^2|\xi_{i,j}|^2.
	\end{align*}
	This in turn gives,
	$$
	\Vert \tilde{\beta}_j-R_j U\Vert_{L^2(B_{\rho_{\varepsilon_j/4}})}^2 \leq C\int_{B_{\rho_{\varepsilon_j}}} \dist^2(\beta_j,K) \, dx + C \eta_{\varepsilon_j}^2 \int_{C_{i,j}} |\divr \beta_j|^2 \, dx+C\varepsilon_j^2|\xi_{i,j}|^2.
	$$
	By definition of $\tilde{\beta}_j$ we have
	\begin{equation*}
		\int_{B_{\rho_{\varepsilon_j/4}} \setminus B_{2 \eta_{\varepsilon_j}}} |\beta_j - R_j U|^2 \, dx \leq C\int_{B_{\rho_{\varepsilon_j}}} \dist^2(\beta_j,K) \, dx + C \eta_{\varepsilon_j}^2 \int_{C_{i,j}} |\divr \beta_j|^2 \, dx+C\varepsilon_j^2|\xi_{i,j}|^2.
	\end{equation*}
	
	Hence, for every $j$ large enough and $i=1,\dots,M_j$ we have proven that there exist $A_{i,j} \in K$, $C>0$ not dependent on $j$ and $i$ and $0<s_1<s_2<\gamma<1$ (where $\gamma$ is as in \eqref{eta range}) such that
	\begin{equation}\label{m12}
		\int_{B_{\varepsilon_j^{s_1}}(x_{i,j}) \setminus B_{\varepsilon_j^{s_2}}(x_{i,j})} |\beta_j-A_{i,j}|^2 \, dx \leq C\int_{B_{\rho_{\varepsilon_j}}(x_{i,j})} \dist^2(\beta_j,K) \, dx + C \eta_{\varepsilon_j}^2 \int_{C_{i,j}} |\divr \beta_j|^2 \, dx+C\varepsilon_j^2|\xi_{i,j}|^2.
	\end{equation}
	In turn, since $\beta_j \in \mathcal{AS}_{\varepsilon_j}(\mu_j)$, for every $j \geq 1 $ and $i=1,\dots,M_j$ we estimate
	\begin{align}\label{m13}
		\begin{split}
			\int_{B_{\varepsilon_j^{s_1}}(x_{i,j}) \setminus B_{\varepsilon_j^{s_2}}(x_{i,j})} |\beta_j - A_{i,j}|^2 \, dx & \geq \int_{\varepsilon_j^{s_2}}^{\varepsilon_j^{s_1}} \frac{1}{2\pi r} \left|  \int_{\partial B_r} (\beta_j-A_{i,j}) \cdot t \, d\mathcal{H}^1  \right|^2 \, dr=\int_{\varepsilon_j^{s_2}}^{\varepsilon_j^{s_1}} \frac{1}{2\pi r} \left|  \int_{\partial B_r} \beta_j \cdot t \, d\mathcal{H}^1  \right|^2 \, dr \\
			& =\int_{\varepsilon_j^{s_2}}^{\varepsilon_j^{s_1}} \frac{\varepsilon_j^2}{2\pi r} |\xi_{i,j}|^2 \, dr = \frac{s_2-s_1}{2\pi} \varepsilon_j^2|\log \varepsilon_j||\xi_{i,j}|^2.
		\end{split}
	\end{align}
	Hence, combining \eqref{51}, \eqref{m12}, \eqref{m13} we obtain
	\begin{align*}
		C \varepsilon_j^2 |\log \varepsilon_j|^2+C\varepsilon_j^2 \sum_{i=1}^{M_j} |\xi_{i,j}|^2 & \geq \sum_{i=1}^{M_j} \int_{B_{\rho_{\varepsilon_j}}(x_{i,j}) \setminus B_{\varepsilon_j}(x_{i,j})}\left( \dist^2(\beta_j,K)+\eta_{\varepsilon_j}^2|\divr \beta_j|^2 \right) \, dx+C\varepsilon_j^2 \sum_{i=1}^{M_j} |\xi_{i,j}|^2 \\
		& \geq \sum_{i=1}^{M_j} \frac{s_2-s_1 }{2\pi} \varepsilon_j^2|\log \varepsilon_j||\xi_{i,j}|^2.
	\end{align*}
	This in turn gives
	$$
	\left( \frac{s_2-s_1 }{2\pi} |\log \varepsilon_j|-C \right) \sum_{i=1}^{M_j}  |\xi_{i,j}|^2 \leq C |\log \varepsilon_j|^2.
	$$
	Finally, using the fact that non null elements of $\mathbb{S}$ have norm bounded away from zero, we conclude that for $j$ large enough $M_j \leq C|\log \varepsilon_j|$ and thus \eqref{m1}.

	\noindent \textbf{Step 2.} \textit{Weak convergence of the scaled strains.} 
	We further subdivide the proof into three sub-steps.
	
	\noindent \textbf{Step 2.1.} \textit{Modification of $\beta_j$.} We need to modify $\beta_j$ on $\Omega$ in order to obtain $\tilde{\beta}_j \in L^2(\Omega;\R^{2 \times 2})$ such that $|\curl \tilde{\beta}_j|(\Omega)=|\mu_j|(\Omega)$. 
    Take $x_{i,j}$ in the support of $\mu_j$. For every $i=1,\dots,M_j$, we set as before $C_{i,j}:=B_{4\eta_{\varepsilon_j}}(x_{i,j}) \setminus B_{\eta_{\varepsilon_j}}(x_{i,j})$. Arguing exactly as in Step 1.1. and keeping the exact notations, we define 
	the modification $\tilde{\beta}_j \colon \Omega \to \R^{2 \times 2}$ of $\beta_j$ on $\Omega$ as
	\begin{equation*}
		\tilde{\beta}_j:=
		\begin{cases*}
			\beta_j & in $\Omega_{2 \eta_{\varepsilon_j}}(\mu_j)$,\\
			\nabla v_{i,j}+F_{i,j} & in $B_{2\eta_{\varepsilon_j}}(x_{i,j})$ for every $i=1,\dots,M_j$;
		\end{cases*}
	\end{equation*}
	such that
	\begin{align}\label{55}
		\begin{split}
			\int_\Omega \dist^2(\tilde{\beta}_j,K) \, dx & \leq \int_\Omega \dist^2(\beta_j,K) \, dx+\sum_{i=1}^{M_j} \int_{B_{2 \eta_{\varepsilon_j}}(x_{i,j})} |\nabla v_{i,j}|^2 \, dx \\
			& \leq C \int_\Omega \dist^2(\beta_j,K) \, dx + C \sum_{i=1}^{M_j} \eta_{\varepsilon_j}^2 \int_{C_{i,j}} |\divr \beta_j|^2 \, dx \\
			& \leq \vphantom{\int} C \varepsilon_j^2|\log \varepsilon_j|^2,
		\end{split}
	\end{align}
	and
	\begin{align}\label{56}
		\begin{split}
			\int_\Omega |\divr \tilde{\beta}_j|^2 & \leq \int_\Omega |\divr \beta_j |^2 \, dx +\sum_{i=1}^{M_j} \int_{B_{2 \eta_{\varepsilon_j}}(x_{i,j})} |\nabla^2 v_{i,j}|^2 \, dx \\
			& \leq \frac{C}{\eta_{\varepsilon_j}^2} \int_{\Omega} \dist^2(\beta_j,K) \, dx +  C \int_{\Omega} |\divr \beta_j|^2 \, dx \\
			& \leq C \frac{1}{\eta_{\varepsilon_j}^2}\varepsilon_j^2 |\log \varepsilon_j|^2.
		\end{split}
	\end{align}
	Finally, by construction we have $|\curl \tilde{\beta}_j|(\Omega)=|\mu_j|(\Omega)$. 
	
	\noindent \textbf{Step 2.2.} \textit{Rigidity of $\tilde{\beta}_j$ on $\Omega$.} We essentially repeat the construction in Step 1.2 but on the whole set $\Omega$, keeping track of the additional dependence on $\rho_\varepsilon$. As we are going to use Lemma \ref{lem: shrinking lip}, in this step we explicit the dependence of the constants appearing. We now apply Theorem \ref{thm: rig est for incompat} to $\tilde{\beta}_j$ on $\Omega$ and find $V_j \in K$ such that
	\begin{equation}\label{56.5}
		\Vert \tilde{\beta}_j-V_j \Vert_{L^2(\Omega)} \leq C(\Omega) \left( \Vert \dist(\tilde{\beta}_j,K) \Vert_{L^2(\Omega)} +\Vert \divr \tilde{\beta}_j \Vert_{L^1(\Omega)} +|\mu_j|(\Omega) \right) \leq C(\Omega) \frac{1}{\eta_{\varepsilon_j}}\varepsilon_j|\log \varepsilon_j|.
	\end{equation}
	
	Define the function $z_j \in H^1_0(\Omega;\R^2)$ as the unique solution to the Dirichlet problem
	\begin{equation*}
		\begin{cases*}
			\Delta z_j = -\curl \tilde{\beta}_j & on $\Omega$, \\
			z_j=0 & on $\partial \Omega$.
		\end{cases*}
	\end{equation*}
	By elliptic regularity with measure data, we have $\Vert \nabla z_j \Vert_{L^{2,\infty}(\Omega)} \leq C(\Omega) |\mu_j|(\Omega)$. Let $w_j \in H^1(\Omega;\R^2)$ be defined as $\nabla w_j:=\tilde{\beta}_j-\nabla z_j J$. Observe that $\divr \nabla w_j=\divr \tilde{\beta}_j$ on $\Omega$ and, by \eqref{55}, 
	\begin{equation}\label{57}
		\Vert \dist(\nabla w_j,K) \Vert_{L^{2,\infty}(\Omega)} \leq C \varepsilon_j |\log \varepsilon_j|.
	\end{equation}
	Fix $0<\delta \ll 1$. We can find $\Omega^\delta \Subset \Omega$ satisfying (i)--(iii) of Lemma \ref{lem: shrinking lip}. Hence, using Lemma \ref{lem: elliptic regularity}, \eqref{56} and \eqref{56.5} we estimate
	\begin{align}\label{58}
		\begin{split}
		 \Vert \nabla^2 w_j \Vert_{L^{2,\infty}(\Omega^\delta)} \, dx & \leq \frac{C(\Omega)}{\delta} \Vert \nabla w_j-V_j \Vert_{L^{2,\infty}(\Omega)}+\frac{C(\Omega)}{\delta}\Vert \divr \tilde{\beta}_j \Vert_{L^2(\Omega)} \\
		 & \leq \frac{C(\Omega)}{\delta} \Vert \tilde{\beta}_j-V_j \Vert_{L^2(\Omega)}+\frac{C(\Omega)}{\delta}|\mu_j|(\Omega)+\frac{C(\Omega)}{\delta}\Vert \divr \tilde{\beta}_j \Vert_{L^2(\Omega)} \\
		 & \leq C(\Omega) \frac{1}{\delta \eta_{\varepsilon_j}} \varepsilon_j|\log \varepsilon_j|.
		\end{split}
	\end{align}
	Combining \eqref{57} and \eqref{58} with Proposition \ref{prop: weak rigidity multipl form} and Theorem \ref{thm: CDM rigidity}, we find a rotation $S_j \in SO(2)$ and a matrix $\tilde{U} \in \{ U_1,\dots,U_\ell \}$ such that the following estimate holds
	\begin{align}\label{59}
		\begin{split} 
		\Vert \nabla w_j - S_j \tilde{U} \Vert_{L^{2,\infty}(\Omega^\delta)}^2 
		 \leq C(\Omega) \varepsilon_j^2|\log \varepsilon_j|^2+ C(\Omega)\frac{1}{\delta^2 \eta_{\varepsilon_j}^2} \varepsilon_j^4 |\log \varepsilon_j|^4.
		\end{split}
	\end{align}
	Indeed, in view of property (ii) of $\Omega^\delta$, and Proposition \ref{App: CDM} we have that the constant $C$ in \eqref{59} depends only on $\Omega$ and not on $\delta$.
	 
	Arguing as in the proof of Theorem \ref{thm: rig est for incompat}, in particular from \eqref{40} to \eqref{47}, and using Lemma \ref{App: lem for BBV}, we infer that
	\begin{align*}
	\Vert J(\curl (\cof(\tilde{U})^{-1}\tilde{\beta}_j)) \Vert_{H^{-1}(\Omega^\delta)}^2 & \leq C(\Omega) \left( \Vert \dist(\tilde{\beta}_j,K) \Vert_{L^{2}(\Omega)}^2+\Vert \tilde{\beta}_j- S_j \tilde{U} \Vert_{L^{2,\infty}(\Omega^\delta)}^2+|\mu_j|(\Omega)^2 \right) \\
	& \leq C(\Omega) \varepsilon_j^2|\log \varepsilon_j|^2+ C(\Omega)\frac{1}{\delta^2 \eta_{\varepsilon_j}^2} \varepsilon_j^4 |\log \varepsilon_j|^4.
	\end{align*}
	Hence, if we define another Dirichlet problem in $\Omega^\delta$ as
	\begin{equation*}
		\begin{cases*}
			\Delta \tilde{z}_j = -\curl \left( \cof(\tilde{U})^{-1}\tilde{\beta}_j \right) & on $\Omega^\delta$, \\
			\tilde{z}_j=0 & on $\partial \Omega^\delta$,
		\end{cases*}
	\end{equation*}
	there exists a unique solution $\tilde{z}_j \in H^1_0(\Omega^\delta,\R^2)$ such that
	\begin{equation}\label{60}
		\int_{\Omega^\delta} |\nabla \tilde{z}_j|^2 \, dx \leq \Vert \curl (\cof(\tilde{U})^{-1}\tilde{\beta}_j) \Vert_{H^{-1}(\Omega^\delta)}^2 \leq  C(\Omega) \varepsilon_j^2|\log \varepsilon_j|^2+ C(\Omega) \frac{1}{\delta^2 \eta_{\varepsilon_j}^2} \varepsilon_j^4 |\log \varepsilon_j|^4.
	\end{equation}
	Since $\Omega^\delta$ is simply connected by (i), we can define $\tilde{w}_j \in H^1(\Omega^\delta;\R^2)$ as $\nabla \tilde{w}_j := \tilde{\beta}_j-\cof(\tilde{U}) \nabla \tilde{z}_j J$. By Lemma \ref{lem: shrinking lip} applied to $\Omega^\delta$ we can find another set $\Lambda^\delta:=(\Omega^\delta)^\delta \Subset \Omega^\delta \Subset \Omega$ such that (i)--(iii) hold. Similarly as before, using Lemma \ref{lem: elliptic regularity}, \eqref{56}, \eqref{56.5} and \eqref{60} we estimate
	\begin{align}\label{61}
		\begin{split}
			\Vert \nabla^2 \tilde{w}_j \Vert_{L^2(\Lambda^\delta)} & \leq \frac{C(\Omega)}{\delta} \Vert \tilde{\beta}_j-V_j \Vert_{L^2(\Omega)}+\frac{C(\Omega)}{\delta} \Vert \nabla  \tilde{z}_j \Vert_{L^2(\Lambda^\delta)}+\frac{C(\Omega)}{\delta}\Vert \divr \tilde{\beta}_j \Vert_{L^2(\Omega)} \\
			& \leq C(\Omega) \frac{1}{\delta \eta_{\varepsilon_j}} \varepsilon_j|\log \varepsilon_j|+C(\Omega)\frac{1}{\delta^2 \eta_{\varepsilon_j}}\varepsilon_j^2 |\log \varepsilon_j|^2.
		\end{split}
	\end{align}
	Finally, using Proposition \ref{prop: rigidity multiplic form}, \eqref{60}, \eqref{61} Theorem \ref{thm: FJM rigidity} and Lemma \ref{App: FJM}, we infer the existence of $R_j \in SO(2)$ and $U \in \{ U_1,\dots,U_\ell \}$ such that
	\begin{align*}
		\Vert \nabla \tilde{w}_j-R_j U \Vert_{L^2(\Lambda^\delta)}^2  & \leq C(\Omega) \int_{\Lambda^\delta} \! \dist^2(\nabla \tilde{w}_j,K) \, dx\left(1+\int_{\Lambda^\delta} |\nabla^2 \tilde{w}_j|^2 \, dx \right) \\
		& \leq C(\Omega) \left( \varepsilon_j^2|\log \varepsilon_j|^2+\frac{1}{\delta^2\eta_{\varepsilon_j}^2}\varepsilon_j^4 |\log \varepsilon_j|^4+ \frac{1}{\delta^4\eta_{\varepsilon_j}^4}\varepsilon_j^6 |\log \varepsilon_j|^6+\frac{1}{\delta^6\eta_{\varepsilon_j}^4}\varepsilon_j^8 |\log \varepsilon_j|^8 \right).
	\end{align*}
	This in turn gives, taking $\delta:=\rho_{\varepsilon_j}/8$, as $\Omega_j \subset \Lambda^\delta$,
	$$
	\Vert \tilde{\beta}_j-R_j U\Vert_{L^2(\Omega_j)}^2 \leq C(\Omega) \left( \varepsilon_j^2|\log \varepsilon_j|^2+\frac{1}{\rho_{\varepsilon_j}^2\eta_{\varepsilon_j}^2}\varepsilon_j^4 |\log \varepsilon_j|^4+ \frac{1}{\rho_{\varepsilon_j}^4\eta_{\varepsilon_j}^4}\varepsilon_j^6 |\log \varepsilon_j|^6+\frac{1}{\rho_{\varepsilon_j}^6\eta_{\varepsilon_j}^4}\varepsilon_j^8 |\log \varepsilon_j|^8 \right).
	$$
	Recalling the bounds on $\eta_{\varepsilon_j}$ in \eqref{eta range}, we deduce that $\Vert \tilde{\beta}_j-R_j U\Vert_{L^2(\Omega_j)} \leq C(\Omega) \varepsilon_j |\log \varepsilon_j|$. Finally, by definition of $\tilde{\beta}_j$ we have
	\begin{equation}\label{62}
		\int_{\Omega_j \cap \Omega_{2 \eta_{\varepsilon_j}}(\mu_j)} |\beta_j - R_j U|^2 \, dx \leq C\varepsilon_j^2 |\log \varepsilon_j|^2,
	\end{equation}
	where the constant $C>0$ does not depends on $j$.
	
	\noindent \textbf{Step 2.3.} \textit{Rigidity of $\beta_j$ near the dislocations.} Let now $i \in \{ 1, \dots M_j \}$ and define \footnote{Notice that here we do not have any information on $\curl \beta$. For this reason, we have to introduce a cut in the annulus to exploit Lemma \ref{lem: rigidity non equi spaced annulus}.} 
	$$
	L_{i,j}:=\{ (z,(x_{i,j})_2) \in \R^2 \colon (x_{i,j})_1+\varepsilon_j < z < (x_{i,j})_1+3\eta_{\varepsilon_j} \}
	$$
	Observe that $\beta_j$ restricted to $(B_{3 \eta_{\varepsilon_j}}(x_{i,j}) \setminus B_{\varepsilon_j}(x_{i,j})) \setminus L_{i,j}$, since it has zero curl, coincides with the gradient of a function in $H^1((B_{3 \eta_{\varepsilon_j}}(x_{i,j}) \setminus B_{\varepsilon_j})(x_{i,j}) \setminus L_{i,j};\R^2)$. Hence, we can use Lemma \ref{lem: rigidity non equi spaced annulus} on the annulus with the cut to find a constant $C>0$ not dependent on $i$ and $j$ and a matrix $\tilde{F}_{i,j} \in K$ such that
	\begin{equation}\label{63}
		\int_{B_{3\eta_{\varepsilon_j}(x_{i,j})} \setminus B_{\varepsilon_j}(x_{i,j})} |\beta_j-\tilde{F}_{i,j}|^2 \, dx \leq C \int_{B_{3\eta_{\varepsilon_j}(x_{i,j})} \setminus B_{\varepsilon_j}(x_{i,j})} \left( \dist^2(\beta_j,K) + \eta_{\varepsilon_j}^2  |\divr \beta_j|^2 \right) \, dx.
	\end{equation}
	Combining \eqref{62}, \eqref{63}, using  triangular inequality and recalling that $B_{3\eta_{\varepsilon_j}} \Subset \Omega_j$, we get
	\begin{equation}\label{64}
		\sum_{i=1}^{M_j} \eta_{\varepsilon_j}^2|R_j U - \tilde{F}_{i,j}|^2 \leq C\varepsilon_j^2|\log \varepsilon_j|^2+ C \sum_{i=1}^{M_j} \int_{B_{3\eta_{\varepsilon_j}(x_{i,j})} \setminus B_{\varepsilon_j}(x_{i,j})} \left( \dist^2(\beta_j,K) + \eta_{\varepsilon_j}^2  |\divr \beta_j|^2 \right) \, dx.
	\end{equation}
	Using triangular inequality and \eqref{62}--\eqref{64}, we estimate
	\begin{align*}
		\int_{\Omega_j \cap \Omega_{\varepsilon_j}(\mu_j)} \! |\beta_j-R_j U|^2 \, dx & \leq \int_{\Omega_j \cap \Omega_{2 \eta_{\varepsilon_j}}(\mu_j)} |\beta_j - R_j U|^2 \\
		& \ \ \ \ \ \ \ \ \ \ + \sum_{i=1}^{M_j} \left( \int_{B_{\eta_{\varepsilon_j}}(x_{i,j}) \setminus B_{\varepsilon_j}(x_{i,j})} \! |\beta_j-\tilde{F}_{i,j}|^2 \, dx + \eta_{\varepsilon_j}^2|R_j U-\tilde{F}_{i,j}|^2 \right) \\
		& \leq C \varepsilon_j^2|\log \varepsilon_j|^2.
	\end{align*}
	Finally, recalling that $\beta_j \equiv I$ on $\bigcup_{i=1}^{M_j} B_{\varepsilon_j}(x_{i,j})$ and that $M_j \leq C|\log \varepsilon_j|$ from Step 1, we conclude that, up to subsequences,
	\begin{equation*}
		\chi_{\Omega_j}\frac{R^T_j \beta_j - U}{\varepsilon_j |\log \varepsilon_j|} \rightharpoonup \beta \qquad \mbox{ in $L^2(\Omega;\R^{2 \times 2})$},
	\end{equation*}
	where $\beta \in L^2(\Omega,\R^{2 \times 2})$.
	
	\noindent \textbf{Step 3.} \textit{The limit measure $\mu$ belongs to $H^{-1}(\Omega;\R^2)$ and $\curl \beta=R^T \mu$.} Let $\phi \in C^1_c(\Omega)$ and let $(\phi_j) \subset H^1_0(\Omega)$ be a sequence converging to $\phi$ uniformly on $\Omega$ and strongly on $H_0^1(\Omega)$, such that $\phi_j(y)=\phi_j(x_{i,j})$ for every $y \in B_{\varepsilon_j}(x_{i,j})$ and every $i=1,\dots,M_j$ and $\supp \phi_j \subseteq \Omega_j$ for every $j$ large enough. We have
	\begin{align*}
		\int_\Omega \phi \, d\mu & = \lim_{j \to +\infty} \frac{1}{\varepsilon_j|\log \varepsilon_j|}\int_\Omega \phi_j \, d\mu_j = \lim_{j \to +\infty} \frac{1}{\varepsilon_j|\log \varepsilon_j|} \langle \curl \beta_j,\phi_j \rangle \\
		& = \lim_{j \to +\infty} \frac{1}{\varepsilon_j|\log \varepsilon_j|} \langle \curl (\beta_j-R_jU),\phi_j \rangle
		 = \lim_{j \to +\infty} \frac{1}{\varepsilon_j|\log \varepsilon_j|} \int_{\Omega_j} (\beta_j-R_jU) J \nabla \phi_j \, dx \\
		 & = \int_\Omega R\beta J \nabla \phi \, dx=\langle \curl(R\beta),\phi \rangle =\langle R \ \curl(\beta),\phi \rangle.
	\end{align*}
	Hence, $\curl \beta = R^T\mu$. Finally, since from Step 2 we have that $\beta \in L^2(\Omega;\R^{2 \times 2})$, we immediately deduce that $\mu \in H^{-1}(\Omega;\R^2)$.

	\end{proof}

	In view of Proposition \ref{prop: compactness} it is convenient to give the following notion of convergence for sequences of pairs $(\mu_\varepsilon,\beta_\varepsilon)$.
	
	\begin{defn}\label{def: convergence}
		A pair of sequences $(\mu_\varepsilon,\beta_\varepsilon) \subset \mathcal{M}(\Omega;\R^2) \times L^2(\Omega;\R^{2 \times 2})$ is
		said to converge to a triplet $(\mu,\beta,RU) \in \mathcal{M}(\Omega;\R^2) \times L^2(\Omega;\R^{2 \times 2}) \times K$ if there
		exists a sequence of rotations $(R_\varepsilon) \subset SO(2)$ such that $R_\varepsilon \to R$ and
		\begin{align}
			& \frac{1}{\varepsilon|\log \varepsilon|} \mu_\varepsilon \xrightharpoonup{*} \mu \qquad \mbox{in $\mathcal{M}(\Omega;\R^2)$}, \label{conv of meas} \\
			& \chi_{\{ \dist(x,\partial \Omega)>\rho_\varepsilon/2 \}}\frac{R_\varepsilon^T \beta_\varepsilon - U}{\varepsilon|\log \varepsilon|} \rightharpoonup \beta \qquad \mbox{in $L^2(\Omega;\R^{2 \times 2})$}.
		\end{align}
	\end{defn}
	
	\subsection{The Self-Energy}
	
	Before proving the $\Gamma$-limit we need to introduce some additional results (see also \cite[Section 6]{GaLeoPonDislocations}).
	For $0<r_1<r_2 \leq 1$ and $\xi \in \R^2$ we introduce the class
	\begin{align*}
	\mathcal{AS}_{r_1,r_2}(\xi):= \left\{ \vphantom{\int} \right. \beta \in L^2(B_{r_2} \setminus B_{r_1};\R^{2 \times 2}) \colon  \ & \curl \beta =0 \mbox{ in } B_{r_2} \setminus B_{r_1}, \\ & \divr \beta \in L^2(B_{r_2} \setminus B_{r_1};\R^2), \ \int_{\partial B_{r_1}} \beta \cdot t \, d\mathcal{H}^1=\xi \left. \right\}.
	\end{align*}
	In the special case $r_2=1$, we simply write $\mathcal{AS}_{r_1}(\xi)$ instead of $\mathcal{AS}_{r_1,1}(\xi)$. We also define for every $\delta \in (0,1)$
	\begin{equation}\label{psi}
		\psi(\xi,\delta,U):=\min \left\{ \int_{B_1 \setminus B_\delta} \mathbb{C}_U \beta : \beta \, dx \colon \ \beta \in \mathcal{AS}_\delta(\xi) \right\}
	\end{equation}
	where $U \in \{ U_1,\dots,U_\ell \}$ and $\mathbb{C}_U := \partial^2 W/\partial F^2 (U)$. Notice that by elliptic regularity we have that a solution to the minimum problem \eqref{psi} exists and the minimum coincides with
	$$
	\min \left\{ \int_{B_1 \setminus B_\delta} \mathbb{C}_U \beta : \beta \, dx \colon \ \beta \in L^2(B_1 \setminus B_\delta; \R^{2 \times 2}), \ \curl \beta =0 \mbox{ in } B_1 \setminus B_\delta, \ \int_{\partial B_\delta} \beta \cdot t \, d\mathcal{H}^1=\xi \right\}.
	$$ 
	Hence, we can apply the results from \cite{GaLeoPonDislocations}. Notice, moreover, that $\psi(\cdot,\delta)$ is $2$-homogeneous for every $\delta$.
	\begin{prop}[\cite{GaLeoPonDislocations} Corollary 6, Remark 7]\label{prop: self energy}
		Let $\xi \in \R^2$, $\delta \in (0,1)$, $U \in \{U_1,\dots,U_\ell\}$ and let $\psi(\xi,\delta,U)$ be as in \eqref{psi}. Then, 
		$$
		\lim_{\delta \to 0} \frac{\psi(\xi,\delta,U)}{|\log \delta|} = \hat{\psi}(\xi,U),
		$$
		where $\hat{\psi} \colon \R^2 \times \{U_1,\dots,U_\ell\} \to [0,+\infty)$ is defined by
		\begin{equation}\label{hatpsi}
			\hat{\psi}(\xi,U):= \lim_{\delta \to 0} \frac{1}{|\log \delta|} \frac{1}{2} \int_{B_1 \setminus B_\delta} \mathbb{C}_U \beta_{\R^2}(\xi) : \beta_{\R^2}(\xi) \, dx,
		\end{equation}
		where $\beta_{\R^2}(\xi)$ is a distributional solution to
		\begin{equation*}
			\begin{cases*}
				\curl \beta_{\R^2}(\xi)=\xi \delta_0 & in $\R^2$, \\
				\divr \mathbb{C}_U \beta_{\R^2}(\xi)=0 & in $\R^2$.
			\end{cases*}
		\end{equation*}
	\end{prop}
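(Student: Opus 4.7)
I would follow the now-classical strategy for the self-energy of planar dislocations, cf. \cite{GaLeoPonDislocations, MuScaZep}: construct the canonical $-1$-homogeneous field $\beta_{\R^2}(\xi)$ and then bracket $\psi(\xi,\delta,U)/|\log\delta|$ by matching upper and lower bounds that both converge to $\hat\psi(\xi,U)$.

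\emph{Step 1 (canonical field and well-posedness of $\hat\psi$).} I would solve the distributional system $\curl\beta = \xi\delta_0$, $\divr(\mathbb{C}_U\beta)=0$ on $\R^2$ row by row via the Fourier transform; the Legendre--Hadamard ellipticity of $\mathbb{C}_U$, inherited from \eqref{W1}--\eqref{W4}, makes this problem well-posed. Both the system and the source are invariant under the rescaling $\beta(x)\mapsto\lambda\beta(\lambda x)$ (using $\delta_0(\lambda x)=\lambda^{-2}\delta_0(x)$), so uniqueness in a suitable weighted class forces $\beta_{\R^2}(\xi)$ to be $-1$-homogeneous: $\beta_{\R^2}(\xi)(x)=|x|^{-1}\Phi_{\xi,U}(x/|x|)$ for a smooth matrix-valued profile $\Phi_{\xi,U}$ on $S^1$. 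The energy density is then $-2$-homogeneous, and polar coordinates yield
\[
\int_{B_1 \setminus B_\delta} \mathbb{C}_U \beta_{\R^2}(\xi) : \beta_{\R^2}(\xi) \, dx = |\log \delta| \int_{S^1} \mathbb{C}_U \Phi_{\xi,U} : \Phi_{\xi,U} \, d\mathcal{H}^1,
\]
so the limit in \eqref{hatpsi} exists and is $2$-homogeneous in $\xi$.

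\emph{Step 2 (upper bound).} By Stokes' theorem, $\int_{\partial B_\delta}\beta_{\R^2}(\xi)\cdot t\,d\mathcal{H}^1 = \xi$ and $\curl\beta_{\R^2}(\xi)=0$ on $B_1\setminus B_\delta$, so $\beta_{\R^2}(\xi)\in\mathcal{AS}_\delta(\xi)$ is an admissible competitor for $\psi(\xi,\delta,U)$. To match the precise constant implicit in the normalization of \eqref{hatpsi}, I would modify this competitor only within a fixed boundary layer $B_1\setminus B_{1-c}$ so as to enforce the natural Neumann condition $(\mathbb{C}_U\beta)\nu=0$ on $\partial B_1$. Since the modification lives on a $\delta$-independent annulus, its energy contribution is bounded uniformly in $\delta$ and does not affect the logarithmic leading order, giving $\limsup_{\delta\to 0}\psi(\xi,\delta,U)/|\log\delta| \leq \hat\psi(\xi,U)$.

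\emph{Step 3 (lower bound via dyadic rescaling and blow-up).} For each dyadic annulus $A_k:=B_{2^{-k}}\setminus B_{2^{-k-1}}\subset B_1\setminus B_\delta$, Stokes applied to any $\partial B_r\subset A_k$ shows that the restriction of a minimizer $\beta_\delta$ lies in $\mathcal{AS}_{2^{-k-1},2^{-k}}(\xi)$. The linear change of variables $x\mapsto 2^k x$ maps these admissible classes isometrically (preserving energy) onto a single $k$-independent problem on $B_1\setminus B_{1/2}$, producing a uniform per-layer lower bound that sums to $\psi(\xi,\delta,U)\gtrsim|\log\delta|$. The main obstacle, and the most delicate point, is identifying the resulting constant with $\hat\psi(\xi,U)$. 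I would handle this by a global blow-up: set $\tilde\beta_\delta(y):=\delta\beta_\delta(\delta y)$ on $B_{1/\delta}\setminus B_1$. The rescaling preserves the quadratic energy, the circulation condition $\int_{\partial B_1}\tilde\beta_\delta\cdot t\,d\mathcal{H}^1=\xi$, and the Euler--Lagrange equation $\divr(\mathbb{C}_U\tilde\beta_\delta)=0$. Extracting an $L^2_{\mathrm{loc}}(\R^2\setminus\overline{B_1})$ subsequential limit $\tilde\beta_\infty$ and invoking a Liouville-type uniqueness statement for the linear elliptic system with prescribed circulation forces $\tilde\beta_\infty$ to coincide with $\beta_{\R^2}(\xi)$ up to a correction vanishing at infinity. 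Lower semicontinuity of the quadratic energy then closes the matching and yields $\liminf_{\delta\to 0}\psi(\xi,\delta,U)/|\log\delta| \geq \hat\psi(\xi,U)$, completing the proof.
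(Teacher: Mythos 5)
This proposition is not proved in the paper at all: it is imported verbatim from \cite{GaLeoPonDislocations} (Corollary 6, Remark 7), after the one observation the paper does make, namely that by elliptic regularity the minimum in \eqref{psi} over the class containing the constraint $\divr\beta\in L^2$ coincides with the minimum without that constraint, so the results of \cite{GaLeoPonDislocations} apply directly. So your task was really to reconstruct the cited proof. Your Step 1 and the upper bound are fine (in fact simpler than you make them: $\beta_{\R^2}(\xi)$ restricted to $B_1\setminus B_\delta$ is admissible as it stands, its energy is exactly $|\log\delta|$ times the angular constant, and no Neumann boundary-layer modification is needed), but the lower bound in Step 3 has a genuine gap.

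The dyadic argument only yields $\liminf_{\delta\to 0}\psi(\xi,\delta,U)/|\log\delta|\ge c(\xi,U)/\log 2$, where $c(\xi,U)$ is the minimum on a single annulus with the circulation constraint only. Writing a competitor as $\beta_{\R^2}(\xi)+\nabla v$ and integrating the cross term by parts, the interior term vanishes because $\divr(\mathbb{C}_U\beta_{\R^2}(\xi))=0$, but the boundary tractions $(\mathbb{C}_U\beta_{\R^2}(\xi))\nu$ on the two circles do not vanish for anisotropic $\mathbb{C}_U$, so the per-layer minimizer (which satisfies natural boundary conditions) generically beats the restriction of $\beta_{\R^2}(\xi)$ by a fixed, scale-invariant amount; summing independent layer minima therefore gives a constant that may be strictly below $\hat\psi(\xi,U)$, as you yourself suspect. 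The proposed repair does not close this: the blow-up $\tilde\beta_\delta(y)=\delta\beta_\delta(\delta y)$ captures only $O(1)$ dyadic layers adjacent to the core, whereas the logarithmic energy is spread over all $\sim|\log\delta|$ intermediate scales, so weak $L^2_{\mathrm{loc}}$ compactness plus lower semicontinuity on compact sets cannot recover the full coefficient of $|\log\delta|$; moreover the "Liouville-type uniqueness" you invoke needs uniform local energy bounds for the rescaled minimizers and decay/normalization conditions at infinity that are not supplied. The standard way to finish (and essentially what the cited reference does) is quantitative rather than by compactness: for any admissible $\beta=\beta_{\R^2}(\xi)+\nabla v$, the cross term reduces to boundary integrals over $\partial B_1$ and $\partial B_\delta$; since $\int_{\partial B_r}(\mathbb{C}_U\beta_{\R^2}(\xi))\nu\,d\mathcal{H}^1=0$, one may subtract the average of $v$ on each circle and use a Poincar\'e inequality on circles together with the $1/r$ scaling of the traction to bound these terms by $C\Vert\nabla v\Vert_{L^2}$ with $C$ independent of $\delta$; Young's inequality then gives $\psi(\xi,\delta,U)\ge\frac12\int_{B_1\setminus B_\delta}\mathbb{C}_U\beta_{\R^2}(\xi):\beta_{\R^2}(\xi)\,dx-C$, which together with the trivial upper bound yields the claimed limit. (Minor point: the factor $\tfrac12$ in \eqref{hatpsi} versus its absence in \eqref{psi} is a normalization inconsistency of the paper, not something to be fixed by modifying the competitor.)
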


	\begin{rem}\label{rem: selfenergy}
		Assume that $\rho_\varepsilon$ satisfies \eqref{rho1} and \eqref{rho2}. Then, from \cite[Proposition 8]{GaLeoPonDislocations} follows that the function $\psi_\varepsilon\colon \R^2 \times \{U_1,\dots,U_\ell\} \to [0,+\infty)$ defined as
		\begin{equation}\label{psieps}
			\psi_\varepsilon (\xi,U) := \frac{1}{|\log \varepsilon|} \min \left\{ \int_{B_{\rho_\varepsilon} \setminus B_\varepsilon} \mathbb{C}_U \beta : \beta \, dx \colon \ \beta \in \mathcal{AS}_{\varepsilon,\rho_\varepsilon}(\xi) \right\}
		\end{equation}
		satisfies
		$$
		\psi_\varepsilon(\xi,U)=\frac{\psi(\xi,\varepsilon,U)}{|\log \varepsilon|} (1+o(1)),
		$$
		where $o(1) \to 0$ as $\varepsilon \to 0$ uniformly with respect to $\xi$. In particular, $\psi_\varepsilon$ converges pointwise to $\hat{\psi}$ given by \eqref{hatpsi} as $\varepsilon \to 0$.
	\end{rem}

	We can now define the density $\varphi \colon K \times \R^2 \to [0,+\infty)$ of the self-energy through the following relaxation procedure
	\begin{equation}\label{selfenergy}
		\varphi(RU,\xi):= \min \left\{ \sum_{k=1}^M \lambda_k \hat{\psi}(R^T \xi_k,U) \colon \ \sum_{k=1}^M \lambda_k \xi_k=\xi, \ M \in \N, \ \lambda_k \geq 0,\ \xi_k \in \mathbb{S} \right\}.
	\end{equation}
	It follows directly from the definition that the function $\varphi$ is positively $1$-homogeneous and convex (see also \cite[Remark 9]{GaLeoPonDislocations}).

	\subsection{$\Gamma$-Convergence}
	
	Finally, we prove the $\Gamma$-convergence result for the energy $\mathcal{E}_\varepsilon$ defined in \eqref{scaled energy}. In what follows, we assume that $\eta_\varepsilon=\varepsilon|\log \varepsilon|/\rho_\varepsilon$ (this will be crucial for the proof of the upper bound).
	
	\begin{thm}\label{thm: gamma limite}
		The energy functionals $\mathcal{E}_\varepsilon$ defined in \eqref{scaled energy} $\Gamma$-converge with respect to the convergence defined in Definition \ref{def: convergence} to the functional $\mathcal{E}$ defined on $\mathcal{M}(\Omega;\R^2) \times L^2(\Omega;\R^{2 \times 2}) \times K$ by
		\begin{equation}\label{final functional}
			\mathcal{E}(\mu,\beta,RU):=
			\begin{cases*}
				\displaystyle	\frac{1}{2} \int_\Omega \mathbb{C}_U \beta : \beta \, dx+\int_\Omega \varphi\left( RU,\frac{d\mu}{d|\mu|} \right) \, d|\mu| & if $\mu \in H^{-1}(\Omega;\R^2) \cap \mathcal{M}(\Omega;\R^2)$, $\curl \beta=R^T \mu$, \\
				+\infty \vphantom{\int} & otherwise;
			\end{cases*}
		\end{equation}
	where $\mathbb{C}_U:=\partial^2 W/\partial F^2(U)$ and $\varphi$ is as in \eqref{selfenergy}.
	\end{thm}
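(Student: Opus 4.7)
The plan is to prove the $\Gamma$-convergence via the customary $\liminf$/$\limsup$ scheme, leaning on Proposition \ref{prop: compactness} to ensure that the convergence of Definition \ref{def: convergence} is the right topology and that any finite-energy limit automatically satisfies $\mu \in H^{-1}(\Omega;\R^2)$ and $\curl \beta = R^T\mu$. Both inequalities reduce to controlling two distinct contributions to the energy: the bulk elastic part concentrated on $\Omega_{\rho_\varepsilon}(\mu_\varepsilon)$ and the self-energy part localised in the hard-core annuli $B_{\rho_\varepsilon}(x_{i,\varepsilon}) \setminus B_\varepsilon(x_{i,\varepsilon})$.

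For the $\liminf$ inequality, I would handle the bulk part by a linearisation argument in the spirit of \cite{DalMasoNegriPercivale:02}: Step~2 of Proposition \ref{prop: compactness} shows that $R_\varepsilon^T\beta_\varepsilon$ lies close to the single well $U$ on $\Omega_{\rho_\varepsilon}(\mu_\varepsilon)$, so the Taylor expansion of $W$ at $U$ from \eqref{W1}, frame indifference \eqref{W3}, the two-sided bound \eqref{W4} and weak $L^2$ lower semicontinuity deliver
\begin{equation*}
\liminf_\varepsilon \frac{1}{\varepsilon^2|\log\varepsilon|^2}\int_{\Omega_{\rho_\varepsilon}(\mu_\varepsilon)} W(\beta_\varepsilon)\,dx \geq \frac{1}{2}\int_\Omega \mathbb{C}_U\beta:\beta\,dx.
\end{equation*}
On each annulus, $R_\varepsilon^T\beta_\varepsilon$ is admissible in \eqref{psieps} for the Burgers vector $R_\varepsilon^T \xi_{i,\varepsilon}$, hence another Taylor expansion combined with the definition of $\psi_\varepsilon$ yields
\begin{equation*}
\frac{1}{\varepsilon^2|\log\varepsilon|^2}\int_{B_{\rho_\varepsilon}(x_{i,\varepsilon})\setminus B_\varepsilon(x_{i,\varepsilon})} W(\beta_\varepsilon)\,dx \geq \frac{\psi_\varepsilon(R_\varepsilon^T \xi_{i,\varepsilon},U)}{|\log\varepsilon|}(1-o(1)).
\end{equation*}
Remark \ref{rem: selfenergy} gives pointwise convergence $\psi_\varepsilon \to \hat{\psi}$, and the definition \eqref{selfenergy} of $\varphi$ as the convex, $1$-homogeneous envelope of $\hat{\psi}$ over $\mathbb{S}$-combinations allows one to bound the sum further by $\sum_i \varepsilon\,\varphi(RU,\xi_{i,\varepsilon})/|\log\varepsilon|$. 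Weakly$^{*}$ convergence of $\mu_\varepsilon/(\varepsilon|\log\varepsilon|)$ together with Reshetnyak lower semicontinuity for the convex, positively $1$-homogeneous functional $\mu \mapsto \int_\Omega \varphi(RU, d\mu/d|\mu|)\,d|\mu|$ then concludes the lower bound.

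For the $\limsup$ inequality, by continuity and $1$-homogeneity of $\varphi$ a standard diagonal/density argument reduces the problem to atomic targets $\mu = \sum_k c_k \xi_k \delta_{y_k}$ with $\xi_k \in \mathbb{S}$ and $c_k \geq 0$, for which the self-energy realises $\sum_k c_k \hat{\psi}(R^T\xi_k,U)$. For such $\mu$ I would distribute $\lfloor c_k/\varepsilon\rfloor$ dislocations of Burgers vector $\varepsilon\xi_k$ on a fine grid around each $y_k$, mutually separated by more than $2\rho_\varepsilon$, so that $\mu_\varepsilon/(\varepsilon|\log\varepsilon|) \xrightharpoonup{*} \mu$. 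In each annulus one takes the rotation by $R_\varepsilon$ of a near-minimizer of \eqref{psieps}; on $\Omega_{\rho_\varepsilon}(\mu_\varepsilon)$ one sets $\beta_\varepsilon := R_\varepsilon U + \varepsilon|\log\varepsilon| R_\varepsilon \widetilde{\beta}_\varepsilon$ for $\widetilde{\beta}_\varepsilon$ a smooth approximation of $\beta$ matching the curl constraint; and $\beta_\varepsilon \equiv I$ inside each $B_\varepsilon(x_i)$. Matching of the bulk elastic energy is routine, and matching of the self-energy follows from the pointwise convergence $\psi_\varepsilon \to \hat{\psi}$.

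The principal technical obstacle is to ensure that the singular perturbation $\eta_\varepsilon^2\int|\divr\beta_\varepsilon|^2/(\varepsilon^2|\log\varepsilon|^2)$ vanishes along this recovery sequence. The minimizer $\beta^*$ of \eqref{psieps} satisfies $\divr(\mathbb{C}_U\beta^*) = 0$ but, unless $\mathbb{C}_U$ is a scalar multiple of the identity (the case of \cite{MuScaZepIncomp}), not $\divr\beta^* = 0$. A direct calculation on the singular profile gives $|\divr\beta^*| \sim \varepsilon/r^2$, whence $\int_{B_{\rho_\varepsilon}\setminus B_\varepsilon}|\divr\beta^*|^2\,dx = O(1)$ per annulus; summed over $O(|\log\varepsilon|)$ dislocations and multiplied by $\eta_\varepsilon^2/(\varepsilon^2|\log\varepsilon|^2) = 1/\rho_\varepsilon^2$ this produces a term of order $|\log\varepsilon|/\rho_\varepsilon^2$, which diverges under \eqref{rho2}. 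The fix, foreshadowed in the introduction, is to perturb $\beta^*$ in each annulus by a gradient correction $\nabla\phi_\varepsilon^*$ with $\Delta\phi_\varepsilon^* = \divr\beta^*$ and suitable boundary conditions, which preserves the curl and the Burgers circulation on $\partial B_\varepsilon$; being subleading in $L^2$, such a correction only alters the leading elastic energy by $o(\varepsilon^2|\log\varepsilon|^2)$, and a careful quantification exploiting \eqref{rho2}, the stringent choice $\eta_\varepsilon = \varepsilon|\log\varepsilon|/\rho_\varepsilon$, and interior elliptic estimates for $\phi_\varepsilon^*$ should force the perturbation term to vanish in the limit.
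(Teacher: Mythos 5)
Your overall architecture (compactness fixes the topology, then split the energy into a bulk part on $\Omega_{\rho_\varepsilon}(\mu_\varepsilon)$ and a core part on the hard-core annuli) matches the paper, and your treatment of the bulk liminf and your diagnosis of the divergence penalization in the limsup are sound. However, there are two genuine gaps. First, in the lower bound near the dislocations you assert that ``admissibility in \eqref{psieps} plus another Taylor expansion'' gives $\int_{B_{\rho_\varepsilon}\setminus B_\varepsilon}W(\beta_\varepsilon)\,dx/\varepsilon^2|\log\varepsilon|^2\geq \psi_\varepsilon(R^T\xi_{i,\varepsilon},U)(1-o(1))/|\log\varepsilon|$. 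This is precisely the step that cannot be waved through: near the core the strain deviates from $K$ by quantities of order one, so the expansion of $W$ at $U$ is not valid there; moreover, in the multiwell setting nothing in the definition of $\mathcal{AS}_\varepsilon(\mu_\varepsilon)$ forces $\beta_\varepsilon$ to sit near the \emph{same} well $SO(2)U$ selected by the global compactness — on a small annulus it could be close to another well $SO(2)U_i$, and then the relevant quadratic form would be $\mathbb{C}_{U_i}$, which is not comparable to $\mathbb{C}_U$ from below. The paper's Lemma \ref{lem: gammaliminf} is devoted exactly to this: a dyadic decomposition, a contradiction argument in the spirit of \cite{ScaZep}, and a well-selection step that uses Theorem \ref{thm: multiwell rigidity crit}, the multiplicative rigidity of Proposition \ref{prop: rigidity multiplic form} (which needs the $H^2$ bound coming from the divergence penalization with $\eta_\varepsilon=\varepsilon|\log\varepsilon|/\rho_\varepsilon$, see \eqref{77}--\eqref{79}), and the global estimate \eqref{69} to force $U^j=U$ and $\overline R_j\to R$. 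Your proposal contains no substitute for this argument.

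Second, your reduction of the limsup to atomic targets $\mu=\sum_k c_k\xi_k\delta_{y_k}$ does not work. A Dirac mass is not in $H^{-1}(\Omega;\R^2)$ in dimension two, so such targets violate $\curl\beta=R^T\mu$ with $\beta\in L^2$ and have $\mathcal{E}=+\infty$; consequently they cannot serve as a density class in energy, and a diagonal argument through them proves nothing for finite-energy targets. There is also a scaling mismatch: with $\lfloor c_k/\varepsilon\rfloor$ dislocations of Burgers vector $\varepsilon\xi_k$ one has $|\mu_\varepsilon|(\Omega)\sim c_k$, so $\mu_\varepsilon/(\varepsilon|\log\varepsilon|)$ blows up instead of converging in the sense of Definition \ref{def: convergence}; in this regime the admissible number of dislocations is $O(|\log\varepsilon|)$. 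The correct reduction, as in Lemma \ref{lem: density} and \cite{GaLeoPonDislocations}, is to measures with locally constant ($L^\infty$) densities, and the recovery sequence places $\sim|\log\varepsilon|$ well-separated dislocations per unit area. Finally, your fix for the divergence term (correcting the cell minimizer by a gradient solving $\Delta\phi=\divr\beta^*$) is in the right spirit, but as stated it ignores the gluing: the correction must be cut off to live in the core region without destroying either the curl-free constraint on $\Omega_\varepsilon(\mu_\varepsilon)$ or the smallness of the elastic energy, and the cutoff reintroduces divergence unless one interpolates, at a carefully chosen scale $\varepsilon^{\gamma_j}$ with $\gamma_j\to1$, between the $\mathbb{C}_U$-equilibrium profile and the divergence-free profile, which is exactly the content of Lemma \ref{lem: interpolation} and the choice of $\gamma_j$ in the proof of Proposition \ref{prop: limsup}.
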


	The proof of Theorem \ref{thm: gamma limite} will be subdivided in Proposition \ref{prop: liminf} and Proposition \ref{prop: limsup}. 

	\begin{prop}[$\Gamma-\liminf$ inequality] \label{prop: liminf}
		For every $(\mu,\beta,RU) \in (H^{-1}(\Omega;\R^2) \cap \mathcal{M}(\Omega;\R^2)) \times L^2(\Omega;\R^{2 \times 2}) \times K$ with $\curl \beta=R^T \mu$, and every sequence $(\mu_\varepsilon,\beta_\varepsilon) \in \mathcal{M}(\Omega;\R^2) \times L^2(\Omega;\R^{2 \times 2})$ converging to $(\mu,\beta,RU)$ in the sense of Definition \ref{def: convergence} we have 
		$$
		\liminf_{\varepsilon \to 0} \mathcal{E}_\varepsilon (\mu_\varepsilon,\beta_\varepsilon) \geq \mathcal{E}(\mu,\beta,RU).
		$$
	\end{prop}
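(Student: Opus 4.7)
\emph{Plan.} The strategy decouples the energy into a bulk elastic contribution (away from the dislocations) and a concentrated self-energy at the dislocations, following the pattern of \cite{MuScaZep, GaLeoPonDislocations, Ginster} adapted to the multi-well setting via Proposition~\ref{prop: compactness}. Passing to a subsequence realizing the $\liminf$, the compactness result yields rotations $R_\varepsilon\to R$ and a well $U\in\{U_1,\dots,U_\ell\}$ such that the rescaled rotated strain $\eta_\varepsilon := (R_\varepsilon^T\beta_\varepsilon - U)/(\varepsilon|\log\varepsilon|)$ converges weakly in $L^2(\Omega;\R^{2\times 2})$ to $\beta$ (after extension by zero outside $\Omega_\varepsilon := \{\dist(\cdot,\partial\Omega)>\rho_\varepsilon/2\}$). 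By frame indifference \eqref{W3}, the analysis can be carried out on the rotated strain $R_\varepsilon^T\beta_\varepsilon$; the nonnegative penalization $\eta_\varepsilon^2\int|\divr\beta_\varepsilon|^2$ is dropped in the lower bound.

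\emph{Bulk lower bound.} Choose an intermediate scale $\rho_\varepsilon\ll\sigma_\varepsilon\to 0$ and set $A_\varepsilon := \Omega_\varepsilon\setminus\bigcup_i B_{\sigma_\varepsilon}(x_i^\varepsilon)$. Since the number of dislocations is $O(|\log\varepsilon|)$ (Step~1 of the proof of Proposition~\ref{prop: compactness}), $\mathcal{L}^2(\Omega\setminus A_\varepsilon)\to 0$. On the good set $G_\varepsilon := \{|R_\varepsilon^T\beta_\varepsilon - U|\leq t_\varepsilon\}$, with $t_\varepsilon\to 0$ slowly enough that $\mathcal{L}^2(A_\varepsilon\setminus G_\varepsilon)\to 0$ (Chebyshev on $\eta_\varepsilon$), the Taylor expansion of $W$ near $U$ via \eqref{W1}, \eqref{W2}, \eqref{W4} yields $W(\beta_\varepsilon) = W(R_\varepsilon^T\beta_\varepsilon) \geq \tfrac{1}{2}\mathbb{C}_U(R_\varepsilon^T\beta_\varepsilon - U):(R_\varepsilon^T\beta_\varepsilon - U)(1 - o(1))$. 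Dividing by $\varepsilon^2|\log\varepsilon|^2$ and exploiting weak $L^2$ lower semicontinuity of the positive semidefinite quadratic form $\eta\mapsto \int_\Omega \mathbb{C}_U\eta:\eta\,dx$ together with $\chi_{A_\varepsilon\cap G_\varepsilon}\eta_\varepsilon\rightharpoonup\beta$ (the complement pairs to $0$ against any $L^2$ test function by Cauchy--Schwarz) gives the bulk bound $\tfrac{1}{2}\int_\Omega \mathbb{C}_U\beta:\beta\,dx$.

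\emph{Core lower bound.} For each $x_0\in\supp\mu$ and small $r>0$ with $|\mu|(\partial B_r(x_0))=0$, group the dislocations $x_i^\varepsilon\in B_r(x_0)$ with Burgers vectors $\varepsilon\xi_i^\varepsilon$. On each annulus $B_{\sigma_\varepsilon}(x_i^\varepsilon)\setminus B_\varepsilon(x_i^\varepsilon)$, the circulation condition gives $R_\varepsilon^T\beta_\varepsilon - U\in\mathcal{AS}_{\varepsilon,\sigma_\varepsilon}(\varepsilon R_\varepsilon^T\xi_i^\varepsilon)$. A truncation of the strain in the region where $R_\varepsilon^T\beta_\varepsilon$ is far from $U$ (controlled by Theorem~\ref{thm: rig est for incompat} and the singular perturbation $\eta_\varepsilon^2\int|\divr\beta_\varepsilon|^2$, following Step~2 of the proof of Proposition~\ref{prop: compactness}) brings the strain into the Taylor regime for $W$. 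The minimality in \eqref{psieps} and the $2$-homogeneity of $\psi_\varepsilon$ then produce a per-dislocation lower bound which, after summation over $x_i^\varepsilon\in B_r(x_0)$, rescaling by $1/(\varepsilon^2|\log\varepsilon|^2)$, and use of $\psi_\varepsilon\to\hat\psi$ (Remark~\ref{rem: selfenergy}), gives (up to vanishing corrections) a lower bound proportional to $|\log\varepsilon|^{-1}\sum_{x_i^\varepsilon\in B_r(x_0)}\hat\psi(R^T\xi_i^\varepsilon, U)$. Applying the convex-envelope definition \eqref{selfenergy} with weights $\lambda_i = 1/|\log\varepsilon|$ yields
$$
\frac{1}{|\log\varepsilon|}\sum_{x_i^\varepsilon\in B_r(x_0)}\hat\psi(R^T\xi_i^\varepsilon, U) \geq \varphi\!\left(RU, \frac{1}{|\log\varepsilon|}\sum_{x_i^\varepsilon\in B_r(x_0)}\xi_i^\varepsilon\right) \longrightarrow \varphi\bigl(RU, \mu(B_r(x_0))\bigr),
$$
by continuity of $\varphi$ and the $*$-weak convergence $\mu_\varepsilon/(\varepsilon|\log\varepsilon|)\xrightharpoonup{*}\mu$. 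Covering $\supp\mu$ by a Vitali-type family of such disjoint balls and letting their radii shrink, Besicovitch differentiation and $1$-homogeneity of $\varphi$ recover the core contribution $\int_\Omega \varphi(RU, d\mu/d|\mu|)\,d|\mu|$. Combining with the bulk bound (on disjoint support) completes the proof.

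\emph{Main obstacle.} The key difficulty is justifying the Taylor expansion of $W$ around $U$ on the intermediate annular region $B_{\sigma_\varepsilon}\setminus B_{\rho_\varepsilon}$, where $R_\varepsilon^T\beta_\varepsilon$ is only $L^2$-close to $U$ and not pointwise close; this is exactly where the singular perturbation $\eta_\varepsilon^2\int|\divr\beta_\varepsilon|^2$ and the multi-well rigidity estimate Theorem~\ref{thm: rig est for incompat} are indispensable, enabling the strain-regularization scheme of Step~2.2 in the proof of Proposition~\ref{prop: compactness} and pinning $R_\varepsilon^T\beta_\varepsilon$ to the single well $U$ throughout $\Omega$. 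A secondary subtlety is the passage from the discrete sum $\sum_i\hat\psi(R^T\xi_i^\varepsilon, U)$ to the continuous relaxed self-energy $\int\varphi(RU, d\mu/d|\mu|)\,d|\mu|$, precisely accommodated by the convex-envelope structure built into \eqref{selfenergy}.
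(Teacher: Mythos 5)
Your bulk lower bound follows the paper's route (truncation to a good set, Taylor expansion of $W$ at $U$, weak $L^2$ lower semicontinuity of the quadratic form), and your final step — applying the definition \eqref{selfenergy} with weights $1/|\log\varepsilon|$ ball by ball and then a Vitali/Besicovitch refinement — is a workable substitute for the paper's use of Reshetnyak's lower semicontinuity, since $\int_A\varphi(RU,\tfrac{d\mu}{d|\mu|})\,d|\mu|$ is the supremum of $\sum_k\varphi(RU,\mu(A_k))$ over partitions. The genuine gap is in the core lower bound, which is the heart of the proposition and in the paper occupies all of Lemma~\ref{lem: gammaliminf}. You assert that on each annulus $B_{\sigma_\varepsilon}(x_i)\setminus B_\varepsilon(x_i)$ a ``truncation of the strain'' brings $\beta_\varepsilon$ into the Taylor regime, after which ``minimality in \eqref{psieps} and $2$-homogeneity'' give a per-dislocation bound by $\hat\psi(R^T\xi_i,U)$. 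This does not work as stated: (i) a pointwise truncation of the strain destroys the constraints ($\curl$-free plus the circulation condition) that make the comparison with the minimum problem \eqref{psieps} legitimate; (ii) the Taylor expansion cannot be justified down to the core radius $\varepsilon$, where the strain is $O(1)$ away from $K$ — one can only work on radii $\geq\varepsilon_j^{s}$, which is exactly why the paper's bound carries the factor $s-|\log\rho_{\varepsilon_j}|/|\log\varepsilon_j|$ and why $s\to1$ is taken only at the very end; (iii) the identification of the well $U$ and of the rotation $R$ at the scale of a single dislocation does not follow from Proposition~\ref{prop: compactness}, which only yields the global estimate $\Vert\beta_j-R_jU\Vert^2_{L^2(\Omega_j\cap\Omega_{\varepsilon_j}(\mu_j))}\leq C\varepsilon_j^2|\log\varepsilon_j|^2$; restricted to a dyadic annulus of radius $\delta^k\rho_{\varepsilon_j}$ this bound carries the prefactor $1/(\rho_{\varepsilon_j}^2\delta^{2k})$ and is only useful because the radii are kept $\geq\varepsilon_j^{s}$, giving an error $\varepsilon_j^{2-2s}|\log\varepsilon_j|^2\to0$.

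What is missing, concretely, is the paper's per-dyadic-annulus contradiction argument: assuming the bound $\int_{C_j^{k,i}}W(\beta_j)/\varepsilon_j^2\geq\psi(R^T\xi_{i,j},\delta,U)-\sigma_j$ fails, one gets $L^2$-smallness of $\dist(\beta_j,K)$ on that annulus, applies Theorem~\ref{thm: multiwell rigidity crit} on the enlarged annulus with a sector cut (to restore simple connectedness), uses interior $H^2$ regularity together with the divergence penalization — with the specific choice $\eta_{\varepsilon_j}=\varepsilon_j|\log\varepsilon_j|/\rho_{\varepsilon_j}$, so that $\int|\divr\beta_j|^2\leq C\rho_{\varepsilon_j}^2$ — to invoke the multiplicative rigidity of Proposition~\ref{prop: rigidity multiplic form} and pin $\beta_j$ to a single matrix $\overline R_jU^j\in K$, and then compares with \eqref{69} to force $U^j=U$ and $\overline R_j\to R$ before linearizing and contradicting the minimality defining $\psi(\cdot,\delta,U)$. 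You correctly name this as the ``main obstacle'' and point to the right tools, but you do not supply the argument, and the mechanism you propose in its place (truncation plus minimality on the full annulus down to radius $\varepsilon$) would fail. As it stands the proposal proves the bulk term and reduces the self-energy term to an unproven per-dislocation estimate.
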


	Before proving Proposition \ref{prop: liminf} we need the following energy estimate near the dislocations. Let us also introduce the following notation: for every $\vartheta_1,\vartheta_2 \in [0,2\pi)$ we define (in polar coordinates) the open sector
	\begin{equation}\label{sector}
		S(\vartheta_1,\vartheta_2):=\{ (r,\theta) \in \R^2 \colon r>0, \ \vartheta_1 < \theta < \vartheta_2 \},
	\end{equation}
	Finally, for every $x \in \R^2$ we define the open sector centred in $x$ as $S(x,\vartheta_1,\vartheta_2):=x+S(\vartheta_1,\vartheta_2)$.
	
	\begin{lem}\label{lem: gammaliminf}
		Let $(\mu,\beta,RU) \in (H^{-1}(\Omega;\R^2) \cap \mathcal{M}(\Omega;\R^2)) \times L^2(\Omega;\R^{2 \times 2}) \times K$ with $\curl \beta=R^T \mu$. Let $\varepsilon_j \to 0$ as $j \to +\infty$ and consider $(\mu_{\varepsilon_j},\beta_{\varepsilon_j}) \in X_{\varepsilon_j} \times \mathcal{AS}_{\varepsilon_j}(\mu_{\varepsilon_j})$ \textnormal{(}see \eqref{X epsilon} and \eqref{Admissible strains} for the precise definitions\textnormal{)} a sequence converging to $(\mu,\beta,RU)$ in the sense of Definition \ref{def: convergence} such that $\sup_j \mathcal{E}_{\varepsilon_j}(\mu_{\varepsilon_j},\beta_{\varepsilon_j})<+\infty$. Assume that $\mu_{\varepsilon_j}=\sum_{i=1}^{M_j} \xi_{i,j} \delta_{x_{i,j}}$. Then, for every $s \in (0,1)$ and $j $ large enough we have
		\begin{equation}\label{67}
			\sum_{i=1}^{M_j} \mathcal{E}_{\varepsilon_j} \left( \mu_{\varepsilon_j},\beta_{\varepsilon_j};B_{\rho_{\varepsilon_j}}(x_{i,j}) \right) \geq \frac{1}{|\log \varepsilon_j|} \sum_{i=1}^{M_j} \left( s-\frac{|\log \rho_{\varepsilon_j}|}{|\log \varepsilon_j|} \right) \hat{\psi}(R^T \xi_{i,j},U),
		\end{equation}
		where $\hat{\psi}$ is defined in \eqref{hatpsi}.
	\end{lem}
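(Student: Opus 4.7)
The plan is to estimate from below the energy contribution near each dislocation in terms of the self-energy $\psi$ defined in \eqref{psi}, and then invoke Proposition \ref{prop: self energy} to replace $\psi$ by $\hat{\psi}$. By \eqref{rho1} we have $\varepsilon_j^s \ll \rho_{\varepsilon_j}$ for every $s \in (0,1)$ and $j$ large, so the annuli $A_{i,j} := B_{\rho_{\varepsilon_j}}(x_{i,j}) \setminus B_{\varepsilon_j^s}(x_{i,j})$ are well-defined, pairwise disjoint (since $|x_{i,j} - x_{k,j}| \geq 2 \rho_{\varepsilon_j}$), and contained in $B_{\rho_{\varepsilon_j}}(x_{i,j}) \cap \Omega_{\varepsilon_j}(\mu_{\varepsilon_j})$. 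I would bound $\mathcal{E}_{\varepsilon_j}(\mu_{\varepsilon_j}, \beta_{\varepsilon_j}; B_{\rho_{\varepsilon_j}}(x_{i,j}))$ from below by restricting the integral to $A_{i,j}$ and discarding the nonnegative contribution on $B_{\varepsilon_j^s}(x_{i,j}) \setminus B_{\varepsilon_j}(x_{i,j})$.

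On each $A_{i,j}$ set $\tilde\beta_{i,j} := R_{\varepsilon_j}^T \beta_{\varepsilon_j} - U$. By frame indifference \eqref{W3}, $W(\beta_{\varepsilon_j}) = W(U + \tilde\beta_{i,j})$. Since $\curl \beta_{\varepsilon_j} = 0$ on the annular region around $x_{i,j}$ and $U$ is constant (hence curl- and divergence-free with vanishing circulation), conservation of circulation yields $\tilde\beta_{i,j} \in \mathcal{AS}_{\varepsilon_j^s, \rho_{\varepsilon_j}}(\varepsilon_j R_{\varepsilon_j}^T \xi_{i,j})$. Assumptions \eqref{W1}, \eqref{W2} and \eqref{W4} provide a Taylor expansion of the form: for every $\eta > 0$ there exists $\tau_\eta > 0$ such that $W(U + F) \geq \frac{1-\eta}{2} \mathbb{C}_U F : F$ whenever $|F| \leq \tau_\eta$.

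To promote this pointwise estimate to an integral estimate on $A_{i,j}$, I would follow the truncation scheme of \cite{MuScaZep, GaLeoPonDislocations}: pick via Fubini a radius $r \in (\varepsilon_j^s, 2 \varepsilon_j^s)$ along which the $L^2$ trace of $\tilde\beta_{i,j}$ is controlled and the circulation is still $\varepsilon_j R_{\varepsilon_j}^T \xi_{i,j}$; inside $B_r(x_{i,j}) \setminus B_{\varepsilon_j^s}(x_{i,j})$ replace $\tilde\beta_{i,j}$ by an explicit curl-free radial field (proportional to $\xi \otimes J(x-x_{i,j})/|x-x_{i,j}|^2$) carrying the required circulation; then cap the modulus at level $\tau_\eta$ on the set where it is exceeded, compensating with a small curl-free correction so as to preserve the circulation. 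Using the energy bound to control via Chebyshev the measure of $\{|\tilde\beta_{i,j}| > \tau_\eta\}$, and the fact that the singular contribution of $\beta_{\varepsilon_j}$ on $A_{i,j}$ has order $\varepsilon_j^{1-s} \to 0$, one obtains an admissible $\hat\beta_{i,j} \in \mathcal{AS}_{\varepsilon_j^s, \rho_{\varepsilon_j}}(\varepsilon_j R_{\varepsilon_j}^T \xi_{i,j})$ such that
$$
\int_{A_{i,j}} W(\beta_{\varepsilon_j}) \, dx \geq \frac{(1-\eta)(1-o(1))}{2} \int_{A_{i,j}} \mathbb{C}_U \hat\beta_{i,j} : \hat\beta_{i,j} \, dx \geq \frac{(1-\eta)(1-o(1))}{2} \, \varepsilon_j^2 \, \psi\bigl(R_{\varepsilon_j}^T \xi_{i,j}, \, \varepsilon_j^s/\rho_{\varepsilon_j}, \, U\bigr),
$$
where the last step uses the scaling $\psi(\lambda \xi, \delta, U) = \lambda^2 \psi(\xi, \delta, U)$ and the defining minimality of $\psi$.

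To conclude, $|\log(\varepsilon_j^s/\rho_{\varepsilon_j})| = s|\log\varepsilon_j| - |\log\rho_{\varepsilon_j}| \to +\infty$ by \eqref{rho1}, so Proposition \ref{prop: self energy} (together with the uniformity on bounded sets recalled in Remark \ref{rem: selfenergy}) gives
$$
\psi\bigl(R_{\varepsilon_j}^T \xi_{i,j}, \varepsilon_j^s/\rho_{\varepsilon_j}, U\bigr) \geq (1-o(1)) \bigl(s|\log\varepsilon_j| - |\log\rho_{\varepsilon_j}|\bigr) \hat{\psi}(R_{\varepsilon_j}^T \xi_{i,j}, U),
$$
and continuity of $\hat{\psi}(\cdot, U)$ together with $R_{\varepsilon_j} \to R$ yields $\hat{\psi}(R_{\varepsilon_j}^T \xi_{i,j}, U) = (1-o(1)) \hat{\psi}(R^T \xi_{i,j}, U)$. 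Dividing by $\varepsilon_j^2|\log\varepsilon_j|^2$, summing over $i = 1, \dots, M_j$, and using $M_j \leq C|\log\varepsilon_j|$ from Step 1 of Proposition \ref{prop: compactness} to keep the $o(1)$ errors summable, then letting $\eta \to 0$, produces \eqref{67}. The main obstacle is the construction of the admissible truncated field $\hat\beta_{i,j}$: reconciling pointwise truncation with curl-freeness and the prescribed inner-boundary circulation, while keeping the perturbation $o(1)$ in energy, requires the delicate Fubini-plus-radial-extension argument described above; the remaining steps are mostly bookkeeping in the asymptotics.
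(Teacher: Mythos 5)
Your overall scaffolding (restrict to the annuli $B_{\rho_{\varepsilon_j}}(x_{i,j})\setminus B_{\varepsilon_j^s}(x_{i,j})$, compare with the cell problem $\psi$, then pass to $\hat{\psi}$ via Proposition \ref{prop: self energy}) is the right shape, but the step you defer as ``the main obstacle'' is precisely where the multiwell structure makes the argument fail as described, and it is where the paper spends essentially all of its effort. You linearize $W$ around the single well $U$ by writing $\tilde\beta_{i,j}=R_{\varepsilon_j}^T\beta_{\varepsilon_j}-U$ and truncating at a level $\tau_\eta$, controlling the bad set $\{|\tilde\beta_{i,j}|>\tau_\eta\}$ by Chebyshev. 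In the multiwell setting this is not enough: on regions where $\beta_{\varepsilon_j}$ sits near a \emph{different} well (or near $SO(2)U$ but with a rotation far from $R_{\varepsilon_j}$), the energy density $W(\beta_{\varepsilon_j})$ is essentially zero while $|\tilde\beta_{i,j}|$ is of order one, so (i) the pointwise bound $W(U+F)\geq\frac{1-\eta}{2}\mathbb{C}_UF:F$ is simply false there, (ii) the Chebyshev estimate gives only an area bound on such regions, with no control of their geometry, of the circulation defect they create on circles $\partial B_r$, or of the Dirichlet energy of the ``small curl-free correction'' you would need to restore admissibility (a correction that cannot be controlled by the energy at all, since these regions are energetically free), and (iii) the comparison with the minimum $\psi(\cdot,\cdot,U)$, which is tied to the specific well $U$, is then unjustified. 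In short, your argument would work verbatim only in the single-well case; the fact that no use is made of the divergence penalization near the cores is a symptom of this.

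The paper's proof confronts exactly this point: it decomposes $B_{\rho_{\varepsilon_j}}\setminus B_{\delta\varepsilon_j^s}$ into dyadic annuli $C_j^{k,i}$ with fixed ratio $\delta$ and argues by contradiction that on each such annulus the energy is at least $\psi(R^T\xi_{i,j},\delta,U)-\sigma_j$. The contradiction hypothesis yields an $O(\varepsilon_j^2)$ energy bound on that annulus, and then the well and the rotation are identified there by the new rigidity machinery: Theorem \ref{thm: multiwell rigidity crit} on the enlarged simply connected sets $\mathscr{C}_j^{k,i}$, interior elliptic estimates for $\nabla^2u_j$ which crucially use the bound $\int|\divr\beta_j|^2\leq C\varepsilon_j^2|\log\varepsilon_j|^2/\eta_{\varepsilon_j}^2$ coming from the singular perturbation with $\eta_{\varepsilon_j}=\varepsilon_j|\log\varepsilon_j|/\rho_{\varepsilon_j}$ (see \eqref{77}), the multiplicative rigidity of Proposition \ref{prop: rigidity multiplic form}, a covering of the annulus by overlapping sectors, and finally the comparison with the global estimate \eqref{69} to force the local matrix to be $\overline{R}_jU$ with $\overline{R}_j\to R$; only then does the linearization \`a la \cite[Proposition 3.11]{ScaZep} apply. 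Your proposal contains no substitute for this well-identification step, so as written it has a genuine gap; to repair it you would have to import the paper's rigidity-plus-penalization argument (or an equivalent mechanism excluding the other wells near each dislocation) before any truncation or comparison with $\psi$ can be made.
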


	\begin{proof}
		Let us denote $\mu_j=\mu_{\varepsilon_j}$ and $\beta_j=\beta_{\varepsilon_j}$ for brevity. By assumption, there exists $C>0$ such that for every $j \geq 1$
		\begin{equation}\label{68}
			\int_{\Omega_{\varepsilon_j}(\mu_j)} W(\beta_j) \, dx+\eta_{\varepsilon_j} \int_{\Omega_{\varepsilon_j}(\mu_j)} |\divr \beta_j|^2 \, dx \leq C \varepsilon_j^2 |\log \varepsilon_j|^2
		\end{equation}
		and there exists a sequence $(R_j) \subset SO(2)$ such that $R_j \to R$ and
		\begin{equation}\label{69}
			\int_{\Omega_j} |\beta_j-R_jU|^2 \, dx \leq C \varepsilon_j^2|\log \varepsilon_j|^2,
		\end{equation}
		where $\Omega_j:=\{ x \in \Omega \colon \dist(x,\partial \Omega)>\rho_{\varepsilon_j}/2 \}$. Notice moreover that by definition of $X_{\varepsilon_j}(\Omega)$ we have that $B_{\rho_{\varepsilon_j}}(x_{i,j}) \subset \Omega_j$ for every $j \geq 1$ and $i=1,\dots,M_j$.
		Fix $\delta \in (0,1/2)$. For every $i=1,\dots,M_j$ we decompose the annulus $B_{\rho_{\varepsilon_j}}(x_{i,j}) \setminus B_{\delta \varepsilon_j^s}(x_{i,j})$ centred at $x_{i,j}$ into dyadic annuli with constant ratio $\delta$ between the inner and outer radii. The annuli are defined as
		$$
		C_j^{k,i}:=B_{\delta^{k-1}\rho_{\varepsilon_j}}(x_{i,j}) \setminus B_{\delta^{k}\rho_{\varepsilon_j}}(x_{i,j}),
		$$
		and we consider only those corresponding to $k=1,\dots,\tilde{k}_j$, where
		\begin{equation}\label{70}
			\tilde{k}_j:= \lfloor k_j \rfloor +1 \qquad \mbox{and} \qquad k_j:=s\frac{|\log \rho_{\varepsilon_j}|}{|\log \delta|}-\frac{|\log \varepsilon_j|}{|\log \delta|}.
		\end{equation}
		Notice that $\delta^{\tilde{k}_j}\rho_{\varepsilon_j} \geq \delta^{k_j+1}\rho_{\varepsilon_j}=\delta \varepsilon_j^s$. Therefore, for every $i=1,\dots,M_j$ we have
		\begin{equation}\label{71}
			\frac{1}{\varepsilon_j^2|\log \varepsilon_j|^2} \int_{B_{\rho_{\varepsilon_j}}(x_{i,j}) \setminus B_{\delta \varepsilon_j^s}(x_{i,j})} W(\beta_j) \, dx \geq \frac{1}{|\log \varepsilon_j|^2} \sum_{k=1}^{\tilde{k}_j} \int_{C_j^{k,i}} \frac{W(\beta_j)}{\varepsilon_j^2} \, dx.
		\end{equation}
		We now divide the proof into two steps.
		
		\noindent \textbf{Step 1.} We claim that for every $j$ large enough, every $i=1,\dots,M_j$ and every $k=1,\dots,\tilde{k}_j$
		\begin{equation}\label{72}
			\int_{C^{k,i}_j} \frac{W(\beta_j)}{\varepsilon_j^2} \, dx \geq \psi(R^T \xi_{i,j},\delta,U)-\sigma_j,
		\end{equation}
		where $\psi(\cdot,\delta,U)$ is defined as in \eqref{psi} and $\sigma_j$ is a nonnegative infinitesimal sequence as $j \to +\infty$.
		As in \cite[Proposition 3.11]{ScaZep} we prove it arguing by contradiction. Assume that \eqref{72} does not hold true, then there exists a subsequence (not relabelled) such that, for every infinitesimal positive sequence $(\varsigma_j)$ there exists an index $i \in \{1,\dots,M_j\}$ and $k \in \{1,\dots,\tilde{K}_j\}$ such that for every $j \geq 1$
		\begin{equation}\label{73}
			\int_{C^{k,i}_j} \frac{W(\beta_j)}{\varepsilon_j^2} \, dx < \psi(R^T \xi_{i,j},\delta,U)-\varsigma_j.
		\end{equation}
		By assumption \eqref{W4} on the growth of $W$, from \eqref{73} we infer
		\begin{equation}\label{74}
			\int_{C^{k,i}_j} \dist^2(\beta_j,K) \, dx \leq C \psi(R^T \xi_{i,j},\delta,U)\varepsilon_j^2.
		\end{equation}
		We define the set
		$$
		\mathscr{C}_j^{k,i}:=\left( C_j^{k-1,i} \cup C_j^{k,i} \cup C^{k+1,i}_j \right)\cap S\left(x_{i,j},\frac{\pi}{4},\frac{7\pi}{4}\right)
		$$
		where $C^{0,i}_j:=B_{(1+\delta)\rho_{\varepsilon_j}}(x_{i,j}) \setminus B_{\rho_{\varepsilon_j}}(x_{i,j})$ and $S(x_{i,j},\cdot,\cdot)$ is as in \eqref{sector}. Notice that $\mathscr{C}_j^{k,i} \subset B_{2 \rho_{\varepsilon_j}}(x_{i,j})$ and it is a simply connected set, moreover, since $\delta<1/2$, we have (see Figure~\ref{fig:C})
		\begin{equation}\label{75}
			C_j^{k,i} \cap \left\{ (x_1,x_2) \in \R^2 \colon x_1<(x_{i,j})_1 \right\} \subset \left\{ x \in \mathscr{C}_j^{k,i} \colon \dist\left(x, \partial \mathscr{C}_j^{k,i}\right)> \frac{\pi}{8} \delta^{k+1} \rho_{\varepsilon_j} \right\}.
		\end{equation}

\begin{center}
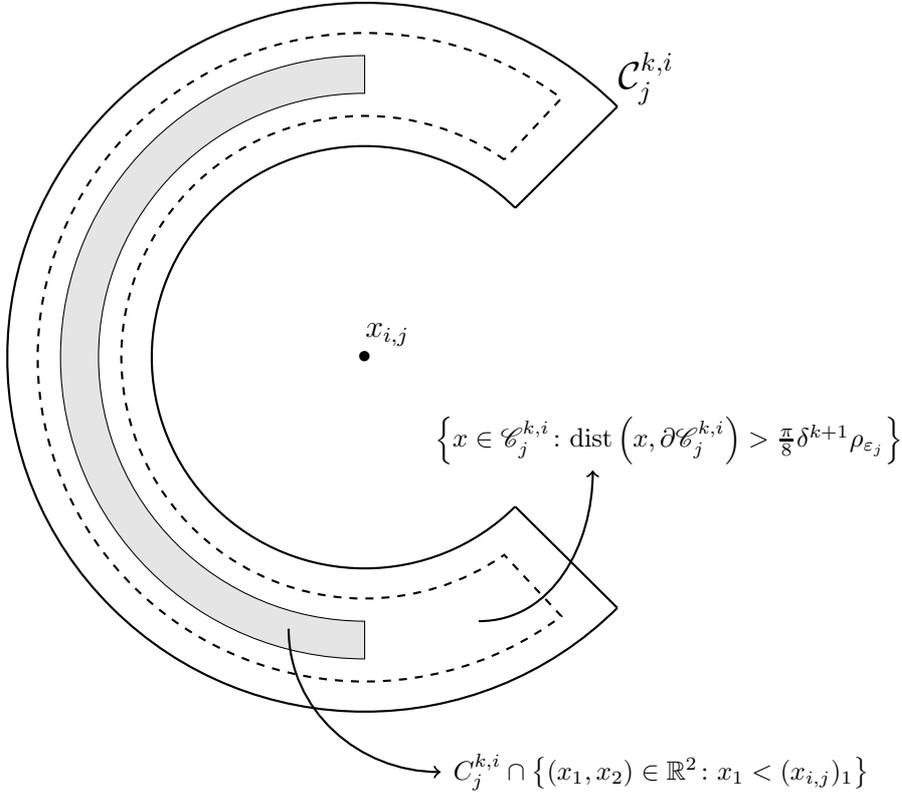
\begin{figure}[h!]
\begin{tikzpicture}
   \draw[thick] ([shift=(45:2.8cm)]0,0) arc (45:315:2.8cm);
     \draw[thick] ([shift=(45:4.7cm)]0,0) arc (45:315:4.7cm);
      \draw[thick] ([shift=(45:2.8cm)]0,0) -- ([shift=(45:4.7cm)]0,0);
      \draw[thick]  ([shift=(315:2.8cm)]0,0) --  ([shift=(315:4.7cm)]0,0);
     \draw[fill=gray, fill opacity=0.2] ([shift=(90:3.5cm)]0,0) arc (90:270:3.5cm)
     --
    ([shift=(270:4cm)]0,0) arc (270:90:4cm)
    --cycle;
    \draw [dashed, thick] ([shift=(55:3.2cm)]0,0) arc (55:305:3.2cm)
    --
        ([shift=(307:4.3cm)]0,0) arc (307:53:4.3cm)
        -- cycle;
    \node at (3.7,3.7) {{\LARGE{$\mathcal{C}^{k, i}_{j}$}}};
    \draw [->, thick] (1.5, -3.5) to [out=0,in=270] (3, -1.5);
    \node at (4,-1.1) {$ \left\{ x \in \mathscr{C}_j^{k,i} \colon \dist\left(x, \partial \mathscr{C}_j^{k,i}\right)> \frac{\pi}{8} \delta^{k+1} \rho_{\varepsilon_j} \right\}$};
     \draw [->, thick] (-1, -3.6) to [out=270,in=180] (1, -5.5);
    \node at (3.9,-5.5) {$C_j^{k,i} \cap \left\{ (x_1,x_2) \in \R^2 \colon x_1<(x_{i,j})_1 \right\}$};
    \node at (0, 0) {\textbullet};
    \node at (0.3, 0.3) {\Large{$x_{i, j}$}};
\end{tikzpicture}
\caption{Representation of the sets~$\mathcal{C}^{k, i}_{j}$ and~\eqref{75}.} \label{fig:C}
\end{figure}
\end{center}
		
		As $\beta_j \in \mathcal{AS}_{\varepsilon_j}(\mu_j)$, there exists $u_j \in H^1(\mathscr{C}_j^ {k,i};\R^2)$ such that $\beta_j = \nabla u_j$ on $\mathscr{C}_j^{k,i}$. Moreover, using \eqref{68} and the growth conditions on $W$ we estimate
		\begin{equation}\label{76}
			\int_{\mathscr{C}_j^{k,i}} \dist^2(\nabla u_j,K) \, dx \leq C\varepsilon_j^2|\log \varepsilon_j|^2,
		\end{equation}
		and, recalling that $\eta_{\varepsilon_j}=\varepsilon_j|\log \varepsilon_j|/\rho_{\varepsilon_j}$,
		\begin{equation}\label{77}
			\int_{\mathscr{C}_j^{k,i}} |\Delta u_j|^2 \, dx = \int_{\mathscr{C}_j^{k,i}} |\divr \beta_j|^2 \, dx \leq C \frac{\varepsilon_j^2|\log \varepsilon_j|^2}{\eta_{\varepsilon_j}} \leq C \rho_{\varepsilon_j}^2.
		\end{equation}
		Applying Theorem \ref{thm: multiwell rigidity crit} to $u_j$ on $\mathscr{C}_j^{k,i}$, we infer the existence of $C>0$ not depending on $j$, $i$ or $k$ (by scaling invariance) and a matrix $V_j \in K$ such that
		\begin{align}
			\begin{split}\label{78}
				\int_{\mathscr{C}_j^{k,i}} |\nabla u_j-V_j|^2 \, dx & \leq C \int_{\mathscr{C}_j^{k,i}} \dist^2(\nabla u_j,K) \, dx+ C \left( \int_{\mathscr{C}_j^{k,i}} |\Delta u_j| \right)^2 \\
				& \leq C\varepsilon_j^2 |\log \varepsilon_j|^2+C \mathcal{L}^2(\mathscr{C}_j^{k,i})^2 \int_{\mathscr{C}_j^{k,i}} |\Delta u_j|^2 \, dx \\
				& \leq \vphantom{\int} C(\delta^{k-2})^2 \rho_{\varepsilon_j}^4.
			\end{split}
		\end{align}
		where we have also used \eqref{76} and \eqref{77}.
		Let $v_j \in H^1\left( (B_1 \setminus B_{\delta^3}) \cap S(\pi/4,7\pi/4);\R^2 \right)$ be the function defined as $v_j(x):=u_j(\delta^{k-2}\rho_{\varepsilon_j}(x-x_{i,j}))-\delta^{k-2}\rho_{\varepsilon_j}V_j(x-x_{i,j})$ for every $x \in (B_1 \setminus B_{\delta^3}) \cap S(\pi/4,7\pi/4)=:\mathscr{C}$. Using \eqref{77}, \eqref{78} and a change of coordinates, we have
		\begin{align*}
		& \int_\mathscr{C} |\nabla v_j|^2 \, dx = \int_{\mathscr{C}_j^{k,i}} |\nabla u_j -V_j|^2 \, dx \leq C(\delta^{k-2})^2 \rho_{\varepsilon_j}^4, \\
		& \int_\mathscr{C} |\Delta v_j|^2 \, dx \leq C(\delta^{k-2})^2 \rho_{\varepsilon_j}^2 \int_{\mathscr{C}_j^{k,i}} |\Delta u_j|^2 \, dx \leq C(\delta^{k-2})^2 \rho_{\varepsilon_j}^4.
		\end{align*}
		Thus, recalling \eqref{75}, we estimate
		\begin{equation*}
			\int_{(B_\delta \setminus B_{\delta^2}) \cap \{ x_1 <0 \} } |\nabla^2 v_j|^2 \, dx \leq \frac{C}{\delta^3} (\delta^{k-2})^2 \rho_{\varepsilon_j}^4 = C(\delta) (\delta^{k-2})^2 \rho_{\varepsilon_j}^4,
		\end{equation*}
		from which we deduce using a change of coordinates
		\begin{equation*}
			\int_{C_j^{k,i} \cap \{ x_1 < (x_{i,j})_1 \}} |\nabla^2 u_j|^2 \, dx \leq \frac{1}{(\delta^{k-2})^2 \rho_{\varepsilon_j}^2} \int_{(B_\delta \setminus B_{\delta^2}) \cap \{ x_1 <0 \} } |\nabla^2 v_j|^2 \, dx \leq C(\delta) \rho_{\varepsilon_j}^2,
		\end{equation*}
		where $C(\delta)>0$ is a constant depending on $\delta$ but not on $j$, $i$ or $k$. Hence, we can use multiplicative rigidity, namely Proposition \ref{prop: rigidity multiplic form} to find $\tilde{U}^j \in \{ U_1,\dots,U_\ell \}$ such that
		\begin{equation}\label{79}
			\int_{C_j^{k,i} \cap \{ x_1 < (x_{i,j})_1 \}} \dist^2(\nabla u_j,SO(2)\tilde{U}^j) \, dx \leq C(1+C(\delta)\rho_{\varepsilon_j}^2)\int_{C_j^{k,i} \cap \{ x_1 < (x_{i,j})_1 \}} \dist^2(\nabla u_j,K) \, dx,
		\end{equation}
		where $C>0$ in \eqref{79} depends on the isoperimetric constant of the set $C_j^{k,i} \cap \{ x_1 < (x_{i,j})_1 \}$. However, being the constant in the relative isoperimetric inequality invariant after translation and rescaling of the set, we infer that $C$ does not depend on $j$, $k$ or $i$. Combining \eqref{74} with \eqref{79} and Theorem \ref{thm: FJM rigidity} we find $F_j^1 \in K$ such that
		\begin{equation}\label{80}
			\int_{C_j^{k,i} \cap \{ x_1 < (x_{i,j})_1 \}} |\beta_j -F_j^1|^2 \, dx = \int_{C_j^{k,i} \cap \{ x_1 < (x_{i,j})_1 \}} |\nabla u_j -F_j^1|^2 \, dx \leq C(\delta) \psi(R^T \xi_{i,j},\delta,U)\varepsilon_j^2.
		\end{equation}
		Reasoning analogously, we find $F_j^2,F_j^3 \in K$ such that \eqref{80} holds in the sets $C_j^{k,i} \cap \{ x_2 < (x_{i,j})_2 \}$ and $C_j^{k,i} \cap \{ x_2 > (x_{i,j})_2 \}$, respectively. Finally, using the usual technique of covering with overlapping for the set $C_j^{k,i}$, we obtain the global estimate with a single matrix $F_j \in K$
		\begin{equation}\label{81}
		\int_{C_j^{k,i}} |\beta_j -F_j|^2 \, dx  \leq C(\delta) \psi(R^T \xi_{i,j},\delta,U)\varepsilon_j^2,
		\end{equation}
		where $C(\delta)>0$ does not depend on $j$, $i$ or $k$.
		
		Now, since $F_j \in K$ we have that $F_j=\overline{R}_j U^j$ with $\overline{R}_j \in SO(2)$ and $U^j \in \{U_1,\dots,U_\ell\}$ for every $j \geq 1$. We want to prove that $U^j=U$ for every $j$ large enough and $\lim_j \overline{R}_j=R$. Observe that, since $C_j^{k,i} \subset \Omega_j$ for every $j$,$i$ and $k$, using \eqref{69}, \eqref{81} and the triangular inequality, we estimate
		\begin{align*}
			|\overline{R}_jU^j-R_jU|^2 & \leq \frac{C}{\rho_{\varepsilon_j}^2 \delta^{2k}} \left( C(\delta)\psi(R^T \xi_{i,j},\delta,U) \varepsilon^2_j+\varepsilon_j^2 |\log \varepsilon_j|^2 \right) \\
			&  \leq C \left( \frac{\varepsilon_j}{\rho_{\varepsilon_j} \delta^k} \right)^2 \left( \psi(R^T \xi_{i,j},\delta,U)+|\log \varepsilon_j|^2 \right).
		\end{align*}
		Recalling \eqref{70}, we conclude $\frac{\varepsilon_j}{\rho_{\varepsilon_j} \delta^k} \leq \frac{\varepsilon_j^{1-s}}{\delta}$ and hence, $|\overline{R}_jU^j-R_jU|^2 \leq C \varepsilon_j^{2-2s}|\log \varepsilon_j|^2$. Thus, there exists $J \in \N$ such that for every $j \geq J$ we have $U^j=U$ and $\lim_j \overline{R}_j=R$. Arguing as in \cite[Proposition 3.11]{ScaZep} and recalling the expression of $\psi(\cdot,\delta,U)$, we find an infinitesimal positive sequence $(\varsigma_j)$ such that for every $j \geq J$
		\begin{equation*}
			\int_{C^{k,i}_j} \frac{W(\beta_j)}{\varepsilon_j^2} \, dx \geq \psi(R^T \xi_{i,j},\delta,U)-\varsigma_j.
		\end{equation*}
		Thus the contradiction.
		
		\noindent \textbf{Step 2.} Combining \eqref{71} and the claim \eqref{72}, we obtain that for every $\delta \in (0,1/2)$ and $j \geq J$ it holds
		\begin{align*}
			\sum_{i=1}^{M_j} \mathcal{E}_{\varepsilon_j} \left( \mu_{\varepsilon_j},\beta_{\varepsilon_j};B_{\rho_{\varepsilon_j}}(x_{i,j}) \right) & \geq \frac{1}{|\log \varepsilon_j|^2} \sum_{i=1}^{M_j} \sum_{k=1}^{\tilde{k}_j} \left( \psi(R^T \xi_{i,j},\delta,U)-\sigma_j \right) \\
			& \geq \frac{1}{|\log \varepsilon_j|} \sum_{i=1}^{M_j} \left(s-\frac{|\log \rho_{\varepsilon_j}|}{|\log \varepsilon_j|} \right) \left( \frac{\psi(R^T \xi_{i,j},\delta,U)}{|\log \delta|}-\frac{\sigma_j}{|\log \delta|} \right).
		\end{align*}
		Using Proposition \ref{prop: self energy} and Remark \ref{rem: selfenergy}, sending $\delta \to 0$ we finally get for every $j \geq J$
		\begin{equation*}
			\liminf_{j \to +\infty} \sum_{i=1}^{M_j} \mathcal{E}_{\varepsilon_j} \left( \mu_{\varepsilon_j},\beta_{\varepsilon_j};B_{\rho_{\varepsilon_j}}(x_{i,j}) \right) \geq \frac{1}{|\log \varepsilon_j|} \sum_{i=1}^{M_j} \left( s-\frac{|\log \rho_{\varepsilon_j}|}{|\log \varepsilon_j|} \right) \hat{\psi}(R^T \xi_{i,j},U),
		\end{equation*}
		that is \eqref{67}.
	\end{proof}
	
	Now we are ready to prove the $\Gamma-\liminf$.
	\begin{proof}[Proof of Proposition \ref{prop: liminf}]
		Let $(\mu,\beta,RU) \in (H^{-1}(\Omega;\R^2) \cap \mathcal{M}(\Omega;\R^2)) \times L^2(\Omega;\R^{2 \times 2}) \times K$ with $\curl \beta=R^T \mu$ and $(\mu_\varepsilon,\beta_\varepsilon) \in \mathcal{M}(\Omega;\R^2) \times L^2(\Omega;\R^{2 \times 2})$ be as in the statement and assume that $\liminf_{\varepsilon \to 0} \mathcal{E}_\varepsilon(\mu_\varepsilon,\beta_\varepsilon)=\lim_{j \to +\infty} \mathcal{E}_{\varepsilon_j} (\mu_{\varepsilon_j},\beta_{\varepsilon_j})$. Suppose moreover that $\mathcal{E}_{\varepsilon_j}(\mu_{\varepsilon_j},\beta_{\varepsilon_j}) \leq C$ for every $j \geq 1$, otherwise there is nothing to prove. This implies in particular that $(\mu_{\varepsilon_j},\beta_{\varepsilon_j}) \in X_{\varepsilon_j} \times \mathcal{AS}_{\varepsilon_j}(\mu_{\varepsilon_j})$. 
		
		Let us set $\mu_j:=\mu_{\varepsilon_j}$ and $\beta_j:=\beta_{\varepsilon_j}$. We decompose the energy into a contribution far from the dislocations, in $\Omega_{\rho_{\varepsilon_j}}(\mu_j):=\Omega \setminus \cup_{i=1}^{M_j} B_{\rho_{\varepsilon_j}}(x_{i,j})$, and a contribution close to the dislocations:
		$$
		\mathcal{E}_{\varepsilon_j}(\mu_j,\beta_j)=\mathcal{E}_{\varepsilon_j}\left(\mu_j,\beta_j;\Omega_{\rho_{\varepsilon_j}}(\mu_j)\right)+\sum_{i=1}^{M_j} \mathcal{E}_{\varepsilon_j} \left( \mu_j,\beta_j;B_{\rho_{\varepsilon_j}}(x_{i,j}) \right).
		$$
		
		\noindent \textit{Lower bound far from the dislocations.}
		For the energy contribution far from the dislocations we will perform a second order expansion at scale $\varepsilon_j |\log \varepsilon_j|$ of $W$ at $U$. We have $W(U+F)=\frac{1}{2} \mathbb{C}_U F : F +\sigma(F)$, where $\sigma(F)/|F|^2 \to 0$ as $|F| \to 0$. Then, setting $\omega(t):=\sup_{|F| \leq t} |\sigma(F)|$, we have
		\begin{equation}\label{82}
		W(U+\varepsilon_j|\log \varepsilon_j| F) \geq \frac{1}{2} \varepsilon_j^2 |\log \varepsilon_j|^2 \mathbb{C}_U F:F-\omega(\varepsilon_j |\log \varepsilon_j| |F|),
		\end{equation}
		with $\omega(t)/t^2 \to 0$ as $t \to 0$. Set
		$$
		G_j:=\frac{R^T_{\varepsilon_j} \beta_j - U}{\varepsilon_j |\log \varepsilon_j|},
		$$
		recall that $\Omega_j:=\{ x \in \Omega \colon \ \dist(x,\partial \Omega) > \rho_{\varepsilon_j}/2 \}$ and define the characteristic function 
		\begin{equation*}
			\chi_j:=
			\begin{cases*}
				1 & if $x \in \Omega_j \cap \Omega_{\rho_{\varepsilon_j}}(\mu_j)$ and $|G_j| \leq \varepsilon_j^{-1/2}$, \\
				0 & otherwise.
			\end{cases*}
		\end{equation*}
		By the boundedness of $G_j$ in $L^{2}(\Omega_j;\R^{2 \times 2})$ and recalling the definitions of $\Omega_j$ and $\Omega_{\rho_{\varepsilon_j}}(\mu_j)$, we have that $\chi_j \to 1$ boundedly in measure on $\Omega$. Therefore, from the definition of convergence \ref{def: convergence}, we deduce that
		\begin{equation*}
			\tilde{G}_j :=\chi_j G_j \rightharpoonup \beta \qquad \mbox{in $L^2(\Omega;\R^{2 \times 2})$}.
		\end{equation*} 
		Using the frame indifference of $W$ and \eqref{82}, we infer
		\begin{align*}
			\mathcal{E}_{\varepsilon_j}\left(\mu_j,\beta_j;\Omega_{\rho_{\varepsilon_j}}(\mu_j)\right) & \geq \frac{1}{\varepsilon_j^2 |\log \varepsilon_j|^2} \int_\Omega \chi_j W(R^T_{\varepsilon_j} \beta_j) \, dx \\
			& = \frac{1}{\varepsilon_j^2 |\log \varepsilon_j|^2} \int_\Omega \chi_j W\left( U+\varepsilon_j |\log \varepsilon_j|G_j \right) \, dx \\
			& \geq \int_\Omega \left( \frac{1}{2} \mathbb{C}_U \tilde{G}_j : \tilde{G}_j-\chi_j \frac{\omega(\varepsilon_j|\log \varepsilon_j| |G_j|)}{\varepsilon_j^2 |\log \varepsilon_j|^2} \right) \, dx.
		\end{align*}
		Arguing as in \cite[Theorem 4.6]{MuScaZep} we conclude that
		\begin{equation}\label{83}
			\liminf_{j \to +\infty} \mathcal{E}_{\varepsilon_j} \left( \mu_j,\beta_j;\Omega_{\rho_{\varepsilon_j}}(\mu_j)\right) \geq \frac{1}{2} \int_\Omega \mathbb{C}_U \beta : \beta \, dx.
		\end{equation}
		
		\noindent \textit{Lower bound close to the dislocations.}
		Notice that the sequence $(\mu_j,\beta_j)$ satisfies the assumptions of Lemma \ref{lem: gammaliminf}. Hence, for every $s \in (0,1)$ and every $j$ large enough
		\begin{equation}\label{84}
			\sum_{i=1}^{M_j} \mathcal{E}_{\varepsilon_j} \left( \mu_{\varepsilon_j},\beta_{\varepsilon_j};B_{\rho_{\varepsilon_j}}(x_{i,j}) \right) \geq \frac{1}{|\log \varepsilon_j|} \sum_{i=1}^{M_j} \left( s-\frac{|\log \rho_{\varepsilon_j}|}{|\log \varepsilon_j|} \right) \hat{\psi}(R^T \xi_{i,j},U).
		\end{equation}
		By definition of $\mu_j$ and formula \eqref{selfenergy} follows 
		\begin{equation}\label{85}
			\frac{1}{|\log \varepsilon_j|} \sum_{i=1}^{M_j} \hat{\psi}(R^T \xi_{i,j},U)  \geq \int_\Omega \varphi \left( RU, \frac{d \tilde{\mu}_j}{d |\tilde{\mu}_j|} \right) \, d |\tilde{\mu}_j|,
		\end{equation}
		where $\tilde{\mu}_j:= \mu_j /(\varepsilon_j|\log \varepsilon_j|)$. Recall that by assumption $\tilde{\mu}_j \xrightharpoonup{*} \mu$ in $\mathcal{M}(\Omega;\R^2)$ as $j \to +\infty$. Notice that $\textnormal{Span}_\R \Sf=\R^2$, hence, the convex $1$-homogeneous function $\varphi$ is finite on $\R^2$ and therefore continuous. Then, Reshetnyak’s lower-semicontinuity Theorem, \eqref{84} and \eqref{85} give
		\begin{equation}\label{86}
			\sum_{i=1}^{M_j} \mathcal{E}_{\varepsilon_j} \left( \mu_{\varepsilon_j},\beta_{\varepsilon_j};B_{\rho_{\varepsilon_j}}(x_{i,j}) \right) \geq \int_\Omega s \varphi \left( RU, \frac{d \mu}{d |\mu|} \right) \, d |\mu|.
		\end{equation}
		Combining \eqref{83}, \eqref{86} and letting $s \to 1$ we conclude the lower bound.
	\end{proof}

	Next, we prove the existence of a recovery sequence for the functional $\mathcal{E}$.
	
	\begin{prop}[$\Gamma-\limsup$ inequality]\label{prop: limsup}
		Let $(\mu,\beta,RU) \in (H^{-1}(\Omega;\R^2) \cap \mathcal{M}(\Omega;\R^2)) \times L^2(\Omega;\R^{2 \times 2}) \times K$ with $\curl \beta=R^T \mu$. There exists a sequence $(\mu_j,\beta_j) \in X_{\varepsilon_j} \times \mathcal{AS}_{\varepsilon_j}(\mu_j)$ converging to $(\mu,\beta,RU)$ in the sense of Definition \ref{def: convergence} and such that: $\varepsilon_j \to 0$ as $j \to +\infty$ and
		\begin{equation}\label{87}
			\limsup_{j \to +\infty} \mathcal{E}_{\varepsilon_j}(\mu_j,\beta_j) \leq \mathcal{E}(\mu,\beta,RU).
		\end{equation}
	\end{prop}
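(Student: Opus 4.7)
The plan is to construct an explicit three-scale recovery sequence following the strategy of \cite[Theorem~3]{MuScaZepIncomp}, with two adaptations: the core fields must be adapted to the multiwell elasticity tensor $\mathbb{C}_U$, and the divergence penalization must vanish in the limit.

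\textbf{Density reduction and construction.} By the standard approximation scheme of \cite[Proposition~4]{MuScaZep}, combined with the definition \eqref{selfenergy} of $\varphi$, I would first reduce to the case $\mu = \sum_{k=1}^M \lambda_k \xi_k \delta_{x_k}$ with distinct $x_k \in \Omega$, $\xi_k \in \Sf$, and $\int_\Omega \varphi(RU, d\mu/d|\mu|)\,d|\mu| = \sum_k \lambda_k \hat\psi(R^T \xi_k, U)$; moreover $\beta$ can be taken smooth outside the $x_k$'s, with a prescribed logarithmic singularity at each $x_k$ encoding the relation $\curl \beta = R^T\mu$. Fix rotations $R_\varepsilon \to R$ and, for each $k$, place $N_k^\varepsilon \sim \lambda_k |\log\varepsilon|$ dislocation points $x_{k,i}^\varepsilon$ well-separated at scale $\rho_\varepsilon$ inside a shrinking neighborhood of $x_k$. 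Set $\mu_\varepsilon := \sum_{k,i} \varepsilon \xi_k \delta_{x_{k,i}^\varepsilon} \in X_\varepsilon$, which automatically satisfies \eqref{conv of meas}. The strain is then defined piecewise: in the core annulus $B_{\rho_\varepsilon}(x_{k,i}^\varepsilon) \setminus B_\varepsilon(x_{k,i}^\varepsilon)$, take $\beta_\varepsilon(x) = R_\varepsilon(U + K^\varepsilon_k(x - x_{k,i}^\varepsilon))$ where $K^\varepsilon_k$ is a near-minimizer for \eqref{psieps} modified to be divergence-free; outside $\bigcup_{k,i} B_{2\rho_\varepsilon}(x_{k,i}^\varepsilon)$, take $\beta_\varepsilon := R_\varepsilon(U + \varepsilon|\log\varepsilon|\beta)$; in the transition annulus interpolate via a smooth cutoff preserving the circulation condition; finally $\beta_\varepsilon \equiv I$ on $B_\varepsilon(x_{k,i}^\varepsilon)$.

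\textbf{Energy estimate.} The elastic energy splits naturally into an outer and a core contribution. For the outer part, the Taylor expansion $W(U+F) = \tfrac12 \mathbb{C}_U F{:}F + o(|F|^2)$ together with $\chi_{\Omega_{\rho_\varepsilon}(\mu_\varepsilon)} \to 1$ boundedly in measure yields
$$
\frac{1}{\varepsilon^2|\log\varepsilon|^2}\int_{\Omega_{\rho_\varepsilon}(\mu_\varepsilon)} W(\beta_\varepsilon)\,dx \to \tfrac12 \int_\Omega \mathbb{C}_U \beta{:}\beta \, dx.
$$
The core contributions sum to $\sum_{k,i} \psi_\varepsilon(R^T\xi_k, U)/|\log\varepsilon|$ which, by Remark~\ref{rem: selfenergy} and Proposition~\ref{prop: self energy}, tends to $\sum_k \lambda_k \hat\psi(R^T\xi_k, U)$, matching the self-energy term of $\mathcal{E}(\mu, \beta, RU)$. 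A standard diagonal argument then transfers the estimate back to a general triple $(\mu,\beta,RU)$.

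\textbf{Main obstacle.} The delicate point is making the penalization $\frac{\eta_\varepsilon^2}{\varepsilon^2|\log\varepsilon|^2} \int|\divr \beta_\varepsilon|^2\,dx$ vanish, because the choice $\eta_\varepsilon = \varepsilon|\log\varepsilon|/\rho_\varepsilon$ is not infinitesimal of high order. In the isotropic case $\mathbb{C}_U = I$ of \cite{MuScaZepIncomp} the optimal core is of the form $\xi \otimes J\nabla(\log|x|)$ and is automatically divergence-free; but for general $\mathbb{C}_U$ the minimizer of \eqref{psi} only satisfies $\divr(\mathbb{C}_U K) = 0$ and may have divergence of order $\varepsilon/|x|^2$, producing a fatal contribution of order $1/\rho_\varepsilon^2$ per dislocation. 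The fix is to replace $K^\varepsilon_k$ by the minimizer of \eqref{psieps} taken over the closed subspace $\{\divr \beta = 0\}$: by coercivity of $\mathbb{C}_U$ this constrained minimum is attained, and via a Helmholtz decomposition one shows that the constrained and unconstrained minima differ by a quantity that is $o(|\log\varepsilon|)$, so the limiting self-energy $\hat\psi$ is preserved. The transition construction must then be done compatibly (e.g.\ via a stream-function interpolation) to keep $\|\divr\beta_\varepsilon\|_{L^2}$ of order $\varepsilon/\rho_\varepsilon$ on each annulus, which combined with the number $N_\varepsilon \sim |\log\varepsilon|$ of cores and the scaling of $\eta_\varepsilon$ yields a total penalization of order $|\log\varepsilon|^{-1}$, hence vanishing.
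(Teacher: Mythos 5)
Your overall strategy (follow \cite[Theorem 3]{MuScaZepIncomp}, adapt the cores to $\mathbb{C}_U$, kill the divergence penalization) is the right frame, and you correctly identify the penalization as the main obstacle, but two of your key steps fail. First, the density reduction to atomic measures $\mu=\sum_k\lambda_k\xi_k\delta_{x_k}$ is not available: in dimension two a Dirac mass does not belong to $H^{-1}(\Omega)$, and no $\beta\in L^2(\Omega;\R^{2\times2})$ can satisfy $\curl\beta=R^T\mu$ for such $\mu$ (if $\beta\in L^2$ then $\curl\beta\in H^{-1}$); a field with merely logarithmic singularities does not produce atomic curl, one needs a $1/|x|$ singularity, which is not square integrable. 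Hence atomic configurations lie outside the effective domain of $\mathcal{E}$, and a diagonal argument cannot be run through them, since it requires approximants with $\beta_m\to\beta$ in $L^2$, $\curl\beta_m=R^T\mu_m$ and $\mathcal{E}(\mu_m,\beta_m,RU)\to\mathcal{E}(\mu,\beta,RU)$. Relatedly, clustering $N_k^\varepsilon\sim\lambda_k|\log\varepsilon|$ dislocations in a shrinking neighborhood of a single point produces an interaction energy exceeding the $\varepsilon^2|\log\varepsilon|^2$ scaling, so the rescaled strains cannot stay bounded in $L^2$. The paper goes the opposite way (Lemma \ref{lem: density}): it approximates $\mu$ by \emph{locally constant} (absolutely continuous, piecewise constant density) measures, and in Step 1 spreads the $\sim|\log\varepsilon|$ dislocations of each species $\xi_k$ with areal density $\lambda_k|\log\varepsilon|$ over the set $E$, at mutual distance $r_j\sim|\log\varepsilon_j|^{-1/2}$; an extra corrector $\tilde\beta_j$ solving \eqref{106} is needed so that the diffuse curl $\varepsilon_j|\log\varepsilon_j|\mu$ of the outer term is cancelled and $\beta_j\in\mathcal{AS}_{\varepsilon_j}(\mu_j)$, a compatibility issue your outer ansatz $R_\varepsilon(U+\varepsilon|\log\varepsilon|\beta)$ leaves unaddressed.

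Second, your fix for the penalization—replacing the core profile by the minimizer of \eqref{psieps} constrained to $\{\divr\beta=0\}$—does not preserve the self-energy. The unconstrained minimizer satisfies $\divr(\mathbb{C}_U\beta)=0$, and for generic $\mathbb{C}_U$ the optimal profile $\frac1r\Gamma_\xi(\theta)$ is not divergence free; imposing $\divr\beta=0$ on the whole annulus raises the angular energy by a fixed positive amount, so the constrained and unconstrained minima differ by $\Theta(|\log\varepsilon|)$, not $o(|\log\varepsilon|)$ (a Helmholtz correction $\nabla v$ with $\divr\nabla v=\divr\beta^*\sim\varepsilon/r^2$ is itself of order $\varepsilon/r$ and contributes at leading logarithmic order). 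Your recovery sequence would therefore converge to a self-energy density strictly larger than $\varphi(RU,\cdot)$ in general. The paper's Lemma \ref{lem: interpolation} resolves this differently: the core field $\zeta_\xi$ equals the $\mathbb{C}_U$-optimal field $\beta_{\R^2}(\xi)$ on the bulk of the annulus and is switched to the divergence-free profile only inside a ball of radius $\varepsilon_j^{\gamma_j}$ with $\gamma_j\to1$, so the energy loss is confined to an annulus of logarithmic width $(1-\gamma_j)|\log\varepsilon_j|=o(|\log\varepsilon_j|)$, while $|\divr\zeta_j|\lesssim\varepsilon_j/|x|^2$ outside that ball yields $\eta_{\varepsilon_j}^2\varepsilon_j^{-2}|\log\varepsilon_j|^{-2}\int|\divr\beta_j|^2\,dx\lesssim\varepsilon_j^{2-2\gamma_j}|\log\varepsilon_j|/\rho_{\varepsilon_j}^2\to0$ for the explicit choice $\gamma_j=1-|\log\rho_{\varepsilon_j}|/|\log\varepsilon_j|-\tfrac12|\log\varepsilon_j|^{-1/2}$ and $\eta_\varepsilon=\varepsilon|\log\varepsilon|/\rho_\varepsilon$. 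Without this two-scale core (or an equivalent device) your construction either loses the sharp constant $\hat\psi$ or fails to make the penalization vanish.
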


	Before proving the upper bound, we need a density result. The construction is identical to the one in \cite[Step 3 of the proof of Theorem 12]{GaLeoPonDislocations}, we have only to check in addition that the divergence of the approximants is an $L^2$ function on $\Omega$. Hence, we only sketch the main steps of the proof.  In the statement below, a locally constant measure is an absolutely continuous measure with respect to the Lebesgue one whose density is constant on a regular partition of the domain.
	
	\begin{lem}\label{lem: density}
		For every $(\mu,\beta,RU) \in (H^{-1}(\Omega;\R^2) \cap \mathcal{M}(\Omega;\R^2)) \times L^2(\Omega;\R^{2 \times 2}) \times K$ with $\curl \beta=R^T \mu$, there exists a sequence $(\mu_m,\beta_m)$ such that $\beta_m \in L^\infty(\Omega;\R^{2 \times 2})$, $\divr \beta_m \in L^2(\Omega;\R^2)$, $\mu_m \in \mathcal{M}(\Omega;\R^2)$ is a locally constant measure on $\Omega$, $\curl \beta_m = R^T \mu_m$ and
		\begin{equation}\label{88}
			\beta_m \to \beta \in L^2(\Omega;\R^{2 \times 2}), \qquad \mu_m \xrightharpoonup{*} \mu \in \mathcal{M}(\Omega;\R^2) \qquad \mbox{and} \qquad |\mu_m|(\Omega) \to |\mu|(\Omega).
		\end{equation}
		Moreover, the measure $\mu_m$ can be chosen such that it is constant on the sets belonging to a partition $\{E_l\}_{l=1}^L$ of $\Omega$, where every $E_l$ is a simply connected open set with Lipschitz boundary.
	\end{lem}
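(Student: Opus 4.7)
The plan is a decomposition-and-approximation strategy following \cite[Step~3 of Theorem~12]{GaLeoPonDislocations}, with one extra check that the approximants' divergence lies in $L^2(\Omega;\R^2)$.

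\smallskip

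\noindent\textbf{Step 1 (Decomposition of $\beta$).} Let $B\Supset\Omega$ be a ball. Extend $R^T\mu$ by zero to $B$ and let $z\in H^1_0(B;\R^2)$ be the unique solution of $-\Delta z=R^T\mu$ in $B$, which exists since $R^T\mu\in H^{-1}$. Set $\gamma:=\nabla z\, J$ with $J$ the anticlockwise rotation by $\pi/2$. A direct computation yields $\curl\gamma=-\Delta z=R^T\mu$ and $\divr\gamma=0$ on $\Omega$. Since $\beta-\gamma\in L^2(\Omega;\R^{2\times 2})$ is curl-free on the simply connected Lipschitz domain $\Omega$, there exists $v\in H^1(\Omega;\R^2)$ with $\nabla v=\beta-\gamma$.

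\smallskip

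\noindent\textbf{Step 2 (Locally constant approximation of $\mu$).} For each $m$, I would partition $\Omega$ into simply connected open Lipschitz cells $\{E_l^m\}_{l=1}^{L_m}$ of diameter at most $1/m$ (e.g., dyadic squares intersected with $\Omega$, with cells adjacent to $\partial\Omega$ slightly regularized), and set
\begin{equation*}
\mu_m:=\sum_{l=1}^{L_m}\frac{\mu(E_l^m)}{|E_l^m|}\chi_{E_l^m}.
\end{equation*}
Standard approximation gives $\mu_m\overset{*}{\rightharpoonup}\mu$ in $\mathcal{M}(\Omega;\R^2)$, and the bound $|\mu_m|(\Omega)=\sum_l|\mu(E_l^m)|\le|\mu|(\Omega)$ combined with weak-$*$ lower semicontinuity yields $|\mu_m|(\Omega)\to|\mu|(\Omega)$. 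The crucial property is $\mu_m\to\mu$ in $H^{-1}(\Omega;\R^2)$: using that $\mu_m$ is the cell-average of $\mu$,
\begin{equation*}
\langle\mu-\mu_m,\varphi\rangle=\sum_{l=1}^{L_m}\int_{E_l^m}(\varphi-\bar\varphi_l^m)\,d\mu,\qquad \bar\varphi_l^m:=\fint_{E_l^m}\varphi,
\end{equation*}
one proves the bound first for $\varphi\in C^1_c(\Omega)$ via $|\varphi-\bar\varphi_l^m|\le m^{-1}\|\nabla\varphi\|_\infty$, then extends to $H^1_0(\Omega)$ by density together with a uniform bound $\|\mu_m\|_{H^{-1}}\le C\|\mu\|_{H^{-1}}$.

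\smallskip

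\noindent\textbf{Step 3 (Construction of $\beta_m$).} Solve $-\Delta z_m=R^T\mu_m$ in $B$ with $z_m\in H^1_0(B;\R^2)$. Since $\mu_m\in L^\infty(\Omega;\R^2)$ is compactly supported in the smooth ball $B$, $W^{2,p}$-regularity gives $z_m\in W^{2,p}(B;\R^2)$ for every $p<\infty$, so $\gamma_m:=\nabla z_m\, J\in L^\infty(\Omega;\R^{2\times 2})$ with $\curl\gamma_m=R^T\mu_m$ and $\divr\gamma_m=0$. The $H^{-1}$-convergence from Step~2 implies $z_m\to z$ in $H^1_0(B)$, whence $\gamma_m\to\gamma$ in $L^2(\Omega)$. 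Independently, via a Stein extension of $v$ to a slightly larger Lipschitz set followed by standard mollification, approximate $v$ by $v_m\in C^\infty(\overline\Omega;\R^2)$ with $v_m\to v$ in $H^1$. Setting
\begin{equation*}
\beta_m:=\gamma_m+\nabla v_m,
\end{equation*}
we immediately obtain $\beta_m\in L^\infty$, $\curl\beta_m=R^T\mu_m$, $\beta_m\to\beta$ in $L^2(\Omega;\R^{2\times 2})$, and the new check $\divr\beta_m=\divr\gamma_m+\Delta v_m=\Delta v_m\in L^2(\Omega;\R^2)$, which is automatic from the smoothness of $v_m$.

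\smallskip

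\noindent\textbf{Main obstacle.} The delicate point is the $H^{-1}$-convergence of the locally constant approximants $\mu_m\to\mu$ in Step~2: since $H^1_0(\Omega)\not\subset L^\infty(\Omega)$ in dimension two, a cell-by-cell Poincaré–Wirtinger estimate does not close directly on rough test functions, forcing a density argument that requires a uniform $H^{-1}$-bound on $(\mu_m)$, best obtained by identifying $\mu_m$ as the distributional Laplacian of a finite-element-type Galerkin projection of $z$. The additional $L^2$-divergence requirement, by contrast, is essentially free once the decomposition $\beta=\gamma+\nabla v$ is in place, because all singular behavior of $\divr\beta$ is absorbed into $\Delta v$ and becomes $L^2$ after smoothing.
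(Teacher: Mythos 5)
Your overall architecture (split $\beta$ into a divergence-free field carrying the curl plus a gradient, then regularize the two pieces separately) is plausible, but there is a genuine gap exactly at the point you flag, and it is not repaired by the argument you sketch. To obtain $\beta_m \to \beta$ strongly in $L^2$ you need $\gamma_m \to \gamma$ strongly in $L^2$, hence $\mu_m \to \mu$ \emph{strongly} in $H^{-1}$; but "convergence against $C^1_c$ test functions plus a uniform $H^{-1}$ bound" can only yield weak-$*$ convergence in $H^{-1}$, which through the solution map gives only weak convergence of $\gamma_m$ and therefore only weak convergence of $\beta_m$, insufficient for \eqref{88}. Moreover the uniform bound $\Vert \mu_m \Vert_{H^{-1}} \leq C \Vert \mu \Vert_{H^{-1}}$ is itself unjustified: the adjoint of cell-averaging is the piecewise-constant projection $\varphi \mapsto \sum_l \bar\varphi_l \chi_{E_l}$, which does not map $H^1_0(\Omega)$ into $H^1_0(\Omega)$, so the $H^{-1}$ duality cannot be invoked, and the proposed identification of $\mu_m$ with the distributional Laplacian of a finite-element projection of $z$ is false (the distributional Laplacian of a piecewise-polynomial Galerkin projection is a measure concentrated on the mesh skeleton, not the cell average of $\mu$). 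There is also a smaller defect in Steps 1 and 3: the zero extension of $\mu \in H^{-1}(\Omega;\R^2) \cap \mathcal{M}(\Omega;\R^2)$ to the ball $B$ need not belong to $H^{-1}(B;\R^2)$ (test functions in $H^1_0(B)$ do not vanish on $\partial\Omega$), so $z$ is not well defined as you set it up; solving instead on $\Omega$ repairs the definition of $\gamma$, but then ruins the $W^{2,p}$, hence $L^\infty$, bound you invoke for $\gamma_m$, since $\Omega$ is only Lipschitz.

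The paper avoids all of this by never putting the raw measure into an elliptic equation. It first regularizes the \emph{pair} $(\beta,\mu)$ by reflection and convolution, producing smooth $f_h \to \beta$ in $L^2$ and $g_h$ with $\curl f_h = g_h$, $g_h\,dx$ converging weakly-$*$ to $\mu$ together with the total variations; the piecewise-constant approximation is then performed on the continuous densities $g_h$ in the uniform norm, and the corrector $r_{h,k}$ solves a div-curl system on a ball with datum $\chi_\Omega(g_{h,k}-g_h)$, which is small in $L^\infty$, so $W^{2,p}$ regularity on the smooth ball plus Sobolev embedding give $\Vert r_{h,k} \Vert_{L^\infty} \leq C \Vert g_{h,k}-g_h \Vert_{L^\infty} \to 0$ while the divergence is left untouched; a diagonal argument concludes. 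If you want to keep your scheme, the fix is essentially to insert this mollification step before the cell averaging, so that you only ever average continuous densities; otherwise you would have to prove strong $H^{-1}$ convergence of piecewise-constant averages of a general measure in $H^{-1}(\Omega;\R^2) \cap \mathcal{M}(\Omega;\R^2)$, which your argument does not provide.
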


	\begin{proof}
		Assume without loss of generality that $R=I$. By standard reflection and convolution arguments we can find sequences $(f_h) \in C^\infty(\overline{\Omega};\R^{2 \times 2})$, $(g_h) \in C^\infty(\overline{\Omega};\R^2)$ with $\curl f_h = g_h$ and such that
		\begin{equation}\label{89}
			f_h \to \beta \in L^2(\Omega;\R^{2 \times 2}), \qquad g_h \, dx \xrightharpoonup{*} \mu \in \mathcal{M}(\Omega;\R^2) \qquad \mbox{and} \qquad |g_h \, dx|(\Omega) \to |\mu|(\Omega).
		\end{equation}
		Further, we can approximate each $g_h$ with piecewise constant functions in $\Omega$. That is, we find piecewise constant functions $(g_{h,k})$ such that
		\begin{equation}\label{90}
			\Vert g_{h,k} - g_h \Vert_{L^\infty(\Omega;\R^2)} \to 0 \mbox{ as } k \to +\infty \qquad \mbox{and} \qquad \int_\Omega (g_{h,k}-g_h) \, dx =0.
		\end{equation}
		Let $B_R$ be a ball in $\R^2$ such that $\Omega \subset B_R$. Consider now $r_{h,k}$ the solution of the following problem
		\begin{equation}\label{91}
			\begin{cases*}
				\curl \ r_{h,k}=\chi_{\Omega}(g_{h,k}-g_h) & in $B_R$, \\
				\divr \ r_{h,k}=0 & in $B_R$, \\
				r_{h,k} \cdot t =0 & on $\partial B_R$.
			\end{cases*}
		\end{equation}
		Observe that $r_{h,k} J=\nabla z_{h,k}$ on $B_R$, where $J$ is the anticlockwise rotation and $z_{h,k} \in H^1(B_R;\R^2)$. Moreover, $z_{h,k}$ satisfies the following system (recall \eqref{90})
		\begin{equation*}
			\begin{cases*}
				\Delta z_{h,k}=\chi_{\Omega}(g_{h,k}-g_h) & in $B_R$, \\
				\nabla z_{h,k} \cdot \nu =0 & on $\partial B_R$.
			\end{cases*}
		\end{equation*}
		By standard elliptic estimates it holds $\Vert z_{h,k} \Vert_{W^{2,p}(B_R;\R^{2})} \leq C(p) \Vert g_{h,k} - g_h \Vert_{L^p(\Omega;\R^2)}$ for every $p \in (1,\infty)$. Hence, by Sobolev Embedding Theorem we have  
		\begin{equation}\label{92}
		\Vert r_{h,k} \Vert_{L^\infty(B_R;\R^{2 \times 2})} \leq C \Vert g_{h,k} - g_h \Vert_{L^\infty(\Omega;\R^2)}.
		\end{equation}
		Set $f_{h,k}:=f_h + r_{h,k}$. By \eqref{91} we have $\curl f_{h,k}=g_{h,k}$ and $\divr f_{h,k}= \divr f_h$. Moreover, by \eqref{90} and \eqref{92} we also have $f_{h,k} \to f_h$ in $L^2(\Omega;\R^{2 \times 2})$ as $k \to +\infty$. Hence, recalling \eqref{89} and using a diagonal argument, we can extract the desired sequence $(\mu_m,\beta_m)$ satisfying \eqref{88}, with $\mu_m = g_m \mathrm{d}x$. 
	\end{proof}

	We will also employ the following construction in order to deal with the penalisation in the $\Gamma-\limsup$.
	
	\begin{lem}\label{lem: interpolation}
		Let $\mathbb{C}$ be a linear symmetric operator on $\R^{2 \times 2}$ with the property that there exist $\lambda_1,\lambda_2 >0$ such that for every $\beta \in \R^{2 \times 2}$
		$$
		\lambda_1 |\beta_\textnormal{sym}|^2 \leq \mathbb{C} \beta \cdot \beta \leq \lambda_2 |\beta_\textnormal{sym}|^2.
		$$ 
		Let $\xi \in \R^2$ and $\beta_{\R^2} \colon \R^2 \setminus \{0\} \to \R^{2 \times 2 }$ be the smooth matrix field such that
		\begin{equation*}
			\begin{cases*}
				\curl \beta_{\R^2}(\xi)=\xi \delta_0 & in $\R^2$, \\
				\divr \mathbb{C} \beta_{\R^2}(\xi) = 0 & in $\R^2$;
			\end{cases*}
		\end{equation*}
		as defined in Proposition \eqref{prop: self energy}.
		Then, there exists a smooth matrix field $\zeta_\xi \colon \R^2 \setminus \{0\} \to \R^{2 \times 2}$ and a constant $C=C(\lambda_1,\lambda_2)>0$ such that:
		\begin{align}
			& \curl \, \zeta_\xi =0 \qquad \mbox{in $\R^2 \setminus \{0\}$}, \label{93} \\
			& \int_{\partial B_r} \zeta_\xi \cdot t \, d\mathcal{H}^1 = \xi \qquad \mbox{for every $r>0$}, \label{94} \\
			& \divr \, \zeta_\xi =0 \qquad \mbox{in $B_{1/2}$}, \label{95} \\
			& \zeta_\xi = \beta_{\R^2}(\xi) \qquad \mbox{in $\R^2 \setminus B_1$}, \label{96} \\
			& |\zeta_\xi(x)| \leq \frac{C}{|x|}|\xi| \qquad  \mbox{in $\R^2 \setminus \{0\}$}, \label{97} \\
			& |\nabla \zeta_\xi(x)| \leq \frac{C}{|x|^2}|\xi| \qquad \mbox{in $\R^2 \setminus \{0\}$} \label{98}.
		\end{align}
	\end{lem}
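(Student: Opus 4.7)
My plan is to construct $\zeta_\xi$ by interpolating between an explicit singular field near the origin, which is automatically both curl-free and divergence-free, and $\beta_{\R^2}(\xi)$ outside the unit ball. The starting point is the elementary singular field
\[
\beta_0(\xi)(x):=\tfrac{1}{2\pi}\,\xi\otimes J\tfrac{x}{|x|^2},
\]
whose rows are constant multiples of $\nabla\theta(x)$ (the gradient of the multi-valued angular coordinate). A direct computation shows that $\beta_0(\xi)$ is smooth on $\R^{2}\setminus\{0\}$, is both curl-free and divergence-free there, satisfies $\curl\beta_0(\xi)=\xi\delta_0$ distributionally, and has circulation $\xi$ on every circle centered at the origin. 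Moreover it is positively $-1$-homogeneous and linear in $\xi$, so $|\beta_0(\xi)(x)|\le C|\xi|/|x|$ and $|\nabla\beta_0(\xi)(x)|\le C|\xi|/|x|^{2}$ for a universal constant. On the other hand, the standard scaling argument applied to the elliptic system $\divr(\mathbb{C}\beta_{\R^2}(\xi))=0$, $\curl\beta_{\R^2}(\xi)=\xi\delta_0$ together with its smoothness away from $0$ yields an analogous representation $\beta_{\R^2}(\xi)(x)=|x|^{-1}g_{\mathbb{C}}(x/|x|,\xi)$ with $g_\mathbb{C}$ smooth and linear in $\xi$, leading to the same kind of bounds with a constant $C=C(\lambda_1,\lambda_2)$.

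Since both $\beta_0(\xi)$ and $\beta_{\R^2}(\xi)$ have curl equal to $\xi\delta_0$, their difference is curl-free in $\R^{2}\setminus\{0\}$ and, by construction, has the same circulation $\xi$ on every loop encircling the origin, hence zero net circulation. As $H^{1}(\R^{2}\setminus\{0\};\R)$ is one-dimensional, this is enough to guarantee the existence of $u\in C^{\infty}(\R^{2}\setminus\{0\};\R^{2})$ such that
\[
\nabla u=\beta_{\R^2}(\xi)-\beta_0(\xi)\qquad\text{in }\R^{2}\setminus\{0\},
\]
and by the linearity of the problem in $\xi$ we may choose $u$ linear in $\xi$. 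I then fix a radial cutoff $\chi\in C^{\infty}(\R^{2};[0,1])$ with $\chi\equiv 0$ on $B_{1/2}$, $\chi\equiv 1$ outside $B_{1}$, and $|\nabla\chi|+|\nabla^{2}\chi|\le C$, and define
\[
\zeta_\xi:=\beta_0(\xi)+\nabla(\chi u).
\]
Since $\chi u$ vanishes in a neighborhood of $0$, it extends smoothly to all of $\R^{2}$, and $\nabla(\chi u)$ is a smooth gradient on all of $\R^{2}$.

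The verification of the six properties is then straightforward: \eqref{93} is immediate because $\nabla(\chi u)$ is a gradient and $\beta_0(\xi)$ is curl-free away from $0$; \eqref{94} follows since for any $r>0$ the gradient term contributes zero circulation, leaving the value $\xi$ from $\beta_0(\xi)$; \eqref{95} holds because on $B_{1/2}$ one has $\chi\equiv 0$ and $\nabla\chi\equiv 0$, so $\zeta_\xi=\beta_0(\xi)$, which is divergence-free; \eqref{96} follows because outside $B_{1}$ we have $\chi\equiv 1$, so $\nabla(\chi u)=\nabla u=\beta_{\R^2}(\xi)-\beta_0(\xi)$ and $\zeta_\xi=\beta_{\R^2}(\xi)$. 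The bounds \eqref{97}--\eqref{98} follow by combining three regimes: on $B_{1/2}$, $\zeta_\xi=\beta_0(\xi)$ and the homogeneity estimates for $\beta_0$ apply; outside $B_{1}$, $\zeta_\xi=\beta_{\R^2}(\xi)$ and the analogous estimates coming from the $C=C(\lambda_1,\lambda_2)$-dependent scaling of $\beta_{\R^2}(\xi)$ apply; in the transition annulus $B_{1}\setminus B_{1/2}$, $u$ is smooth with $|u|+|\nabla u|\le C|\xi|$ (again by scaling, since $\nabla u=\beta_{\R^2}(\xi)-\beta_0(\xi)$ is controlled by $|\xi|$ on that fixed annulus), and $|x|\sim 1$, so the factors $1/|x|$ and $1/|x|^{2}$ can be absorbed into the constant. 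The only potential obstacle is the single-valuedness of $u$, which is precisely what the matched circulations of $\beta_0$ and $\beta_{\R^2}$ secure, so no further argument is needed.
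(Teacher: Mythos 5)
Your construction is correct and is essentially the paper's own proof: the paper likewise introduces the isotropic field $\beta_0(\xi)$ with $\curl\beta_0(\xi)=\xi\delta_0$, $\divr\beta_0(\xi)=0$, writes $\beta_0(\xi)-\beta_{\R^2}(\xi)$ as $\nabla u$ (single-valued because both fields carry the same circulation), and interpolates with a cutoff supported in $B_1\setminus B_{1/2}$, so your $\zeta_\xi=\beta_0(\xi)+\nabla(\chi u)$ coincides with the paper's $\beta_{\R^2}(\xi)+\nabla(fu)$. The only detail to make explicit is the normalization of $u$ (e.g.\ $u(x_0)=0$ at some $x_0$ in the annulus), which is what turns the gradient bound into the bound $|u|\leq C|\xi|$ used for the $u\otimes\nabla\chi$ term.
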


	\begin{proof}
		Observe that by elliptic regularity, the strain $\beta_{\R^2}(\xi)$ is smooth on $\R^2 \setminus \{0\}$, moreover, in polar coordinates it is of the form $\beta_{\R^2}(\xi)(r,\theta)=\frac{1}{r}\Gamma_\xi(\theta)$ where $\Gamma_\xi$ depends on the tensor $\mathbb{C}$ and it is linear in $\xi$ (we refer to \cite{BACON} for a detailed treatment of the subject). Hence, there exists $C=C(\lambda_1,\lambda_2)>0$ such that for every $\theta \in [0,2\pi)$
		$$
		|\Gamma_\xi(\theta)| \leq C|\xi| \qquad \mbox{and} \qquad |\partial_\theta \Gamma_\xi(\theta)| \leq C|\xi|.
		$$
		This in turn implies that 
		\begin{equation}\label{99}
			|\beta_{\R^2}(\xi)(r,\theta)| \leq \frac{C}{r}|\xi| \qquad \mbox{and} \qquad |\nabla \beta_{\R^2}(\xi)(r,\theta)| \leq \frac{C}{r^2}|\xi|.
		\end{equation}
		Let $\beta_0(\xi) \colon \R^2 \setminus \{0\} \to \R^{ 2 \times 2}$ be the distributional solution of the following system
		\begin{equation*}
			\begin{cases*}
				\curl \beta_{0}(\xi)=\xi \delta_0 & in $\R^2$, \\
				\divr \beta_{0}(\xi) = 0 & in $\R^2$.
			\end{cases*}
		\end{equation*}
		We have that for $\beta_0(\xi)$ the same consideration of $\beta_{\R^2}(\xi)$ hold.
		Observe that $\curl(\beta_0(\xi)-\beta_{\R^2}(\xi))=0$ on $\R^2$, hence, there exists a smooth function $u \colon \R^2 \setminus \{0\} \to \R^2$ such that $\nabla u = \beta_0(\xi)-\beta_{\R^2}(\xi)$. We can assume that $u(x_0)=0$ for $x_0 \in B_1 \setminus B_{1/2}$, therefore, using \eqref{99} we have
		\begin{equation}\label{100}
			\sup_{B_1 \setminus B_{1/2}} |u| \leq (1+\pi) \sup_{B_1 \setminus B_{1/2}} |\nabla u| \leq C.
		\end{equation}
		Let $f \colon \R^2 \to \R$ be a smooth cut off function such that $f \equiv 1$ on $B_{1/2}$, $f \equiv 0$ on $\R^2 \setminus B_1$ and $|\nabla f|+|\nabla^2 f| \leq C$ on $\R^2$.
		
		Set $\zeta_\xi:=\beta_{\R^2}(\xi) + \nabla(fu)$. By construction, \eqref{93}--\eqref{96} hold. To see \eqref{97}, observe that by \eqref{99} and \eqref{100} we have
		$$
		|\zeta_\xi(x)| \leq (1-f(x))|\beta_{\R^2}(\xi)(x)|+f(x)|\beta_0(\xi)(x)|+|u(x)||\nabla f(x)| \leq \frac{C}{|x|}|\xi|.
		$$
		Finally, we estimate
		$$
		|\nabla \zeta_\xi(x)| \leq (1-f(x))|\nabla \beta_{\R^2}(\xi)(x)|+f(x)|\nabla \beta_0(\xi)(x)|+2|\nabla f(x)||\nabla u(x)|+|u(x)||\nabla^2 f(x)| \leq \frac{C}{|x|^2}|\xi|
		$$
		and conclude \eqref{98}.
	\end{proof}

	\begin{proof}[Proof of Proposition \ref{prop: limsup}.]
		Without loss of generality, to simplify the notation we consider only the case $R=I$.
		We divide the proof in three steps.
		
		\noindent \textbf{Step 1.} We first show the inequality for $\beta \in L^\infty(\Omega;\R^{2 \times 2})$, $\divr \beta \in L^2(\Omega;\R^2)$ and $\mu = \xi \chi_E \, dx$, where $\xi \in \R^2$ and $\overline{E} \subset \Omega$ is a simply connected open set with Lipschitz boundary.
		
		Let $\hat{\psi}$ be as in \eqref{hatpsi}. By Remark \eqref{rem: selfenergy} there exist $M \in \N$, $\xi_1,\dots,\xi_M \in \Sf$ and $\lambda_k \geq 0$ for every $k=1,\dots,M$ such that
		\begin{equation}\label{101}
			\varphi(U,\xi)=\sum_{k=1}^M \lambda_k \hat{\psi}(\xi_k,U), \qquad \mbox{and} \qquad \xi=\sum_{k=1}^M \lambda_k \xi_k.
		\end{equation}
		Set
		$$
		\Lambda:= \sum_{k=1}^M \lambda_k, \qquad r_j:=\frac{1}{2\sqrt{\Lambda|\log \varepsilon_j|}}.
		$$
		Recalling \eqref{rho2} we have that $r_j \gg \rho_{\varepsilon_j}$. Using \cite[Lemma 14]{GaLeoPonDislocations}, there exists a sequence of admissible measures $\mu_j \in X_{\varepsilon_j}$ of the form
		\begin{equation*}
			\mu_j = \sum_{k=1}^M \varepsilon_j \xi_k \mu_j^k, \qquad \mbox{where} \qquad \mu_j^k=\sum_{i=1}^{M_j^k} \delta_{x_{i,k}^j},
		\end{equation*}
		with the property that
		$$
		B_{r_j}(x_{i,k}^j) \subset E, \qquad |x_{i,k}^j-x_{i',k'}^j| \geq 2r_j \ \ \mbox{if $(i,k) \neq (i',k')$}
		$$
		and
		\begin{align}
			& \frac{\mu_j^k}{|\log \varepsilon_j|} \xrightharpoonup{*} \lambda_k \chi_E \, dx \qquad \mbox{weakly in $\mathcal{M}(\Omega)$ as $j \to +\infty$ for every $k=1,\dots,M$,} \label{102} \\
			& \frac{\mu_j}{\varepsilon_j|\log \varepsilon_j|} \xrightharpoonup{*} \mu \qquad \mbox{weakly in $\mathcal{M}(\Omega;\R^2)$ as $j \to +\infty$}. \label{103}
		\end{align}
		It is useful to combine the two summations in the definition of $\mu_j$ into one as $\mu_j=\sum_{i=1}^{M_j} \varepsilon_j \xi_{i,j} \delta_{x_{i,j}}$.
		
		Let $(\gamma_j) \subset (0,1)$ be an increasing sequence such that $\gamma_j \to 1$ as $j \to +\infty$. Let $\mathbb{C}_U:=\partial^2 W/\partial F^2(U)$. By Lemma \ref{lem: interpolation}, for every $\xi_{i,j}$ we can find $\zeta_{\xi_{i,j}}$ satisfying \eqref{93}--\eqref{98}. Consider a smooth cut off function $f \colon \R^2 \to \R$ such that $f \equiv 1$ on $B_{1/2}$, $\supp f \subset B_1$ and $|\nabla f|+|\nabla^2 f| \leq C$ on $\R^2$. We define the field $\zeta_j $ as
		$$
		\zeta_j:= \sum_{i=1}^{M_j} \zeta_{i,j}, \qquad \mbox{where} \qquad \zeta_{i,j}:= \varepsilon_j^{1-\gamma_j} \zeta_{\xi_{i,j}}\left( \frac{x-x_{i,j}}{\varepsilon_j^{\gamma_j}} \right) f\left( \frac{x-x_{i,j}}{r_j} \right).
		$$
		For every $i=1,\dots,M_j$, by property \eqref{94} of $\zeta_{\xi_{i,j}}$ and a change of variables $x \mapsto x_{i,j}+\varepsilon_j^{\gamma_j}x$ we infer
		\begin{equation}\label{104}
			\int_{\partial B_{\varepsilon_j}(x_{i,j})} \zeta_{i,j} \cdot t \, d \mathcal{H}^1 =  \varepsilon_j^{\gamma_j} \int_{\partial B_{\varepsilon_j^{1-\gamma_j}}} \varepsilon^{1-\gamma_j}_j \zeta_{\xi_{i,j}} \cdot t \, d \mathcal{H}^1 = \varepsilon_j \xi_{i,j}.
		\end{equation}
		Moreover, by property \eqref{93} we have that $\curl \eta_{i,j}$ vanishes outside $\{x_{i,j}\} \cup \left( B_{r_j}(x_{i,j}) \setminus B_{r_j/2}(x_{i,j}) \right)$, hence,
		\begin{equation}\label{105}
			\int_{B_{r_j}(x_{i,j}) \setminus B_{\varepsilon_j}(x_{i,j})} \curl \zeta_{i,j} \, dx = - \int_{\partial B_{r_j/2}(x_{i,j})} \zeta_{i,j} \cdot t \, d \mathcal{H}^1 = -\varepsilon_j \xi_{i,j}.
		\end{equation}
		Define the measure $\nu_j:=\chi_{\Omega_{\varepsilon_j}(\mu_j)} \curl \zeta_j$ and $\tilde{\beta}_j:=\nabla w_j J$, where $J$ is the anticlockwise rotation of $\pi/2$ and $w_j \in H^1_0(B_R;\R^2)$ is the distributional solution of the following system
		\begin{equation}\label{106}
			\begin{cases*}
				\Delta w_j=\varepsilon_j|\log \varepsilon_j| \mu+ \nu_j & in $B_R$ \\
				w_j = 0 & on $\partial B_R$;
			\end{cases*}
		\end{equation}
		and $B_R$ is some ball containing $\Omega$. By definition we have $\curl \tilde{\beta}_j=-\varepsilon_j|\log \varepsilon_j| \mu- \nu_j$.
		
		We are now ready to define the recovery sequence as 
		\begin{equation}\label{107}
			\beta_j:=
			\begin{cases*}
				U+\zeta_j+\varepsilon_j|\log \varepsilon_j| \beta+\tilde{\beta}_j & on $\overline{\Omega_{\varepsilon_j}(\mu_j)}$, \\
				I & on $\cup_{i=1}^{M_j} B_{\varepsilon_j}(x_{i,j})$.
			\end{cases*}
		\end{equation}
		Notice that by definition $\curl \beta_j=0$ on $\Omega_{\varepsilon_j}(\mu_j)$ (as $\nu_j=\chi_{\Omega_{\varepsilon_j}(\mu_j)} \curl \zeta_j$). Moreover,
		$$
		\curl \left( \varepsilon_j|\log \varepsilon_j| \beta+\tilde{\beta}_j \right)=\varepsilon_j |\log \varepsilon_j|\mu -\varepsilon_j |\log \varepsilon_j|\mu-\nu_j=0 \qquad \mbox{on } \bigcup_{i=1}^{M_j} B_{\varepsilon_j}(x_{i,j}),
		$$
		hence, for every $i=1,\dots,M_j$
		$$
		\int_{\partial B_{\varepsilon_j}(x_{i,j})} \beta_j \cdot t \, d \mathcal{H}^1=\int_{\partial B_{\varepsilon_j}(x_{i,j})} \zeta_{i,j} \cdot t \, d \mathcal{H}^1=\varepsilon_j \xi_{i,j}.
		$$
		Finally, we have $\divr \beta_j = \divr \zeta_j + \varepsilon_j |\log \varepsilon_j| \beta$, recalling \eqref{95} and \eqref{98} together with the definition of $\zeta_j$, we conclude $\divr \beta_j \in L^2(\Omega_{\varepsilon_j}(\mu_j);\R^2)$. Thus, $\beta_j \in \mathcal{AS}_{\varepsilon_j}(\mu_j)$.
	
		We need to show that $(\mu_j,\beta_j)$ converges to $(\mu,\beta)$ in the sense of Definition \ref{def: convergence}. Recalling \eqref{103}, we only need to show that
		\begin{equation}\label{108}
			\frac{\beta_j - U}{\varepsilon_j|\log \varepsilon_j|} \rightharpoonup \beta \qquad \mbox{in $L^2(\Omega;\R^{2 \times 2 })$.}
		\end{equation}
		First, observe that since the $\xi_{i,j}$'s are uniformly bounded in norm and by \eqref{97}, we have
		\begin{align*}
			\int_{\Omega_\varepsilon(\mu_j)} |\zeta_j|^2 \, dx \leq C M_j \varepsilon_j^2 |\log \varepsilon_j|^2 \leq C \varepsilon_j^2 |\log \varepsilon_j|^2
		\end{align*}
		and
		$$
		\int_\Omega |\zeta_j| \, dx \leq C M_j (r_j-\varepsilon_j)\varepsilon_j \leq C\varepsilon_j|\log \varepsilon_j|r_j.
		$$
		Therefore, the sequence $\zeta_j \chi_{\Omega_{\varepsilon_j}(\mu_j)}/\varepsilon_j|\log \varepsilon_j|$ converges to zero in $L^1$ and it is bounded in $L^2$ on $\Omega$. This gives that
		\begin{equation}\label{108.5}
		\chi_{\Omega_{\varepsilon_j}(\mu_j)}\frac{\zeta_j }{\varepsilon_j|\log \varepsilon_j|} \rightharpoonup 0 \qquad \mbox{in $L^2(\Omega;\R^{2 \times 2 })$.}
		\end{equation}
		Next, we claim that
		\begin{equation}\label{109}
			\frac{1}{\varepsilon_j|\log \varepsilon_j|} \nu_j + \mu \xrightharpoonup{*} 0 \qquad \mbox{in $L^\infty(\Omega;\R^2)$}.
		\end{equation}
		To see this, recall that $\curl \zeta_{i,j}=0$ outside $\{x_{i,j}\} \cup \left( B_{r_j}(x_{i,j}) \setminus B_{r_j/2}(x_{i,j}) \right)$, hence, by \eqref{98}, we estimate
		\begin{equation}\label{110}
		|\curl \zeta_{i,j}(x)| \leq |\nabla \zeta_{i,j}(x)| \leq |\nabla f(x)||\zeta_{i,j}(x)|+|\nabla \zeta_{i,j}(x)||f(x)| \leq \frac{C}{r_j^2} \varepsilon_j \leq C \varepsilon_j |\log \varepsilon_j|.
		\end{equation}
		Thus, since the sequence is uniformly bounded in $L^\infty$, to prove \eqref{109} it is enough to show that for every Lipschitz function $g \in L^1(\Omega;\R^2)$ it holds
		\begin{equation*}
			\int_\Omega \left( \frac{1}{\varepsilon_j|\log \varepsilon_j|} \nu_j + \mu \right) g \, dx = \frac{1}{\varepsilon_j|\log \varepsilon_j|}  \int_\Omega \left( \nu_j + \mu_j \right) g \, dx+\int_\Omega \left( -\frac{1}{\varepsilon_j|\log \varepsilon_j|} \mu_j + \mu \right) g \, dx \to 0.
		\end{equation*}
		The second term converges to zero by \eqref{103}. For the first term, recalling \eqref{105} and \eqref{110} we have
		\begin{align*}
			\left| \int_\Omega \left( \nu_j + \mu_j \right) g \, dx \right| & = \left| \sum_{i=1}^{M_j} \int_{B_{r_j}(x_{i,j}) \setminus B_{\varepsilon_j}(x_{i,j})} \curl \zeta_{i,j}(x)g(x) \, dx+ \varepsilon_j \xi_{i,j} g(x_{i,j}) \right| \\
			& = \left| \sum_{i=1}^{M_j} \int_{B_{r_j}(x_{i,j}) \setminus B_{\varepsilon_j}(x_{i,j})} \curl \zeta_{i,j}(x)\left(g(x) -g(x_{i,j}) \right)\, dx \right| \\
			& \leq \vphantom{\int} C \sum_{i=1}^{M_j} \varepsilon_j |\xi_{i,j}| r_j \lip \, g \leq C r_j \lip \, g |\mu_j|(\Omega).
		\end{align*}
		The claim follows recalling that $r_j \to 0$ as $j \to +\infty$. By elliptic regularity, \eqref{107} and \eqref{109}, we infer that 
		$w_j/\varepsilon_j|\log \varepsilon_j| \rightharpoonup 0$ in $W^{2,p}(\Omega;\R^2)$ for every $p < \infty$. Hence, by Sobolev compact Embedding Theorem, we deduce
		\begin{equation}\label{111}
			\frac{\tilde{\beta}_j}{\varepsilon_j |\log \varepsilon_j|} \to 0 \qquad \mbox{uniformly on $\Omega$}.
		\end{equation}
		Together with \eqref{108.5} this gives \eqref{108}.
		
		We now prove the upper bound
		\begin{equation*}
			\limsup_{j \to +\infty} \mathcal{E}_{\varepsilon_j}(\mu_j,\beta_j) \leq \mathcal{E}(\mu,\beta,U).
		\end{equation*}
		Fix $s \in (0,1)$, we split the energy as follows
		\begin{align*}
		\mathcal{E}_{\varepsilon_j}(\mu_j,\beta_j)= \frac{1}{\varepsilon_j^2 |\log \varepsilon_j|^2} \int_{\Omega_{\varepsilon_j^s}(\mu_j)} W(\beta_j) \, dx & + \frac{1}{\varepsilon_j^2 |\log \varepsilon_j|^2} \sum_{i=1}^{M_j} \int_{B_{\varepsilon_j^s}(x_{i,j})} W(\beta_j) \, dx \\
		& \ + \frac{\eta_{\varepsilon_j}^2}{\varepsilon_j^2 |\log \varepsilon_j|^2} \int_{\Omega_{\varepsilon_j}(\mu_j)} |\divr \beta_j|^2 \, dx =: I^1_j+I^2_j+I^3_j.
		\end{align*}
		In order to deal with $I^1_j$ we perform a linearisation near $U$:
		\begin{align*}
			I^1_j & = \frac{1}{\varepsilon_j^2 |\log \varepsilon_j|^2} \int_{\Omega_{\varepsilon_j^s}(\mu_j)} W(U+\varepsilon_j|\log \varepsilon_j| \beta +\zeta_j+\tilde{\beta}_j) \, dx \\
			& = \frac{1}{\varepsilon_j^2 |\log \varepsilon_j|^2} \frac{1}{2} \int_{\Omega_{\varepsilon_j^s}(\mu_j)} \mathbb{C}_U (\varepsilon_j|\log \varepsilon_j| \beta +\zeta_j+\tilde{\beta}_j):(\varepsilon_j|\log \varepsilon_j| \beta +\zeta_j+\tilde{\beta}_j) \, dx \\
			& \hphantom{=} + \frac{1}{\varepsilon_j^2 |\log \varepsilon_j|^2} \int_{\Omega_{\varepsilon_j^s}(\mu_j)} \sigma (\varepsilon_j|\log \varepsilon_j| \beta +\zeta_j+\tilde{\beta}_j) \, dx, 
		\end{align*}
		where $\sigma(F)/|F|^2 \to 0$ as $|F| \to 0$.
		We claim that
		\begin{equation}\label{112}
			\limsup_{j \to +\infty} \frac{1}{\varepsilon_j^2 |\log \varepsilon_j|^2} \frac{1}{2} \int_{\Omega_{\varepsilon_j^s}(\mu_j)} \mathbb{C}_U (\varepsilon_j|\log \varepsilon_j| \beta +\zeta_j+\tilde{\beta}_j):(\varepsilon_j|\log \varepsilon_j| \beta +\zeta_j+\tilde{\beta}_j) \, dx \leq \mathcal{E}(\mu,\beta,U).
		\end{equation}
		First observe that thanks to \eqref{108.5} and \eqref{111} the mixed terms and the quadratic term involving $\tilde{\beta}_j$ vanish in the limit.
		Next, we trivially have
		\begin{equation}\label{113}
			\lim_{j \to +\infty} \frac{1}{\varepsilon_j^2 |\log \varepsilon_j|^2} \frac{1}{2} \int_{\Omega_{\varepsilon_j^s}(\mu_j)} \mathbb{C}_U\varepsilon_j|\log \varepsilon_j| \beta:\varepsilon_j|\log \varepsilon_j| \beta \, dx = \frac{1}{2} \int_\Omega \mathbb{C}_U \beta : \beta \, dx.
		\end{equation}
		Now, observe that the $L^2$-norm of $\zeta_j/\varepsilon_j |\log \varepsilon_j|$ is concentrated outside $\Omega_{\rho_{\varepsilon_j}}(\mu_j)$, indeed, recalling \eqref{97} and the definition of $\zeta_j$ we estimate for $j \to +\infty$
		$$
		\frac{1}{\varepsilon_j^2 |\log \varepsilon_j|^2} \int_{\Omega_{\rho_{\varepsilon_j}}(\mu_j)} |\zeta_j|^2 \leq \frac{1}{\varepsilon_j^2 |\log \varepsilon_j|^2} \sum_{i=1}^{M_j} \int_{\rho_{\varepsilon_j}}^{r_j} \varepsilon_j^{2-2\gamma_j} \frac{\varepsilon_j^{2\gamma_j}}{r^2} r  \, dr =\frac{1}{ |\log \varepsilon_j|^2} \sum_{i=1}^{M_j} (\log r_j-\log \rho_{\varepsilon_j}) \to 0.
		$$
		As $\gamma_j \to 1$ we have that $\varepsilon_j^{\gamma_j} < \varepsilon_j^s$ and $\rho_{\varepsilon_j} < r_j/2$ for $j$ large enough, in turn, recalling the definition of $\zeta_{i,j}$, \eqref{96} and the expression of $\beta_{\R^2}(\xi_{i,j})$ in polar coordinates, we obtain
		\begin{align*}
		\frac{1}{\varepsilon_j^2 |\log \varepsilon_j|^2} \frac{1}{2} \int_{\Omega_{\varepsilon_j^s}(\mu_j)} \mathbb{C}_U \zeta_j: \zeta_j \, dx & =  \frac{1}{\varepsilon_j^2 |\log \varepsilon_j|^2} \frac{1}{2} \int_{\Omega_{\varepsilon_j^s}(\mu_j) \setminus \Omega_{\rho_{\varepsilon_j}}(\mu_j)} \mathbb{C}_U \zeta_j: \zeta_j \, dx +o(1)\\
		& =\frac{1}{|\log \varepsilon_j|^2} \sum_{i=1}^{M_j} \frac{1}{2} \int_{B_{\rho_{\varepsilon_j}}(x_{i,j}) \setminus B_{\varepsilon_j^s}(x_{i,j})} \mathbb{C}_U \beta_{\R^2}(\xi_{i,j}) :  \beta_{\R^2}(\xi_{i,j}) \, dx + o(1).
		\end{align*}
		For $i=1,\dots,M_j$, by \eqref{psieps} we have
		$$
		\frac{1}{|\log \varepsilon_j|} \frac{1}{2} \int_{B_{\rho_{\varepsilon_j}}(x_{i,j}) \setminus B_{\varepsilon_j^s}(x_{i,j})} \mathbb{C}_U \beta_{\R^2}(\xi_{i,j}) :  \beta_{\R^2}(\xi_{i,j}) \, dx \leq \psi_{\varepsilon} (\xi_{i,j},U)(1 + o(1)),
		$$
		hence, for $j \to +\infty$ 
		\begin{equation}\label{114}
			\frac{1}{\varepsilon_j^2 |\log \varepsilon_j|^2} \frac{1}{2} \int_{\Omega_{\varepsilon_j^s}(\mu_j)} \mathbb{C}_U \zeta_j: \zeta_j \, dx \leq \frac{1}{|\log \varepsilon_j|} \sum_{i=1}^{M_j} \psi_{\varepsilon_j}(\xi_{i,j},U)+o(1).
		\end{equation}
		Recalling \eqref{101} and \eqref{102}
		\begin{align}\label{115}
			\begin{split}
			\lim_{j \to +\infty} \frac{1}{|\log \varepsilon_j|} \sum_{i=1}^{M_j} \psi_{\varepsilon_j}(\xi_{i,j},U) & = \lim_{j \to +\infty} \frac{1}{|\log \varepsilon_j|} \sum_{k=1}^{M} |\mu_j^k|(\Omega) \psi_{\varepsilon_j}(\xi_{i,j},U) \\
			& = \mathcal{L}^2(E) \sum_{k=1}^{M} \lambda_k \hat{\psi}(\xi_k,U)=\int_E \varphi(U,\xi) \, dx.
			\end{split}
		\end{align}
		Therefore, combining \eqref{114}, \eqref{115} and recalling that $\varphi$ is nonnegative, we get
		\begin{equation}\label{116}
			\lim_{j \to +\infty} \frac{1}{\varepsilon_j^2 |\log \varepsilon_j|^2} \frac{1}{2} \int_{\Omega_{\varepsilon_j^s}(\mu_j)} \mathbb{C}_U \zeta_j: \zeta_j \, dx \leq \int_E \varphi(U,\xi) \, dx \leq \int_\Omega \varphi(U,\xi) \, dx,
		\end{equation}
		thus, the claim \eqref{112} is proven. It remains to prove that the remainder term goes to zero in the limit. We start observing that by definition of $\zeta_j$ and \eqref{97}, for every $x \in \Omega_{\varepsilon_j^s}(\mu_j)$ it holds $|\zeta_j(x)| \leq C \varepsilon_j^{1-s}$. Hence, \eqref{111} and the fact that $\beta \in L^\infty(\Omega;\R^{2 \times 2})$ give
		\begin{equation}\label{117}
			|\varepsilon_j |\log \varepsilon_j| \beta + \zeta_j + \tilde{\beta}_j| \leq C (\varepsilon_j|\log \varepsilon_j|+\varepsilon_j^{1-s}).
		\end{equation}
		Setting $\omega(t):=\sup_{|F|\leq t} |\sigma(F)|$, we estimate
		\begin{align*}
			\int_{\Omega_{\varepsilon_j^s}(\mu_j)} \! \frac{\sigma\left(\varepsilon_j|\log \varepsilon_j| \beta +\zeta_j+\tilde{\beta}_j\right)}{\varepsilon_j^2 |\log \varepsilon_j|^2} \, dx 
			\leq \int_\Omega \chi_{\Omega_{\varepsilon_j^s}(\mu_j)} \frac{\omega\left(\left|\varepsilon_j|\log \varepsilon_j| \beta +\zeta_j+\tilde{\beta}_j\right|\right)}{\left|\varepsilon_j|\log \varepsilon_j| \beta +\zeta_j+\tilde{\beta}_j\right|^2} \, \frac{\left|\varepsilon_j|\log \varepsilon_j| \beta +\zeta_j+\tilde{\beta}_j\right|^2}{\varepsilon_j^2 |\log \varepsilon_j|^2} \, dx,
		\end{align*}
		and the limit as $j \to +\infty$ is zero, since the integrand in the expression above is the product of a sequence converging uniformly to zero (thanks to \eqref{117}) and a sequence bounded in $L^1$ (thanks to \eqref{108.5} and \eqref{111}). Thus, 
		\begin{equation}\label{118}
			\limsup_{j \to +\infty} I^1_j \leq \mathcal{E}(\mu,\beta,U).
		\end{equation}
	
		We now deal with $I^2_j$. Using the quadratic upper bound \eqref{W4} on $W$, we have
		$$
		I^2_j \leq \frac{1}{\varepsilon_j^2 |\log \varepsilon_j|^2} \sum_{i=1}^{M_j} \int_{B_{\varepsilon_j^s}(x_{i,j}) \setminus B_{\varepsilon_j}(x_{i,j})} C \left| \varepsilon_j |\log \varepsilon_j| \beta +\eta_j +\tilde{\beta}_j \right|^2 \, dx.
		$$
		Arguing as before, we have only to consider the quadratic terms involving $\beta$ and $\zeta_j$, as the others vanish in the limit. Observe that 
		$$
		\sum_{i=1}^{M_j} \int_{B_{\varepsilon_j^s}(x_{i,j}) \setminus B_{\varepsilon_j}(x_{i,j})} |\beta|^2 \, dx \leq \Vert \beta \Vert_{L^\infty(\Omega;\R^{2 \times 2})} M_j (\varepsilon_j^{2s}-\varepsilon_j) \to 0 \qquad \mbox{as $j \to +\infty$},
		$$
		and
		$$
		\frac{1}{\varepsilon_j^2 |\log \varepsilon_j|^2} \sum_{i=1}^{M_j} \int_{B_{\varepsilon_j^s}(x_{i,j}) \setminus B_{\varepsilon_j}(x_{i,j})} |\zeta_j|^2 \, dx \leq \frac{1}{|\log \varepsilon_j|^2} C M_j \int_{\varepsilon_j}^{\varepsilon_j^s} \frac{1}{r} \, dr \leq C(1-s).
		$$
		where the constant $C$ does not depends on $s$. Therefore,
		\begin{equation}\label{119}
			\limsup_{j \to +\infty} I^2_j \leq C(1-s).
		\end{equation}
		Finally, we deal with $I^3_j$. First, observe that by definition of $\tilde{\beta}_j$ we have $\divr \beta_j=\varepsilon_j |\log \varepsilon_j| \divr \beta+\divr \zeta_j$. Hence, recalling that $\eta_{\varepsilon_j}=\varepsilon_j|\log \varepsilon_j|/\rho_{\varepsilon_j}$,
		$$
		I_j^3 \leq \frac{4\varepsilon_j^2 |\log \varepsilon_j|^2}{\rho_{\varepsilon_j}^2} \int_\Omega |\divr \beta|^2 \, dx+\frac{4}{\rho_{\varepsilon_j}^2} \int_{\Omega_{\varepsilon_j}(\mu_j)} |\divr \zeta_j|^2 \, dx.
		$$
		As $\divr \beta \in L^2(\Omega;\R^2)$ the first term trivially vanishes in the limit. For the second term, recalling \eqref{95}, \eqref{97}, \eqref{98} and the definition of $\zeta_j$, we estimate
		\begin{align*}
			\frac{1}{\rho_{\varepsilon_j}^2} \int_{\Omega_{\varepsilon_j}(\mu_j)} |\divr \zeta_j|^2 \, dx & \leq \frac{C}{\rho_{\varepsilon_j}^2} \sum_{i=1}^{M_j} \left(\int_{B_{r_j}(x_{i,j}) \setminus B_{r_j/2}(x_{i,j})} \frac{\varepsilon_j^2}{r_j^4}  + \int_{B_{r_j/2}(x_{i,j}) \setminus B_{\varepsilon_j^{\gamma_j}/2}(x_{i,j})} |\divr \zeta_{i,j}|^2 \right) \\
			& \leq \frac{C \varepsilon_j^2 |\log \varepsilon_j|}{\rho_{\varepsilon_j}^2 r_j^4}+ \frac{C}{\rho_{\varepsilon_j}^2} M_j \varepsilon_j^2 \int_{\varepsilon_j^{\gamma_j}/2}^{r_j/2} \frac{1}{r^3} \, dr \\
			&  \leq \frac{C \varepsilon_j^2 |\log \varepsilon_j|}{\rho_{\varepsilon_j}^2} \left( \frac{4}{\varepsilon_j^{2\gamma_j}}-\frac{4}{r_j^2} \right) \leq C \varepsilon_j^{2-2\gamma_j} \frac{|\log \varepsilon_j|}{\rho_{\varepsilon_j}^2}.
		\end{align*}
		We now fix the choice for the sequence $\gamma_j$, namely
		$$
		\gamma_j := 1- \frac{|\log \rho_{\varepsilon_j}|}{|\log \varepsilon_j|} - \frac{1}{2\sqrt{|\log \varepsilon_j|}}.
		$$
		Notice that for this choice we have $\gamma_j \in (0,1)$ for $j $ large enough, $\gamma_j \to 1$ and $\varepsilon_j^{2 -2\gamma_j}=\rho^2_{\varepsilon_j} e^{-\sqrt{|\log \varepsilon_j|}}$, hence
		\begin{equation}\label{120}
			\limsup_{j \to +\infty} I_j^3 =0.
		\end{equation}
		Thus, combining \eqref{118}--\eqref{120}, we obtain for every $s \in (0,1)$ the final estimate
		$$
		\limsup_{j \to +\infty} \mathcal{E}_{\varepsilon_j} (\mu_j,\beta_j) \leq \mathcal{E}(\mu,\beta,U)+C(1-s),
		$$
		where the constant $C$ does not depend on $s$; hence, sending $s \to 1$ we conclude. 
		
		For the sake of the next step, we observe that \eqref{113}, \eqref{116} and the arguments above entails that, for every open set $E \subseteq \Omega$
		\begin{equation}\label{121}
			\limsup_{j \to +\infty} \mathcal{E}_{\varepsilon_j}(\mu_j,\beta_j;E) \leq \mathcal{E}(\mu,\beta,U;E).
		\end{equation}
		Moreover, for any sequence $(\alpha_j) \subset L^2(E;\R^{2 \times 2})$ such that for every $j \geq 1$ it holds $\curl \alpha_j =0$ and $\divr \alpha_j =0$ on $E$ and such that $\alpha_j/\varepsilon_j |\log \varepsilon_j| \to 0$ uniformly in $E$, the same arguments give that
		\begin{equation}\label{122}
			\limsup_{j \to +\infty} \mathcal{E}_{\varepsilon_j}(\mu_j,\beta_j+\alpha_j;E) \leq \mathcal{E}(\mu,\beta,U;E).
		\end{equation}
	
		\noindent \textbf{Step 2.} Let now $\beta \in L^\infty(\Omega;\R^{2 \times 2})$, $\divr \beta \in L^2(\Omega;\R^2)$ and $\mu$ of the type
		$$
		\mu=\sum_{l=1}^L \mu_l, \qquad \mbox{with} \qquad \mu_l= \xi_l \chi_{E_l} \, dx,
		$$
		where $\xi_l \in \R^2$ and $\{ E_l \}_{l=1}^L$ is a partition of $\Omega$, that is $\mathcal{L}^2 \left( \Omega \setminus \cup_{l=1}^L E_l \right)=0$ and $E_l$ is a simply connected open set with Lipschitz boundary for every $l=1,\dots,L$.
		
		Let $\mu_j^l$, $\zeta_j^l$ and $\tilde{\beta}_j^l$ be the sequences corresponding to $\mu_l$ constructed in \textbf{Step 1}. For every $l=1,\dots,L$ we have
		\begin{align}
			& \supp \, \zeta_j^l \subset E_l, \label{123} \\
			& \frac{\tilde{\beta}_j^l}{\varepsilon_j |\log \varepsilon_j|} \to 0 \qquad \mbox{uniformly in $\Omega$}, \label{124} \\
			& \chi_{\Omega_{\varepsilon_j}(\mu_j^l)} \frac{\zeta_j^l}{\varepsilon_j |\log \varepsilon_j|} \rightharpoonup 0 \qquad \mbox{weakly in $L^2(\Omega;\R^{2 \times 2})$}, \label{125} \\
			& \curl \tilde{\beta}_j^l = \divr \tilde{\beta}_j^l =0 \qquad \mbox{on $\Omega \setminus E_l$}. \label{126}
		\end{align}
		Set $\mu_j:= \sum_{l=1}^L \mu_j^l$, $\chi_{\Omega_{\varepsilon_j}(\mu_j)}:=\Pi_{l=1}^L \chi_{\Omega_{\varepsilon_j}(\mu_j^l)}$ and define
		$$
		\beta_j:= I + \chi_{\Omega_{\varepsilon_j}(\mu_j)} \left(U-I +\varepsilon_j |\log \varepsilon_j| \beta + \sum_{l=1}^L \left( \zeta_j^l + \tilde{\beta}_j^l \right) \right).
		$$
		Using \eqref{123} and \eqref{126} and arguing as in \textbf{Step 1}, we have that $\beta_j \in \mathcal{AS}_{\varepsilon_j}(\mu_j)$. Moreover, using \eqref{123}--\eqref{125} we infer that $(\mu_j,\beta_j) $ converges to $(\mu,\beta,U)$ in the sense of Definition \ref{def: convergence}.
		For every $l=1,\dots,L$, \eqref{121}, \eqref{122}, \eqref{125} and \eqref{126} gives that
		$$
		\limsup_{j \to +\infty} \mathcal{E}_{\varepsilon_j}(\mu_j,\beta_j;E_l) \leq \mathcal{E} (\mu,\beta,U;E_l).
		$$
		Finally, recalling that $\mathcal{L}^2(\Omega \setminus \cup_{l=1}^L E_l)=0$ and $|\mu|(\Omega \setminus \cup_{l=1}^L E_l)=0$, we conclude the upper bound
		$$
		\limsup_{j \to +\infty} \mathcal{E}_{\varepsilon_j}(\mu_j,\beta_j) \leq \mathcal{E} (\mu,\beta,U).
		$$
		
		\noindent \textbf{Step 3.} We finally consider a general $\mu \in (H^{-1}(\Omega;\R^2) \cap \mathcal{M}(\Omega;\R^2))$ and $\beta \in  L^2(\Omega;\R^{2 \times 2})$. 
		
		By approximation using Lemma \ref{lem: density} we can reduce to the case of locally constant measure, that is \textbf{Step 2} and then use a diagonal argument. Indeed, notice that \eqref{88} gives that 
		$$
		\lim_{m \to +\infty} \mathcal{E}(\mu_m,\beta_m,U)=\mathcal{E}(\mu,\beta).
		$$
		This concludes the proof of the upper bound.
	\end{proof}

	\appendix 
	\section{}
	
	We show that the constants appearing in Theorem \ref{thm: n=2 BB} and in the rigidity estimates: Theorem \ref{thm: FJM rigidity} and Theorem \ref{thm: CDM rigidity}, are uniform for the shrinked Lipschitz domains obtained through Lemma \ref{lem: shrinking lip}. 
	Here $\Omega \subset \R^2$ is a fixed bounded, simply connected and Lipschitz domain. We recall here the statement of Lemma \ref{lem: shrinking lip}.
	\begin{lem}
		For every $\delta >0$ small enough, there exists a bounded Lipschitz domain $\Omega^\delta \subset \R^2$ such that:
		\begin{itemize}
			\item[\textnormal{(i)}] the set $\Omega^\delta$ is simply connected and there exists $c>0$ not depending on $\delta$ such that 
			$$\{ x \in \Omega \colon \dist(x,\partial \Omega) > \delta \} \subset \Omega^\delta \Subset \Omega^\delta+B_{c\delta} \subset \Omega;$$
			\item[\textnormal{(ii)}] given a set $E \subset \Omega^\delta$ of finite perimeter, there exists $C(\Omega)>0$ such that 
			$$\min \left\{ \mathcal{L}^2(E) ,\mathcal{L}^2(\Omega \setminus E)\right\} \leq C(\Omega) P\left(E,\Omega^\delta\right)^2;$$
			\item[\textnormal{(iii)}] given $p \in (1,\infty)$, for every $f \in W^{1,p}_{\textnormal{loc}}(\Omega^\delta;\R^{2 \times 2})$ there exists a constant $C'(\Omega,p)>0$ such that
			$$
			\min_{F \in \R^{2 \times 2}} \int_{\Omega^\delta} |f-F|^p \, dx \leq C'(\Omega,p) \int_{\Omega^\delta} |\nabla f|^p \dist^p\left(x,\partial \Omega^\delta\right) \, dx.
			$$
		\end{itemize}
	\end{lem}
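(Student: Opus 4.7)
The plan is to build $\Omega^{\delta}$ as the image of $\Omega$ under a smooth diffeomorphism $\Phi_\delta:\R^2\to\R^2$ that pushes the boundary slightly inward along a transverse vector field. This is a classical construction (see e.g.\ Lemma~2.1 of \cite{ErnGuermond}), producing $\Phi_\delta\in C^\infty(\R^2;\R^2)$ with $\Phi_\delta(\Omega)+B_{c\delta}\subset\Omega$ for some $c>0$ independent of $\delta$, together with
\[
\sup_{x\in\Omega}\bigl(|D^k\Phi_\delta(x)-D^k\textnormal{Id}|+|D^k\Phi_\delta^{-1}(x)-D^k\textnormal{Id}|\bigr)\leq C_k\,\delta
\]
for every $k\in\N$. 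Setting $\Omega^\delta:=\Phi_\delta(\Omega)$, property (i) follows at once: $\Omega^\delta$ is a smooth diffeomorphic image of a simply connected Lipschitz domain and therefore remains simply connected and Lipschitz, while the inclusion $\{x\in\Omega:\dist(x,\partial\Omega)>\delta\}\subset\Omega^\delta$ is a direct consequence of the $C\delta$ control of $\Phi_\delta-\textnormal{Id}$.

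For (ii) the task is to transport the relative isoperimetric inequality of the fixed domain $\Omega$ to $\Omega^\delta$ with a constant that does not blow up as $\delta\to 0$. Given $E\subset\Omega^\delta$ of finite perimeter with $\mathcal{L}^2(E)\leq\mathcal{L}^2(\Omega^\delta)/2$, I would pull back by $\Phi_\delta^{-1}$: using $|D\Phi_\delta^{-1}|\leq 3$ for $\delta$ small gives $P(\Phi_\delta^{-1}(E),\Omega)\leq 3\,P(E,\Omega^\delta)$, and the area formula combined with the Jacobian bound $|\det D\Phi_\delta^{-1}|\in[1/2,2]$ yields $\tfrac12\mathcal{L}^2(E)\leq\mathcal{L}^2(\Phi_\delta^{-1}(E))\leq 2\mathcal{L}^2(E)$. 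Hence, as soon as $\mathcal{L}^2(E)\leq\mathcal{L}^2(\Omega^\delta)/8$, the pull-back $\Phi_\delta^{-1}(E)$ satisfies the mass hypothesis of the relative isoperimetric inequality on $\Omega$, which then gives $\mathcal{L}^2(E)^2\leq C(\Omega)\,P(E,\Omega^\delta)^2$. The complementary regime $\min\{\mathcal{L}^2(E),\mathcal{L}^2(\Omega^\delta\setminus E)\}>\mathcal{L}^2(\Omega^\delta)/8$ is handled by the trivial remark $\mathcal{L}^2(E)\leq 8\min\{\mathcal{L}^2(E),\mathcal{L}^2(\Omega^\delta\setminus E)\}$ and arguing as before.

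For (iii) I would run the same scheme on the weighted Poincaré inequality. Applying the inequality on the fixed Lipschitz domain $\Omega$ to $f\circ\Phi_\delta\in W^{1,p}_{\textnormal{loc}}(\Omega;\R^{2\times 2})$ and undoing the change of variables reduces everything to controlling the weight. The crucial observation is the comparison $\dist(\Phi_\delta^{-1}(y),\partial\Omega)\leq\max_{\Omega^\delta}|D\Phi_\delta^{-1}|\,\dist(y,\partial\Omega^\delta)\leq 3\,\dist(y,\partial\Omega^\delta)$ valid for $\delta$ small, which transfers the distance weight with a universal factor; combined with the two-sided Jacobian bounds and the bound on $|D\Phi_\delta|$, this delivers the desired weighted Poincaré inequality on $\Omega^\delta$ with a constant depending only on $\Omega$ and $p$. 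The main obstacle in the whole argument is book-keeping: one has to choose the threshold on $\delta$ so that all the Jacobian, distance and derivative factors remain in a fixed compact subset of $(0,\infty)$, guaranteeing that the constants surviving the change of variables do not degenerate as $\delta\to 0$.
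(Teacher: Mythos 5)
Your proposal is correct and follows essentially the same route as the paper: the same construction $\Omega^\delta=\Phi_\delta(\Omega)$ via the diffeomorphism of \cite[Lemma 2.1]{ErnGuermond} with the $C\delta$ bounds, the same pull-back argument with perimeter, Jacobian, and mass-threshold estimates for (ii), and the same change of variables with the distance comparison $\dist(\Phi_\delta^{-1}(y),\partial\Omega)\leq 3\,\dist(y,\partial\Omega^\delta)$ for the weighted Poincar\'e inequality in (iii). No gaps to report.
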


	\begin{rem}\label{App: rem}
		Recalling the proof of Lemma \ref{lem: shrinking lip}, for every $\delta>0$ small there exists a diffeomorphism $\Phi_\delta \in C^\infty(\R^2,\R^2)$ such that $\Phi_\delta(\Omega)=\Omega^\delta$ and 
		\begin{equation*}
			\sup_{x \in \Omega} \left(|D^k \Phi_\delta-D^k \textnormal{Id}|+|D^k \Phi_\delta^{-1}-D^k \textnormal{Id}| \right) \leq C \delta,
		\end{equation*}
		for every $k \in \N$ and for some $C=C(\Omega)>0$, where $D^k$ is the Fréchet derivative of order $k$.
	\end{rem}
	
	We start with the following.
	
	\begin{lem}\label{App: lem for BBV}
		The constant appearing in Theorem \ref{thm: n=2 BB} is uniform for every $\Omega^\delta$ when $\delta$ is small enough. That is, there exists $C(\Omega)>0$ such that for every $\delta>0$ small enough and every $f \in L^1(\Omega^\delta;\R^2)$ vector field with $\divr f \in H^{-2}(\Omega^\delta)$ we have
		\begin{equation}\label{a1}
		\Vert f \Vert_{H^{-1}(\Omega^\delta;\R^2)} \leq C(\Omega) \left( \Vert \divr f\Vert_{H^{-2}(\Omega^\delta)}+\Vert f \Vert_{L^1(\Omega^\delta;\R^2)} \right).
		\end{equation}
	\end{lem}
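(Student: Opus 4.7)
The plan is to reduce the estimate on the variable domain $\Omega^\delta$ to the already established Theorem \ref{thm: n=2 BB} on the fixed domain $\Omega$, using the diffeomorphism $\Phi_\delta$ from Remark \ref{App: rem}, which is $C^k$-close to the identity uniformly in $\delta$. The main transport is the Piola transform, which interacts naturally with the divergence.

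Given $f \in L^1(\Omega^\delta;\R^2)$ with $\divr f \in H^{-2}(\Omega^\delta)$, define $\tilde f \in L^1(\Omega;\R^2)$ by
$$
\tilde f(y) := \det(D\Phi_\delta(y))\, D\Phi_\delta(y)^{-1} f(\Phi_\delta(y)).
$$
The first step is to verify three quasi-isometric norm estimates, all with constants of the form $1+O(\delta)$ (hence $\leq C(\Omega)$): \emph{(a)} $\Vert \tilde f\Vert_{L^1(\Omega)}\leq C\Vert f\Vert_{L^1(\Omega^\delta)}$, by the change of variables $x=\Phi_\delta(y)$ and Remark \ref{App: rem}; \emph{(b)} for the divergence, a direct integration by parts shows that for any $\phi\in C_c^\infty(\Omega)$ one has $\langle \divr\tilde f,\phi\rangle_\Omega =\langle \divr f,\psi\rangle_{\Omega^\delta}$ with $\psi(x):=\phi(\Phi_\delta^{-1}(x))$, and since $\Phi_\delta$ is $C^2$-close to the identity, $\Vert\psi\Vert_{H^2_0(\Omega^\delta)}\leq C\Vert\phi\Vert_{H^2_0(\Omega)}$, hence $\Vert\divr\tilde f\Vert_{H^{-2}(\Omega)}\leq C\Vert\divr f\Vert_{H^{-2}(\Omega^\delta)}$; \emph{(c)} for any $\Xi\in H^1_0(\Omega^\delta;\R^2)$, setting $\Psi(y):=D\Phi_\delta(y)^T\Xi(\Phi_\delta(y))$ one has $\Psi\in H^1_0(\Omega;\R^2)$ with $\Vert\Psi\Vert_{H^1_0(\Omega)}\leq C\Vert\Xi\Vert_{H^1_0(\Omega^\delta)}$, and the change of variables together with $D\Phi_\delta D\Phi_\delta^{-1}=I$ yields
$$
\int_{\Omega^\delta} f\cdot\Xi\,dx=\int_\Omega \tilde f\cdot\Psi\,dy,
$$
whence $\Vert f\Vert_{H^{-1}(\Omega^\delta)}\leq C\Vert\tilde f\Vert_{H^{-1}(\Omega)}$.

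Once these three quasi-isometries are in place, apply Theorem \ref{thm: n=2 BB} to $\tilde f$ on the fixed domain $\Omega$, which gives a constant $C(\Omega)$ such that
$$
\Vert\tilde f\Vert_{H^{-1}(\Omega)}\leq C(\Omega)\bigl(\Vert\divr\tilde f\Vert_{H^{-2}(\Omega)}+\Vert\tilde f\Vert_{L^1(\Omega)}\bigr).
$$
Chaining this with (a)--(c) yields \eqref{a1} with a constant depending only on $\Omega$.

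The only nontrivial point is the interaction of the Piola transform with the $H^{-2}$-dual: one must check that composition by $\Phi_\delta$ sends $H^2_0(\Omega)$ into $H^2_0(\Omega^\delta)$ with norm $1+O(\delta)$, which follows from the chain rule for second derivatives together with the $C^2$-bound in Remark \ref{App: rem}. Everything else is an elementary change of variables and the fact that $D\Phi_\delta$, $D\Phi_\delta^{-1}$, $\det D\Phi_\delta$ and $\det D\Phi_\delta^{-1}$ are all uniformly within $O(\delta)$ of the identity/$1$ in $L^\infty$.
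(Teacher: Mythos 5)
Your argument is correct, but it takes a genuinely different route from the paper. You transport the datum: you pull $f$ back to the fixed domain $\Omega$ by the Piola transform $\tilde f=\det(D\Phi_\delta)\,D\Phi_\delta^{-1}(f\circ\Phi_\delta)$, check that the three relevant norms ($L^1$, $H^{-2}$ of the divergence, $H^{-1}$) change only by factors $1+O(\delta)$ — the key point being that the Piola transform intertwines the divergence, so that $\langle\divr\tilde f,\phi\rangle_\Omega=\langle\divr f,\phi\circ\Phi_\delta^{-1}\rangle_{\Omega^\delta}$ — and then invoke Theorem \ref{thm: n=2 BB} on the fixed $\Omega$. The paper instead transports the test functions: it first proves, via Theorem \ref{thm: app BB} (solvability of $\divr Y=\curl\varphi$ with $Y\in L^\infty\cap H^1_0$ on $\Omega$) and a pull-back by $\Phi_\delta$, a uniform decomposition $\varphi=\psi+\nabla\eta$ of every $\varphi\in H^1_0(\Omega^\delta;\R^2)$ with $\Vert\psi\Vert_{L^\infty}+\Vert\psi\Vert_{H^1}+\Vert\eta\Vert_{H^2}\leq C(\Omega)\Vert\varphi\Vert_{H^1}$, and then estimates $\int f\cdot\varphi$ directly by splitting it into the $L^1$–$L^\infty$ and $H^{-2}$–$H^2_0$ pairings; this amounts to re-proving the Bourgain--Brezis--Van Schaftingen inequality on $\Omega^\delta$ rather than citing it on $\Omega$. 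Your approach is leaner, since it only uses the already-stated Theorem \ref{thm: n=2 BB} and the uniform $C^k$-closeness of $\Phi_\delta$ to the identity; the paper's approach isolates the underlying Helmholtz-type decomposition of $H^1_0$ fields, which it obtains with $\delta$-uniform constants as a by-product. The only point you should make explicit is that the pairing identities (your (b) and (c)) are first verified for smooth compactly supported test functions and then extended by density, since $f$ is merely $L^1$; this is the same interpretational convention the paper uses, so it is not a gap.
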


	We will use the following Theorem for Lipschitz sets contained in \cite[Theorem 3']{BBJAMS}.

	\begin{thm}\label{thm: app BB}
		Let $E$ be a bounded Lipschitz domain. For every $f \in L^2(E)$ with $\int_E f \, dx =0$, there exists $Y \in L^\infty(E;\R^2) \cap H^1_0(E;\R^2)$ such that $\divr Y = f$ on $E$ and
		$$
		\Vert Y \Vert_{L^\infty(E;\R^2)} + \Vert Y \Vert_{H^1(E;\R^2)} \leq C(E) \Vert f \Vert_{L^2(E)}.
		$$
		The constant $C(E)>0$ depends only on the domain $E$ and not on $f$.
	\end{thm}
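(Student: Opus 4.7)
The plan is to reduce the problem to the case where $E$ is a cube and then invoke the Bourgain--Brezis iteration scheme, which is the heart of the matter. For a general bounded Lipschitz domain $E$, one can decompose $E$ into finitely many star-shaped (with respect to balls) Lipschitz pieces $\{E_j\}$ and, using a partition of unity $\{\varphi_j\}$ subordinated to this cover, split $f = \sum_j f_j$ with $f_j := \varphi_j f - \alpha_j$ for suitable correcting constants making $\int_{E_j} f_j = 0$. On each star-shaped $E_j$ the classical Bogovski\u\i\ construction applies; the correction terms $\alpha_j \chi_{E_j}$ (whose total sum has zero mean by the assumption $\int_E f=0$) are $L^\infty$ functions which can be transported across neighboring pieces via an explicit finite-step construction. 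This reduction preserves both target norms (up to constants depending on $E$), so it suffices to treat the case where $E=Q$ is a cube.

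On the cube, the textbook Bogovski\u\i\ operator already produces $Y \in H^1_0(Q;\R^2)$ with $\divr Y = f$ and $\|Y\|_{H^1}\leq C\|f\|_{L^2}$; the difficulty is that in dimension two $H^1$ does not embed in $L^\infty$, so this construction \emph{a priori} gives no uniform pointwise bound. The Bourgain--Brezis strategy is to build a \emph{different} solution via an iteration based on the following approximation lemma: given $f\in L^2(Q)$ with $\int_Q f=0$, there exists $\widetilde Y\in L^\infty(Q;\R^2)\cap H^1_0(Q;\R^2)$ and a remainder $g:=f-\divr\widetilde Y$ satisfying
\begin{equation*}
\|\widetilde Y\|_{L^\infty}+\|\widetilde Y\|_{H^1}\leq C\|f\|_{L^2},\qquad \int_Q g\,dx=0,\qquad \|g\|_{L^2}\leq \tfrac{1}{2}\|f\|_{L^2}.
\end{equation*}
Granted this lemma, one applies it recursively: set $f_0:=f$, $f_{k+1}:=g_k$, and take the associated $\widetilde Y_k$. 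Then $\|f_k\|_{L^2}\leq 2^{-k}\|f\|_{L^2}$ forces $\|\widetilde Y_k\|_{L^\infty}+\|\widetilde Y_k\|_{H^1}\leq C 2^{-k}\|f\|_{L^2}$, so the series $Y:=\sum_{k\geq 0}\widetilde Y_k$ converges in $L^\infty(Q;\R^2)\cap H^1_0(Q;\R^2)$ with the desired bound, and $\divr Y=f$ holds distributionally since the remainders tend to zero in $L^2$.

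The main obstacle is proving the approximation lemma. The construction proceeds by discretization: partition $Q$ into a dyadic grid of sub-cubes $\{Q_j\}$ of side $\ell\ll 1$, let $c_j:=\fint_{Q_j}f$, and approximate $f$ by the step function $\tilde f:=\sum_j c_j\chi_{Q_j}$. Choosing $\ell$ small produces $\|f-\tilde f\|_{L^2}\leq \tfrac14\|f\|_{L^2}$ on a generic $f$ (by Lebesgue differentiation, up to approximating $f$ first by a smooth function whose modulus of continuity governs $\ell$). The mean-zero step function $\tilde f$ is then realized as a divergence by explicitly transporting mass from each cell $Q_j$ to a reference cell along a spanning tree of the dyadic grid: for each edge of the tree one writes $\chi_{Q_j}-\chi_{Q_{j'}}=\divr\Psi_{j,j'}$ with $\Psi_{j,j'}$ supported in $Q_j\cup Q_{j'}$, $\|\Psi_{j,j'}\|_{L^\infty}\lesssim \ell$ and $\|\Psi_{j,j'}\|_{H^1}\lesssim 1$. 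The delicate point is to combine all the transports so that the $L^\infty$ bound does not accumulate multiplicatively in the number of cells while keeping the $H^1$ bound; this is achieved precisely by the Bourgain--Brezis balancing of these two norms and is what forces the iteration (one cannot resolve a generic $f$ in a single step, only reduce its $L^2$ norm by a fixed factor). This balancing step is the genuinely new ingredient and the reason the proof is much subtler than the classical Bogovski\u\i\ estimate.
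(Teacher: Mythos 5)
This statement is not proved in the paper at all: it is quoted verbatim as an external result, namely \cite[Theorem 3']{BBJAMS} (Bourgain--Brezis), already stated there for bounded Lipschitz domains. So the relevant question is whether your sketch would actually constitute a proof of the Bourgain--Brezis theorem, and it does not: there is a genuine gap exactly at the approximation lemma, which is the entire mathematical content of the result. Your iteration scheme (reduce $\Vert f\Vert_{L^2}$ by a factor $\tfrac12$ at each step with uniform constants, then sum the geometric series) is fine and is indeed how one passes from an approximation statement to an exact solution; but the lemma itself is only asserted. Your proposed construction of it --- approximate $f$ by cell averages on a dyadic grid of side $\ell$ and realize the mean-zero step function as a divergence by transporting mass along a spanning tree of the grid --- fails precisely where you flag the ``delicate point'': along a spanning tree the flux through a single face can be comparable to the total mass of a subtree, so carrying it through a face of length $\ell$ forces $|\Psi|\sim \ell^{-1}\times(\text{subtree mass})$, and the $L^\infty$ (or, after renormalizing, the $H^1$) bound of the assembled field degenerates as the number of cells grows. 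Note also that $\ell$ is not a geometric constant: it depends on $f$ through its $L^2$ modulus of continuity, so any bound that deteriorates as $\ell\to 0$ is fatal for the uniformity the iteration requires. Writing that this obstruction ``is achieved precisely by the Bourgain--Brezis balancing of these two norms'' is circular --- it names the difficulty rather than resolving it, and the actual Bourgain--Brezis argument for the approximation step is a different, genuinely harder construction (a Fourier/Littlewood--Paley--type decomposition with a carefully balanced splitting of the datum), not a spanning-tree transport of step functions.

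Two further, smaller points. First, your reduction from a Lipschitz domain to a cube (partition of unity into star-shaped pieces, Bogovski\u{\i} on each piece, transport of the piecewise-constant corrections) is plausible and can be made rigorous in dimension $2$ (the corrections are bounded, so Bogovski\u{\i} in $W^{1,p}$ with $p>2$ gives fields in $H^1_0\cap L^\infty$), but it is unnecessary here since \cite[Theorem 3']{BBJAMS} is already stated for Lipschitz domains. Second, since the paper uses this theorem as a black box, the appropriate ``proof'' in this context is the citation itself; if you want to present an argument, you must reproduce (or cite precisely) the Bourgain--Brezis proof of the approximation lemma rather than the heuristic above.
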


	\begin{proof}[Proof of Lemma \ref{App: lem for BBV}]
		We claim that for every $\varphi \in H^1_0(\Omega^\delta;\R^2)$ there exist $\psi \in L^\infty(\Omega^\delta;\R^2) \cap H^1_0(\Omega^\delta;\R^2)$ and $\eta \in H^2_0(\Omega^\delta)$ such that $\varphi=\psi+\nabla \eta$ on $\Omega^\delta$ and
		\begin{equation}\label{a2}
			\Vert \psi \Vert_{L^\infty(\Omega^\delta;\R^2)}+\Vert \psi \Vert_{H^1(\Omega^\delta;\R^2)}+\Vert \eta \Vert_{H^2(\Omega^\delta)} \leq C(\Omega) \Vert \varphi \Vert_{H^1(\Omega^\delta;\R^2)}.
		\end{equation}
		Notice that once we have \eqref{a2} given $f \in L^1(\Omega^\delta;\R^2)$ with $\divr f \in H^{-2}(\Omega^\delta)$, for every $\varphi \in L^\infty(\Omega^\delta;\R^2) \cap H^1_0(\Omega^\delta;\R^2)$ we estimate
		\begin{align*}
			\int_{\Omega^\delta} f \varphi \, dx & = \int_{\Omega^\delta} f (\psi +\nabla \eta) \, dx \leq \Vert f \Vert_{L^1(\Omega^\delta;\R^2)} \Vert \psi \Vert_{L^\infty(\Omega^\delta;\R^2)}+\Vert \divr f \Vert_{H^{-2}(\Omega^\delta)} \Vert \eta \Vert_{H^2(\Omega^\delta)} \\
			& \leq C(\Omega) \left( \Vert \divr f\Vert_{H^{-2}(\Omega^\delta)}+\Vert f \Vert_{L^1(\Omega^\delta;\R^2)} \right) \Vert \varphi \Vert_{H^1(\Omega^\delta;\R^2)},
		\end{align*}
		which gives \eqref{a1}.
		
		We now prove the claim. First, let $\varphi \in H^1_0(\Omega;\R^2)$. Since $\varphi$ has zero trace on $\partial \Omega$, we have that $\int_\Omega \curl \varphi =0$. Hence, we can apply Theorem \ref{thm: app BB} finding $Y \in L^\infty(\Omega;\R^2) \cap H^1_0(\Omega;\R^2)$ such that $\divr Y = \curl \varphi$ on $\Omega$ and 
		\begin{equation}\label{a3}
			\Vert Y \Vert_{L^\infty(\Omega;\R^2)} + \Vert Y \Vert_{H^1(\Omega;\R^2)} \leq C(\Omega) \Vert \curl \varphi \Vert_{L^2(\Omega)} \leq C(\Omega) \Vert \varphi \Vert_{H^1(\Omega;\R^2)}.
		\end{equation}
		By setting $\psi:=JY$, where $J$ is the anticlockwise rotation of $\pi/2$, we have that $\curl \psi = \divr Y = \curl \varphi$ and for $\psi$ holds the same estimate in \eqref{a3}. Hence, since $\Omega$ is simply connected we can set $\nabla \eta:=\varphi-\psi$ and $\eta \in H^2_0(\Omega)$. Moreover, by \eqref{a3}
		$$
		\Vert \nabla^2 \eta \Vert_{L^2(\Omega;\R^{2 \times 2})} \leq \Vert \nabla \varphi \Vert_{L^2(\Omega;\R^{2 \times 2})}+\Vert \nabla \psi \Vert_{L^2(\Omega;\R^{2 \times 2})} \leq C(\Omega) \Vert \varphi \Vert_{H^1(\Omega;\R^2)},
		$$
		which gives, as $\eta \in H^2_0(\Omega;\R^2)$, $\Vert \eta \Vert_{H^2(\Omega)} \leq C(\Omega) \Vert \varphi \Vert_{H^1(\Omega;\R^2)}$.
		
		Let now $\varphi \in H^1_0(\Omega^\delta;\R^2)$. Define on $\Omega$ the function $\tilde{\varphi} := (D\Phi_\delta)^T ( \varphi \circ \Phi_\delta )$, notice that by Remark \ref{App: rem} we have $\tilde{\varphi} \in H^1_0(\Omega;\R^2)$ and for $\delta>0$ small enough 
		$\Vert \tilde{\varphi} \Vert_{H^1(\Omega;\R^2)} \leq 4 \Vert \varphi \Vert_{H^1(\Omega^\delta;\R^2)}$. In view of the previous considerations, there exist $\tilde{\psi} \in L^\infty(\Omega;\R^2) \cap H^1_0(\Omega;\R^2)$ and $\tilde{\eta} \in H^2_0(\Omega)$ such that $\tilde{\varphi}=\tilde{\psi}+\nabla \tilde{\eta}$ on $\Omega$ and
		$$
		\Vert \tilde{\psi} \Vert_{L^\infty(\Omega;\R^2)}+\Vert \tilde{\psi} \Vert_{H^1(\Omega;\R^2)}+\Vert \tilde{\eta} \Vert_{H^2(\Omega)} \leq C(\Omega) \Vert \tilde{\varphi} \Vert_{H^1(\Omega;\R^2)}.
		$$
		Set $\psi:= (D\Phi_\delta^{-1})^T (\tilde{\psi} \circ \Phi_\delta^{-1})$ and $\eta := \tilde{\eta} \circ \Phi_\delta^{-1}$ on $\Omega^\delta$. By definition and Remark \ref{App: rem} we have $\psi \in L^\infty(\Omega^\delta;\R^2) \cap H^1_0(\Omega^\delta;\R^2)$, $\eta \in H^2(\Omega^\delta)$ and $\varphi=\psi+\nabla \eta$. Moreover, using again Remark \ref{App: rem}, for every $\delta>0$ small enough we estimate
		$$
		\Vert \psi \Vert_{L^\infty(\Omega^\delta;\R^2)} \leq 2 \Vert \tilde{\psi} \Vert_{L^\infty(\Omega;\R^2)}, \qquad \Vert \psi \Vert_{H^1(\Omega^\delta;\R^2)} \leq 4 \Vert \tilde{\psi} \Vert_{H^1(\Omega;\R^2)} \qquad \mbox{and} \qquad \Vert \eta \Vert_{H^2(\Omega^\delta)} \leq 4 \Vert \tilde{\eta} \Vert_{H^2(\Omega)}.
		$$
		Thus, the claim is proven.
	\end{proof}

	We now deal with the constant appearing in Theorem \ref{thm: FJM rigidity}.
	
	\begin{lem}\label{App: FJM}
		The constant appearing in Theorem \ref{thm: FJM rigidity} is uniform for every $\Omega^\delta$ when $\delta$ is small enough. That is, given any $p \in (1,\infty)$, there exists a constant $C(\Omega,p)>0$ with the following property: for every $\delta>0$ small enough and every $u \in W^{1,p}(\Omega^\delta;\R^2)$ there exists an associated rotation $R \in SO(2)$ such that
		\begin{equation}\label{a4}
			\Vert \nabla u -R \Vert_{L^p(\Omega^\delta;\R^{2 \times 2})} \leq C(\Omega) \Vert \dist(\nabla u,SO(2)) \Vert_{L^p(\Omega^\delta)}.
		\end{equation}
	\end{lem}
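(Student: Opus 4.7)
The plan is to pull back $u$ through the diffeomorphism $\Phi_\delta \colon \Omega \to \Omega^\delta$ provided by Remark \ref{App: rem}, thereby reducing the estimate on the variable domain $\Omega^\delta$ to the fixed-domain FJM rigidity (Theorem \ref{thm: FJM rigidity}) on $\Omega$, while carefully tracking the $O(\delta)$ errors arising from the fact that $D\Phi_\delta \neq I$. Given $u \in W^{1,p}(\Omega^\delta;\R^2)$, I would set $\tilde u := u \circ \Phi_\delta \in W^{1,p}(\Omega;\R^2)$, so that $\nabla \tilde u(x) = \nabla u(\Phi_\delta(x))\,D\Phi_\delta(x)$, and apply Theorem \ref{thm: FJM rigidity} on the fixed domain $\Omega$ to obtain a rotation $R \in SO(2)$ satisfying
$$\Vert \nabla \tilde u - R \Vert_{L^p(\Omega)} \leq C(\Omega,p)\,\Vert \dist(\nabla \tilde u, SO(2)) \Vert_{L^p(\Omega)}.$$

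Next, I would transfer this estimate back to $\Omega^\delta$ via the identity $\nabla u(y) = \nabla \tilde u(\Psi_\delta(y))\,D\Psi_\delta(y)$, where $\Psi_\delta := \Phi_\delta^{-1}$. For the left-hand side, writing $\nabla u - R = (\nabla \tilde u \circ \Psi_\delta - R)\,D\Psi_\delta + R\,(D\Psi_\delta - I)$ and using the bounds $|D\Psi_\delta - I|, |D\Phi_\delta - I| \leq C(\Omega)\delta$ from Remark \ref{App: rem}, together with the change of variables formula (whose Jacobian lies in $[1/2,2]$ for $\delta$ small), gives
$$\Vert \nabla u - R \Vert_{L^p(\Omega^\delta)}^p \leq C(\Omega)\,\Vert \nabla \tilde u - R \Vert_{L^p(\Omega)}^p + C(\Omega)\,\delta^p.$$
For the right-hand side, a similar pointwise comparison—choosing, for each $x \in \Omega$, the rotation $S \in SO(2)$ nearest to $\nabla u(\Phi_\delta(x))$ and decomposing $\nabla \tilde u - S\,D\Phi_\delta = (\nabla u \circ \Phi_\delta - S)\,D\Phi_\delta + S\,(D\Phi_\delta - I)$—combined with the change of variables yields
$$\Vert \dist(\nabla \tilde u, SO(2)) \Vert_{L^p(\Omega)}^p \leq C(\Omega)\,\Vert \dist(\nabla u, SO(2)) \Vert_{L^p(\Omega^\delta)}^p + C(\Omega)\,\delta^p.$$
Chaining the three inequalities produces an estimate of the form
$$\Vert \nabla u - R \Vert_{L^p(\Omega^\delta)}^p \leq C(\Omega,p)\,\Vert \dist(\nabla u, SO(2)) \Vert_{L^p(\Omega^\delta)}^p + C(\Omega,p)\,\delta^p,$$
with constants independent of $\delta$.

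The main obstacle I anticipate is the elimination of the residual additive $O(\delta^p)$ term in order to recover the clean multiplicative estimate \eqref{a4}. The natural route is a compactness-contradiction argument: suppose, towards a contradiction, that there exist $\delta_n \to 0$ and $u_n \in W^{1,p}(\Omega^{\delta_n};\R^2)$ with $\inf_{R \in SO(2)}\Vert \nabla u_n - R \Vert_{L^p(\Omega^{\delta_n})} = 1$ and $\Vert \dist(\nabla u_n, SO(2)) \Vert_{L^p(\Omega^{\delta_n})} \to 0$. Transferring each $u_n$ to $\Omega$ via $\tilde u_n := u_n \circ \Phi_{\delta_n}$, the $O(\delta_n)$ bounds above would yield a sequence in $W^{1,p}(\Omega;\R^2)$ with $\inf_R \Vert \nabla \tilde u_n - R \Vert_{L^p(\Omega)} \gtrsim 1$ and $\Vert \dist(\nabla \tilde u_n, SO(2)) \Vert_{L^p(\Omega)} \to 0$, directly contradicting Theorem \ref{thm: FJM rigidity} on the fixed domain $\Omega$. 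Alternatively, one can obtain the estimate by inspecting the proof of FJM and observing that its constant depends on the domain only through the Lipschitz constant of the boundary, the diameter, and the weighted Poincaré constant, all of which are uniformly bounded for the family $\{\Omega^\delta\}$ by Remark \ref{App: rem} and property (iii) of Lemma \ref{lem: shrinking lip}.
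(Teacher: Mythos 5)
Your pull-back computation is fine as far as it goes: with $S$ the pointwise nearest rotation the transferred estimates with the additive $C\delta^p$ error are correct. The genuine gap is in the step you rely on to remove that additive term. The rigidity inequality \eqref{a4} is not positively homogeneous in $u$ (neither $u\mapsto\lambda u$ nor adding affine maps scales $\dist(\nabla u,SO(2))$ linearly), so in the contradiction argument you are not entitled to normalize $\inf_{R}\Vert\nabla u_n-R\Vert_{L^p(\Omega^{\delta_n})}=1$. The negation of the lemma only produces sequences with $\inf_R\Vert\nabla u_n-R\Vert_{L^p(\Omega^{\delta_n})}>n\,\Vert\dist(\nabla u_n,SO(2))\Vert_{L^p(\Omega^{\delta_n})}$, and both sides may tend to zero; for instance nothing you derived excludes a hypothetical sequence with $\Vert\dist(\nabla u_n,SO(2))\Vert_{L^p}\sim\delta_n^2$ and $\inf_R\Vert\nabla u_n-R\Vert_{L^p}\sim\delta_n$, which is perfectly compatible with your chained bound $\inf_R\Vert\nabla u_n-R\Vert^p\le C\Vert\dist(\nabla u_n,SO(2))\Vert^p+C\delta_n^p$ and yields no contradiction with Theorem \ref{thm: FJM rigidity} on $\Omega$ after pull-back, since there $\Vert\dist(\nabla\tilde u_n,SO(2))\Vert_{L^p(\Omega)}$ is only $O(\delta_n)$, not $o$ of the left-hand side. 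So the compactness-contradiction route, as formulated, does not close the argument; the $O(\delta^p)$ residue is exactly the regime the uniform estimate must control.

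Your closing sentence is, in fact, the actual proof: the paper does not pull back at all, but invokes the analysis of Conti--Zwicknagl (Section 5 of that work), which shows that the constant in Theorem \ref{thm: FJM rigidity} is governed solely by the constant in the weighted Poincar\'e inequality, and then uses property (iii) of Lemma \ref{lem: shrinking lip}, which provides that constant uniformly in $\delta$ for the family $\Omega^\delta$. In your proposal this appears only as an unsupported assertion ("one can obtain the estimate by inspecting the proof of FJM\ldots"), but it is the entire content of the lemma: to make it a proof you must either cite such a quantitative dependence of the FJM constant on the domain or rerun the FJM argument (Whitney-type covering, local estimates on cubes, weighted Poincar\'e inequality) checking that every constant is controlled by quantities that Remark \ref{App: rem} and Lemma \ref{lem: shrinking lip} keep uniform in $\delta$.
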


	\begin{proof}
		For the proof of this result we refer to \cite[Section 5]{ContiZwicknagl}, where the authors have proven that ultimately the constant appearing in \eqref{a4} depends on the constant of the weighted Poincaré inequality. Hence, recalling property (iii) for the sets $\Omega^\delta$ we conclude.
	\end{proof}

	\begin{rem}\label{App: Korn}
		We point out that the result contained in \cite[Section 5]{ContiZwicknagl} holds for a much more general class of Lipschitz domains which encompass the class of Lipschitz domains having smooth diffeomorphisms arbitrary close to the identity mapping one into the other. Moreover, in \cite{ContiZwicknagl} the authors prove also a Korn's inequality (see Theorem 5.10) with an uniform constant. In our setting we can rephrase it as follows: 
		
		\noindent Given $p \in (1,\infty)$, for every $\delta>0$ small enough and every $u \in W^{1,p}(\Omega^\delta;\R^2)$ there exists an associated matrix $S \in \R^{2 \times 2}_{\textnormal{skew}}$ such that
		\begin{equation}\label{a5}
			\Vert \nabla u - S \Vert_{L^p(\Omega^\delta;\R^{2 \times 2})} \leq C(\Omega) \Vert E u \Vert_{L^p(\Omega^\delta;\R^{2 \times 2})}.
		\end{equation}
	\end{rem}
		
	Finally we deal with the constant in Theorem \ref{thm: CDM rigidity} (we do it only for the case $p=2$ since it is the only one we need).
	
	\begin{prop}\label{App: CDM}
		The constant appearing in Theorem \ref{thm: CDM rigidity} is uniform for every $\Omega^\delta$ when $\delta$ is small enough. That is, there exists a constant $C(\Omega)>0$ with the following property: for every $\delta>0$ small enough and every $u \in W^{1,1}(\Omega^\delta;\R^2)$ with $\nabla u \in L^{2,\infty}(\Omega^\delta;\R^{ 2 \times 2})$, there exists an associated rotation $R \in SO(2)$ such that
		\begin{equation}\label{a6}
			\Vert \nabla u -R \Vert_{L^{2,\infty}(\Omega^\delta;\R^{2 \times 2})} \leq C(\Omega) \Vert \dist(\nabla u,SO(2)) \Vert_{L^{2,\infty}(\Omega^\delta)}.
		\end{equation}
	\end{prop}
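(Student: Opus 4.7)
The plan is to mirror the strategy of Lemma~\ref{App: FJM}: we inspect the proof of Theorem~\ref{thm: CDM rigidity} in~\cite{CDM} and verify that every constant can be bounded uniformly for the family $\{\Omega^\delta\}$. The argument in \cite{CDM} reduces the weak-$L^p$ rigidity constant to two ingredients: the strong $L^2$ FJM rigidity constant on the same Lipschitz domain, which is already uniform over $\{\Omega^\delta\}$ by Lemma~\ref{App: FJM}; and an auxiliary Korn-type inequality in $L^{2,\infty}$ on the same domain, whose constant depends only on the weighted Poincaré constant and on universal Calderón--Zygmund constants. Since Lemma~\ref{lem: shrinking lip}(iii) provides a uniform weighted Poincaré constant, both ingredients are uniform in $\delta$, and \eqref{a6} follows with a constant $C=C(\Omega)$.

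One can also argue by direct transport via $\Phi_\delta$, as in Lemmas~\ref{App: lem for BBV} and~\ref{App: FJM}. Setting $\tilde u := u \circ \Phi_\delta \in W^{1,1}(\Omega;\R^2)$, a change of variables combined with the bound $\sup_\Omega |D\Phi_\delta - I| \le C\delta$ from Remark~\ref{App: rem} yields $\|\nabla \tilde u\|_{L^{2,\infty}(\Omega)} \le C\|\nabla u\|_{L^{2,\infty}(\Omega^\delta)}$ together with the pointwise bound
\[
\dist(\nabla \tilde u(x), SO(2)) \le (1 + C\delta)\, \dist(\nabla u(\Phi_\delta(x)), SO(2)) + C\delta,
\]
obtained by observing that, for any $R\in SO(2)$ realising $\dist(\nabla u(\Phi_\delta(x)), SO(2))$, one has $|\nabla \tilde u(x) - R| \le |\nabla u(\Phi_\delta(x)) - R|\,|D\Phi_\delta(x)| + |R|\,|D\Phi_\delta(x) - I|$. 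Applying Theorem~\ref{thm: CDM rigidity} on the fixed domain $\Omega$ to $\tilde u$ furnishes a rotation $\tilde R \in SO(2)$, and transporting back gives \eqref{a6} up to an additive error of order $\delta$.

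The main obstacle is to absorb this additive $\delta$-error in the small-data regime $\|\dist(\nabla u, SO(2))\|_{L^{2,\infty}(\Omega^\delta)} \lesssim \delta$. Outside this regime the error is dominated by the right-hand side of~\eqref{a6} and the estimate closes up for $\delta$ small. Inside it, one appeals to the uniform strong FJM rigidity (Lemma~\ref{App: FJM}) applied to $u$ after truncating $\nabla u$ at level $\delta^{1/2}$: the exceptional set where the truncation acts has $\mathcal{L}^2$-measure of order $\delta$ by the weak-$L^2$ bound, while on its complement $\dist(\nabla u, SO(2))$ is pointwise small; strong rigidity then identifies a single rotation $R$ independent of $\delta$, and the weak estimate is recovered by the Chebyshev passage $\lambda^2 \mathcal{L}^2(\{|\nabla u - R|>\lambda\}) \le \|\nabla u - R\|_{L^2}^2$.
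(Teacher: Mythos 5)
Your first route is, in outline, the paper's own strategy (trace the constants through \cite{CDM}), but it is asserted at exactly the point where the work lies. In \cite{CDM} the $L^{2,\infty}$ estimate is deduced by Lorentz interpolation from the rigidity estimate \emph{with mixed growth} (\cite[Theorem 1.1]{CDM}), whose constant depends on the Friesecke--James--M\"uller constant for two exponents, on the constant of the Korn inequality with mixed growth, and on the constant of a Lipschitz truncation lemma. The uniformity in $\delta$ of the last two ingredients is the nontrivial content of the appendix: the paper proves the mixed-growth Korn inequality uniformly on $\Omega^\delta$ (Lemma \ref{App CDM korn}) by constructing a boundary covering with balls, local Lipschitz graphs and Nitsche extensions whose radii, Lipschitz constants and overlap parameter are stable under $\Phi_\delta$, and proves the truncation lemma uniformly by transport (Lemma \ref{App: lip trunc}). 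Your claim that the relevant Korn-type ingredient ``depends only on the weighted Poincar\'e constant and universal Calder\'on--Zygmund constants'' is neither proved nor consistent with the mixed-growth structure actually used, so as written this route has a hole precisely at the step the proposition is about.

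Your alternative transport argument correctly identifies the obstruction (the additive $O(\delta)$ error coming from $|R|\,|D\Phi_\delta-I|$, since $SO(2)$ is not invariant under right multiplication by $D\Phi_\delta$), but the proposed fix in the small-data regime does not work. First, truncating at level $\delta^{1/2}<\sqrt{2}=|R|$ makes $\{|\nabla u|>\delta^{1/2}\}$ essentially the whole domain; the weak-$L^2$ bound controls $\dist(\nabla u,SO(2))$, not $|\nabla u|$, so the exceptional set is not of measure $O(\delta)$ as claimed (you would have to truncate the distance, not the gradient). More seriously, $L^{2,\infty}\not\subset L^{2}$: with $\varepsilon:=\Vert\dist(\nabla u,SO(2))\Vert_{L^{2,\infty}(\Omega^\delta)}\lesssim\delta$, the strong $L^2$ norm of the distance truncated at height $\delta^{1/2}$ is only of order $\varepsilon|\log\delta|^{1/2}$, so strong rigidity followed by Chebyshev can only give $\Vert\nabla u-R\Vert_{L^{2,\infty}}\lesssim\varepsilon|\log\delta|^{1/2}$, which is not uniform in $\delta$; moreover the Chebyshev step presupposes $\nabla u-R\in L^{2}$, which is not among the hypotheses, and the tail of $\nabla u$ above the truncation level receives no estimate at all --- yet controlling that tail from purely weak data is exactly what a weak-type rigidity estimate must do, and is what the mixed-growth Korn/truncation machinery of \cite{CDM} is designed for. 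So the additive $\delta$-error cannot be absorbed by this argument, and the small-data case remains open in your proposal.
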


	In order to prove Proposition \ref{App: CDM} we need the following two lemmas. The first one deals with the uniformity of the constant appearing in the Korn inequality with mixed growth (see \cite[Theorem 2.1]{CDM}). The second one is a Lipschitz truncation lemma.
	
	\begin{lem}\label{App CDM korn}
		Let $1<p<q<\infty$. there exists $\bar{\delta}=\bar{\delta}(\Omega)>0$ such that for every $\delta \in (0,\bar{\delta})$ the following holds. Let $u \in W^{1,1}(\Omega^\delta;\R^2)$ such that
		\begin{equation}\label{a7}
			E u =\frac{1}{2}\left|\nabla u + \nabla u^T\right| = f + g \qquad \mathcal{L}^2-\mbox{a.e. in $\Omega^\delta$},
		\end{equation} 
		where $f \in L^p(\Omega^\delta;\R^{2 \times 2})$ and $g \in L^q(\Omega^\delta;\R^{2 \times 2})$. Then, there exists a constant $C=C(\Omega,p,q)>0$, matrix fields $F \in L^p(\Omega^\delta;\R^{2 \times 2 })$, $G \in L^q(\Omega^\delta;\R^{2 \times 2 })$ and a skew-symmetric matrix $S \in \R^{2 \times 2}_{\textnormal{skew}}$ such that
		\begin{equation}\label{a8}
			\nabla u = S + F + G \qquad \mathcal{L}^2-\mbox{a.e. in $\Omega^\delta$},
		\end{equation}
		and
		\begin{equation}\label{a9}
			\Vert F \Vert_{L^p(\Omega^\delta;\R^{2 \times 2})} \leq C \Vert f \Vert_{L^p(\Omega^\delta)}, \qquad \Vert G \Vert_{L^q(\Omega^\delta;\R^{2 \times 2})} \leq C \Vert g \Vert_{L^q(\Omega^\delta)}.
		\end{equation}
	\end{lem}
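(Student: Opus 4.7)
The statement is the mixed-growth Korn inequality of \cite[Theorem 2.1]{CDM} applied on the shrunk domain $\Omega^\delta$, with the additional requirement that the constant be independent of $\delta$ for $\delta$ small. The plan is therefore to run the CDM argument on $\Omega^\delta$ and verify that each ingredient appearing in it is available with a constant depending only on $\Omega$ (and the exponents). The CDM construction relies on two pillars: the classical Korn inequality in $L^r$, $r\in(1,\infty)$, and a Bogovskii-type right inverse of the divergence. Uniform Korn bounds on $\Omega^\delta$ are already provided by Lemma~\ref{App: FJM} together with Remark~\ref{App: Korn}; the nontrivial task is to produce a uniform Bogovskii operator.

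The idea is to transport the Bogovskii operator $\mathcal{B}_\Omega:L^r_0(\Omega)\to W^{1,r}_0(\Omega;\R^2)$ of the fixed reference domain through the diffeomorphism $\Phi_\delta$ of Remark~\ref{App: rem}. Given $h\in L^r_0(\Omega^\delta)$, set $\tilde h:=(\det D\Phi_\delta)\,h\circ\Phi_\delta\in L^r_0(\Omega)$ (zero mean follows from a change of variables), compute $\tilde w:=\mathcal{B}_\Omega(\tilde h)\in W^{1,r}_0(\Omega;\R^2)$, and push forward to $\Omega^\delta$ by $w_\delta:=(D\Phi_\delta\,\tilde w)\circ\Phi_\delta^{-1}$. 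Since $\|D^k\Phi_\delta-D^k\mathrm{Id}\|_\infty\le C\delta$, a direct computation shows $\divr w_\delta = h + R_\delta h$ on $\Omega^\delta$, where the error $R_\delta$ has $L^r\to L^r$ norm of order $\delta$. For $\delta$ smaller than a threshold depending only on $\Omega$ and $r$, the Neumann series $(\mathrm{Id}+R_\delta)^{-1}=\sum_{k\ge 0}(-R_\delta)^k$ converges in $L^r$ with norm $\le 2$, and composing with the push-forward produces a genuine Bogovskii operator $\mathcal{B}_\delta$ on $\Omega^\delta$ of operator norm bounded by $2\|\mathcal{B}_\Omega\|_{L^r\to W^{1,r}}$, uniformly in $\delta$.

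With the uniform Bogovskii bound and the uniform Korn inequality in hand, the CDM decomposition procedure runs unchanged on $\Omega^\delta$: given $Eu=f+g$, one uses $\mathcal{B}_\delta$ to construct auxiliary Stokes correctors that separate the $L^p$- and $L^q$-contributions of the strain, and Korn's inequality on the residual yields the decomposition $\nabla u=S+F+G$ with the desired bounds. A finite chain of such applications produces a constant depending only on $\Omega$, $p$ and $q$.

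The main obstacle is exactly the uniform Bogovskii bound. The push-forward naturally satisfies the divergence equation only up to an $O(\delta)$ perturbation, and one must check that the Neumann correction closes with a threshold and a norm bound independent of $\delta$; this is where the full force of the derivative estimates in Remark~\ref{App: rem} (for all $k$, not only $k=1$) is used, since the error $R_\delta$ involves second derivatives of $\Phi_\delta$ acting on $\tilde w$. An equivalent alternative, avoiding the Neumann series, is to note that the family $\{\Omega^\delta\}_{\delta}$ consists of John domains with uniform John constants (again via $\Phi_\delta$), and to invoke the classical Galdi-style proof of the Bogovskii estimate, whose constant depends only on the John constants.
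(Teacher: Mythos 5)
There is a genuine gap. Your plan rests on the claim that the proof of \cite[Theorem 2.1]{CDM} is built from two ingredients, the classical Korn inequality and a Bogovskii-type right inverse of the divergence, so that uniform versions of these on $\Omega^\delta$ would let ``the CDM decomposition procedure run unchanged''. That is not how the mixed-growth Korn inequality is proved in \cite{CDM}, and the domain dependence you need to control does not enter through a divergence inverse. The CDM argument is a localization: an interior estimate on balls (with constants depending only on $p,q$ and, by scaling, the radius), a boundary estimate obtained by Nitsche's extension of $u$, $f$, $g$ across the Lipschitz boundary (constants depending on the radius and the Lipschitz constant of the local graph), and a final patching step whose constant involves the measure of the pairwise overlaps of the covering balls. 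Hence the $\delta$-dependence to be ruled out sits precisely in the covering data (radii, graph Lipschitz constants, overlap measure), which your proposal never addresses. Moreover, the crucial step in your plan --- producing $\nabla u=S+F+G$ with $\Vert F\Vert_{L^p}\le C\Vert f\Vert_{L^p}$ and $\Vert G\Vert_{L^q}\le C\Vert g\Vert_{L^q}$ from a uniform Korn inequality plus a uniform Bogovskii operator --- is only asserted (``auxiliary Stokes correctors''), not constructed; the classical Korn inequality by itself cannot separate the two contributions, since it controls $\nabla u-S$ by $Eu$ in a single norm, and the separation of the $f$- and $g$-parts is the whole content of the lemma. So even granting your uniform Bogovskii operator (the push-forward plus Neumann-series argument, or the uniform John constants, are plausible in themselves), the statement does not follow as written; to make your route work you would have to give an actual proof of the mixed-growth Korn inequality based on negative-norm liftings applied separately to $f$ and $g$, which is a different and nontrivial argument.

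For comparison, the paper proceeds by re-running the CDM proof on $\Omega^\delta$ and tracking constants: it first disposes of the case $\Vert g\Vert_{L^q}\le\Vert f\Vert_{L^p}$ by H\"older together with the uniform Korn inequality of Remark \ref{App: Korn}; it then fixes a finite cover of $\overline{\Omega}$ by interior balls and boundary balls with uniformly Lipschitz graph descriptions, transports it by the diffeomorphism $\Phi_\delta$ of Remark \ref{App: rem}, and checks that for small $\delta$ the radii, the Lipschitz constants and the overlap quantity remain comparable to those of $\Omega$; with these uniform covering data the interior estimate, the Nitsche extension at the boundary and the patching step of \cite{CDM} deliver a constant depending only on $\Omega$, $p$ and $q$.
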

	
	\begin{proof}
		To prove that the constant does not depend on $\delta$, we need to go through the proof of \cite[Theorem 2.1]{CDM} and check the dependencies of the constants appearing there. We will only sketch the main points.
		
		Let $\delta_1 =\delta_1(\Omega)>0$ such that for every $\delta \in (0,\delta_1)$ the considerations in Remark \ref{App: Korn} hold and $1/2 \leq \Vert \det D\Phi_\delta \Vert_{L^\infty(\Omega)} \leq 2$. Notice that if $\Vert g \Vert_{L^q} \leq \Vert f \Vert_{L^p}$ then, by H\"older inequality we have 
		$$
		\Vert E u \Vert_{L^p} \leq \Vert f \Vert_{L^p}+\Vert g \Vert_{L^p} \leq \Vert f \Vert_{L^p} + \mathcal{L}^2(\Omega^\delta)^{\frac{q-p}{pq}} \Vert g \Vert_{L^q} \leq \left(1+2^{\frac{q-p}{pq}}\mathcal{L}^2(\Omega^\delta)^{\frac{q-p}{pq}}\right) \Vert f \Vert_{L^p}.
		$$
		Hence, using \eqref{a5} with $p$ we conclude the assertion with $F=\nabla u-S$ and $G=0$. For every $\delta \in (0,\delta_1)$ we may thus assume that $\Vert f \Vert_{L^p} \leq \Vert g \Vert_{L^q}$.
		
		\textit{Construction of a cover for the domains.} We now construct a finite cover with balls of the domain $\Omega$ and then prove that we can find a corresponding cover for the $\Omega^\delta$'s with the relevant quantities independent of $\delta$, at least when the latter is small enough. For every $x \in \Omega$ we take $r_x>0$ such that $B(x,r_x) \subset \Omega$. For every $x \in \partial \Omega$ we take $r_x$ with the following properties. There exists an orthonormal coordinate system $v_1,v_2$ in $\R^2$ and a Lipschitz function $\phi_x \colon \R \to \R$ such that $\phi_x(0)=0$ and
		$$
		B(x,r_x) \cap \partial \Omega = \left\{ x+\sum_{i=1}^2 \lambda_i v_i \colon \, (\lambda_1,\lambda_2) \in B(0,r_x), \ \lambda_2=\phi_x(\lambda_1) \right\},
		$$
		$$
		B(x,r_x) \cap \Omega = \left\{ x+\sum_{i=1}^2 \lambda_i v_i \colon \, (\lambda_1,\lambda_2) \in B(0,r_x), \ \lambda_2 < \phi_x(\lambda_1) \right\}.
		$$
		This is possible since $\Omega$ has Lipschitz boundary. Let $L>0$ be a uniform Lipschitz constant for all $\phi_x$ with $x \in \partial \Omega$ (this exists since $\partial \Omega$ is compact). Define $\gamma:=1/(2\sqrt{1+L^2})$. By construction $\{ B(x,\gamma r_x/2) \}_{x \in \overline{\Omega}}$ is an open cover of $\overline{\Omega}$. Since $\overline{\Omega}$ is compact we may choose a finite subcover $\{ B(x_j,\gamma r_j/2) \}_{j=0,\dots,M}$ of $\overline{\Omega}$. Moreover, the finitely many balls $B_j:=B(x_j,\gamma r_j/2)$ satisfy
		\begin{equation}\label{a10}
			\alpha:= \min \left\{ \mathcal{L}^2(B_l \cap B_k \cap \Omega) \colon \ l, \ k \in \{0,\dots,M\}, \ B_l \cap B_k \cap \Omega \neq \emptyset \right\} >0.
		\end{equation}
		Let $j \in \{0,\dots,M\}$ such that $x_j \in \Omega$, observe that for every $\delta>0$ small we have $\Phi_\delta(B_j) \subset \Omega^\delta$ by definition. Moreover, in virtue of Remark \eqref{App: rem}, we infer that $\Phi_\delta(B_j) \to B_j$ in $\R^2$ in the Hausdorff sense. Hence, there exists $r_j^\delta < r_j$ such that $B(\Phi_\delta(x_j),\gamma r_j^\delta/2) \subset \Phi_\delta(B_j)$ and $r_j^\delta \to r_j$ as $\delta \to 0$. Let now $j \in \{0,\dots,M\}$ such that $x \in \partial \Omega$. Using again Remark \ref{App: rem} we have that there exists $\delta_2=\delta_2(\Omega)>0$ such that for every $\delta \in (0,\delta_2)$ we can find an appropriate orthonormal coordinate system $v_1^\delta,v_2^\delta$ of $\R^2$ and a Lipschitz function $\phi_j^\delta \colon \R \to \R$ such that $\phi_j^\delta(0)=0$ and
		$$
		\Phi_\delta(B_j) \cap \partial \Omega^\delta = \left\{ \Phi_\delta(x_j)+\sum_{i=1}^2 \lambda_i v_i^\delta \in \Phi_\delta(B_j) \colon \, \lambda_2=\phi_j^\delta(\lambda_1) \right\},
		$$
		$$
		\Phi_\delta(B_j) \cap \Omega^\delta = \left\{ \Phi_\delta(x_j)+\sum_{i=1}^2 \lambda_i v_i^\delta \in \Phi_\delta(B_j) \colon \, \lambda_2 < \phi_j^\delta(\lambda_1) \right\}.
		$$
		Moreover, $\lip \phi_j^\delta \leq L_\delta$, where $L_\delta \searrow L$ as $\delta \to 0$. Arguing as before, we can find $r_j^\delta \leq r_j$ such that $B(\Phi_\delta(x_j),\gamma r_j^\delta/2) \subset \Phi_\delta(B_j)$. 
		
		Set $\gamma_\delta:= 1/(2\sqrt{1+L_\delta^2}) \leq \gamma$. We fix $\delta_3=\delta_3(\Omega) \leq \delta_2$ such that for every $\delta \in (0,\delta_3)$ and $j=0,\dots,M$ the following inequalities hold
		\begin{equation}\label{a11}
			\frac{r_j}{2} \leq r_j^\delta \leq r_j, \qquad L \leq L_\delta \leq 2L, \qquad \frac{\gamma}{2} \leq \gamma_\delta \leq \gamma.
		\end{equation}
		Set $x_j^\delta=\Phi_\delta(x_j)$ and $B_j^\delta:=B(x_j^\delta,\gamma_\delta r_j^\delta/2)$. We have that the family $\{ B_j^\delta \}_{j=0,\dots,M}$ consists of balls intersecting $\Omega^{\delta}$ and $B_j^\delta \to B_j$ in the Hausdorff sense, for every $j=0,\dots,M$. Therefore, there exists $\delta_4=\delta_4(\Omega) \leq \delta_3$ such that for every $\delta \in (0,\delta_4)$ the family $\{ B_j^\delta \}_{j=0,\dots,M}$ is a cover of $\Omega^\delta$. This can be seen by considering the family $\{\Phi_\delta^{-1}(B_j^\delta)\}_{j=0,\dots,M}$, and observing that for $\delta$ small enough it is a covering for $\Omega$. Finally, by continuity we infer the existence of $\delta_5 = \delta_5(\Omega) \leq \delta_4$ such that the quantity $\alpha_\delta$ defined as 
		$$
		\alpha_\delta:= \min \left\{ \mathcal{L}^2(B_l^\delta \cap B_k^\delta \cap \Omega^\delta) \colon \ l, \ k \in \{0,\dots,M\}, \ B_l^\delta \cap B_k^\delta \cap \Omega^\delta \neq \emptyset \right\} >0,
		$$
		satisfies the bounds $\alpha/2 \leq \alpha_\delta \leq \alpha$ for every $\delta \in (0,\delta_5)$.
		
		\textit{Interior estimate.} We fix $\delta \in (0,\delta_5)$. Let $x_j \in \Omega$. Arguing exactly as in the proof of \cite[Theorem 2.1]{CDM} and in view of \eqref{a11} we get that there exists $S_j \in \R^{2 \times 2}_{\textnormal{skew}}$ such that
		$$
		\nabla u = F+G+S_j \qquad \mathcal{L}^2-\mbox{a.e in $B(x_j^\delta,r_j^\delta/2)$},
		$$
		with
		\begin{align*}
		&\Vert F \Vert_{L^p(B(x_j^\delta,r_j^\delta/2))} \leq C(p,q,r_j)\Vert f \Vert_{L^p(B(x_j^\delta,r_j^\delta))}, \\
		&\Vert G \Vert_{L^q(B(x_j^\delta,r_j^\delta/2))} \leq C(p,q,r_j)\Vert g \Vert_{L^q(B(x_j^\delta,r_j^\delta))}+C(p,q,r_j)\Vert f \Vert_{L^p(B(x_j^\delta,r_j^\delta))}.
		\end{align*}
		
		\textit{Local estimate at the boundary.} Now fix $x^\delta_j \in \partial \Omega^\delta$. By Nitsche extension (see \cite[Theorem 5.1]{CDM} and \cite{NitscheKorn}, there exist $\tilde{u}$, $\tilde{f}$ and $\tilde{g}$ extension of $u$, $f$ and $g$, respectively, on $\Omega^\delta \cup B(x_j^\delta,\gamma_\delta r_j^\delta)$ with $E\tilde{u}=\tilde{f}+\tilde{g}$ $\mathcal{L}^2$-a.e. on $B(x_j^\delta,\gamma_\delta r_j^\delta)$. Moreover, these functions satisfy the following estimates
		\begin{align}\label{a12}
			\begin{split}
			& \Vert \tilde{f} \Vert_{L^p(B(x_j^\delta,\gamma_\delta r_j^\delta))} \leq C(p,q,r_j^\delta,L_\delta) \Vert f \Vert_{L^p(B(x_j^\delta, r_j^\delta)\cap \Omega^\delta)},\\
			& \Vert \tilde{g} \Vert_{L^q(B(x_j^\delta,\gamma_\delta r_j^\delta))}  \leq  C(p,q,r_j^\delta,L_\delta) \Vert g \Vert_{L^q(B(x_j^\delta, r_j^\delta)\cap \Omega^\delta)}.
			\end{split}
		\end{align}
		Hence, in view of \eqref{a11}, it is clear that the constant appearing in \eqref{a12} depends only on $p$, $q$, $r_j$ and $L$. As before, we infer the existence of $S_j$, $\tilde{F}$ and $\tilde{G}$ such that
		$$
		\nabla \tilde{u} = \tilde{F}+\tilde{G}+S_j \qquad \mathcal{L}^2-\mbox{a.e in $B(x_j^\delta,\gamma_\delta r_j^\delta/2)$},
		$$
		with
		\begin{align*}
			&\Vert \tilde{F} \Vert_{L^p(B(x_j^\delta,\gamma_\delta r_j^\delta/2))} \leq C(p,q,r_j,L)\Vert \tilde{f} \Vert_{L^p(B(x_j^\delta,\gamma_\delta r_j^\delta))} \leq C(p,q,r_j,L)\Vert f \Vert_{L^p(B(x_j^\delta,r_j^\delta) \cap \Omega^\delta)}, \\
			&\Vert \tilde{G} \Vert_{L^q(B(x_j^\delta,\gamma_\delta r_j^\delta/2))} \leq C(p,q,r_j,L)\Vert \tilde{g} \Vert_{L^q(B(x_j^\delta,\gamma_\delta r_j^\delta))}+C(p,q,r_j,L)\Vert f \Vert_{L^p(B(x_j^\delta,\gamma_\delta r_j^\delta))} \\
			& \hphantom{\Vert \tilde{G} \Vert_{L^q(B(x_j^\delta,\gamma_\delta r_j^\delta/2))}} \leq C(p,q,r_j,L)\Vert g \Vert_{L^q(B(x_j^\delta,r_j^\delta) \cap \Omega^\delta)}+C(p,q,r_j,L)\Vert f \Vert_{L^p(B(x_j^\delta,r_j^\delta) \cap \Omega^\delta)}.
		\end{align*}
		\textit{Global estimate.} Arguing exactly as in the last step of the proof of \cite[Theorem 2.1]{CDM} and recalling that for every $\delta \in (0,\delta_5)$ we have $\alpha_\delta \geq \alpha/2$, we infer that the following decomposition with the relative estimates hold: $\nabla u = S_0+F+G$ and
		$$
		\Vert F \Vert_{L^p(\Omega^\delta)} \leq C(p,q,r_j,L,\alpha) \Vert f \Vert_{L^p(\Omega^\delta)}, \qquad \Vert G \Vert_{L^q(\Omega^\delta)} \leq C(p,q,r_j,L,\alpha) \left(\Vert g \Vert_{L^q(\Omega^\delta)}+\Vert f \Vert_{L^p(\Omega^\delta)}\right).
		$$
		
		Finally, taking $\overline{\delta}=\min \{ \delta_1,\delta_5 \}$ we infer that for every $\delta \in (0,\overline{\delta})$ the assertion holds with an uniform constant.
	\end{proof}

	\begin{lem}\label{App: lip trunc}
		There exists a constant $c=c(\Omega)>0$ such that for every $\delta>0$ small enough, every $u \in W^{1,1}(\Omega^\delta;\R^2)$ and all $\lambda>0$, there exists a measurable set $X^\delta \subset \Omega^\delta$ such that:
		\begin{itemize}
			\item[ ] $u$ is $c \lambda- Lipschitz $ on $X$,
			\item[ ]  $\mathcal{L}^2\left(\Omega^\delta \setminus X^\delta\right) \leq \displaystyle \frac{c}{\lambda} \displaystyle \int_{\{|\nabla u|>\lambda\}} |\nabla u| \, dx$.
		\end{itemize}
	\end{lem}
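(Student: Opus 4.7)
The plan is to reduce the problem to the fixed bounded Lipschitz domain $\Omega$, where the Lipschitz truncation is classical, and then transfer the resulting truncation set to $\Omega^\delta$ through the diffeomorphism $\Phi_\delta$ of Remark~\ref{App: rem}. The uniformity of the constant in $\delta$ will come entirely from the $C^1$-closeness of $\Phi_\delta$ to the identity; no new functional-analytic input is required.

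First I would fix $\bar\delta=\bar\delta(\Omega)>0$ small enough so that for every $\delta\in(0,\bar\delta)$ the bounds $\|D\Phi_\delta\|_\infty\le 2$, $\|D\Phi_\delta^{-1}\|_\infty\le 2$ and $|\det D\Phi_\delta|,\,|\det D\Phi_\delta^{-1}|\in[1/4,4]$ all hold, as is immediate from Remark~\ref{App: rem}. Given $u\in W^{1,1}(\Omega^\delta;\R^2)$, I set $\tilde u:=u\circ\Phi_\delta\in W^{1,1}(\Omega;\R^2)$ and apply the classical Acerbi--Fusco-type Lipschitz truncation on $\Omega$ at scale $2\lambda$: this yields a constant $c_0=c_0(\Omega)>0$ and a set $\tilde X\subset\Omega$ on which $\tilde u$ is $2c_0\lambda$-Lipschitz, with
\[
\mathcal{L}^2(\Omega\setminus\tilde X)\le \frac{c_0}{2\lambda}\int_{\{|\nabla\tilde u|>2\lambda\}}|\nabla\tilde u|\,dx.
\]
The candidate truncation set for the original problem is then $X^\delta:=\Phi_\delta(\tilde X)\subset\Omega^\delta$.

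To verify the two conclusions, I would exploit the chain rule $\nabla\tilde u(x)=\nabla u(\Phi_\delta(x))D\Phi_\delta(x)$. Combined with $\|D\Phi_\delta^{-1}\|_\infty\le 2$, the $2c_0\lambda$-Lipschitz estimate on $\tilde X$ transfers directly to a $4c_0\lambda$-Lipschitz estimate for $u$ on $X^\delta$. Similarly, $|\nabla\tilde u|\le 2|\nabla u\circ\Phi_\delta|$ gives the inclusion $\{|\nabla\tilde u|>2\lambda\}\subseteq\Phi_\delta^{-1}(\{|\nabla u|>\lambda\})$, and the change of variables with the Jacobian bound upgrades the integral to
\[
\int_{\{|\nabla\tilde u|>2\lambda\}}|\nabla\tilde u|\,dx\le 8\int_{\{|\nabla u|>\lambda\}}|\nabla u|\,dy.
\]
Together with $\mathcal{L}^2(\Omega^\delta\setminus X^\delta)\le 4\mathcal{L}^2(\Omega\setminus\tilde X)$ (again from the Jacobian bound) this yields the desired estimate with $c(\Omega):=16c_0(\Omega)$.

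The only ingredient not made explicit above is the classical Lipschitz truncation on the fixed Lipschitz domain $\Omega$. This is standard and may be proved by extending $\tilde u$ to $\R^2$ via a bounded Sobolev extension (available since $\Omega$ is Lipschitz), taking $\tilde X$ to be the complement of the superlevel set $\{M|\nabla\tilde u|>2\lambda\}$ of the Hardy--Littlewood maximal function, and invoking the Calder\'on--Zygmund characterisation of this set together with the truncated weak-$(1,1)$ maximal inequality. The main obstacle one might have feared---that the extension norm or the maximal function constant for the family $\Omega^\delta$ could deteriorate as $\delta\to 0$---is entirely sidestepped by our reduction to the single domain $\Omega$, and was the whole reason for performing the change of variables through $\Phi_\delta$ rather than working directly on $\Omega^\delta$.
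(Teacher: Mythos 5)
Your proposal is correct and follows essentially the same route as the paper: pull the function back to the fixed domain $\Omega$ via $\Phi_\delta$, apply the classical Lipschitz truncation there at scale $2\lambda$, push the truncation set forward with $\Phi_\delta$, and use the uniform bounds of Remark~\ref{App: rem} on $D\Phi_\delta$, $D\Phi_\delta^{-1}$ and the Jacobians to transfer the Lipschitz bound, the measure estimate and the gradient integral with a $\delta$-independent constant. The only difference is that you also sketch the standard truncation on $\Omega$ (extension plus maximal function), which the paper simply cites as classical.
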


	\begin{proof}
		This is a standard result in the case of a fixed Lipschitz domain. Hence, the result holds for every $u \in W^{1,1}(\Omega;\R^2)$ and all $\lambda > 0$ with a constant $c=c(\Omega)>0$. As there is a one to one correspondence between the functions in $W^{1,1}(\Omega;\R^2)$ and the functions in $W^{1,1}(\Omega^\delta;\R^2)$, we prove the result for $u \circ \Phi_\delta$. Fix $\lambda>0$, we take a set $X \subset \Omega$ such that the function $u$ is $c \, 2\lambda$-Lipschitz on $X$ and 
		$$
		\mathcal{L}^2(\Omega \setminus X) \leq \frac{c}{2\lambda} \int_{\{|\nabla u|>2\lambda\}} |\nabla u| \, dx.
		$$
		Thus, in view of Remark \ref{App: rem}, for $\delta>0$ small enough we have that $u \circ \Phi_\delta$ is $4c\lambda$-Lipschitz in $X^\delta:=\Phi_\delta(X) \subset \Omega^\delta$. Moreover,
		$$
		\mathcal{L}^2\left(\Omega^\delta \setminus X^\delta\right) \leq 2 \mathcal{L}^2(\Omega \setminus X) \leq \frac{c}{\lambda}  \int_{\{|\nabla u|>2\lambda\}} |\nabla u| \, dx \leq \frac{4c}{\lambda}  \int_{\{|\nabla (u \circ \Phi_\delta)|>\lambda\}} |\nabla (u \circ \Phi_\delta)| \, dx.
		$$
		Therefore the assertion holds with the constant $4c$.
	\end{proof}

	We finally prove Proposition \ref{App: CDM}.
	
	\begin{proof}[Proof of Proposition \ref{App: CDM}]
		We are not going to prove again the rigidity estimate with mixed growth checking the dependencies of the constants involved. We just point out that the constant appearing in the rigidity estimate with mixed growth, \cite[Theorem 1.1]{CDM}, depends only on the constant of Friesecke-James-M\"uller rigidity estimate for $L^p$ and $L^q$, on the constant of the Korn's inequality with mixed growth and finally on the constant appearing in the Lipschitz truncation Lemma (see \cite[Lemma 3.1, Theorem 1.1]{CDM}). Hence, in view of Lemmas \ref{App: FJM}, \ref{App CDM korn} and \ref{App: lip trunc} we infer that for $\delta>0$ small enough, the constant is uniform for every $\Omega^\delta$. In turn, the rigidity estimate in weak Lebesgue spaces (and in Lorentz spaces in general) is a straightforward consequence of \cite[Theorem 1.1]{CDM} (cf. \cite[Corollary 4.1]{CDM}) using the interpolation property of Lorentz spaces. Therefore, the constant appearing in Theorem \ref{thm: CDM rigidity} is the same constant of the rigidity with mixed growth conditions for some fixed $1<p_1<p_2<\infty$. Thus, we conclude.
	\end{proof}

		\section*{Acknowledgments}
This work was supported by the Austrian Science Fund through the projects ESP-61 and P35359-N, by the Italian Ministry of Education and Research through the PRIN 2022 project ``Variational Analysis of Complex Systems in Material Science, Physics and Biology'' No. 2022HKBF5C, by the project Starplus 2020 Unina Linea 1 "New challenges in the variational modeling of continuum mechanics" by the University of Naples Federico II and Compagnia di San Paolo, and by the project FRA ``Regularity and Singularity in Analysis, PDEs, and Applied Sciences'' by the University of Naples Federico II. Finally, F.S.\ is member of the Gruppo Nazionale per l'Analisi Matematica, la Probabilit\'a e le loro Applicazioni (GNAMPA-INdAM).

	\bibliographystyle{siam}
	\bibliography{bibliography_NEW}

\end{document}